\documentclass[12pt]{amsart}
\usepackage[utf8]{inputenc}
\usepackage{BAstyle}

\usepackage[margin=1in]{geometry}
\usepackage{comment}

\newcommand{\tw}{\operatorname{tw}}

\title{Counting Number Fields using Multiple Dirichlet Series}
\author{Brandon Alberts and Alina Bucur}

\begin{document}
\allowdisplaybreaks

\begin{abstract}
    We provide a method for counting number fields of fixed Galois group ordered by arbitrary inertial invariants using analytic techniques from the study of multiple Dirichlet series. We prove unconditional results for infinitely many new (concentrated and semiconcentrated) groups that were not approachable by previous methods. Conditional on subconvexity bounds bounds for certain Dirichlet series (e.g. the generalized Lindel\"of hypothesis), we use these techniques to prove the existence of an asymptotic growth rate for $G$-extensions for infinitely many new groups $G$ for which the minimum index elements of $G$ are contained in a union of proper abelian normal subgroups. In particular, our conditional results include all groups with nilpotency class $2$. Additionally, when $G$ is nilpotent our results give a power saving error term.
\end{abstract}

\maketitle

\tableofcontents

\pagebreak

\section{Introduction}\label{sec:introduction}

There has been significant interest in determining the asymptotic behavior of the number of extensions of a fixed number field with given Galois group ordered by various invariants. One could say that the problem goes back to Gauss in the case of counting quadratic fields of bounded discriminant. To explain exactly what we mean, we need to introduce some notation.

Fix a number field $k$ and $G$ a transitive permutation group of degree $n$. We want to study the set $\mathcal{F}_{k}(G)$ of $G$-extensions of $k$, namely the set of field extension $L/k$ of degree $[L:k] = n$ contained in a fixed algebraic closure $\overline{k}$ whose Galois closure $\widetilde{L}$ has Galois group $\Gal(\widetilde{L}/k)\cong G$ which acts on the $n$ embeddings $L\hookrightarrow \widetilde{L}$ according to the transitive permutation action of $G$. The inverse Galois problem asks for which groups $G$ is this set nonempty. However, we are interested in a different issue. When the inverse Galois problem has infinitely many solutions, we want to study how the number of elements in the set $\mathcal{F}_{k}(G)$ grows when we order them by some invariant. 

Perhaps the most natural invariant is the discriminant, so we will study the set
\[
    \mathcal{F}_{\disc,k}(G;X) = \left\{L/k\ G\text{-extension } : |\disc(F/k)| \leq X \right\},
\]
where we use $|\cdot|$ to denote the norm down to $\Q$. One expects a result of the following form.

\begin{conjecture}[Number Field Counting Conjecture]\label{conj:number_field_counting}
    Let $k$ be a number field and $G$ a transitive permutation group of degree $n$. Then there exist positive constants $a,b,c > 0$ depending on $k$ and $G$ such that
    \[
    \#\mathcal{F}_{\disc,k}(G;X)
        \sim c X^{1/a} (\log X)^{b-1}
    \]
    as $X\to \infty$.
\end{conjecture}

This is a conjecture first formulated by Malle in \cite{malle2002, malle2004} based on work by Baily \cite{baily1980}, Davenport--Heilbronn \cite{davenport-heilbronn1971}, Wright \cite{wright1989}, and many others. In particular, Malle conjectured that the constant $a$ in the exponent of $X$ does not depend on the base field $k$, it can be predicted simply in terms of $G$ and its embedding into $S_n.$ Namely, he predicted that $a = a(G)$ to be the minimum index $\min_{g\ne 1} \ind_n(g)$ of the nonidentity elements of $G$, where $\ind_n:S_n\to \Z$ is given by $\ind_n(g) = n - \#\{\text{orbits of }g\}$.

In this paper we develop a new method for proving cases of Conjecture \ref{conj:number_field_counting} for permutation groups that we refer to as semiconcentrated. See \Cref{subsec:semiconcentrated} below for the definition. We define a multiple Dirichlet series associated to the counting problem for these groups and take advantage of Hartogs' phenomenon in several complex variables to get a meromorphic continuation of said multiple Dirichlet series to a region larger than previously known. This allows us to prove \Cref{conj:number_field_counting} in new cases (albeit ineffectively in $b$ and $c$), it allows us to get better power savings in the error terms in many already known cases, and it packages the counting by all invariants into one single multiple Dirichlet series whose analytic behavior encodes the information about the asymptotics for each invariant.

\subsection{Notation}
Since we are going to consider the same abstract group in different representations, we need some notation that distinguishes between these representations, as well as between the different invariants. Thus we will introduce the set

\[
\mathcal F_{\inv, k} (G;X) = \left\{L/k\ G\text{-extension of } : |\inv(L/k)| \le X \right \},
\]

In general for Galois groups $G$ we will use the transitive group label convention from LMFDB; namely each label will have the form $nTd$, where $n$ is the degree of the representation and $d$ is the $T$-number. (See \url{https://www.lmfdb.org/GaloisGroup/Labels} for more details.) Thus the degree of the $G$-extension $L/k$ will be given by the number $n$ in the label $n$T$d$. We will use the abstract group notation (e.g. $C_2$) when a transitive representation has not been specified, or when there is no danger of confusion.

\subsection{Semiconcentrated groups}\label{subsec:semiconcentrated}

\begin{definition}\label{def:semiconcentrated}
    A permutation group $G = nTd$ is said to be {\em semiconcentrated} (for the discriminant ordering) in a set of proper normal subgroups $T_1,T_2,\dots,T_r\normal G$ if the minimum index elements of $G$ are contained in the union of $T_1,T_2,\dots,T_r$, i.e.
    \[
    \{ g\in G : \ind_n(g) = a(G) \} \subseteq \bigcup_{j=1}^r T_j,
    \]
    and we say that $G$ is semiconcentrated if this holds for the collection of all proper normal subgroups of $G$.
    Moreover, we say that $G$ is {\em properly semiconcentrated} if the minimum index elements are not all contained in one single proper normal subgroup. 
\end{definition}
A number of important groups are semiconcentrated. In particular, every noncyclic nilpotent group is a union of its proper normal subgroups, and therefore semiconcentrated.

This is a weakening of the notion of a \emph{concentrated} group defined in the recent work of Alberts, Lemke Oliver, Wang, and Wood \cite{ALOWW}: a permutation group $G$ is called concentrated if its minimum index elements generate a proper normal subgroup. More specifically, $G$ is concentrated in the normal subgroup $T\normal G$ if
\[
    \{ g\in G : \ind_n(g) = a(G) \} \subseteq T.
\]
Every concentrated group is semiconcentrated, but the converse need not be true. A properly semiconcentrated group is precisely a group which is semiconcentrated but not concentrated.

Nearly all the known cases of \Cref{conj:number_field_counting} are concentrated groups, with a number of past results in this direction now falling under the ``inductive counting method'' described in \cite{ALOWW} for concentrated groups. More specifically, all the concentrated groups for which Conjecture \ref{conj:number_field_counting} is known are specifically concentrated in a proper normal subgroup which is either abelian, or one of $S_3^m$, $S_4$, or $S_5$.  Meanwhile, very little is known about \Cref{conj:number_field_counting} for groups which are not concentrated. The only nonconcentrated groups for which \Cref{conj:number_field_counting} has been proven are elementary abelian groups $C_p^r$ \cite{wright1989}, $S_3$ in degree $3$ \cite{datskovsky-wright1988}, $S_3$ in degree $6$ \cite{bhargava-wood2007}, $S_4$ in degree $4$ \cite{bhargava-shankar-wang2015}, $S_5$ in degree $5$ \cite{bhargava-shankar-wang2015}, $D_4$ in degree $8$ \cite{shankar-varma2025octic}, and most recently $D_6$ in degree $12$ \cite{koymans-lemke-oliver-sofos-thorne2025}. In fact, $D_4$ in degree $8$ and $D_6$ in degree $12$ are the only nonabelian properly semiconcentrated groups for which Conjecture \ref{conj:number_field_counting} was previously known. Our results represent a significant breakthrough: we prove \Cref{conj:number_field_counting} for infinitely many new groups and illustrate the power of the method by the detailed computation (all the way to the power saving error term) for one properly semiconcentrated group ($16T11$). Conditional on certain analytic assumptions, we are able to do this for infinitely many properly semiconcentrated groups.

\subsection{Multiple Dirichlet series: idea of the method}\label{subsec:Idea_of_method}

Our method is of a similar inductive flavor to that of \cite{ALOWW}. When $G$ is semiconcentrated in a set of proper normal subgroups $T_1,T_2,\dots,T_r$ we combine ``uniform'' information about relative $T_j$-extensions of number fields with fairly weak information on the number of $G/T_j$-extensions for each $j=1,2,...,r$. When $G$ is concentrated in a single normal subgroup, $T$, \cite[Theorem 2.1]{ALOWW} describes how to convert these counting hypotheses to an asymptotic using a dominated convergence argument. In our setting, however, we need to do quite a bit more in order to stitch together the information from multiple different normal subgroups.

This is where multiple Dirichlet series come into the picture. Rather than working directly with counting functions as in \cite{ALOWW}, we instead work with generating Dirichlet series of several complex variables. We will define a generating multiple Dirichlet series $D_k(G;\underline{s})$ for $G$-extensions as a function of a certain tuple of complex variables $\underline{s}$. We ask for three ingredients.
\begin{enumerate}
    \item Firstly, we need a meromorphic continuation for the generating multiple Dirichlet series of relative $T_j$-extensions (which implies the counting results used in \cite{ALOWW}),
    \item Secondly, we need upper bounds for this series in vertical strips (which plays the role of ``uniformity'' for the Dirichlet series),
    \item Finally, we need a region of absolute convergence for the generating multiple Dirichlet series for $G/T_j$-extensions (which plays the role of upper bounds for the number of $G/T_j$-extensions used in \cite{ALOWW}).
\end{enumerate}
This results in a meromorphic continuation for $D_k(G;\underline{s})$ in the direction of \emph{only some} of the variables in the tuple $\underline{s}$, exactly which variables depends on the normal subgroup $T_j$. Call this region of meromorphicity $\Omega_j$. The meromorphic continuation to $\Omega_j$ is essentially a ``complex analytic version'' of the method described in \cite{ALOWW}.

Once we show that $D_k(G,\underline{s})$ is meromorphic on $\bigcup \Omega_j$, we can take advantage of the multivariable setting: Hartogs phenomenon for multivariable holomorphic functions states that not every domain in multidimensional complex space gets to be a domain of holomorphy. In our context, Hartogs phenomenon in the form of Bochner's tube theorem will give us an extra meromorphic continuation to the convex hull of $\bigcup \Omega_j$ practically for free. Taking the convex hull acts as a way of stitching together the results for each normal subgroup $T_1,T_2,\dots,T_r$. Specializing $D_k(G;\underline{s})$ to a specific complex line gives the usual single variable generating Dirichlet series which inherits a meromorphic continuations from the multivariable series. The specific choice of line is dictated by the invariant one uses to order the set of $G$-extensions (discriminant, conductor, product of ramified primes, etc).  If the point expected to give the right-most pole of this one-variable specialization happens to be inside the convex hull (that is, inside the region of meromorphic continuation), then the desired asymptotic results follow from a standard Tauberian argument.

We execute this strategy when $G$ is semiconcentrated in a set of proper abelian normal subgroups, leaning on the twisted number field counting program of study to control the relative $T_j$-extensions similar to \cite{ALOWW}. We remark that the structure of such transitive groups is fairly well understood.
\begin{lemma}\label{lem:semiconcentrated_in_abelian}
    Suppose $G$ is a transitive group that is semiconcentrated in a set of proper abelian normal subgroups. Then one of the following is true:
    \begin{enumerate}[$($a$)$]
        \item $G$ is nilpotent, or
        \item $G$ is concentrated in a proper nilpotent normal subgroup; equivalently $G$ is concentrated in its Fitting subgroup.
    \end{enumerate}
\end{lemma}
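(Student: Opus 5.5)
Let $T_1,\dots,T_r$ be proper abelian normal subgroups of $G$ whose union contains every minimum index element. The plan is to set $N = T_1T_2\cdots T_r$ and prove two facts: $N$ is nilpotent, and $N$ lies in the Fitting subgroup $F(G)$. First, a product of normal nilpotent subgroups is normal and nilpotent by Fitting's theorem. Since each $T_j$ is abelian and normal, $N$ is a normal nilpotent subgroup of $G$, so $N \le F(G)$. Hence every minimum index element of $G$ lies in $F(G)$.

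Next we split into cases according to whether $F(G)=G$.
\begin{enumerate}[$($1$)$]
    \item If $F(G) = G$, then $G$ is nilpotent. This is alternative (a).
    \item If $F(G) \ne G$, then $F(G)$ is a proper nilpotent normal subgroup containing all minimum index elements. Thus $G$ is concentrated in $F(G)$, and in particular in a proper nilpotent normal subgroup. This is alternative (b).
\end{enumerate}

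It remains to justify the stated equivalence in (b). Suppose $G$ is concentrated in some proper nilpotent normal subgroup $M$. Then $M \le F(G)$, since $F(G)$ is the largest nilpotent normal subgroup. So the minimum index elements lie in $F(G)$. This means $G$ is concentrated in $F(G)$, provided $F(G)$ is proper.

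Properness of $F(G)$ is where the two alternatives interact, and we handle it in the reverse direction as follows. If $F(G) = G$, then $G$ is nilpotent and we are in case (a). Thus, in the situation where (a) fails, $F(G)$ is automatically proper, and "concentrated in a proper nilpotent normal subgroup" is equivalent to "concentrated in $F(G)$". When $G$ is nilpotent, the two conditions are handled separately by alternative (a).

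The only genuine input is Fitting's theorem that a product of two normal nilpotent subgroups is nilpotent, extended to $r$ factors by induction. Here it is even easier, because the factors are abelian. The main point to be careful about is therefore bookkeeping: making sure the equivalence in (b) is stated relative to the non-nilpotent case, so that $F(G)$ is proper.
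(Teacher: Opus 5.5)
Your proof is correct and follows the same route as the paper: the paper simply cites Fitting's theorem to see that $T_1T_2\cdots T_r$ is a nilpotent normal subgroup, hence contained in the Fitting subgroup, and then distinguishes whether the Fitting subgroup is all of $G$. Your observation that the equivalence in (b) requires the Fitting subgroup to be proper, i.e.\ that it is meant in the non-nilpotent case, is a useful clarification that the paper leaves implicit.
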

This follows from purely group theoretic facts: Fitting's theorem states that the internal product of normal nilpotent subgroups is again nilpotent, and the Fitting subgroup is then defined to be the maximal nilpotent normal subgroup. We note that the converse of course is not true: there exist groups which are concentrated in their Fitting subgroup but not semiconcentrated in abelian normal subgroups, such as the dihedral group $D_8$ of order $16$ in degree $8$ (transitive group label $8T6$). Informed by \Cref{lem:semiconcentrated_in_abelian}, we will separately highlight results for each of these families.

The analysis that we make use of is rather delicate. There are three primary sticking points:
\begin{itemize}
    \item The bounds in vertical strips. When $T_j$ is abelian, we will show that these can be given by (sub)convexity bounds of certain Hecke $L$-functions.
    \item Computing convex hulls in more than two dimensions is a very involved process, and determining when the convex hull is ``sufficiently large'' can be difficult.
    \item Strictly speaking, Hartogs' phenomenon applies to holomorphic functions and not meromorphic functions.
\end{itemize}
The first two points can be checked for an individual group $G$ if you have enough patience (or a good computer program), but is significantly harder in infinite families. We include several examples of small groups for which we work out these details unconditionally to prove \Cref{conj:number_field_counting}. However, for most properly semiconcentrated groups the best known subconvexity bound for Hecke $L$-functions is not enough to show that the convex hull is ``sufficiently large''. These two issues can be overcome by assuming the Lindel\"of hypothesis in the conductor aspect (or even something slightly weaker), from which we are able to give conditional proofs of \Cref{conj:number_field_counting} for infinitely many new groups, including infinitely many properly semiconcentrated groups.

The third point can be accounted for by multiplying $D_k(G;\underline{s})$ by a polynomial that cancels out the (only finitely many) poles in the region of interest. However, by doing so we lose some control over the poles within the convex hull: we know their location, but not their order. As a consequence, all the cases we prove for \Cref{conj:number_field_counting} are ineffective in the constants $b\in \Z_{>0}$ and $c\in \R_{>0}$. It may be possible to determine $b$ and $c$ by a closer study of the residues of $D_k(G;\underline{s})$ before taking the convex hull, as similar things have been done in the study of moments of $L$-functions.

\subsection{Unconditional results for nilpotent groups}\label{subsec:intro_undconditional_nilpotent}

When $G$ is small, existing hybrid subconvexity bounds are sufficient to prove a meromorphic continuation to a ``sufficiently large'' convex hull. Moreover, we always prove meromorphic continuations to open orthants.  If we have vertical bounds in the $t$-aspect, these can automatically be converted to power saving error bounds using an appropriate Tauberian theorem.

Our main unconditional result for nilpotent groups is \Cref{thm:general_unconditional_nilpotent}, which gives an explicit cone of meromorphicity for $D_k(G;\underline{s})$. We need to build up a fair amount of notation before we can state this result, so for the introduction state several corollaries for specific groups to give the reader a good sense of our results.

First, we explicitly compute the largest cone of meromorphicity for $D_\Q(G;\underline{s})$ for a couple of small groups, along with explicit bounds for the growth in vertical strips for two small groups, to prove the following results:

\renewcommand{\labelenumi}{{{$($\alph{enumi}$)$}}}
\begin{theorem}\label{thm:D4}
Consider the dihedral group $D_4$ of order $8$.
\begin{enumerate}
    \item For the two transitive representations of $D_4$ we have the asymptotic expansions
    \begin{align*}
        \#\mathcal{F}_{\disc,\Q}(4T3;X) &= c_4X + O_{\epsilon}(X^{15/22+\epsilon})\\
        \#\mathcal{F}_{\disc,\Q}(8T4;X) &= X^{1/4}P_8(\log X) + O_{\epsilon}(X^{61/274+\epsilon}),
    \end{align*}
    where $c_4$ is the leading coefficient from \cite{cohen-diaz-y-diaz-olivier2002}, and $P_8$ is a degree $2$ polynomial with leading coefficient given by \cite{shankar-varma2025octic} but whose other coefficients are undetermined.
\item Ordering $D_4$-extensions by conductor we have 
    \begin{align*}
        \#\mathcal{F}_{{\rm cond},\Q}(D_4;X) &= XP_{\rm cond}(\log X) + O_{\epsilon}(X^{39/44+\epsilon})
    \end{align*}
    where  $P_{\rm cond}$ is a degree $1$ polynomial with leading coefficient given by \cite{altug-shankar-varma-wilson2021} and constant given by \cite{friedrichsen2021secondary}.
\end{enumerate}
\end{theorem}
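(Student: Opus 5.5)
Since $D_4$ is nilpotent, I would obtain Theorem~\ref{thm:D4} by specializing the general unconditional nilpotent result (\Cref{thm:general_unconditional_nilpotent}) to $G=D_4$ over $k=\Q$. The first step is to set up the multiple Dirichlet series $D_\Q(D_4;\underline{s})$. Here $\underline{s}$ has one variable per nontrivial conjugacy class of $D_4$ (the center $\{1,r^2\}$ class, the rotation class $\{r,r^3\}$, and the two reflection classes), and each local factor records the inertia type at $p$. Every invariant in the statement is then a linear specialization $s_c=\ind(c)s$. For $4T3$ the indices are $(1,2,1,2)$ up to labeling. For $8T4$ the reflections and $r^2$ all have index $4$ and $r$ has index $6$, so $a=4$; the minimal index elements are $r^2$ and both reflection classes, which lie in no single proper normal subgroup but do lie in the union of the two Klein four-subgroups $V_1,V_2\normal D_4$. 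For the conductor the weights are all $1$ on ramified classes, i.e.\ the Artin conductor of the $2$-dimensional representation.

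Next I would apply the inductive step for each proper abelian normal subgroup $T\in\{C_4,V_1,V_2,\langle r^2\rangle\}$. The quotients $G/T$ are $C_2$ or $C_2^2$, and their generating series converge absolutely in an explicit half-space. The relative $T$-extensions of a base quadratic (or biquadratic) field $K$ are counted by Hecke characters of $K$. Their series continue meromorphically as sums of products of Hecke $L$-functions over $K$, with poles only where the variables attached to $T$ equal $1$. Using the best hybrid subconvexity bounds for these $L$-functions uniformly in $K$ and in $t$, this gives a meromorphic continuation of $D_\Q(D_4;\underline{s})$, after multiplying by a polynomial cancelling the finitely many polar divisors, to a tube region $\Omega_T$. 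Bochner's tube theorem then extends it to the convex hull of $\bigcup_T\Omega_T$. The concrete work is to compute this hull explicitly, which is a polytope in $\R^4$ with a $t$-aspect growth bound inherited by convexity (Phragm\'en--Lindel\"of in several variables). I would then check that each line $s_c=\ind(c)s$ meets the hull past the expected pole, which is $s=1$ for $4T3$ and for the conductor, and $s=1/4$ for $8T4$.

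Finally, I would apply a Tauberian theorem with power saving, such as the Landau--type result used in the nilpotent theorem, along each line. The rightmost pole on the line has order $1$ for $4T3$, order $3$ for $8T4$ (three minimal index classes, hence $P_8$ of degree $2$) and order $2$ for the conductor. The error exponent is determined by how far left the hull reaches on that line, together with the polynomial growth order in $t$; the numbers $15/22$, $61/274$ and $39/44$ should come out of this optimization. The leading constants of $P_8$ and $P_{\rm cond}$, and $c_4$, are not produced by the method, since the polynomial multiplier loses control of pole orders. I would identify them by comparing with the known main terms of \cite{cohen-diaz-y-diaz-olivier2002}, \cite{shankar-varma2025octic}, \cite{altug-shankar-varma-wilson2021} and \cite{friedrichsen2021secondary}, which must agree by uniqueness of asymptotic expansions.

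The main obstacle is the explicit convex-hull and growth computation in four variables: tracking which subconvexity exponent governs each face of the hull, and carrying it through the Tauberian theorem to reach the stated exponents exactly. This is a finite linear-programming computation, which I would carry out by computer and then verify by hand on the relevant faces.
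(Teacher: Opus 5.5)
Your architecture is the paper's: fiber over the two Klein four-subgroups, continue the fibers via Hecke $L$-functions, apply Bochner's tube theorem and multivariable Phragm\'en--Lindel\"of, then the power-saving Tauberian theorem, and import the exact main terms from the cited works. But for this statement the whole content is the explicit numerical computation, and your input data is wrong in two of the three orderings.

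\emph{Quartic indices.} For $4T3$ no labeling gives the indices $\{1,1,2,2\}$. The $4$-cycle $r$ has index $3$ and $r^2=(13)(24)$ has index $2$. The reflections have index $1$ (transpositions, the paper's $2C$) or $2$ (double transpositions, $2B$). With your vector the rotation variable becomes $s$ or $2s$. The constraint $\sigma_{4A}>19/16$ then stops the line at $\sigma>19/16$ or $\sigma>19/32$, instead of the correct binding constraint $2\sigma+\sigma>27/16$, i.e.\ $\sigma>9/16$. So you would not reach $15/22$.

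\emph{Conductor weights.} These are not all $1$. At a tame prime the Artin conductor exponent of the $2$-dimensional representation is $2-\dim V^{I_p}$. That is $2$ on $r^2$ and $r$, and $1$ on both reflection classes. Weights all $1$ give the product-of-ramified-primes invariant, which is a different counting problem. There all four classes have minimal weight, and the critical point has $\sigma_{4A}=1$, which the $V_1,V_2$-regions never reach since they require $\sigma_{4A}>1$. So your conductor computation would concern the wrong function. (Also, the central class is $\{r^2\}$, not $\{1,r^2\}$.)

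\emph{Where the exponents come from.} Even with correct data, ``best hybrid bounds'' plus an ``optimization'' does not pin down $15/22$, $39/44$, $61/274$; they come from specific inputs you should name. For $T_C$, the central variable $s_{2A}$ carries Dirichlet $L$-functions of conductor dividing $8$. These are bounded by \cite{petrow2023fourth} as $(1+|t|)^{\frac13\max\{1-\sigma,0\}+\epsilon}$. The variable $s_{2C}$ carries quadratic Hecke $L$-functions over the field of $\pi$, with analytic conductor $\ll|\disc(\pi)|(|s|+4)^2$. These are bounded by \cite{yang2023burgess} with exponent $\frac38\max\{1-\sigma,0\}$. Convergence of $\sum_\pi|\disc(\pi)|^{-\min\{\sigma_{2B},\sigma_{4A}\}+\frac38\max\{1-\sigma_{2C},0\}+\epsilon}$ gives $\Omega_C$, simplified using $\sigma_{4A}>19/16$; $\Omega_B$ follows by symmetry.

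These two regions differ only in $(\sigma_{2B},\sigma_{2C})$, so the hull is a planar computation. Joining the corners $(1/2,19/16)$ and $(19/16,1/2)$ gives $\sigma_{2B}+\sigma_{2C}>27/16$. No four-variable linear program is needed, and $C_4$ and $\langle r^2\rangle$ play no role. The vertical bound on the hull is $(1+|\underline{t}|)^{\frac13\max\{1-\sigma_{2A},0\}+\frac38+\epsilon}$. Applying \Cref{thm:tauberian_power_saving} with $(\sigma_a,\delta,\xi)=(1,\frac{7}{16},\frac38)$, $(1,\frac{5}{32},\frac38)$ and $(\frac14,\frac{5}{128},\frac{41}{96})$ gives exactly $15/22$, $39/44$ and $61/274$.

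\emph{Pole orders.} The method only bounds these, by $1$, $2$ and $3$ respectively. So the exact degrees of $P_8$ and $P_{\rm cond}$ come from the cited works, not from counting minimal-index classes.
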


\begin{theorem}\label{thm:Q8sxC2}
Consider the semidirect product $Q_8\rtimes C_2$, where $Q_8$ is the quaternion group and the action sends $i\mapsto -i$, $j\mapsto -j$, and $k\mapsto k$. For the two transitive representations of this group we have the asymptotic expansions
    \begin{align*}
        \#\mathcal{F}_{\disc,\Q}(8T11;X) &= c_8 X^{1/2} + O_{\epsilon}(X^{19/55+\epsilon})\\
        \#\mathcal{F}_{\disc,\Q}(16T11;X) &= X^{1/8}P_{16}(\log X) + O_{\epsilon}(X^{97/800+\epsilon}),
    \end{align*}
    where $c_8$ is the leading coefficient from \cite{ALOWW}, and $P_{16}$ is a polynomial of degree $1\le \deg P_{16} \le 3 = b(\Q,16T11)-1$, but is otherwise undetermined.
\end{theorem}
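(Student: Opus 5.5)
The plan is to treat $G=Q_8\rtimes C_2$ with the multiple Dirichlet series strategy of \Cref{subsec:Idea_of_method}, applied in both transitive representations. First I would record the group theory. $G$ has order $16$ and is nilpotent and noncyclic, so it is semiconcentrated in its proper normal subgroups, and by \Cref{lem:semiconcentrated_in_abelian} we may use abelian ones. The centre is $Z=\{\pm1\}$, and the abelian normal subgroups of index $2$ or $4$ include $\langle i\rangle\cong C_4$, $\langle j\rangle$, $\langle k\rangle$, together with the Klein-type subgroups generated by $-1$ and an involution outside $Q_8$.

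Next I would compute $\ind$ on each conjugacy class in the degree $8$ and degree $16$ actions and identify the minimum index classes. In $8T11$ I expect these to lie in a single abelian normal subgroup, so that group is concentrated. In that case the inductive method of \cite{ALOWW} already gives the main term $c_8X^{1/2}$, and only the error term needs the analytic input. For $16T11$, where $a=8$ and $b=4$, the minimum index elements (the fixed-point-free involutions) should be spread over several abelian normal subgroups $T_1,\dots,T_r$, which makes the group properly semiconcentrated. I would then introduce one variable $s_C$ for each nontrivial conjugacy class $C$ and define $D_\Q(G;\underline{s})=\sum_L\prod_p p^{-s_{C_p(L)}}$, where $C_p(L)$ is the inertia class at $p$, so that the discriminant in each representation is the specialization $s_C=\ind(C)s$.

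For each $T_j$ I would fibre over the quotient: $D_\Q(G;\underline{s})$ is a sum over $G/T_j$-extensions $K$ of the twisted abelian series counting $T_j$-extensions of $K$ with the prescribed action. By class field theory, each such twisted series is a finite combination of Hecke $L$-functions over $K$, with Euler products in the variables $s_C$ for $C\subseteq T_j$. This gives meromorphic continuation in those variables. Vertical growth of these series is bounded using hybrid subconvexity for Hecke $L$-functions, uniform in the conductor of $K$. Absolute convergence of the outer sum over $K$ comes from Schmidt-type bounds for $G/T_j$-extensions, which are abelian quotients here. Together these produce the tube regions $\Omega_j$. I would then apply Bochner's tube theorem, after multiplying by a polynomial that kills the finitely many polar hyperplanes, to obtain continuation to the convex hull of $\bigcup_j\Omega_j$ with polynomial growth in vertical strips.

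The main obstacle is the explicit convex hull computation. One has to check that the discriminant line $s_C=\ind(C)s$ for $16T11$ enters the hull to the left of $s=1/8$, and to find the furthest abscissa $\sigma_0$ so reached; the same check for $8T11$ gives its $\sigma_0$. I would do this by writing each $\Omega_j$ as linear inequalities in $\Re\underline{s}$ and solving a linear program, using the best available subconvexity exponents. Finally, a Tauberian theorem with the polynomial vertical growth converts the continuation into the asymptotics, with error exponents computed from $\sigma_0$ and the growth exponent; I expect this to produce $19/55$ and $97/800$. In degree $16$ the pole at $s=1/8$ has order at most $b=4$, which gives $\deg P_{16}\le 3$. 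The bound $\deg P_{16}\ge 1$, meaning the pole has order at least $2$, would follow from a lower bound obtained by counting inside a sub-family coming from a single $T_j$, which contributes a double pole.
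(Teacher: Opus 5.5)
Your skeleton matches the paper's proof: fibre over the three Klein subgroups $T_B,T_C,T_D$ generated by $-1$ and an involution outside $Q_8$, continue each fibre, pass to the convex hull via Bochner, specialise, and apply \Cref{thm:tauberian_power_saving}. But the theorem is quantitative, and you defer exactly the quantitative part: $19/55$ and $97/800$ are asserted, not derived. For $16T11$ this is not cosmetic. The point $s=1/8$ on the degree-$16$ line projects to $(\sigma_{2B},\sigma_{2C},\sigma_{2D})=(1,1,1)$, which the hull contains only with margin $1/8$ in the coordinate sum. The fibre bound you describe does not reach it. Suppose the fibre over $T_B$ is controlled by Hecke $L$-functions over the biquadratic $G/T_B$-field $K$, uniformly in $\disc(K)$, with the Burgess-type exponent $3/8$. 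Then each prime ramified in $K$ contributes $p^{2\cdot\frac38(1-\sigma_{2B})}$, and the $T_B$-region needs $\sigma_{2C}+\frac34\sigma_{2B}>\frac74$ and $\sigma_{2D}+\frac34\sigma_{2B}>\frac74$. On that region $\sigma_{2B}+\sigma_{2C}+\sigma_{2D}>3$. By symmetry the same holds for $T_C$ and $T_D$, so $(1,1,1)$ is not in the hull and $16T11$ is not reached at all.

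The missing input is the finer structure in \Cref{thm:meromorphic_continuation_of_fiber_series}.
\begin{itemize}
\item The fibre series is a sum of $\ll|H^1_{ur}(\Q,T_B(\pi))|\ll|\disc(\pi)|^{\epsilon}$ terms. You also need this bound; it holds because $T_B(\pi)$ is a nilpotent module.
\item Each term is a product, over ramification types of $T_B(\pi)$, of $L$-functions over the stabiliser fields. In $s_{2A}$ these are Dirichlet $L$-functions of conductor dividing $8$, since $2A$ is central. In $s_{2B}$ it is $L_F(s_{2B},\chi)$ over the quadratic field $F$ fixed by $\pi^{-1}(C_G(T_B)/T_B)$, where $C_G(T_B)=1A\cup2A\cup2B\cup4A\cup4B$.
\item So only primes of type $2C,2D,4C,4D$ enter the conductor, each once.
\end{itemize}
Use the $3/8$ exponent of \cite{yang2023burgess} for $L_F$ and the Weyl bound of \cite{petrow2023fourth} in $t$ for the Dirichlet factor. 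This gives $\sigma_{2C}+\frac38\sigma_{2B}>\frac{11}8$ and $\sigma_{2D}+\frac38\sigma_{2B}>\frac{11}8$, with outer corners $(\frac12,\frac{19}{16},\frac{19}{16})$ and permutations. Hence the hull satisfies $\sigma_{2B}+\sigma_{2C}+\sigma_{2D}>\frac{23}8$ and pairwise sums $>\frac{27}{16}$, with vertical growth $(1+|\underline t|)^{\frac13\max\{1-\sigma_{2A},0\}+\frac38+\epsilon}$. Specialising yields $\sigma>\frac{23}{80}$, $\xi=\frac38$ in degree $8$, and $\sigma>\frac{23}{192}$, $\xi=\frac7{18}$ in degree $16$. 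These produce $19/55$ and $97/800$. Even for $8T11$ the hull matters: the $T_C$-region alone gives only $\sigma>\frac{11}{38}$.

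Smaller slips:
\begin{itemize}
\item The centre of $G$ is $1A\cup2A\cup4A\cong C_4$, not $\{\pm1\}$, because $C_2$ acts as conjugation by $k$.
\item $4A1\cup4A\text{-}1$ is a single $\Q$-ramification type, so one variable per conjugacy class is not well defined there. This is harmless, since $4A$ lies in no $T_j$.
\item The outer sum needs convergence of the $C_2\times C_2$ series with each variable $>1$, i.e.\ a count by ramified primes, not a Schmidt-type discriminant bound.
\item The pole order $\le4$ at $s=1/8$ uses that the divisors $s_{2A},s_{2B},s_{2C},s_{2D}=1$ are simple.
\end{itemize}
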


The group $16T11$ is properly semiconcentrated, so this is a new case of Conjecture \ref{conj:number_field_counting}. Namely, $16T11$ is concentrated in three different proper abelian normal subgroups, and is generated by its minimum index elements.

As remarked in \Cref{subsec:Idea_of_method} above, our method only proves the existence of a main term while not producing the precise power of $\log X$ because we do not produce the order or residues of the corresponding pole. This is why the polynomial in the main term  for $16T11$ is unspecified. The asymptotics in the other four cases of the above results were previously known by work in \cite{cohen-diaz-y-diaz-olivier2002,altug-shankar-varma-wilson2021,friedrichsen2021secondary,shankar-varma2025octic,ALOWW}, allowing us to cite those results for the shape of the main term.

With the exception of $4T3$, these are now the best known error bounds for each of these groups. The power savings we can extract from the $t$-aspect of the vertical bounds tends to get much worse and more tedious to calculate as $G$ becomes large, as they are a complicated mixture of $t$-aspect subconvexity bounds and convex hull computations. We have taken care to determine the best possible power savings in \Cref{thm:D4} and \Cref{thm:Q8sxC2} in order to demonstrate the computations involved, but for the remaining results of the paper we opt to only state when a power saving error bound exists and not compute an explicit such bound.

The previously known counting results for $4T3$ and $8T11$ have been extended to arbitrary base fields \cite{bucur-florea-serrano-lopez-varma2022,ALOWW}. We do the same for $8T4$ and $16T11$ below, along with several new nilpotent groups. 

\begin{theorem}\label{thm:16T11_over_k}
    Let $k$ be a number field and consider its degree 16 extensions with Galois group $Q_8\rtimes C_2$. Here  $Q_8$ denotes again the quaternion group and the action sends $i\mapsto -i$, $j\mapsto -j$, and $k\mapsto k$. There exists a constant $\delta_{k,16T11}>0$ for which
    \begin{align*}
        \#\mathcal{F}_{\disc,k}(16T11;X) &= X^{1/8}P_{k,16T11}(\log X) + O(X^{1/8-\delta_{k,16T11}}),
    \end{align*}
    where $P_{k,16T11}$ is a polynomial of degree $1\le \deg P_{k,16T11} \le 3 = b(k,16T11)-1$, but is otherwise undetermined.
\end{theorem}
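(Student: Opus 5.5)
The plan is to run the multiple Dirichlet series strategy of \Cref{subsec:Idea_of_method} for $G=Q_8\rtimes C_2$ in its degree $16$ representation over an arbitrary base field $k$. Since the claim is only the existence of some power saving, we never need to optimize exponents.

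\textbf{Group theory.} First record the structure of the minimum index elements. In $16T11$ the minimum index is $a(G)=8$. It is attained by the involutions acting fixed-point-freely, and these lie in the union of three proper abelian normal subgroups $T_1,T_2,T_3$, each isomorphic to $C_2\times C_4$ or $C_2^3$ (the preimages of the three index-two subgroups containing the relevant classes). Let $\underline{s}$ be the tuple of variables indexed by the conjugacy classes of minimum index elements, together with a variable for everything else. The generating series $D_k(G;\underline{s})$ is then the sum over $G$-extensions of $\prod_p |p|^{-s_{c(p)}}$, where $c(p)$ is the inertia class. The discriminant specialization is the line $s_c=\ind(c)\,s$.

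\textbf{Continuation in each $\Omega_j$.} For each $j$, write $D_k(G;\underline{s})$ as a sum over $G/T_j$-extensions $K/k$ of the series counting $T_j$-extensions $L/K$ whose Galois closure over $k$ realizes the given $G$-structure. Because $T_j$ is abelian, class field theory expresses each inner series as a finite linear combination of products of Hecke $L$-functions of $K$, times a factor that is absolutely convergent in the relevant range. This gives meromorphic continuation in the variables attached to classes inside $T_j$. The poles sit on finitely many hyperplanes $s_c=1/\ind(c)$.

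Next, bound the inner series in vertical strips with the convexity bound, or any fixed subconvexity bound for Hecke $L$-functions uniform in the discriminant of $K$. Combine this with the trivial bound on the number of $G/T_j$-extensions: the series $\sum_K |\disc K|^{-\sigma}$ converges for $\sigma$ large relative to the exponent lost in the convexity bound. Together these give holomorphy of $D_k$ times a polynomial cancelling the finitely many polar hyperplanes on a tube domain $\Omega_j+i\mathbb{R}^m$. The base $\Omega_j$ is roughly a translated orthant: the variables for classes in $T_j$ may go slightly left of $1/\ind(c)$, and the others stay in a convergent region. All constants depend on $k$, but only finitely many inputs are needed, so this is uniform enough.

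\textbf{Bochner and Tauberian step.} Apply Bochner's tube theorem to the product with the polynomial. This yields holomorphy on the tube over the convex hull of $\bigcup_j\Omega_j$. The key check is that the point $\underline{s}=(1/8,\dots)$ on the discriminant line, which is a corner in all three $\Omega_j$, lies in the interior of this hull after moving slightly left along the line. The main obstacle is that the convexity bound over general $k$ may make each $\Omega_j$ too thin for the hull to contain a neighborhood of the line past $1/8$.

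I would handle this by exploiting that the three $T_j$ together cover every minimum index class. For every minimum index class, at least one $\Omega_j$ extends a fixed amount $\eta>0$ left in that coordinate, while the other coordinates need only stay a small amount left of their convergence abscissa, since they have index at least $12$. A convex combination with weights $1/3$ then gives a point on the discriminant line left of $1/8$ by roughly $\eta/3$. This requires the strip bounds to cost only a linear amount in the other variables, which the convexity bound provides. The tube-domain holomorphy also gives polynomial growth in $|t|$, by a Phragmén--Lindelöf argument on the hull; a later result of the paper makes this precise.

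Finally, restrict to the discriminant line. We obtain a function meromorphic in $\Re s>1/8-\delta'$, with a single pole at $s=1/8$ of order at most $b(k,16T11)=4$ and polynomial vertical growth. A standard Tauberian theorem with power-saving error then gives $X^{1/8}P_{k,16T11}(\log X)+O(X^{1/8-\delta})$ with $\deg P\le 3$. For the lower bound $\deg P\ge1$, it suffices that the pole has order at least $2$. This follows from the known lower bound $\#\mathcal{F}\gg X^{1/8}\log X$, obtained by counting inside a single $T_j$-fibration in the manner of \cite{ALOWW}, because two of the classes contribute independently.
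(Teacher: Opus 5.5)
Your architecture matches the paper's: fibering over three abelian normal subgroups, continuing each fiber through Hecke $L$-functions, summing over $\pi$ with the triangle inequality, then Bochner, Phragm\'en--Lindel\"of on the hull, and a power-saving Tauberian theorem. Using the order-$8$ subgroups $C_G(T_B),C_G(T_C),C_G(T_D)\cong C_4\times C_2$ instead of the paper's Klein four-groups $T_B,T_C,T_D$ is harmless.

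\textbf{The gap is in the convex-hull step}, which is exactly the obstacle you flagged: $\Omega_j$ is not a translated orthant.
\begin{itemize}
\item The classes $2B,2C,2D$ pairwise fail to commute, so each abelian normal subgroup contains at most one of them. In the region for $T_B$, the coordinates $\sigma_{2C},\sigma_{2D}$ are therefore minimum-index coordinates outside $T_B$, not coordinates of index $\ge 12$.
\item The $L$-function attached to $s_{2B}$ lives over the quadratic extension $F/k$ cut out by $C_G(T_B)$. Its tame discriminant is exactly the product of the primes of type $2C,2D,4C,4D$.
\item A bound $|L_F(s_{2B},\chi)|\ll|\disc F|^{\alpha\max\{1-\sigma_{2B},0\}+\epsilon}$ therefore forces $\sigma_{2C},\sigma_{2D}>1+\alpha\max\{1-\sigma_{2B},0\}$ for the sum over $\pi$ to converge. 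This is the paper's constraint $\sigma_{2C}+\frac38\sigma_{2B}>\frac{11}{8}$.
\end{itemize}
So the corner of $\Omega_B$ with $\sigma_{2B}=1-\eta$ has $\sigma_{2C}=\sigma_{2D}=1+\alpha\eta$. The $1/3$-average of the three symmetric corners then has every coordinate equal to $1-\frac{\eta}{3}(1-2\alpha)$, not $1-\frac{\eta}{3}$ as you claim.

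This average lies left of $1$ only when $\alpha<1/2$. The convexity bound gives exactly $\alpha=1/2$. In that case $\sigma_{2B}+\sigma_{2C}+\sigma_{2D}>3$ on each of the three regions, hence on their convex hull. But every point of the discriminant line with $\mathrm{Re}(s)<1/8$ has $\sigma_{2B}+\sigma_{2C}+\sigma_{2D}<3$. So with convexity you get no continuation past $s=1/8$ at all. Because every abelian normal subgroup contains at most one of $2B,2C,2D$, no other choice of subgroups avoids this within this framework.

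You mention subconvexity as an option, but your hull argument explicitly asserts that convexity suffices, and that is false. What is needed is a bound that is genuinely subconvex uniformly in the discriminant of the quadratic field. The paper uses Yang's Burgess-type bound with $\alpha=3/8$:
\begin{itemize}
\item the corners become $(1/2,19/16,19/16)$ and its permutations;
\item the hull acquires the face $\sigma_{2B}+\sigma_{2C}+\sigma_{2D}>23/8$;
\item the discriminant series is continued only to $\mathrm{Re}(s)>23/192$, just past the pole at $24/192$.
\end{itemize}
So the margin is thin even with subconvexity.

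A smaller omission: the number of Euler products in each fiber is controlled by $|H^1_{ur}(k,T(\pi))|$. You must show this is $\ll|\disc\pi|^{\epsilon}$ before summing over $\pi$, for instance by genus theory or by \cite[Corollary 1.14(i)]{ALOWW} since $G$ is nilpotent.
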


\begin{theorem}\label{thm:intro_nilpotent_groups_with_shortcut}    
    Let $k$ be a number field and $G$ be any of the nilpotent transitive groups 
    $8T4$, 
    $8T28$, 
    $8T29$, 
    $16T209$, 
    $16T223$, 
    $16T227$, 
    $16T243$, 
    $16T247$, 
    $16T271$, 
    $16T280$, 
    $16T283$, 
    $16T345$, 
    $16T388$, 
    $16T401$, 
    $16T479$, 
    $16T480$, 
    $16T481$, 
    $16T492$, 
    $16T505$, 
    $16T517$, 
    $16T519$, 
    $16T527$, 
    $16T625$, 
    $16T630$, 
    $16T1555$, 
    $16T1566$, 
    $16T1609$, 
    $16T1700$, 
    $16T1702$, and
    $16T1728$.
    
    Then $G$ is concentrated in a proper nilpotent normal subgroup (except for $8T4$, which is properly semiconcentrated), but not concentrated in an abelian normal subgroup, and there exists a constant $\delta_{k,G} > 0$ such that
    \[
        \#\mathcal{F}_{\disc,k}(G;X) = X^{1/a(G)}P_{k,G}(\log X) + O\left(X^{1/a(G) - \delta_{k,G}}\right),
    \]
    where $P_{k,G}$ is a polynomial of degree $1\le \deg P_{k,G} \le 2 = b(k,G) - 1$.
\end{theorem}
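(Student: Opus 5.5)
The plan is to deduce \Cref{thm:intro_nilpotent_groups_with_shortcut} from the general unconditional nilpotent result (\Cref{thm:general_unconditional_nilpotent}). That result gives an explicit cone of meromorphicity for $D_k(G;\underline{s})$, obtained as the convex hull of the regions $\Omega_j$ attached to proper abelian normal subgroups $T_j$. It also gives polynomial growth in vertical strips, coming from convexity or subconvexity bounds for Hecke $L$-functions. Since every group in the list is nilpotent and noncyclic, each is the union of its proper normal subgroups, so each is at least semiconcentrated. The listed groups are small enough that a computer check can carry out the group-theoretic verification. For each $G$ I will enumerate the minimum index elements and the normal subgroups. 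This confirms that $G$ is concentrated in a proper nilpotent normal subgroup but in no abelian one (for $8T4$, that it is properly semiconcentrated), and it yields $b(k,G)=3$ for every $k$. The last point should follow because the minimum index conjugacy classes form three Galois orbits regardless of $k$, for instance because they are rational classes.

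The main step is to choose, for each $G$, a set of proper abelian normal subgroups $T_1,\dots,T_r$ that together contain all minimum index elements, and then to feed them into \Cref{thm:general_unconditional_nilpotent}. The requirement is that the point $\underline{s}_0$ on the discriminant line with $s=1/a(G)$ lie in the interior of the convex hull of $\bigcup\Omega_j$, or at least that the hull contain a neighborhood of $\underline{s}_0$ minus the finitely many polar hyperplanes through it. The ``shortcut'' in the name suggests that for these groups the check becomes easy. A plausible mechanism is that each variable attached to a minimum index class is continued past $1/a(G)$ by at least one $\Omega_j$. The remaining variables then sit in their region of absolute convergence, which the inductive hypothesis on $G/T_j$ supplies. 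In that case only the convexity bound for the Hecke $L$-functions is needed and no fine hybrid subconvexity input is required. The plan is to verify this coordinate by coordinate on a computer.

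Once meromorphy near $\underline{s}_0$ is established, I multiply by the polynomial that clears the hyperplanes $s_c=1/a(G)$ for the three minimum index classes $c$, and then specialize to the discriminant line. The result is a single-variable Dirichlet series that is meromorphic in $\mathrm{Re}(s)>1/a(G)-\eta$, has a unique pole at $1/a(G)$ of order at most $3$, and has polynomial growth in vertical strips. A standard Tauberian theorem with power saving (Perron's formula with a shifted contour) then gives the main term $X^{1/a(G)}P_{k,G}(\log X)$ with $\deg P_{k,G}\le 2$, plus an error term $O(X^{1/a(G)-\delta})$. For the lower bound $\deg P\ge 1$, I will argue that the pole has order at least $2$. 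Restricting to a subfamily, namely $T_j$-extensions over a fixed $G/T_j$-extension, already produces $\gg X^{1/a}\log X$ fields, by the twisted abelian counting results (\cite{ALOWW}). The reason is that a single abelian $T_j$ contains minimum index elements from at least two of the Galois orbits.

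The main obstacle is the convex hull step. It requires showing that the hull of the $\Omega_j$ really contains $\underline{s}_0$ in the high-dimensional variable space, uniformly in $k$, and that the poles being cleared do not accumulate. I would first try to exhibit, for each group, an explicit convex combination of points from the $\Omega_j$ that equals a point slightly to the left of $\underline{s}_0$, rather than computing the full hull.
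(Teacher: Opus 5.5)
Your outer framework matches the paper's. You reduce to \Cref{thm:general_unconditional_nilpotent} and note that the three minimum-index types have order $2$, so $b_\tau\le[k(\zeta_\tau):k]=1$ and hence $\deg P\le 2$. You get $\deg P\ge 1$ from a single $T_j$-fiber containing two of those types. The gap is in the convex hull step, which is the entire content of the theorem, and the mechanism you propose there is wrong.

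\textbf{Why the coordinatewise picture fails.} In $\Omega_{T_1}$ the variables outside $T_1$ do not simply sit in their region of absolute convergence. Take $\tau\in T_1\setminus T_2$ and push $\sigma_\tau$ below $1$. The factor $\max_\chi|L_{F(\pi,\widetilde{\tau})}(s_\tau,\chi)|$ in \Cref{thm:meromorphic_continuation_of_fiber_series}(ii) then grows like $\disc(F(\pi))^{\alpha(1-\sigma_\tau)}$. Primes of the type $\kappa\in T_2\setminus T_1$ divide $\disc(F(\pi))$ to the power $\ind_{|\tau|}((q_\tau)_*\kappa)$. Convergence of the sum over $\pi$ therefore forces the tilted constraint $\sigma_\kappa+M_{\kappa,\tau}\sigma_\tau>1+M_{\kappa,\tau}$, not just $\sigma_\kappa>1$, and symmetrically in $\Omega_{T_2}$. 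So no $\Omega_{T_j}$ contains a neighborhood of $\underline{s}_0$.

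\textbf{The actual two-dimensional criterion.} The shared type $\theta\in T_1\cap T_2$ drops out, since $M_{\gamma,\theta}=0$ for $\gamma\in T_1\cup T_2$. The non-minimum types also drop out, since they lie strictly to the right of $1$ at $\underline{s}_0$. What remains is a question in the $(\sigma_\tau,\sigma_\kappa)$-plane. The union of the two local cones at $(1,1)$ has extreme rays $(-1,M_{\kappa,\tau})$ and $(M_{\tau,\kappa},-1)$. Its hull is a full neighborhood exactly when $M_{\tau,\kappa}M_{\kappa,\tau}<1$. This is condition \eqref{eq:friendly} of \Cref{prop:shortcut_convex_hull}: a genuine numerical inequality in the subconvexity exponent and two local indices. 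The paper verifies it group by group in Magma, using Yang's exponent $\alpha=3/8$.

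Your claim that the convexity bound suffices is unsupported. With $\alpha=1/2$ the criterion becomes ``product of the two indices $<4$'' rather than $<64/9$, so local indices $2$ and $2$ would already fail. The mechanism you describe (each minimum-index variable continued by some $\Omega_j$, all other variables at $\sigma>1$, then take the hull) is the conditional argument of \Cref{lem:cond_convex_hull}. That argument is valid only when $\alpha=0$.
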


We remark that optimal regions of meromorphicity become very cumbersome to compute for even moderately sized groups. At the risk of some weaker power savings, we will employ several shortcuts to determine when our results are strong enough to produce an asymptotic. One such shortcut is to reduce the problem to computing a two-dimensional convex hull, even though that only gives a subregion of the full region of meromorphicity (\Cref{subsec:2dim}). Using some short \verb|Magma| code \cite{MAGMA,codeurl}, we can check these simplified conditions for several nilpotent groups of moderate degree.

By contrast, for the group $16T11$, which is semiconcentrated in {\em three} proper normal abelian subgroups, we compute the three-dimensional convex hull in \Cref{sec:16T11}. There are other groups that are concentrated in three different abelian normal subgroups (e.g. $16T146$, $16T147$), and it should be possible to examine these cases using the same method.

\subsection{Unconditional results for concentrated groups}

Our methods apply equally well to the twisted number field counting problem as they do to \Cref{conj:number_field_counting}. By combining our methods with the inductive counting recipe in \cite{ALOWW}, we can prove new results for groups concentrated in proper {\em nonabelian} normal subgroups. The necessary hypotheses on class groups torsion are a bit more complicated to state, but largely of a similar flavor to those in \cite{ALOWW}.

Our main unconditional result for concentrated groups is \Cref{thm:unconditional_concentrated}, which we leave to be stated in the main body of the paper. We prove two families of unconditional examples of a similar flavor to those in the introduction to \cite{ALOWW} to demonstrate this type of result.

The first result includes several new infinite families of wreath products, generalizing the results on wreath products by cyclic groups in \cite[Corollary 1.3]{ALOWW} and wreath products by $S_3$ in \cite[Corollary 1.6]{ALOWW}.

\begin{theorem}\label{thm:wreath_products}
    Let $k$ be a number field and $G=N\wr B$ be the wreath product of the transitive groups $N$ in degree $n$ and $B$ in degree $m$. Additionally, suppose one of the following holds:
    \begin{enumerate}[(a)]
        \item $N=8T4$ and there exists some $\delta > 0$ such that
        \[
            0 < \#\mathcal{F}_{\disc,k}(B;X) \ll X^{1+\frac{1}{m[k:\Q]} - \delta}
        \]
        \item $N=8T28$, 
        $8T29$, 
        $16T209$, 
        $16T223$, 
        $16T227$, 
        $16T243$, 
        $16T247$, 
        $16T271$, 
        $16T280$, 
        $16T283$, 
        $16T345$, 
        $16T388$, 
        $16T401$, 
        $16T479$, 
        $16T480$, 
        $16T481$, 
        $16T492$, 
        $16T505$, 
        $16T517$, 
        $16T519$, 
        $16T527$, 
        $16T625$, or
        $16T630$ and there exists some $\delta > 0$ such that
        \[
            0 < \#\mathcal{F}_{\disc,k}(B;X) \ll X^{2+\frac{2}{m[k:\Q]} - \delta}
        \]
        \item $N = 16T1555,16T1566,16T1609,16T1700,16T1702,$ or $16T1728$ and there exists some $\delta > 0$ such that
        \[
            0 < \#\mathcal{F}_{\disc,k}(B;X) \ll X^{4+\frac{4}{m[k:\Q]} - \delta}.
        \]
    \end{enumerate}
    Then there exist ineffective constants $c\in \R_{>0}$ and $b\in \Z_{>0}$ with $2\le b \le 3$ such that
    \[
        \#\mathcal{F}_{\disc,k}(G;X) \sim cX^{1/a(G)}(\log X)^{b-1}.
    \]
\end{theorem}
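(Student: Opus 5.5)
The plan is to deduce this from the inductive counting recipe of \cite{ALOWW} for concentrated groups, with our multiple Dirichlet series results for $N$ (over varying base fields) serving as the input on relative $N$-extensions. The wreath product $G=N\wr B$ is concentrated in the normal subgroup $N^m\normal G$, provided the minimum index elements of $G$ lie in $N^m$. Indeed, an element $(n_1,\dots,n_m;\sigma)$ with $\sigma\neq 1$ moves whole blocks, so its index is at least $n\cdot\ind_m(\sigma)\ge n$. An element of $N^m$ supported in a single coordinate has index $a(N)$, which for the listed $N$ is $a(N)<n$. Hence $a(G)=a(N)$, and the minimum index elements are the elements of $N^m$ with exactly one nontrivial coordinate, of minimal index in $N$. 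A $G$-extension therefore corresponds to a $B$-extension $F/k$ of degree $m$ together with an $N$-extension $L/F$ satisfying a genericity condition. Its discriminant is $|\disc(F/k)|^{n}\,|\disc(L/F)|$ by the tower formula.

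The first step is a uniform count of $N$-extensions over every degree-$m$ base field $F$:
\[
\#\mathcal{F}_{\disc,F}(N;X)= c(F)X^{1/a(N)}(\log X)^{b(F,N)-1}+\text{error}.
\]
We need explicit dependence on $F$, with the leading term bounded by $|\disc F|^{\epsilon}$ times a controlled power and the error uniform in $F$. This should come from the proof of \Cref{thm:intro_nilpotent_groups_with_shortcut} (via \Cref{thm:unconditional_concentrated} / \Cref{thm:general_unconditional_nilpotent}). In that proof the meromorphic continuation of $D_F(N;\underline{s})$ is built from Hecke $L$-functions over $F$ and their twists. The convexity and subconvexity bounds in vertical strips carry conductor dependence, which is polynomial in $|\disc F|$. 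The extra factor supplied by Hartogs/Bochner continuation is bounded by the maximum over the boundary, so the uniformity should pass to the convex hull. The key quantity is the exponent $\theta$ with $c(F)+(\text{error at }X)\ll |\disc F|^{\theta+\epsilon}X^{1/a(N)}$. The hypotheses in (a), (b), (c) presumably encode $\theta$ for each class of $N$:
\[
1+\tfrac{1}{m[k:\Q]},\qquad 2+\tfrac{2}{m[k:\Q]},\qquad 4+\tfrac{4}{m[k:\Q]}.
\]
The pattern $c(1+1/(m[k:\Q]))$ with $c=1,2,4$ reflects the class group torsion or genus-theory contributions of the abelian layers, which scale like a power of the class number of $F$.

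Second, I would sum over $F$. We have
\[
\#\mathcal{F}_{\disc,k}(G;X)\approx\sum_{F}\#\mathcal{F}_{\disc,F}\bigl(N;X/|\disc F|^{n}\bigr).
\]
Each term is roughly $c(F)(X/|\disc F|^n)^{1/a(N)}\log^{b-1}$. Since $n/a(N)$ exceeds the exponent $\theta$ plus the growth exponent of $\#\mathcal{F}(B;Y)$, partial summation with the assumed bound on $\#\mathcal{F}_{\disc,k}(B;X)$ shows that the series $\sum_F c(F)|\disc F|^{-n/a(N)}$ converges. The tail is then dominated, and \cite[Theorem 2.1]{ALOWW} (dominated convergence) gives $\#\mathcal{F}_{\disc,k}(G;X)\sim cX^{1/a(G)}(\log X)^{b-1}$. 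Here $b$ is the maximum of the $b(F,N)$ that occurs for infinitely many or positive-weight $F$. This is at most $3$ by the bound on $b(F,N)-1\le 2$ from \Cref{thm:intro_nilpotent_groups_with_shortcut} and at least $2$. The constant $c$ is positive because $\#\mathcal{F}(B)>0$ supplies at least one $F$ with $c(F)>0$. The constants are ineffective, since the individual $c(F)$ and $b(F,N)$ are. I would also subtract non-generic pairs $(F,L)$, whose Galois group is a proper subgroup of $G$. These are of lower order, because of the same counts restricted to subgroups, as in \cite{ALOWW}.

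The main obstacle is the first step: making the Hartogs-type continuation uniform in $F$ with the exact exponent $\theta$. Bochner's tube theorem is not quantitative, so I would first multiply by the polynomial cancelling the finitely many poles. I would then apply a Phragm\'en--Lindel\"of-type maximum principle on the convex hull, using the fact that these pole locations are independent of $F$. Getting the exponent to match exactly $c(1+1/(m[k:\Q]))$ will require using only the trivial $\ell$-torsion bound $h_\ell(F)\ll|\disc F|^{1/2+\epsilon}$-type inputs together with convexity bounds.
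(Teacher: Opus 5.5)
Your architecture matches the paper's: $G$ is concentrated in $N^m\normal G$, you fiber over the $B$-extensions $F=k(\pi)$, and you feed a precise fiber count, a uniform fiber bound, and the assumed count of $B$-extensions into \cite[Theorem 2.1]{ALOWW}, which is exactly \Cref{thm:unconditional_concentrated}. The genuine gap is the uniform fiber bound. You leave it open as the ``main obstacle'', and the mechanism you propose (rerunning the Bochner continuation over each $F$ and tracking conductor-aspect convexity bounds) is not what makes it work. The paper never continues a Dirichlet series over $F$, where \Cref{thm:intro_nilpotent_groups_with_shortcut} gives no control of the constants in terms of $F$. Instead it stays over $k$ with the twisted modules $T_j^m(\pi)$. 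One region ${\rm Hull}(\Omega_{T_1^m}\cup\Omega_{T_2^m})$ then serves every $\pi$; membership of the relevant point reduces to \eqref{eq:friendly} for $N$ via \Cref{lem:wreath_shortcut}. \Cref{thm:unconditional_nilpotent_fibers_uniform} then bounds the fiber by
\[
    \frac{\max_j h^1_{ur}(k,N^m,T_j^m,\pi)}{(q_*\disc_G(\pi))^{1/a(N)-\epsilon}}\,X^{1/a(N)}(\log X)^{b-1},
\]
so the only $F$-dependence beyond the pushforward discriminant is through $h^1_{ur}$. The idea you are missing is \Cref{lem:wreath_h1ur}. Along the upper central series, the graded pieces of $T^m(\pi)$ are induced modules ${\rm Ind}_F^k(A_i)$, where $A_i=T\cap Z_i(N)/T\cap Z_{i-1}(N)$ carries the trivial $G_F$-action. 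By Shapiro their unramified cohomology is essentially $\Hom(\Cl_F,A_i)$, and since $N$ is a $2$-group this gives $h^1_{ur}\ll|\Cl_F[2]|^{\log_2|T_j|}$.

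Your exponent bookkeeping is also wrong. The exponents in (a)--(c) are not the $\theta$ of a fiber bound $|\disc F|^{\theta}$. Read that way, your own criterion ($\theta$ plus the growth exponent of $B$-extensions below $n/a(N)$) already fails for $8T4$, since $2\bigl(1+\frac{1}{m[k:\Q]}\bigr)-\delta>2=n/a(N)$ for small $\delta$. The correct setup takes $\theta=\frac{\log_2|T_j|}{2}\bigl(1-\frac{1}{m[k:\Q]}\bigr)$. The inclusion $q_*\Sur(G_k,G;X)\subseteq\Sur(G_k,G/N^m;X^{1/n})$ and partial summation then reduce the criterion to $\#\mathcal{F}_{\disc,k}(B;X)\ll X^{n/a(N)-\theta-\delta}$. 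For $8T4$, $n/a(N)=2$ and $|T_j|=4$ give $1+\frac{1}{m[k:\Q]}$, and the other cases are the same computation. Moreover, the trivial bound $|\Cl_F[2]|\ll|\disc F|^{1/2+\epsilon}$ that you propose only yields the hypothesis $X^{n/a(N)-\frac{\log_2|T_j|}{2}-\delta}$, i.e.\ $X^{1-\delta}$ for $8T4$. The extra $\frac{\log_2|T_j|}{2m[k:\Q]}$ comes precisely from the nontrivial bound $|\Cl_F[2]|\ll|\disc F|^{\frac12-\frac{1}{2[F:\Q]}+\epsilon}$ of \cite{bhargava-shankar-taniguchi+2017} with $[F:\Q]=m[k:\Q]$. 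Convexity bounds for $L$-functions play no role in these exponents.
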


We note that $N\wr B$ is concentrated in $N^m$, but for each of the $N$ above it is not concentrated in an abelian normal subgroup, and thus outside the scope of \cite{ALOWW}. Since the (relatively mild) upper bounds we require in the theorem above are known for infinitely many groups, this result produces infinitely many new asymptotics for each choice of $N$.

We prove a similar result for direct products.

\begin{theorem}\label{thm:direct_products}
    Let $k$ be a number field and $G=N\times B$ be the wreath product of the transitive groups $N$ in degree $n$ and $B$ in degree $m$. Additionally, suppose that $N$ is one of the thirty transitive nilpotent groups in \Cref{thm:intro_nilpotent_groups_with_shortcut} and that there exists some $\delta > 0$ for which
    \[
        0 < \#\mathcal{F}_{\disc,k}(B;X) \ll X^{\frac{n}{ma(N)} - \delta}.
    \]
    Then there exist ineffective constants $c\in \R_{>0}$ and $b\in \Z_{>0}$ with $2\le b \le 3$ such that
    \[
        \#\mathcal{F}_{\disc,k}(G;X) \sim cX^{1/a(G)}(\log X)^{b-1}.
    \]
\end{theorem}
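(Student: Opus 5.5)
This theorem should be a corollary of the general concentrated-group result, \Cref{thm:unconditional_concentrated}, applied with $T = N\times 1 \normal G = N\times B$, with $N$ playing the role of the normal subgroup and $B=G/T$ the quotient.

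First I check that $G$ is concentrated in $N$. In the product action of degree $nm$, an element $(g,h)$ has index
\[
\ind_{nm}(g,h) = nm - \#\{\text{orbits of }(g,h)\}.
\]
If $h=1$, then $\ind(g,1)=m\,\ind_n(g)$, so the smallest value from $N\times 1$ is $m\,a(N)$. If $h\neq 1$, then $\ind(g,h)\ge n\,\ind_m(h) \ge n\,a(B)\ge n$. The index is at least the index of the induced action on each factor, and I would verify this by counting orbits via the projection to the $B$-blocks. The hypothesis $\#\mathcal{F}_{\disc,k}(B;X)\ll X^{n/(ma(N))-\delta}$ together with $\#\mathcal F\ge$ (the Malle lower bound) forces $1/a(B)\le n/(ma(N))$. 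Hence $n\,a(B)\ge m\,a(N)$, with equality not automatic. To handle the equality case I would instead use the strict inequality from the positive $\delta$. The bound on $\mathcal F(B)$ then gives absolute convergence of the $B$-part of the multiple Dirichlet series past the relevant abscissa $1/a(G)=1/(m\,a(N))$ rescaled by the discriminant tower formula $\disc(L/k)=\disc(K/k)^{n}\,N_{K/k}(\disc(L/K))$. This is exactly the ``third ingredient'' of \Cref{subsec:Idea_of_method}: the exponent $n/(ma(N))$ is the threshold at which $\sum_{K}|\disc K|^{-n s}$ converges at $s=1/(m a(N))$.

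Next I supply the uniform relative input. I need meromorphic continuation, with polynomial vertical growth, for the generating series of relative $N$-extensions of $K$ for each $B$-extension $K/k$, where the twist is trivial because the product is direct. Since $N$ is one of the thirty groups in \Cref{thm:intro_nilpotent_groups_with_shortcut}, its proof (via \Cref{thm:general_unconditional_nilpotent} and the two-dimensional convex hull shortcut of \Cref{subsec:2dim}) gives a meromorphic continuation of $D_K(N;\underline s)$ past $\Re s = 1/a(N)$, with a pole of order $b(K,N)\le 3$. I must make this uniform in $K$. The bounds come from convexity or subconvexity for Hecke $L$-functions over $K$ and from class group torsion bounds. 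I will track the dependence on $|\disc K|$ as a factor $|\disc K|^{\theta}$, with $\theta$ arbitrarily small after shrinking the region slightly. I will also check that this uniformity meets the form demanded by \Cref{thm:unconditional_concentrated}.

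Finally I sum over $K$ and take the limit. Dominated convergence, in its Dirichlet series form, transfers the continuation to $D_k(G;s)$ just to the left of $1/a(G)$. The rightmost pole is at $1/a(G)$, with order $b\in\{2,3\}$ inherited from the $N$-factor; the order is at least $2$ because the $N$ groups have $b=3$ generically, or $\ge 2$. The nonvanishing of the sum of leading residues, which are positive, gives $c>0$. A Tauberian theorem then yields the asymptotic. I expect the main obstacle to be the uniformity in $K$. The polynomial-multiplication trick used to handle poles under Hartogs can depend on $K$, so I must show that the pole locations are independent of $K$ and that the order is bounded. Only then does the sum over $K$ converge locally uniformly; this is also why $b$ and $c$ remain ineffective.
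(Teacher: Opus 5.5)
Applying \Cref{thm:unconditional_concentrated} to the normal subgroup $N\times 1$ is the right starting point. But the fibration you then analyze is the one for $N\wr B$, not for $N\times B$, so the argument does not go through as written.

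\textbf{The fibers are misidentified.} Lift $\pi\in\Sur(G_k,B)$ to $\widetilde{\pi}=(1,\pi)$. Since $1\times B$ centralizes $N\times 1$, the twisted group $(N\times 1)(\widetilde{\pi})$ is just $N$ with \emph{trivial} Galois action. Hence $q_*^{-1}(\pi)=\{(f,\pi) : f\in\Hom(G_k,N),\ (f,\pi)\text{ surjective}\}$, so the fibers consist of $N$-extensions of $k$, not of the field $K$ cut out by $\pi$. The series $D_K(N;\underline{s})$ parametrizes all $N$-extensions $L/K$. That is essentially the fiber of $N\wr B\to B$, and base changes of $N$-extensions of $k$ form a thin subset of it.

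\textbf{The exponents confirm the mismatch.} With your tower formula the relative count is about $(X/|\disc K|^{n})^{1/a(N)}$. Summing over $K$ therefore lives at abscissa $1/a(N)=1/a(N\wr B)$, not at $1/(m\,a(N))=1/a(G)$. Convergence would also require controlling $\sum_K|\disc K|^{-n/a(N)}$ weighted by $\Cl_K[2]$-torsion. That is the shape of the hypothesis in \Cref{thm:wreath_products}, not in this theorem. Uniformity of the form $|\disc K|^{\theta}$ with $\theta$ arbitrarily small is also not known unconditionally for that torsion. The correct heuristic is $\disc(f,\pi)\approx\disc(f)^m\disc(\pi)^n$, up to primes ramified in both. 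So the fiber over $\pi$ has size about $(X/|\disc K|^{n})^{1/(ma(N))}$, which is where the threshold $n/(ma(N))$ comes from.

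\textbf{With the correct fibers, the uniformity is automatic; this is the paper's route.}
\begin{itemize}
\item Because the twist is trivial, $\tw_{N\times 1,\widetilde{\pi}}$ is injective. The minimum-index types of $N\times 1$ (index $m\,a(N)$) then correspond order-preservingly to those of $N$.
\item The commuting square for $q_\tau$ shows that \eqref{eq:friendly_twisted} for $N\times 1$ is literally \eqref{eq:friendly} for $N$, which was already checked. So \Cref{prop:shortcut_convex_hull_twisted} gives the convex-hull hypothesis, with poles at $s_\tau=1$ of order bounded independently of $\pi$.
\item The $\pi$-dependence enters only through the Euler factors at primes ramified in $\pi$ and through $\max_j h^1_{ur}(k,N\times 1,T_j\times 1,\pi)$.
\item This $h^1_{ur}$ factor is $\ll_{k,|N|}1$. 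Indeed, $G$ acts trivially by conjugation on each graded piece $(T_j\times 1)\cap Z_i(N\times 1)/(T_j\times 1)\cap Z_{i-1}(N\times 1)$, so each factor is $|\Hom(\Cl_k,M_i)|$.
\item Combined with $q_*\Sur(G_k,G;X)\subseteq\Sur(G_k,B;X^{1/n})$, this gives \eqref{eq:sum_h1ur_uncond} with $\theta=1/(ma(N))-\delta/n<1/a(N\times 1)$, directly from your hypothesis on $B$.
\item The bounds $2\le b\le 3$ follow from \eqref{eq:bbounds_nilpotent}: there are three minimum-index types, two in each $T_j\times 1$, and the twist is injective.
\end{itemize}

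\textbf{The concentration step.} It invokes a lower bound $\#\mathcal{F}_{\disc,k}(B;X)\gg X^{1/a(B)}$ that is not known for general $B$. It is also unnecessary, since \cite[Theorem 2.1]{ALOWW} only needs the convergence criterion $\theta<1/a(N\times 1)$.
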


In particular, if $B$ is also nilpotent it is known that $\#\mathcal{F}_{\disc,k}(B;X) \ll X^{1/a(B)+\epsilon}$ (see, for example, \cite[Corollary 1.8]{alberts2020}). Thus, the conditions of \Cref{thm:direct_products} hold for nilpotent $B$ if $a(B)/m > a(N)/n$. This includes several nice families:
\begin{itemize}
    \item \Cref{conj:number_field_counting} holds for $8T4\times B$ for $B$ any nilpotent group of odd order in the regular representation.
    \item \Cref{conj:number_field_counting} holds for $N\times B$ for $N\ne 8T4$ any of the remaining transitive nilpotent groups listed in \Cref{thm:intro_nilpotent_groups_with_shortcut} and for $B$ any nilpotent group in the regular representation.
\end{itemize}
Among groups of small degree, $24T15$, $24T351$, and $24T352$ are given by direct products $8T4\times C_3$, $8T28\times C_3$, and $8T29\times C_3$ respectively, so that \Cref{conj:number_field_counting} follows for these groups.

\subsection{Conditional results for nilpotent groups}

Now that we established that our method can be used to access new groups, it would be interesting to examine its limits. Namely, we want to explore what kind of asymptotic one can get if one assumes the best possible subconvexity bounds for the growth of $L$-functions in the critical strip.  In order to help keep our theorem statements clean and short, we define a shorthand for the desired hypothesis for Hecke $L$-functions, which is a hybrid Lindel\"of hypothesis type bound in both the imaginary part and the conductor aspects. We will only need this hypothesis for a particular family of Hecke $L$-functions depending on $k$ and $G$.

\begin{definition}
    For a number field $k$ and a group finite transitive group $G$, let $\mathcal{L}(k,G)$ denote the set of Hecke $L$-functions $L_K(s,\chi)$ for which the following conditions hold.
    \begin{enumerate}[(a)]
        \item $[K:k]$ divides the order of  $G$.
        \item There exists a Galois extension $L/k$ with $\Gal(L/k)\cong G$ and an integer $d\mid \exp(G)$ for which $K\subseteq L(\zeta_d)$.
        \item The character $\chi$ is unramified away from the primes dividing $|G|\infty$.
    \end{enumerate}
\end{definition}

\begin{hypothesis}
    Fix a number field $k$ and a group finite transitive group $G$. Let $\alpha,\beta, \gamma \ge 0$. We say the property $H(\gamma,\alpha,\beta; k, G)$ holds if, for every $\epsilon > 0$ and for every L-function in $\mathcal{L}(k,G)$, the hybrid bound
    \[
        |L_K(\sigma+it,\chi)| \ll_{\epsilon} q(\chi)^{\alpha+\epsilon}(1+|t|)^{\beta+\epsilon}
    \]
    holds for each $\sigma > \gamma$. Here $q(\chi)$ denotes the conductor of $L_K(s,\chi).$ The Lindel\"of hypothesis for each $L$-function in this family is $H(1/2,0,0;k,G)$.

    Further, we say $H(\gamma,\alpha,*; k, G)$ holds if, for every $\epsilon > 0$, the hybrid bound
    \[
        |L_K(\sigma + it,\chi)| \ll_{t,\epsilon} q(\chi)^{\alpha+\epsilon}
    \]
    holds for each $\sigma > \gamma$ and for every $ L_K(s,\chi) \in \mathcal{L}(k,G)$, where the implied constant depends only on $t$ and $\epsilon$.
\end{hypothesis}

Recall that our main unconditional result for nilpotent groups is \Cref{thm:general_unconditional_nilpotent}. We show the necessary hypotheses follow from $H(\gamma,0,*;k,G)$, so that we can conclude the following conditional result.

\begin{theorem}\label{thm:intro_nilpotent}
    Let $k$ be a number field and $G$ a nilpotent transitive group which is semiconcentrated in some proper abelian normal subgroups.
    Suppose there exists a nonnegative real number $\gamma < 1$ such that $k$ and $G$ satisfy the hypothesis $H(\gamma,0,*; k, G)$. 
    \begin{enumerate}
        \item In this case, there exist ineffective constants $c \in \R_{>0}$ and $b \in \Z_{>0}$ for which
        \[
            \#\mathcal{F}_{\disc,k}(G;X) \sim cX^{1/a(G)}(\log X)^{b-1},
        \]
        in particular satisfying Conjecture \ref{conj:number_field_counting}.
        \item  
            Moreover, if there exists $\beta \ge 0$ such that the property $H(\gamma,0,\beta; k, G)$ holds, then there exists a nonzero ineffective polynomial $P=P_{k,G}$ of degree $b-1$ and an effective constant $\delta=\delta_{k,G}(\beta) > 0$ for which
        \[
            \#\mathcal{F}_{\disc,k}(G;X) = X^{1/a(G)}P(\log X) + O_{\epsilon}\left(X^{1/a(G) - \delta + \epsilon}\right),
        \]
        satisfying \Cref{conj:number_field_counting} with a power saving error bound.
    \end{enumerate}
\end{theorem}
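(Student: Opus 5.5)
The proof will go through \Cref{thm:general_unconditional_nilpotent}, the main unconditional result for nilpotent groups. That theorem gives an explicit cone of meromorphicity for $D_k(G;\underline{s})$ once two inputs are available for each proper abelian normal subgroup $T_j$ in which $G$ is semiconcentrated: bounds in vertical strips for the relative $T_j$-generating series, and absolute convergence of the $G/T_j$-series. The plan has three steps. First, show that $H(\gamma,0,*;k,G)$ supplies the vertical-strip input with a uniformity exponent arbitrarily small. Second, show that with this input the convex hull of $\bigcup_j\Omega_j$ contains the point of the discriminant line where the rightmost pole $s=1/a(G)$ is expected. Third, apply a Tauberian theorem.

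\textbf{Step 1.} For abelian $T_j$, the relative $T_j$-extensions of a $G/T_j$-field $F$ are counted by class field theory. This writes the twisted generating series as a finite sum, over characters of ray class groups of $F$ (or of $F(\zeta_d)$ with $d\mid\exp(G)$), of products of Hecke $L$-functions $L_K(s,\chi)$, times an Euler product absolutely convergent to the left of $\mathrm{Re}\, s=1$. The fields $K$ and characters $\chi$ that arise are exactly those in $\mathcal{L}(k,G)$: their degree divides $|G|$, they lie in $L(\zeta_d)$, and the characters are ramified only at $|G|\infty$ after the tame part is absorbed into the shifted variables. Under $H(\gamma,0,*;k,G)$ each such factor is $\ll_{t,\epsilon} q^{\epsilon}$ for $\sigma>\gamma$. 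The conductor $q$ is a power of $\disc(F)$, so the uniformity costs only $\disc(F)^{\epsilon}$. This lets the continuation to $\Omega_j$ reach distance $1-\gamma$ past the abscissa in the $T_j$-variables, with no loss in the $G/T_j$-variables beyond $\epsilon$. For the convergence of the $G/T_j$-series, it suffices to use the nilpotent bound $\#\mathcal{F}_{\disc,k}(G/T_j;X)\ll X^{1/a(G/T_j)+\epsilon}$ \cite[Corollary 1.8]{alberts2020}, applied inductively.

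\textbf{Step 2.} This is the main obstacle: verifying that $\mathrm{conv}\bigl(\bigcup_j\Omega_j\bigr)$ contains the specialisation point $\underline{s}=(s\,\ind(g))_g$ at $s=1/a(G)$, with a neighbourhood to the left. Here $\underline{s}$ has one variable per conjugacy class (or per index value). Each $\Omega_j$ extends leftward past the hyperplane where the minimum-index classes lying in $T_j$ sit, but it is only a half-orthant in the other coordinates. I would argue with the tube structure. Every minimum-index class lies in some $T_j$, so for each such coordinate some $\Omega_j$ lets it move left by roughly $1-\gamma$, while keeping the classes outside $T_j$ strictly inside the region of absolute convergence. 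Averaging these points with weights $1/r$ gives a point of the hull. At this point every minimum-index coordinate is pushed below its abscissa and every other coordinate is still admissible, since non-minimal-index classes have slack $\ge \ind-a(G)\ge1$ times $1/a(G)$. Because $\gamma<1$, the shifts are positive, so a small neighbourhood of the target point lies in the hull. Bochner's tube theorem, applied after multiplying by a polynomial that clears the finitely many polar divisors, gives meromorphy there. Restricting to the discriminant line then leaves a single pole at $s=1/a(G)$, of some order $b\ge1$, in a half-plane $\mathrm{Re}\, s>1/a(G)-\eta$.

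\textbf{Step 3.} Part (a) follows from the Ikehara--Delange Tauberian theorem, since the coefficients are nonnegative. This gives $cX^{1/a(G)}(\log X)^{b-1}$ with $b$ and $c$ ineffective, because clearing the poles with a polynomial loses their order. Positivity of $c$ uses nonvanishing of the residue, which follows from the lower bound of order $X^{1/a(G)}$ obtained by already counting inside one $T_j$. For part (b), $H(\gamma,0,\beta;k,G)$ gives polynomial growth $(1+|t|)^{\beta+\epsilon}$ on each $\Omega_j$. Polynomial growth is preserved by the tube theorem, via Phragmén--Lindelöf convexity in the real directions. A Tauberian theorem with power-saving error, obtained by shifting a Perron integral, then yields $X^{1/a(G)}P(\log X)+O(X^{1/a(G)-\delta+\epsilon})$, with $\delta$ depending on $\eta$ and $\beta$.
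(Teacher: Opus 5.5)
Your argument is the paper's. Under $H(\gamma,0,*;k,G)$ one takes $\alpha_\tau=0$ in \Cref{thm:general_unconditional_nilpotent}, so each $\Omega_{T_j}$ is cut out by $\sigma_\tau>\gamma$ on $\iota_*\RTo{k}(T_j)$ and $\sigma_\tau>1$ elsewhere; your weighted average of one-coordinate pushes is essentially \Cref{lem:cond_convex_hull}, which places $(\ind_n(\tau)/a(G))_\tau$ in the hull, and the two Tauberian theorems finish. There are a few caveats, none of which changes the route, since that theorem already supplies the fiber inputs your Step~1 tries to rederive. First, the variables must be indexed by ramification types, not index values: merging minimum-index types that lie in different $T_j$ would eliminate the convex-hull gain. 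Second, the convergence input is not the discriminant bound $X^{1/a(G/T_j)+\epsilon}$ but absolute convergence of $D_k(G/T_j;\underline{x})$ for all tame $x_{\overline{\tau}}>1$ (\Cref{prop:nilpotent_absolute_convergence}). Third, the number of terms in your character sum is governed by $|H^1_{ur}(k,T_j(\pi))|$, which must be shown to be $\ll|\disc(\pi)|^{\epsilon}$ using nilpotency \cite[Corollary 1.14(i)]{ALOWW}.
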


We note that our assumption $H(\gamma,0,*;k,G)$ for some $0\leq \gamma<1$ is much weaker than the Lindel\"of hypothesis in the conductor aspect, which is equivalent to $H(1/2,0,*;k,G)$. 

There are infinitely many properly semiconcentrated groups in this family. In particular, any group of nilpotency class $2$ (i.e. a central extension of an abelian group) is the union of its abelian normal subgroups, and so belongs to this family. It is widely believed that Conjecture \ref{conj:number_field_counting} for nilpotency class $2$ groups should be accessible with existing methods; this intuition is supported by several individual results closely related to counting such number fields, including \cite{fouvry-kluners2007,alberts-klys2020,fouvry-koymans2021nonicheisenberg}. Despite this evidence, \Cref{thm:intro_nilpotent} is the first general statement for asymptotics of this family.

There is nothing special about the discriminant for our methods, as demonstrated by the conductor case of \Cref{thm:D4}. Given a class function ${\rm wt}:G \, \setminus \{1\} \to \R_{>0}$, define the corresponding inertial invariant
\[
    \inv(F/k) := \prod_{c\subset G}\prod_{\pk, I_p(F/k)\sim c} |\pk|^{{\rm wt}(c)},
\]
where the outer product is over nontrivial conjugacy classes $c\subset G$ and we write $I_\pk(F/k)\sim c$ to mean that $I_\pk(F/k)$ is generated by an element of $c$. (See \Cref{prop:specialize} below for a slightly more general notion that replaces conjugacy classes with ramification types). A standard example is the product of ramified primes ordering, which comes from taking ${\rm wt}(g) = 1$ identically.

The minimum weight is then $a_\inv(G) := \min_{g\ne 1} {\rm wt}(g)$. If the minimum weight elements are contained in the union abelian normal subgroups $T_1,T_2,\dots,T_r\normal G$, then our methods for this ordering that are just as strong as \Cref{thm:intro_nilpotent} above. Indeed, our general unconditional result \Cref{thm:general_unconditional_nilpotent} is proven for arbitrary inertial invariants and implies \Cref{thm:intro_nilpotent}.

The case of nilpotency class $2$ groups is particularly interesting in this regard: if $G$ has nilpotency class $2$, then $G$ is covered by its abelian normal subgroups. This leads to a very general result.

\begin{theorem}\label{thm:nilpotent_general_invariant}
    Let $k$ be a number field, $G$ a nilpotent group with nilpotency class $2$, and $\inv$ an inertial invariant with minimal weight $a_{\inv}(G)$. Suppose there exists a nonnegative real number $\gamma < 1$ such that $k$ and $G$ satisfy the hypothesis $H(\gamma,0,*; k, G)$. 
    \begin{enumerate}
        \item In this case, there exist ineffective constants $c \in \R_{>0}$ and $b \in \Z_{>0}$ for which
        \[
            \#\mathcal{F}_{\inv,k}(G;X) \sim cX^{1/a_{\inv}(G)}(\log X)^{b-1},
        \]
        in particular satisfying Conjecture \ref{conj:number_field_counting}.
        \item  
            Moreover, if there exists $\beta \ge 0$ such that the property $H(\gamma,0,\beta; k, G)$ holds, then there exists a nonzero ineffective polynomial $P=P_{\inv,k,G}$ of degree $b-1$ and an effective constant $\delta=\delta_{\inv,k,G}(\beta) > 0$ for which
        \[
            \#\mathcal{F}_{\inv,k}(G;X) = X^{1/a(G)}P(\log X) + O_{\epsilon}\left(X^{1/a_{\inv}(G) - \delta + \epsilon}\right),
        \]
        satisfying \Cref{conj:number_field_counting} with a power saving error bound.
    \end{enumerate}
\end{theorem}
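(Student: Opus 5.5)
Theorem \ref{thm:nilpotent_general_invariant} will be deduced from the general machinery (\Cref{thm:general_unconditional_nilpotent}, which is stated for arbitrary inertial invariants), in the same way as \Cref{thm:intro_nilpotent}. The only new input is group theoretic: a group of nilpotency class $2$ is covered by its proper abelian normal subgroups. So it is semiconcentrated in abelian normal subgroups for \emph{every} choice of weight function, not just for the index.

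\textbf{Step 1: the covering.} Take $g\in G$. Since $G$ has class $2$, we have $[G,G]\subseteq Z(G)$. Set $T_g=\langle g, Z(G)\rangle$.
\begin{itemize}
\item $T_g$ is abelian, because $g$ commutes with $Z(G)$.
\item $T_g$ is normal, because $hgh^{-1}=g[g^{-1},h]$ (up to convention) lies in $gZ(G)$.
\item $T_g$ is proper whenever $G$ is not cyclic-mod-center. But $G/Z(G)$ cyclic would force $G$ abelian, hence class $\le 1$, so $T_g\ne G$.
\end{itemize}
Hence every element of $G$, and in particular every minimum weight element for $\mathrm{wt}$, lies in one of finitely many proper abelian normal subgroups $T_1,\dots,T_r$. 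This is exactly the hypothesis of the general result, with the minimum-index set replaced by the minimum-weight set.

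\textbf{Step 2: verify the analytic inputs from $H(\gamma,0,*;k,G)$.} For each $T_j$, the relative $T_j$-extensions of a fixed $G/T_j$-extension $F/k$ have a generating multiple Dirichlet series. By class field theory (twisted abelian counting), this series is a finite combination of products of Hecke $L$-functions $L_K(s,\chi)$ with $K\subseteq \widetilde F(\zeta_d)$ and $\chi$ ramified only at $|G|\infty$; these lie in $\mathcal{L}(k,G)$. Under $H(\gamma,0,*)$, vertical bounds hold uniformly in $F$ with only $|\disc F|^\epsilon$ loss, to the left of $\Re s=1$ (since $\gamma<1$). The $G/T_j$ series needs only absolute convergence in a half-plane. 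Trivial bounds (e.g.\ $\#\mathcal{F}(G/T_j;X)\ll X^{C}$ for nilpotent quotients via \cite{alberts2020}) together with the $\epsilon$-loss uniformity suffice. This gives meromorphy on each region $\Omega_j$.

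\textbf{Step 3: convex hull and specialization.} By Bochner's tube theorem, after multiplying by a polynomial clearing the finitely many poles, $D_k(G;\underline{s})$ continues to the convex hull of $\bigcup\Omega_j$. Specializing along the line $s_c=\mathrm{wt}(c)s$ gives the one-variable series for $\inv$.

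The main obstacle is showing that this hull contains a neighborhood of the line past the point $s=1/a_{\inv}(G)$, except for the pole there. The argument runs as follows:
\begin{itemize}
\item Each $\Omega_j$ allows the variables $s_c$ with $c\subseteq T_j$ to go down to about $\gamma$-shifted values, while the remaining variables stay in the region of absolute convergence.
\item Because $\gamma<1$ with zero conductor exponent, each $\Omega_j$ contains points where the $T_j$-coordinates are strictly below the pole hyperplanes. The other coordinates can be taken arbitrarily large.
\item Averaging such points over $j$, with the freedom to push non-$T_j$ coordinates high, reaches any point where every minimum-weight coordinate sits just left of its pole and every non-minimal coordinate satisfies $\mathrm{wt}(c)\sigma>1$ (true near $\sigma=1/a_{\inv}$ since $\mathrm{wt}(c)>a_{\inv}$).
\end{itemize}

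The pole at $1/a_{\inv}$ is genuine because the series has positive coefficients and the minimal classes contribute a pole; this gives $b\ge1$. A Tauberian theorem then yields part (1). For part (2), $H(\gamma,0,\beta)$ gives polynomial growth in $t$ in a strip left of $1/a_{\inv}$. A standard Perron-type contour shift then produces the power saving, with $P$ of degree $b-1$.
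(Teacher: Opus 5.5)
Your proposal is correct and is essentially the paper's argument: you apply \Cref{thm:general_unconditional_nilpotent} with $\alpha_\tau=0$ and use that a class-$2$ group is the union of its abelian normal subgroups (the paper merely asserts this, and your $\langle g,Z(G)\rangle$ argument proves it), so the region produced by \Cref{lem:cond_convex_hull} is a neighborhood of the entire closed orthant $\sigma_\tau\ge 1$ and hence contains $(\mathrm{wt}(\tau)/a_{\inv}(G))_\tau$. Two corrections to your commentary, both harmless because \Cref{thm:general_unconditional_nilpotent} already handles them: first, the averaging in your hull step works because the non-$T_j$ coordinates in $\Omega_{T_j}$ may be taken \emph{just above} $1$ (pushing them high only raises the averages), which uses absolute convergence of the nilpotent quotient series on the full orthant (\Cref{prop:nilpotent_absolute_convergence}) rather than a crude $X^{C}$ bound for $G/T_j$-extensions; second, uniformity in $\pi$ also requires $|H^1_{ur}(k,T_j(\pi))|\ll|\disc(\pi)|^{\epsilon}$ (see \eqref{eq:aloww_bound_H1}), which comes from the nilpotency of $G$, not from $H(\gamma,0,*;k,G)$.
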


Previously, counting results for all inertial invariants were known only for abelian groups (this is implicit in \cite{wright1989,wood2010} and explicit in \cite{alberts-odorney2021,darda2023torsors}) and $S_3$ with $k=\Q$ \cite{shankar-thorne2024generalS3}, although such a result for $D_4$ with $k=\Q$ can be reasonably inferred from \cite{hansen-zanoli2025multiD4}.

\subsection{Conditional results for concentrated groups}

For the second family of Lemma \ref{lem:semiconcentrated_in_abelian}, we combine our method with that of \cite{ALOWW} for concentrated groups. We will use the same the notation as \cite{ALOWW} for the set of surjections from the absolute Galois group of the field $k$ to our fixed group $G$, namely 
\[
    \Sur(G_k,G;X) = \{\pi\in \Sur(G_k,G) : |\disc(\pi)|\le X\}.
\]

Once again we show that the hypotheses of our main unconditional result \Cref{thm:unconditional_concentrated} follow from $H(\gamma,0,*;k,G)$, allowing us to prove the following conditional result.

\begin{theorem}\label{thm:main_concentrated}
    Let $k$ be a number field and $G$ a transitive group which is semiconcentrated in the proper abelian normal subgroups $T_1,T_2,...,T_r$.

    Suppose there exists at least one $G$-extension of $k$ and that there exists a nilpotent normal subgroup $N\normal G$ which contains each of the subgroups $T_1,T_2,\dots T_r$. Denote by  $q:G\to G/N$ the quotient map and assume there exists $\theta \ge 0$ such that
    \begin{align}\label{eq:sum_h1ur}
        \sum_{\pi\in q_*\Sur(G_k,G;X)} \max_{1\leq j \leq r} h_{ur}^1(k,N,T_j,\pi) \ll_{n,k} X^{\theta},
    \end{align}
    where $h^1_{ur}(k,N,T_j,\pi)$ is given by Definition \ref{def:h1ur}.

    If there exists $\gamma < 1$ such that $H(\gamma,0,*;k,G)$ is satisfied, then the following hold.
    \begin{enumerate}[(i)]
        \item If $\theta < 1/a(N)$ then there exist ineffective constants $c\in \R_{>0}$ and $b\in \Z_{>0}$ such that
        \[
            \#\mathcal{F}_{\disc,k}(G;X) \sim cX^{1/a(N)}(\log X)^{b-1}.
        \]
        \item If $\theta \ge 1/a(N)$ then
        \[
            \#\mathcal{F}_{\disc,k}(G;X) \ll_{|G|,k} X^{\theta + \epsilon}.
        \]
    \end{enumerate}
\end{theorem}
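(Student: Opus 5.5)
The plan is to deduce \Cref{thm:main_concentrated} from the paper's main unconditional result for concentrated groups, \Cref{thm:unconditional_concentrated}, in the same way \Cref{thm:intro_nilpotent} is deduced from \Cref{thm:general_unconditional_nilpotent}. That result needs three inputs about the relative problem: meromorphic continuation of the generating multiple Dirichlet series of relative $T_j$-extensions, vertical bounds for these series that are uniform in the quotient $\pi$, and a region of absolute convergence for the quotient series over $q_*\Sur(G_k,G)$. The last input is supplied directly by \eqref{eq:sum_h1ur}.

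\textbf{Step 1: reduce to twisted abelian counting.} First, fibre the $G$-extensions over their images $\pi\in q_*\Sur(G_k,G)$. Since $N$ is nilpotent and contains each $T_j$, every minimum index element of $G$ lies in $N$, so $a(G)=a(N)$. Moreover, $N$ is itself semiconcentrated in the abelian subgroups $T_j\normal N$.

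For fixed $\pi$, the lifts form a twisted $N$-counting problem. This is handled inductively through the tower $T_j\le N$, as in \Cref{lem:semiconcentrated_in_abelian} and the nilpotent case. The innermost layer is a twisted abelian count over a field $K$ attached to $\pi$. Its generating series is a finite sum of Euler products, namely products of Hecke $L$-functions $L_K(s,\chi)$ times a factor that converges absolutely in a larger region. The number of terms in this sum is controlled by $h^1_{ur}(k,N,T_j,\pi)$. The characters involved are unramified outside $|G|\infty$ after twisting, so the relevant $L$-functions lie in $\mathcal{L}(k,G)$.

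\textbf{Step 2: uniform vertical bounds from $H(\gamma,0,*;k,G)$.} Under this hypothesis, each $L_K(\sigma+it,\chi)$ with $\sigma>\gamma$ is $\ll_{t,\epsilon}q(\chi)^{\epsilon}$. Hence the $T_j$-series, after multiplying by a fixed polynomial to kill its finitely many poles, is bounded by $h^1_{ur}\cdot|\disc\pi|^{\epsilon}$ in the region $\Omega_j$, which extends past $1/a(N)$ in the $T_j$-variables down to $\gamma<1$. Summing over $\pi$ and taking the maximum over $j$, as in \eqref{eq:sum_h1ur}, the partial sums converge absolutely once the quotient variable has real part greater than $\theta$. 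This yields meromorphy of $D_k(G;\underline s)$ on each $\Omega_j$. Bochner's tube theorem then extends it to the convex hull, since the polynomial-multiplied function is holomorphic.

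\textbf{Step 3: specialize and apply a Tauberian argument.} Restrict to the discriminant line. In case (i), $\theta<1/a(N)$, and the point $s=1/a(N)$ lies in the interior of the convex hull. By Step 1 the expected rightmost pole is at $1/a(N)$, so a Tauberian theorem gives $cX^{1/a(N)}(\log X)^{b-1}$ with ineffective $b,c$; positivity of $c$ follows from the existence of one $G$-extension. In case (ii), no continuation is needed. Bound the count trivially by summing the fibre counts, each $\ll h^1_{ur}X^{1/a(N)+\epsilon}$ scaled appropriately, and use \eqref{eq:sum_h1ur} with partial summation to get $X^{\theta+\epsilon}$.

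\textbf{Main obstacle.} I expect the hard step to be Step 2's convex hull check, namely that the hull of $\bigcup\Omega_j$ contains the point $1/a(N)$ on the discriminant line for a general group. I would first show that each $\Omega_j$ contains the full orthant in the variables indexed by minimum index classes outside $T_j$, shifted by $1-\gamma$. The minimum index classes are covered by the $T_j$, so the hull of these orthants then contains a neighbourhood of the diagonal point, and this works precisely because $\gamma<1$.
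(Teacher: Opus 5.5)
Your top-level plan is the paper's. \Cref{thm:main_concentrated} is deduced from \Cref{thm:unconditional_concentrated}, whose only hypothesis not already assumed here is that $(\ind_n(\tau)/a(G))_\tau$ lies in ${\rm Hull}(\bigcup_j\Omega_{T_j})$. The hypothesis $H(\gamma,0,*;k,G)$ lets one take every $\alpha_\tau=0$, so $M=0$ and $\Omega_{T_j}$ is cut out by $\sigma_\tau>\gamma$ for $\tau\in\iota_*\RTo{k}(T_j)$ and $\sigma_\tau>1$ for all other tame $\tau$. Your last paragraph has this backwards: $\Omega_{T_j}$ is lowered in the coordinates indexed by types \emph{in} $T_j$, not by the minimum index classes outside $T_j$.

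With that fixed, the paper's hull check (\Cref{lem:cond_convex_hull}) is your averaging idea made explicit. For each $\tau$ in $S=\bigcup_j\iota_*\RTo{k}(T_j)$, the region $U_\tau$ in which only $\sigma_\tau$ is lowered to $\gamma$ lies in some $\Omega_{T_j}$. Writing a point as the average of one explicit point from each $U_\tau$ shows the hull contains the open set where $\sigma_\tau>1-(1-\gamma)/|S|$ for $\tau\in S$ and $\sigma_\tau>1$ otherwise. Semiconcentration gives $\ind_n(\tau)/a(G)>1$ for $\tau\notin S$, so the point lies in this set.

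Steps 1--3 are therefore unnecessary, and they also misdescribe what \Cref{thm:unconditional_concentrated} does. The paper does not continue $D_k(G;\underline s)$ itself for non-nilpotent $G$. It continues only the twisted fiber series $D_k(N,\pi;\underline s)$ and derives precise and uniform fiber counts (\Cref{thm:unconditional_nilpotent_fibers}, \Cref{thm:unconditional_nilpotent_fibers_uniform}). It then handles the sum over $\pi$ with the dominated-convergence result \cite[Theorem 2.1]{ALOWW}, where \eqref{eq:sum_h1ur} enters through Abel summation.

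Your one-step alternative can be made to work. Apply \Cref{thm:main} on $\Omega_{T_j}\cap\{\sigma_\kappa>\theta'\ind_n(\kappa) \text{ for tame } \kappa\notin\iota_*\RTo{k}(N)\}$ with $\theta<\theta'<1/a(N)$, bounding $|H^1_{ur}(k,T_j(\psi))|$ via \Cref{prop:h1ur_bound}. With a $t$-aspect hypothesis it could even give the power saving that the paper's remark says its ALOWW-based route lacks.

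As written, however, your Step 2 bound $h^1_{ur}\cdot|\disc\pi|^{\epsilon}$ omits the factor $\inv_{k,G/T_j}(\psi,q_*\underline\sigma+\epsilon)$ from \Cref{thm:meromorphic_continuation_of_fiber_series}(ii). Without that decay the sum over $\pi$ diverges. Likewise, in part (ii) the ``scaling'' each fiber count needs is the factor $q_*\disc_G(\pi)^{-1/a(N)+\epsilon}$.
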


The statement of \Cref{thm:main_concentrated} is extremely similar to that of \cite[Theorem 1.11]{ALOWW}. The primary difference is that the result from \cite{ALOWW} holds only when $N$ is abelian, and in our result the quantity $\max_j h^1_{ur}(k,N,T_j,\pi)$ appears instead of $H^1_{ur}(k,N(\pi))$. The former is a slightly more complicated cohomological object, but is closely related to $H^1_{ur}(k,M)$ for certain Galois modules $M$ and is therefore also controlled by certain torsion in the class groups of some extensions of $k$. See \Cref{sec:bounds} for more details. 

\begin{remark}
    We expect that \Cref{thm:main_concentrated}(i), \Cref{thm:wreath_products}, \Cref{thm:direct_products} can be upgraded to a power saving error bound if we incorporate a $t$-aspect bound (such as $H(\gamma,0,\beta; k, G)$) analogous to the error bounds in \Cref{thm:intro_nilpotent}; however, we appeal to \cite[Theorem 2.1]{ALOWW} in our proof which does not include a power saving error term. We expect that that the methods in \cite{ALOWW} can be improved in this way, which would also lead to power savings in \cite[Theorem 1.11]{ALOWW} and the other results in \cite{ALOWW} for groups concentrated in proper abelian normal subgroups.
\end{remark}

\Cref{thm:main_concentrated} is proven by verifying the hypotheses of \cite[Theorem 2.1]{ALOWW}. The first hypothesis is a ``precise counting of the fibers'', which requires proving the twisted number field counting conjecture \cite[Conjecture 2]{ALOWW} for $N\normal G$. 

\begin{conjecture}[Twisted Number Field Counting Conjecture]\label{conj:twisted_number_field_counting}
    Let $k$ be a number field, $G$ a transitive permutation group of degree $n$, and $N\normal G$ with quotient map $q:G\to G/N$ and pushforward $q_*:\Sur(G_k,G)\to \Sur(G_k,G/N)$. For each $\pi \in q_*\Sur(G_k,G)$, there exist positive constants $a,b,c > 0$ depending on $k$, $G$, $N$, and $\pi$ such that
    \[
    \#\{\psi\in q_*^{-1}(\pi) : |\disc(\psi)|\le X\}
        \sim c X^{1/a} (\log X)^{b-1}
    \]
    as $X\to \infty$.
\end{conjecture}

This conjecture and the corresponding counting problem were first introduced in \cite{alberts2021}. This is referred to as a ``twisted'' counting conjecture because the set being counted is in bijection with crossed homomorphisms valued in the group $N$ carrying a certain Galois action. We prove this conjecture for several new cases.

\begin{theorem}\label{thm:nilpotent_fibers_main}
    Let $k$ be a number field, $G = nTd$ a transitive group in degree $n$, and $N\normal G$ a nilpotent normal subgroup with quotient map $q:G\to G/N$. Suppose that
    \begin{enumerate}
        \item there exists at least one $G$-extension of $k$,
        \item there exist abelian normal subgroups $T_1,T_2,\dots T_r\normal G$ for which
    \[
        \{g\in N : \ind_n(g) = a(N)\} \subseteq \bigcup_{j=1}^r T_j \subseteq N,
    \]
    \item there exists some $\gamma < 1$ such that  $H(\gamma,0,*; k, G)$ holds.
    \end{enumerate}

    Then for each $\pi \in q_*\Sur(G_k,G)$ there exist ineffective constants $c \in \R_{>0}$ and $b \in \Z_{>0}$ for which
    \[
        \#\{\psi\in q_*^{-1}(\pi) : |\disc(\psi)|\le X\} \sim c X^{1/a(N)}(\log X)^{b-1}.
    \]
    In particular, \Cref{conj:twisted_number_field_counting} is satisfied.
\end{theorem}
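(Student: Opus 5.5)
Fix $\pi\in q_*\Sur(G_k,G)$ and study the fiber generating series
\[
D_{k}(q_*^{-1}(\pi);\underline{s}) = \sum_{\psi\in q_*^{-1}(\pi)} \prod_{c} \prod_{\pk:\, I_\pk(\psi)\sim c} |\pk|^{-s_c},
\]
in one complex variable $s_c$ per ramification type $c$ of elements of $N$ (finitely many primes, where $\pi$ itself ramifies, are absorbed into bounded local factors). Specializing to the line $s_c = \ind_n(c)\, s$ recovers the discriminant-ordered fiber series. The target is to show this specialization is meromorphic in $\Re s > 1/a(N)-\delta$ with a pole at $s=1/a(N)$ and no others on that line. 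A standard Tauberian theorem (Delange--Ikehara type) then gives $\sim cX^{1/a(N)}(\log X)^{b-1}$ with ineffective $b,c$. Positivity of $c$ comes from the nonnegativity of coefficients together with hypothesis (1), which makes the fiber nonempty and, by twisting, infinite with a genuine pole.

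For each $j$, group the fiber by the pushforward to $G/T_j$. Write $q_j:G\to G/T_j$. Then $q_*^{-1}(\pi)=\bigsqcup_{\rho} (q_j)_*^{-1}(\rho)$, where $\rho$ ranges over lifts of $\pi$ to $G/T_j$ through $N/T_j$. Each inner fiber is a torsor for crossed homomorphisms into the abelian group $T_j$ with the Galois action twisted by $\rho$. Its multiple Dirichlet series is a finite linear combination (via Poisson summation / the local--global principle for $H^1$ of abelian modules) of Euler products of Hecke $L$-functions $L_K(s,\chi)$ over fields $K\subseteq L(\zeta_d)$, so these lie in $\mathcal{L}(k,G)$ up to finitely many bad Euler factors. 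This gives meromorphic continuation in the variables $s_c$ with $c\subseteq T_j$, with the others held in a region of absolute convergence.

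Hypothesis (3), $H(\gamma,0,*;k,G)$, bounds these $L$-functions by $q(\rho)^{\epsilon}$ uniformly just to the right of $\Re=\gamma<1$. The number of summands in the inner series is controlled by $|H^1_{ur}|$-type quantities, since $N$ is nilpotent and finite. Summing over $\rho$ therefore converges absolutely once the remaining variables (for classes of $N\setminus T_j$, whose indices exceed $a(N)$ by hypothesis (2), or lie in $T_j$ but are nonminimal) have real parts in the convergence region for $N/T_j$-twisted counts. That region is available by induction on $|N|$, or crudely from the trivial bound for nilpotent twisted counts. This produces meromorphy on a tube domain $\Omega_j$ over an open set of real parts, after clearing poles by multiplying by a polynomial in the linear forms $s_c-1/\ind$.

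Next apply Bochner's tube theorem to the polynomial-multiplied function on the tube over $\bigcup_j\Omega_j$. This gives holomorphy on the tube over the convex hull, hence meromorphy of the original series there with poles only on the known hyperplanes. Each minimum-index class lies in some $T_j$ by (2), so every coordinate direction that must be pushed left of $1/\ind$ is covered by some $\Omega_j$. It remains to check that the point $s_c=\ind_n(c)/a(N)-\delta'$ on the diagonal line lies in the convex hull. This works because each $\Omega_j$ permits the $T_j$-coordinates down to near $\gamma$-dependent values below the pole, while other coordinates stay slightly right of abscissa. A convex combination with weights $1/r$ then stays strictly beyond all abscissas, provided $\gamma<1$ gives room.

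I expect the main obstacle to be this convexity step: exhibiting explicit open regions $\Omega_j$ large enough that the diagonal point lies inside their hull, uniformly for an arbitrary configuration of $T_j$. The issue is that in $\Omega_j$ the non-$T_j$ variables are forced to have real part strictly to the right of their own poles. My first attempt would be to note that each non-$T_j$ minimal class lies in some other $T_{j'}$, and take the averaged point. I would verify the averaged inequality coordinatewise, using that the allowed leftward shift in $\Omega_j$ is a fixed positive amount $(1-\gamma)$-proportional, while the required rightward margin in other $\Omega_j$ can be made arbitrarily small.
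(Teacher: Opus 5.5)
Your proposal is correct and is essentially the paper's argument. The paper deduces this from \Cref{thm:unconditional_nilpotent_fibers}: it fibers over each abelian $T_j$ via \Cref{thm:meromorphic_continuation_of_fiber_series}, bounds the $\rho$-dependence by $h^1_{ur}(k,N,T_j,\pi)\,|\disc\rho|^{\epsilon}$ using the upper central series filtration in \Cref{prop:h1ur_bound} together with the $D^{\epsilon}$ bound for nilpotent extensions, and then applies \Cref{thm:main}, Bochner, and the Tauberian theorem. Under $H(\gamma,0,*;k,G)$ the convexity step is exactly your averaging idea. \Cref{lem:cond_convex_hull} averages over the regions $U_\tau$, lowering one minimum-type coordinate to $\gamma$ at a time, rather than over the $r$ regions $\Omega_{T_j}$ with weights $1/r$, but your version works equally well.

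One small slip: in your normalization the polar divisors are $s_c=1$, not $s_c=1/\ind$, and it suffices that the point $(\ind_n(c)/a(N))_c$ itself lies in the (open) hull.
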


See \Cref{thm:unconditional_nilpotent_fibers} for an unconditional version of this result.

The second hypothesis of \cite[Theorem 2.1]{ALOWW} is ``uniform upper bounds on the fibers", which we prove for the same normal subgroups $N$.

\begin{theorem}\label{thm:nilpotent_fibers_uniform}
    Let $k$, $G$, $N$, $q$, and $T_1,T_2,\dots,T_r$ satisfy the conditions of \Cref{thm:nilpotent_fibers_main}.

    Then for each $\pi \in q_*\Sur(G_k,G)$ we have
    \[
        \#\{\psi\in q_*^{-1}(\pi) : |\disc(\psi)|\le X\}  = O_{n,[k:\Q],\epsilon}\left(\frac{\max_j h^1_{ur}(k,N,T_j,\pi)}{(q_*\disc_G(\pi))^{1/a(N) - \epsilon}}X^{1/a(N)}(\log X)^{b-1}\right),
    \]
    where $b \in \Z_{>0}$ is the same ineffective constant from \Cref{thm:nilpotent_fibers_main}, $q_*\disc_G$ is the pushforward discriminant, and $h^1_{ur}(k,N,T_j,\pi)$ is the quantity defined in \Cref{def:h1ur}.
\end{theorem}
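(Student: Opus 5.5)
The plan is to obtain the uniform bound as a byproduct of the multiple Dirichlet series argument that proves \Cref{thm:nilpotent_fibers_main}. We keep track of how every constant depends on $\pi$, and then run a Tauberian argument that is uniform in $\pi$.

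\textbf{Step 1: the fiber series.} For fixed $\pi\in q_*\Sur(G_k,G)$, form the multiple Dirichlet series $D(\pi;\underline{s})$ generating $\psi\in q_*^{-1}(\pi)$, with one variable for each ramification type in $N$. For each abelian $T_j$, fiber the set further over $N/T_j$. The innermost layer then counts crossed homomorphisms into the abelian Galois module $T_j$ twisted by the lift $\pi_j$. By Poitou--Tate and class field theory, this inner sum is a finite linear combination of products of Hecke $L$-functions from $\mathcal{L}(k,G)$ times an Euler product that is absolutely convergent in a wider region. The number of terms in this combination is controlled by $|H^1_{ur}|$ of the relevant module, i.e.\ by $h^1_{ur}(k,N,T_j,\pi)$.

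\textbf{Step 2: uniform continuation and bounds.} Factoring the Euler products, the finite-prime contribution of $\pi$ carries a factor of size $(q_*\disc_G(\pi))^{-\Re s}$ up to $\epsilon$-losses; this is the source of the denominator in the statement. Using $H(\gamma,0,*;k,G)$ together with convexity, we bound the continued function on each region $\Omega_j$ by
\[
\ll_\epsilon h^1_{ur}(k,N,T_j,\pi)\,(q_*\disc_G(\pi))^{-\Re(\cdot)+\epsilon}
\]
times a polynomial in the height, after multiplying by a fixed polynomial $Q(\underline{s})$ that kills the poles. The key point is that $Q$ and the pole locations depend only on $k$ and $G$, not on $\pi$: the poles come from the trivial-character parts, whose positions are determined by the minimal-index classes of $N$. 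Taking the maximum over $j$ gives a bound on $\bigcup_j\Omega_j$ with constant $\max_j h^1_{ur}(k,N,T_j,\pi)$.

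\textbf{Step 3: the convex hull.} Pass to the convex hull by Bochner's tube theorem. The extension is given by an integral representation (Cauchy's formula on the tube), and this preserves sup-norm bounds up to constants depending only on the geometry of the tube. So the bound from Step 2 persists on compact subsets of the hull, with the same dependence on $\pi$.

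\textbf{Step 4: Tauberian step.} Specialize to the discriminant line. Apply a Tauberian theorem with explicit constants, or simply a smoothed Perron formula with positivity of coefficients. This bounds the counting function by the size of the Laurent coefficients at $s=1/a(N)$ times $X^{1/a(N)}(\log X)^{b-1}$, where $b$ is the pole order from \Cref{thm:nilpotent_fibers_main} and is independent of $\pi$. The Laurent coefficients are bounded via Cauchy estimates on a small polydisc, using Step 3.

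\textbf{Main obstacle.} The hard part is Step 2. We must show that the implied constants in the vertical-strip bounds, and the removed poles, are genuinely uniform in $\pi$ apart from the $h^1_{ur}$ and discriminant factors. In particular, the conductors of the twisted Hecke characters depend on $\pi$. We handle this using the $\alpha=0$ conductor exponent in $H(\gamma,0,*;k,G)$, which absorbs that dependence into $\epsilon$-powers of $q_*\disc_G(\pi)$.
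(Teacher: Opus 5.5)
Your outline follows the paper's route. The paper deduces the statement from \Cref{thm:unconditional_nilpotent_fibers_uniform}, whose proof uses the fiber bounds of \Cref{thm:meromorphic_continuation_of_fiber_series}, continuation to the hull, a $\Gamma(s)$-smoothed Perron integral shifted to $\mathrm{Re}(s)=1/a(N)-\epsilon$, and Cauchy estimates for the residue on a circle of radius $\epsilon$.

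The gap is in how you carry bounds to the hull and through the contour shift. In Step 3, the continuation in Bochner's theorem is not obtained from a Cauchy integral over the tube: a Cauchy torus centred at a point of the hull does not lie in $\Omega$. More importantly, bounds ``on compact subsets of the hull'' are not what Step 4 needs. The smoothed Perron argument leaves a shifted integral over the entire line $\mathrm{Re}(s)=1/a(N)-\epsilon$, which your Step 4 never bounds. Controlling it requires $|D|\ll C_\pi(1+|t|)^{B}$ along that whole line, with $C_\pi$ carrying exactly the factor $\max_j h^1_{ur}(k,N,T_j,\pi)\,(q_*\disc_G(\pi))^{-\mathrm{Re}(s)+\epsilon}$. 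The paper obtains this from \Cref{prop:multivariable_phragmen-lindelof} and \Cref{cor:independent_variable_multi_phragmen_lindelof}: majorize $|f|\le|h|$ on the tube by an explicit polynomial $h$, propagate the inequality to the hull via \cite[Proposition C.5]{cech2024ratios}, then apply Phragm\'en--Lindel\"of along segments.

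Relatedly, your Step 2 claim that $H(\gamma,0,*;k,G)$ ``together with convexity'' gives polynomial growth in the height does not hold. The implied constant in $H(\gamma,0,*)$ depends on $t$ in an unspecified way. Interpolating with convexity cannot give $q(\chi)^{\epsilon}(1+|t|)^{B}$ uniformly in $\chi$, because convexity loses a fixed power of $q(\chi)$ at the points with $\mathrm{Re}(s_\tau)$ near $\gamma$ that are used to build the hull. A $t$-aspect exponent is a genuine extra input. The paper's own argument also uses the exponent $\beta$ of the hybrid bounds at this step, so you should assume something like $H(\gamma,0,\beta;k,G)$ explicitly rather than claim to derive it.

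Smaller points:
\begin{enumerate}
\item The ``i.e.'' in Step 1 hides \Cref{prop:h1ur_bound}. The module whose unramified cohomology controls the number of terms is $T_j(\psi)$, where $\psi$ is the intermediate map to $G/T_j$ lying over $\pi$; it is not determined by $\pi$. One filters $T_j$ by the upper central series of $N$, whose graded pieces carry a Galois action factoring through $\pi$, at a cost of $|\disc(\psi)|^{\epsilon}$.
\item The factor $(q_*\disc_G(\pi))^{-\mathrm{Re}(s)}$ appears only after summing over these $\psi$. That sum converges because $N$ is nilpotent (so there are $\ll D^{\epsilon}$ extensions of discriminant $D$) and because the $G/N$-invariant of $\pi$ divides the $G/T_j$-invariant of $\psi$.
\item The conductor dependence you flag enters through $\disc(F(\psi,\widetilde{\tau}))$, not through $\pi$. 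It is absorbed by the $\epsilon$-shift in this $\psi$-sum.
\item $b$ may depend on $\pi$; it is $b(\pi)$ in \Cref{thm:unconditional_nilpotent_fibers}. A uniform pole-cancelling polynomial is fine since the pole orders are bounded, but the residue has degree $b(\pi)-1$.
\end{enumerate}
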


See \Cref{thm:unconditional_nilpotent_fibers_uniform} for an unconditional version of this result. We remark that \Cref{thm:nilpotent_fibers_uniform} is effectively the same strength as the corresponding result for $N$ abelian proven in \cite[Theorem 6.1]{ALOWW}. The only difference is again the fact that $H^1_{ur}(k,N(\pi))$ has been replaced by $h^1_{ur}(k,N,T_j,\pi)$.

The third and final hypothesis of \cite[Theorem 2.1]{ALOWW} is the ``criterion for convergence'', which follows from the condition \eqref{eq:sum_h1ur}. As in \cite{ALOWW}, this relies only on the (average) size of certain class group torsion and mild bounds for the number of $G/N$-extensions.

\subsection{Comparison to previously known results}

The number field counting conjecture (\Cref{conj:number_field_counting}) is known in many cases, although exhaustive lists have become difficult to write down. This is particularly evident in \cite{ALOWW}, whose inductive results prove Conjecture \ref{conj:number_field_counting} for a large number of new, infinite families, but the shape of their results makes writing a closed form expression for the complete list of said families somewhat challenging.
 
We attempt to organize all the previously known cases of \Cref{conj:number_field_counting} in \Cref{table:history}.  We include only the most general references in order to save space and to give a clear picture of where progress has been made towards \Cref{conj:number_field_counting}, although we remark that a number of earlier works also studied groups in these families. This is most pronounced in the case that $G$ is concentrated in an abelian group, as this case has seen the most incremental progress over the years. The results include abelian groups \cite{wright1989}, quartic $D_4$ \cite{cohen-diaz-y-diaz-olivier2002}, wreath products $C_2\wr H$ subject to certain growth conditions on $H$-extensions \cite{kluners2012}, $C_3\wr C_2$ \cite{kluners2005}, various families of nilpotent groups concentrated in an abelian normal subgroup \cite{klunersHab2005,fouvry-koymans2021nonicheisenberg,kluners-wang2023heisenberg,koymans-pagano2021,ALOWW}, wreath products $C_n\wr H$ subject to certain growth conditions on $H$-extensions \cite[Corollary 1.3 and 1.4]{ALOWW}, and certain semidirect products with abelian kernel \cite[Corollary 1.7]{ALOWW}. All of these results  now fall under the umbrella of \cite[Theorem 1.11]{ALOWW}, after inputting the appropriate bounds on class group torsion; thus,  they are special cases of the first row of \Cref{table:history}. We refer the reader to the introduction of \cite{ALOWW} for a more full discussion of this family.

\begin{table}[!htpb]
	\begin{tabular}{|c|c|l|}
		\hline
        \multicolumn{3}{|c|}{\bf Concentrated groups}\\\hline
		Group(s) & Concentrated in & Most general reference for known cases\\\hline
		any $G$ & an abelian $T\normal G$ & \cite[Theorem 1.11]{ALOWW}\\
        &&\hspace{1cm} subject to average bounds for certain\\
        &&\hspace{1cm} class group torsion in $G/T$-extensions\\

        $S_3 \wr B$ & $S_3^m$ & \cite[Theorem 1.9]{ALOWW}\\
        &&\hspace{1cm} subject to average bounds for certain\\
        &&\hspace{1cm} class group torsion in $B$-extensions\\
		
		$S_n\times B$, $n\le 5$& $S_n$ & \cite{jwang2021,masri-thorne-wang-tsai2020,mishra-ray2024}\\
        &&\hspace{1cm}$B$ nilpotent in degree $|B|$\\
        &&\hspace{1cm}subject to certain conditions on the\\
        &&\hspace{1cm}prime divisors of $|B|$\\

		\hline 
        
        \multicolumn{3}{c}{$\,$}\\
        \hline
        \multicolumn{3}{|c|}{\bf Properly semiconcentrated groups }\\\hline
		Group(s) & semiconcentrated in & Reference \\\hline
        $C_p^r$ for $r > 1$ & order $p$ subgroups & \cite{wright1989}\\
        $D_4$ in degree $8$ & abelian normal subgroups & \cite{shankar-varma2025octic}\\
        $D_6$ in degree $12$ & proper normal subgroups & \cite{koymans-lemke-oliver-sofos-thorne2025}\\
        \hline 
        \multicolumn{3}{c}{$\,$}\\
        \hline
        \multicolumn{3}{|c|}{\bf Groups that are not semiconcentrated}\\\hline
		\multicolumn{2}{|c|}{Group(s)} & Reference \\\hline
		\multicolumn{2}{|c|}{$C_p$ for $p$ prime} & \cite{wright1989}\\
		\multicolumn{2}{|c|}{$S_n$ in degree $n\le 5$} & \cite{datskovsky-wright1988,bhargava-shankar-wang2015}\\
		\multicolumn{2}{|c|}{$S_3$ in degree $6$} & \cite{bhargava-wood2007,belabas-fouvry2010sextic}\\
        
		\hline
	\end{tabular}
	\caption{Families containing previously known cases of Conjecture \ref{conj:number_field_counting}}
    \label{table:history}
\end{table}

Our results introduce whole new families of groups for which we can prove Conjecture \ref{conj:number_field_counting} (possibly under some analytic hypotheses), disjoint from the families considered in previous work outlined in \Cref{table:history}.
\begin{itemize}
    \item In \Cref{thm:Q8sxC2} and \Cref{thm:16T11_over_k} we prove \Cref{conj:number_field_counting} for the properly semiconcentrated group $16T11$. \Cref{thm:nilpotent_general_invariant} gives our most general unconditional result in this direction.
    
    Conditional on certain analytic assumptions, \Cref{thm:intro_nilpotent} proves \Cref{conj:number_field_counting} for infinitely many nilpotent transitive groups which are properly semiconcentrated in their abelian normal subgroups.
    
    \item Previously, \cite{ALOWW} covered groups concentrated in {\em abelian} normal subgroups or groups of the form $S_3\wr B$, subject to certain class group torsion bounds. In this paper, we prove a number of new results for transitive groups which are concentrated in a proper {\em nilpotent normal subgroup} and semiconcentrated in abelian normal subgroups. By proving new cases of \Cref{conj:twisted_number_field_counting}, we are able to apply the framework in \cite{ALOWW} to new groups.
    
    {\em Unconditional results:} \Cref{thm:intro_nilpotent_groups_with_shortcut} does this unconditionally for ten new transitive nilpotent groups, while \Cref{thm:wreath_products} and \Cref{thm:direct_products} does this for infinitely many new wreath products and direct products. \Cref{thm:unconditional_concentrated} gives our most general unconditional result in this direction.
    
    {\em Conditional results:} Conditionally on certain analytic assumptions, \Cref{thm:intro_nilpotent} proves \Cref{conj:number_field_counting} for any nilpotent group in this family. Meanwhile, \Cref{thm:main_concentrated} proves \Cref{conj:number_field_counting} for a number of new nonnilpotent groups in this family subject to average bounds for certain class group torsion. This is completely analogous to the statement of \cite[Theorem 1.11]{ALOWW} for transitive groups concentrated in a single abelian normal subgroup.
\end{itemize}

However, in addition to most of the new cases of \Cref{conj:number_field_counting} we prove being conditional on certain analytic assumptions, it is important to remark that our results prove the existence of the asymptotic \emph{without} producing an explicit value for the exponent $b$ of $\log X$ in the asymptotic. All of the previously known cases of Conjecture \ref{conj:number_field_counting} falling into one of the families in Table \ref{table:history} are proven with an explicit formula for $b$ (although sometimes evaluating that formula is not straight-forward, as outlined in \cite{jwang2025counterexamples}).

Our methods are also capable of producing power saving error terms, most of which will be new. Previously, explicit power savings were known for abelian groups \cite{frei-loughran-newton2018,alberts2024}, $S_3$ in degrees $3$ and $6$ \cite{bhargava-taniguchi-thorne2023}, $S_3\times A$ \cite{jwang2017}, $D_4$ in degree $4$ \cite{cohen-diaz-y-diaz-olivier2002, bucur-florea-serrano-lopez-varma2022,mcgown-tucker2023}, $S_4$ in degree $4$ \cite{belabas2010error}, and $S_5$ in degree $5$ \cite{shankar2014counting}.

We give explicit power savings for $4T3$ and $8T4$ in \Cref{thm:D4} and $8T11$ and $16T11$ in \Cref{thm:Q8sxC2}. All of our results for nilpotent groups prove power saving error bounds, although we opt not to compute them explicitly as they tend to become rather cumbersome as $G$ gets larger. The power savings for small groups like $D_4$ and $Q_8\rtimes C_2$ are fairly strong compared to what was previously known.
\begin{itemize}
    \item The error bound for quartic $D_4$-extensions is not the best known, falling short of $O_{\epsilon}(X^{5/8+\epsilon})$ proven by McGown and Tucker \cite{mcgown-tucker2023}. 
    \item The error bound for $D_4$ in the conductor ordering is better than the error term $O_{\epsilon}(X^{11/12+\epsilon})$ proven in \cite{friedrichsen2021secondary}.
    \item Power saving error bounds for counting octic $D_4$-fields and octic $Q_8\rtimes C_2$-fields were not previously known, as Shankar--Varma \cite{shankar-varma2025octic} and Alberts--Lemke Oliver--Wang--Wood \cite{ALOWW} only give the main terms.
\end{itemize}

As part of our work for proving new concentrated cases of \Cref{conj:number_field_counting}, we also prove new cases of the twisted number field counting conjecture, \Cref{conj:twisted_number_field_counting}. The framework of \cite{ALOWW} gives a recipe for converting new results towards \Cref{conj:twisted_number_field_counting} into new results towards \Cref{conj:number_field_counting} for concentrated groups, and our work shows how this recipe can be extended to apply to semiconcentrated groups. This conjecture is much newer, and far fewer cases are currently known. See \Cref{table:history_twisted} below.

\begin{table}[!htpb]
	\begin{tabular}{|c|c|l|}
		\hline
		Group(s) & Normal subgroup & Most general reference for known cases\\\hline
		any $G$ & abelian & \cite[Corollary 1.2]{alberts-odorney2021}\\
        &&\hspace{1cm} assuming there is at least one $G$-extension\\

        $S_3 \wr B$ & $S_3^m$ & \cite[Theorem 3.1]{ALOWW}\\
		
		$G\subseteq A\wr B$ & $T\supseteq A^m\cap G$ & \cite[Theorem 1.6 and 1.7]{choudhary2025twisted}\\
        &&\hspace{1cm} assuming there is at least one $G$-extension,\\
        &&\hspace{1cm} $G$ concentrated in $A^m \cap G$, and subject \\
        &&\hspace{1cm} to certain group theoretic conditions on\\
        &&\hspace{1cm} the abelian group $A$ and $T/(A^m\cap G$).\\
		\hline
	\end{tabular}
	\caption{Families containing previously known cases of Conjecture \ref{conj:twisted_number_field_counting}}
    \label{table:history_twisted}
\end{table}

The unconditional result \Cref{thm:unconditional_nilpotent_fibers}, and corresponding conditional result \Cref{thm:nilpotent_fibers_main}, prove new cases of \Cref{conj:twisted_number_field_counting} for $G$ semiconcentrated in abelian normal subgroups which are contained in a nilpotent normal subgroup $N\normal G$. In the process of proving \Cref{thm:wreath_products}, we verify the hypotheses of \Cref{thm:unconditional_nilpotent_fibers} to prove \Cref{conj:twisted_number_field_counting} for $G = N\wr B$ and the normal subgroup $N^m\normal G$ as described in \Cref{thm:wreath_products}, as well as $G=N\times B$ with normal subgroup $N\times 1$ as described in \Cref{thm:direct_products}.

\subsection{Layout of the paper}

We begin in Section \ref{sec:strategy} with the definition of the generating multiple Dirichlet series for $G$-extensions of $k$, $D_k(G;\underline{s})$. This essentially follows work of Gundlach \cite{gundlach2022multimalle} and Ellenberg--Venkatesh \cite{ellenberg-venkatesh2005} on ordering $G$-extensions by a system of basic invariants determined by ramification type, with a slight modification to give us some flexibility for handling the wildly ramified places. From here, we describe our method in more detail, including discussions of Hartogs' phenomenon and the relevant Tauberian theorems that show that it suffices to construct a meromorphic continuation of $D_k(G;\underline{s})$ to a large enough domain in order to prove Conjecture \ref{conj:number_field_counting}. Of particular interest is \Cref{subsec:D4}, where we work out the proof of \Cref{thm:D4}. This example can be essentially reduced to two dimensions, allowing us to include several pictures illustrating how the method works.

We then define ramification types in \Cref{sec:ramification_types}, and develop their important properties. This section is primarily setup for the underlying structure of $D_k(G;\underline{s})$. There are several equivalent notions of ``tame ramification type'' in the literature. While we make reference to only two of these in the main body of the paper, we include proofs that each of these notions are equivalent in \Cref{app:tame_ramification_types}.

Our most technical section is \Cref{sec:fiber_series}, where we construct the necessary meromorphic continuations of the subseries of $D_k(G;\underline{s})$ associated to the fibers of an abelian normal subgroup $T\normal G$. This section includes multivariable versions of several results proven in \cite{alberts2024}.

We define the cohomological invariants $h_{ur}^1(k,N,T,\pi)$ in \Cref{sec:bounds} and prove some basic properties.

We state and prove our main unconditional results in \Cref{sec:unconditional}. For each inertial invariant, we specify which point needs to be inside the region of meromorphic continuation in order to obtain an asymptotic for  ordering by that invariant.  The unconditional results from the introduction are proven in \Cref{sec:unconditional_corollaries} (except for \Cref{thm:D4}, which is proven in \Cref{subsec:D4} for illustrative purposes). For each of the groups under consideration, we prove that the point of interest identified in \Cref{sec:unconditional} indeed lies inside the region of meromorphicity. For some of the computations we rely on \verb|Magma| code, which we have provided in a github repository \cite{codeurl}.

The conditional results from the introduction are proven in \Cref{sec:conditional}, once again by proving that, under some version of the Lindel\"of hypothesis, the point of interest lies inside the region of meromorphic continuation of the multiple Dirichlet series $D_k(G;\underline{s})$.

\section*{Acknowledgments}
We would like to thank Justine Dell, Kiran S. Kedlaya, Robert Lemke Oliver, Daniel Loughran, Evan O'Dorney, and Will Sawin for helpful discussions.
This research was supported through the program {\em Oberwolfach Research Fellows} by the Mathematisches Forschungsinstitut Oberwolfach in 2025. Alberts was also supported by an AMS-Simons Travel Grant. Bucur would like to thank the DFG-funded Hausdorff Research Institute for Mathematics and the Lodha Mathematical Sciences Institute for their hospitality. Bucur was also supported by the NSF grant DMS-2002716.

\section{The General Strategy}\label{sec:strategy}

In this section we define the generating multiple Dirichlet series for $G$-extensions over a number field and outline our method for giving a meromorphic continuation of this function. Before stating the method in the abstract via Theorem \ref{thm:main}, we work out this procedure explicitly to prove \Cref{thm:D4} for $G=D_4$ in \Cref{subsec:D4}. This example is particularly nice for introducing the main ideas because it can be ``reduced to two dimensions'', allowing us to draw the relevant pictures demonstrating what happens at each step.

There are some technical definitions and steps in this section that we postpone until later in the paper, so as not to distract from the big ideas.

\subsection{The generating multiple Dirichlet series}

The essential definition we make is the generating multiple Dirichlet series for an abstract finite group $G$. It can be specialized to the single variable generating series for any transitive representation of $G$ to get the required asymptotic.

\begin{definition}\label{def:DkGs}
    Let $k$ be a number field and $G$ a finite group. For any $\pi \in \Hom(G_k,G)$ we define
    \[
        \inv_{k,G}(\pi,\underline{s}) = \prod_{\substack{\tau\in \RT_{k,0}(G)}}\prod_{\substack{\pk\text{ ram.}\\\text{in }\pi\text{ with}\\\text{type }\tau}} |\pk|^{-s_\tau},
    \]
    where $\RT_{k,0}(G)$ is the set of nontrivial $k$-ramification types in $G$ (see Definition \ref{def:RT}), $|\pk|$ is the norm of $\pk$ down to $\Q$, and $\underline{s} = (s_\tau)$ is a tuple of variables indexed by ramification types.
    
    For an abstract group $G$, we then define
    \[
        D_k(G;\underline{s}) = \sum_{\pi\in \Sur(G_k,G)} \inv_{k,G}(\pi,\underline{s}).
    \]
\end{definition}

Under the Galois correspondence, $\Sur(G_k,G)$ is in a many-to-one correspondence with the set of $G$-extensions $F/k$, showing that this is a generating series for $G$-extensions. The summands are a multivariable version of the analytic character $\disc(\pi)^{-s}$ built out of the (finitely many) primes that ramify in $\pi$.

In order to make sense of this definition, we need to define ``ramification types.'' We postpone the formal definitions to \Cref{sec:ramification_types}, as they get somewhat technical. Briefly, we consider two flavors of ramification type:
\begin{itemize}
    \item ``Tame ramification types'' measure the ways in which a prime can tamely ramify in a $G$-extension over $k$. The formulation we give in \Cref{def:tameRT} is equivalent to the various other notions appearing in the literature (often referred to as just ``ramification types'', see \cite{malle2004,ellenberg-venkatesh2005,smith2022selmer1,gundlach2022multimalle,alberts2024} for several examples). In fact, our construction of $D_k(G;\underline{s})$ is inspired by Gundlach's construction of a similar multiple Dirichlet series in \cite{gundlach2022multimalle}.

    For the purposes of this section, it will suffice to know that the tame ramification types are in bijection with $k$-conjugacy classes as defined by Malle in \cite{malle2004}: these are minimal subsets of $G$ closed under conjugation and closed under the cyclotomic action $x.g = g^{\chi(x)^{-1}}$, where $\chi:G_k\to \Gal(k\Q^{\rm ab}/k)\subseteq \widehat{\Z}^{\times}$ is the cyclotomic character.

    \item ``Wild ramification types'' measure the ways in which a prime can wildly ramify in a $G$-extension over $k$. To the best of our knowledge, the notion of wild ramification types does not currently exist in the literature. Most researchers do not require it, as the wild primes are not expected to affect the rates of growth for number field counting functions (at least, in characteristic $0$).

    The same is mostly true in our situation: it is reasonable to expect $D_k(G;\underline{s})$ to be an entire function in the wild variables (that is, complex variables indexed by wild ramification types), and in all meromorphic continuations we construct the singularities will indeed be cut out by equations in only the tame variables. However, the wild variables need to be included to make sure that $D_k(G;\underline{s})$ cleanly specializes to the generating single variable Dirichlet series we are interested in.
    
    For the purposes of this section, and in particular our example in \Cref{subsec:D4}, the reader can safely ignore the wild ramification types.
\end{itemize}

The most important property of $D_k(G;\underline{s})$ is that it specializes to all single variable generating Dirichlet series for $G$-extensions ordered by an inertial invariant.

\begin{proposition}\label{prop:specialize}
    Let ${\rm wt}:\RT_{k,0}(G)\to \R^+$ be a weight function on nontrivial ramification types and define an invariant $\inv:\Hom(G_k,G) \to \R^+$ by
    \[
        \inv(\pi) = \prod_{\tau\in \RT_{k,0}(G)} \prod_{\substack{\pk\text{ ram.}\\\text{ in }\pi\text{ with}\\\text{type }\tau}} |\pk|^{{\rm wt}(\tau)}.
    \]
    Specializing $D_k(G;\underline{s})$ to the complex line $s_\tau = {\rm wt}(\tau)s$ gives the generating Dirichlet series
    \[
        \sum_{\pi\in \Sur(G_k,G)} \inv(\pi)^{-s}.
    \]
\end{proposition}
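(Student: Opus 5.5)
This is a direct computation from Definition \ref{def:DkGs}, done summand by summand. Fix $\pi\in\Sur(G_k,G)$. By definition, $\inv_{k,G}(\pi,\underline{s})$ is a finite product over the primes $\pk$ ramified in $\pi$. Each such prime contributes the single factor $|\pk|^{-s_\tau}$, where $\tau$ is its ramification type.

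The first thing to check is that this product is well defined: every prime ramified in $\pi$ must have exactly one nontrivial ramification type $\tau\in\RT_{k,0}(G)$. I would take this from the formal setup of \Cref{sec:ramification_types}. There the ramification type of $\pk$ in $\pi$ is attached to the restriction $\pi|_{G_{k_\pk}}$, so it is uniquely determined. It is trivial exactly when $\pi$ is unramified at $\pk$, since the inertia image is then trivial. For tame primes this is the statement that the type is the $k$-conjugacy class of a generator of inertia. For wild primes it comes from the definition of wild types. This is the only place where the postponed definitions enter, and it is the step I expect to need the most care. It requires only that ``type'' is a function on ramified primes with values in $\RT_{k,0}(G)$, which I will assume from that section.

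Granting this, substitute $s_\tau={\rm wt}(\tau)s$. Each factor becomes $|\pk|^{-{\rm wt}(\tau)s}=\bigl(|\pk|^{{\rm wt}(\tau)}\bigr)^{-s}$. The double product defining $\inv_{k,G}(\pi,\underline{s})$ is finite and has the same index set as the double product defining $\inv(\pi)$. Hence
\[
\inv_{k,G}(\pi,({\rm wt}(\tau)s)_\tau)=\prod_{\tau\in\RT_{k,0}(G)}\prod_{\substack{\pk\text{ ram. in }\pi\\\text{with type }\tau}}|\pk|^{-{\rm wt}(\tau)s}=\inv(\pi)^{-s},
\]
where the last equality uses that the exponential $x\mapsto x^{-s}$ for positive real $x$ is multiplicative.

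Summing over $\pi\in\Sur(G_k,G)$ gives the claimed identity of formal Dirichlet series. To upgrade this to an equality of functions, note that restricting a series to a complex line preserves absolute convergence. Wherever $\Re(s)$ is large enough that the specialized point lies in the region of absolute convergence of $D_k(G;\underline{s})$, both sides are therefore absolutely convergent and equal there. Since ${\rm wt}>0$, every coordinate $\Re(s_\tau)={\rm wt}(\tau)\Re(s)$ tends to infinity as $\Re(s)\to\infty$, so such a half-plane exists whenever $D_k(G;\underline{s})$ converges absolutely on some translate of the positive orthant.
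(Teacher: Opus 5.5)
Your proposal is correct and is the same argument as the paper's, which simply observes that $\inv(\pi)^{-s}$ is by definition the specialization of $\inv_{k,G}(\pi,\underline{s})$ to the line $s_\tau={\rm wt}(\tau)s$, and then sums over $\pi\in\Sur(G_k,G)$. Your extra checks are accurate elaborations of what the paper leaves implicit: that each ramified prime has a unique nontrivial type (strictly, determined by $\pi|_{I_\pk}$), and that the two sides agree as functions on a half-plane of absolute convergence.
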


The proof is immediate, as $\inv(\pi)^{-s}$ is by definition the specialization of $\inv_{k,G}(\pi,\underline{s})$ to this complex line. As a consequence, properties of  $D_k(G;\underline{s})$ can be specialized to the generating series of $G$-extensions ordered by any inertial invariant. In this way, $D_k(G;\underline{s})$ can be interpreted as the generating series for ``all inertial invariants'' of $G$-extensions over $k$.

\begin{remark}
    We remark that the equivalent term \emph{inertial height} is used in some places in the literature instead of inertial invariant, to avoid overloading the word ``invariant'' and draw important comparisons with counting rational points. See, for example, \cite{loughran2024malle,tavernier2025counting,gundlach2026lifts}.
\end{remark}

We can extend the notion of semiconcentrated groups to general inertial invariants. First, we define the equivalent of the minimal index.

\begin{definition}\label{def:min_index_gen_inv}
    For a finite group $G$ and a weight function ${\rm wt}:\RT_{k,0}(G)\to \R^+$ on nontrivial ramification types, we define the minimal weight associated to the inertial invariant $\inv$ induced by $\wt$ to be \[a_\inv(G) = \min\{ \wt(\tau); \tau \in \RT_{k,0}(G) \}.\]
    We say that an element $g\in G$ is of minimal weight with respect to $\inv$ if $\wt(g) = a_\inv(G)$.
\end{definition}

We can now generalize the definitions of (semi)concentrated groups to these more general invariants.
\begin{definition}\label{def:semiconc_gen_inv}
    We say that a finite group $G$ is \emph{concentrated} in a proper normal subgroup $T$ with respect to an inertial invariant $\inv$ if all the elements of $G$ of minimal weight (for the $\wt$ function associated to $\inv$) are contained in $T$.
    
    We say that $G$ is \emph{semiconcentrated} with respect to $\inv$ in a collection $T_1, \dots, T_r$ of proper normal subgroups if all the elements of minimal weight are contained in the union of of the subgroups $T_1, \dots, T_r.$
    
    We say that $G$ is \emph{properly semiconcentrated} with respect to $\inv$ if it is semiconcentrated but not concentrated with respect to $\inv.$
\end{definition}

Our primary goal is to construct a meromorphic continuation of $D_k(G;\underline{s})$, which means we should discuss where this function is convergent.

\begin{proposition}\label{prop:nilpotent_absolute_convergence}
    Let $k$ be a number field and $G$ be a nilpotent group. Then $D_k(G,\underline{s})$ converges absolutely on the orthant
    \[
        \{\underline{s} : {\rm Re}(s_\tau) > 1\text{ for each }\tau\in \RTo{k}(G)\},
    \]
    where $\RTo{k}(G)$ is the set of nontrivial tame ramification types.
\end{proposition}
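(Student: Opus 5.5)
We prove absolute convergence by reducing to the case where every real part is a single real number $\sigma>1$ and the wild variables are as small as we like. Since the summands $\inv_{k,G}(\pi,\underline{s})$ have absolute value $\inv_{k,G}(\pi,{\rm Re}\,\underline{s})$, it suffices to treat real $\underline{s}$. The wild ramification types occur only at the finitely many primes above $|G|$, and at each such prime there are only finitely many local homomorphisms $G_{k_\pk}\to G$ up to the relevant equivalence. So the wild factors contribute a bounded multiplicative constant, provided their variables are fixed in a compact set. Indeed, the wild factor of any single term is a product over at most $[k:\Q]\omega(|G|)$ primes of quantities $|\pk|^{-s_\tau}$, each bounded once ${\rm Re}(s_\tau)$ is bounded below. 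The locus of absolute convergence is a union of translated orthants in the wild directions, and since the statement puts no condition on the wild variables, this local finiteness is exactly what makes them harmless. The problem then becomes bounding the tame part: it suffices to show that
\[
\sum_{\pi\in \Hom(G_k,G)} \prod_{\pk\nmid |G|,\ \pk \text{ ram. in }\pi} |\pk|^{-\sigma}
\]
converges for $\sigma>1$, with the sum restricted to homomorphisms having fixed local behavior at the primes above $|G|\infty$ (finitely many choices).

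For this I use the nilpotent structure and induct on $|G|$. A nilpotent group is the direct product of its Sylow subgroups, so $\Hom(G_k,G)=\prod_p \Hom(G_k,G_p)$. The set of primes ramified in $\pi$ is the union of the ramified sets of its components, and $|\pk|^{-\sigma}\le 1$, so the sum is bounded by the product of the corresponding sums for the $G_p$. We may therefore assume $G$ is a $p$-group.

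Next choose a central subgroup $Z\cong C_p$ and write $q:G\to G/Z$. For a fixed $\bar\pi\in\Hom(G_k,G/Z)$, the lifts $\pi$ with $q\pi=\bar\pi$ form a torsor under $\Hom(G_k,C_p)$, if nonempty. Fix one lift $\pi_0$; every other lift is $\pi_0\chi$ with $\chi\in\Hom(G_k,C_p)$, because $Z$ is central. A prime $\pk\nmid p$ that is tamely ramified in $\pi$ but unramified in $\bar\pi$ must be ramified in $\chi$. Hence
\[
\sum_{\pi\mapsto\bar\pi}\prod_{\pk\text{ ram. in }\pi}|\pk|^{-\sigma}\le \Big(\prod_{\pk\text{ ram. in }\bar\pi}1\Big)\cdot \#\{\text{lifts unramified outside }\bar\pi\text{'s primes}\}\cdot \sum_{\chi}\prod_{\pk\text{ ram. in }\chi,\ \pk\nmid \bar\pi}|\pk|^{-\sigma}.
\]
The obstacle is the middle factor. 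The number of $C_p$-characters unramified outside $S=\mathrm{Ram}(\bar\pi)\cup\{\pk\mid p\infty\}$ is at most $|\mathrm{Cl}_k[p]|\cdot p^{O(\#S)}$, which grows like $C^{\omega(\bar\pi)}$. I handle this by not discarding the weight at the primes ramified in $\bar\pi$: bound the $\bar\pi$-sum inductively with exponent $\sigma'$ slightly less than $\sigma$ but still greater than $1$, and absorb the factor $C^{\omega}$ into $\prod|\pk|^{-(\sigma-\sigma')}$, since $C^{\omega(\mathfrak{a})}\ll_\epsilon |\mathfrak{a}|^\epsilon$. The character sum is bounded by the Euler product $\prod_{\pk}(1+(p-1)\cdot\#\{\text{local tame }C_p\text{-chars}\}|\pk|^{-\sigma})$. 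This converges for $\sigma>1$ because the number of tame local characters at $\pk$ is $O(1)$, and $\sum|\pk|^{-\sigma}<\infty$.

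Induction on $|G|$, starting from the trivial group, then gives convergence for every $\sigma>1$. Throughout, the inductive hypothesis needs to be stated for the slightly stronger sum over $\Hom$ with $\omega$-weights, namely that $\sum_\pi C^{\omega(\pi)}\prod|\pk|^{-\sigma}<\infty$ for all $C$ and all $\sigma>1$. This is harmless, since the same argument absorbs $C^\omega$.
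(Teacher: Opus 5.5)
Your plan is a legitimate, more self-contained alternative to the paper: pass to real parts, absorb the finitely many wild primes into a constant, reduce to one exponent $\sigma=\min_\tau\sigma_\tau>1$, and induct along central subgroups of prime order. The paper instead quotes Kl\"uners--Wang's bound $\#\{F:\prod_{\mathfrak p\text{ ram.}}|\mathfrak p|=D\}\ll_\epsilon D^{\epsilon}$ for nilpotent $G$ and compares with $\prod_\tau\zeta(\sigma_\tau-\epsilon)$; your induction is in effect a reproof of that input. However, two steps fail as written. Write $S(\cdot)$ for the set of tamely ramified primes.

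The reduction to $p$-groups goes the wrong way. Since $|\mathfrak q|^{-\sigma}\le 1$, a prime ramified in two Sylow components is counted twice in $\prod_p\prod_{\mathfrak q\in S(\pi_p)}|\mathfrak q|^{-\sigma}$. So this product is \emph{at most} the true weight $\prod_{\mathfrak q\in\bigcup_p S(\pi_p)}|\mathfrak q|^{-\sigma}$, and the product of the component sums is a lower bound, not an upper bound. For example, over $\Q$ with $G=C_2\times C_3$ and both components ramified only at a prime $q\equiv 1\bmod 6$, you are comparing $q^{-2\sigma}$ with the true $q^{-\sigma}$. The step is also unnecessary: every nontrivial nilpotent group has a central subgroup of prime order with nilpotent quotient, so run the induction directly over $\Hom(G_k,G)$ for nilpotent $G$.

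The central step has a real gap. The claim that a prime ramified in $\pi=\pi_0\chi$ but not in $\bar\pi$ must be ramified in $\chi$ holds only where $\pi_0$ is unramified. For an arbitrary base lift, $S(\pi_0)\setminus S(\bar\pi)$ is not controlled by $\bar\pi$. Put $S_0=\{\mathfrak p\mid |G|\infty\}$. Your displayed inequality tacitly assumes every lift is (a lift unramified outside $S_0\cup S(\bar\pi)$)$\cdot\chi$, i.e.\ that a minimally ramified lift exists. That is an embedding problem with restricted ramification, which you have not justified and which can be obstructed by class group and unit phenomena. If no such lift exists, your middle factor is $0$ while the left side is positive.

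The repair is to avoid base lifts. Fix a finite set $E$ of tame primes outside $S(\bar\pi)$. By centrality of $Z$, any two lifts tamely ramified only in $S(\bar\pi)\cup E$ differ by a character $G_k\to C_p$ unramified outside $S_0\cup S(\bar\pi)\cup E$. So there are at most $C_{k,G}\,p^{\omega(\bar\pi)+\#E}$ of them, with the constant absorbing $|\mathrm{Cl}_k[p]|$ and $S_0$. Grouping lifts by $E=S(\pi)\setminus S(\bar\pi)$ gives
\[
\sum_{\pi\mapsto\bar\pi}\prod_{\mathfrak p\in S(\pi)}|\mathfrak p|^{-\sigma}\le C_{k,G}\,p^{\omega(\bar\pi)}\prod_{\mathfrak p\in S(\bar\pi)}|\mathfrak p|^{-\sigma}\prod_{\mathfrak q}\bigl(1+p\,|\mathfrak q|^{-\sigma}\bigr).
\]
Your absorption $p^{\omega(\bar\pi)}\ll_\epsilon\prod_{\mathfrak p\in S(\bar\pi)}|\mathfrak p|^{\epsilon}$, together with the inductive hypothesis at $\sigma-\epsilon>1$, then finishes. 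This also corrects your Euler-product bound for the $\chi$-sum, which as stated ignores the global factor $|\mathrm{Cl}_k[p]|$ and the weight $1$ at primes of $S(\bar\pi)$. With these repairs the argument is a valid self-contained proof of the proposition.
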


\begin{proof}
    While we have deferred the precise definition of ramification types until \Cref{sec:ramification_types}, the intricacies of the definition are mostly unnecessary for this proof.

    It follows from \cite[Theorem 1.6]{kluners-wang2022elltorsion} that
    \[
        \#\left\{F/k\ G\text{-extension} : \prod_{\mathfrak{p}\text{ ram. in }F/k}|\mathfrak{p}| = D\right\} \ll_{k,G,\epsilon} D^{\epsilon}.
    \]
    In particular, this implies
    \[
        D_k(G;\underline{s}) \ll_{k,G,\epsilon} \sum_{\underline{n}} \prod_{\tau\in \RT_{k,0}(G)} n_\tau^{\epsilon - {\rm Re}(s_\tau)}
    \]
    where $\underline{n}=(n_\tau)$ varies over tuples of positive integers indexed by nontrivial ramification types, as the only primes that can occur in a summand are precisely those ramified in the homomorphism $\pi$, and therefore ramified in the corresponding field. We remark that only the finitely many primes dividing $|G|$ can be wildly ramified and we will see in \Cref{sec:ramification_types} that there are only finitely many wild ramification types. Thus, the implied constant absorbs the contribution of wild ramification and we can bound
    \[
        D_k(G;\underline{s}) \ll_{k,G,\epsilon} \sum_{\underline{n}} \prod_{\tau\in \RTo{k}(G)} n_\tau^{\epsilon - {\rm Re}(s_\tau)}.
    \]
    This series converges absolutely for ${\rm Re}(s_\tau) > 1 + \epsilon$ for each tame $\tau$, so that letting $\epsilon$ tend to zero concludes the proof.
\end{proof}

\begin{remark}
    When $G$ is not nilpotent, a similar argument can be used to give a nonempty orthant of absolute convergence. The only difference is that for a general group one needs to use results like those of \cite{schmidt1995} instead of \cite[Theorem 1.6]{kluners-wang2022elltorsion} to produce the upper bound, which will be significantly weaker. We only state \Cref{prop:nilpotent_absolute_convergence} for nilpotent groups because this is the only case we use to prove our results.
\end{remark}

\subsection{The Hartogs phenomenon}
Our method relies on taking advantage of Hartogs phenomenon for multivariable complex analytic functions, specifically for tubular regions in the following form due to Bochner:

\begin{theorem}[Bochner's Tube Theorem {\cite[Theorem 2.5.10]{hormander1973severalvariables}}]\label{thm:hartogs}
    Let $\Omega\subset \C^n$ be a connected tubular region, that is $\Omega = \{\underline{s}\in \C^n : {\rm Re}(\underline{s})\in U\}$ for some connected open set $U\subseteq \R^n$.

    If $f$ is a holomorphic function on $\Omega$, then $f$ has an analytic continuation to the convex hull ${\rm Hull}(\Omega)$.
\end{theorem}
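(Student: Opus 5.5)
The plan is the classical route through Hartogs' extension and Laplace-type integral representations, though one can also reduce to the case of a convex base directly. Write $\underline{s} = \underline{x} + i\underline{y}$ with $\underline{x}\in U$ and $\underline{y}\in\R^n$. The first reduction is to show that the set of points of ${\rm Hull}(U)$ over which $f$ extends (single-valuedly) is convex. Since $U$ is connected, it suffices by an iteration argument to treat the following local situation: two points $a,b\in U$ joined inside $U$ by a polygonal path, and we want to extend $f$ to a tube over a neighborhood of the segment $[a,b]$. The convex hull of a connected open set is obtained by repeatedly adding segments, and we may use that ${\rm Hull}(U) = \bigcup_m U_m$ with $U_{m+1}$ the union of segments between points of $U_m$.

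The key step is a two-dimensional lemma. Restricting to the complex $2$-plane spanned by the directions of two consecutive edges of the polygonal path, we get a tube over an ``L-shaped'' (or angular) neighborhood in $\R^2$. We need to show that a holomorphic function there extends to the tube over the triangle filling the corner. This is done with Hartogs figures. Apply the exponential map $z_j\mapsto e^{z_j}$, so that tubes become Reinhardt domains. Holomorphic functions on a connected Reinhardt domain have Laurent expansions converging on the logarithmically convex hull. Concretely, a function holomorphic on a tube domain whose base contains the two segments $\{x_1\in I, x_2 = c\}$ and $\{x_1 = c', x_2\in J\}$ has a Laurent expansion in $(e^{z_1},e^{z_2})$, after passing to a covering to handle periodicity in $y$, or better, working directly with Cauchy integrals in $z_2$ over vertical lines. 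The absolute convergence region of a Laurent series is logarithmically convex, which in the $x$-coordinates means convex. This gives the extension over the triangle.

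Periodicity is a technicality: $f$ need not be periodic in $\underline{y}$. I would instead use a Cauchy integral in one variable along the boundary of a rectangle in the $z_2$-plane, with the horizontal sides at heights $\pm R$ and the vertical sides at real parts in $U$. I would show holomorphy in $z_1$ of the resulting integral and let the construction glue.

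Globally, once local extensions across each segment are built, we must check that they are single-valued. This holds because tube domains over convex sets are simply connected, and the extensions agree on overlaps by the identity theorem, since each extension agrees with $f$ on an open subset of the connected original tube. Then induct over the $U_m$.

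The main obstacle is this global monodromy-freeness. A priori the continuation along different chains of segments might differ, so that one only gets a Riemann domain spread over ${\rm Hull}(\Omega)$. I would handle it the way H\"ormander does: show that the envelope of holomorphy of $\Omega$ is schlicht and equals the tube over the convex hull. This uses the fact that tubes over convex sets are domains of holomorphy, via $e^{\langle \lambda, z\rangle}$ peaking functions, and that the envelope is itself a tube, by translation invariance in $i\R^n$. Since the statement is cited verbatim from H\"ormander, in the paper we would simply invoke \cite[Theorem 2.5.10]{hormander1973severalvariables}.
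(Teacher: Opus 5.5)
The paper gives no proof of this statement. It is quoted from H\"ormander \cite[Theorem 2.5.10]{hormander1973severalvariables} and used as a black box, so your closing sentence is exactly what the paper does, and for this paper nothing more is needed.

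The self-contained sketch before that sentence, however, does not work as written, and you should not present it as a proof.

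(1) The Reinhardt/Laurent step only covers $f$ that are $2\pi i$-periodic in each variable. The tube $\Omega$ is already a covering of the Reinhardt domain $\exp(\Omega)$, with deck group $2\pi i\Z^n$. A general $f$ (e.g.\ $f(\underline{s})=s_1$) does not descend to $\exp(\Omega)$, so no passage to coverings produces a Laurent expansion in $(e^{s_1},e^{s_2})$. Since nothing is assumed about growth in ${\rm Im}\,\underline{s}$, you also cannot periodize $f$ or write it as a Fourier--Laplace integral.

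(2) The rectangle Cauchy integral does not fill this gap. To evaluate $f(z_1,\zeta)$ on the horizontal sides you need $({\rm Re}\,z_1,t)\in U$ for every $t$ between the real parts of the two vertical sides. That is precisely what fails over the part of the triangle you are trying to reach, so the integral only reproduces $f$ where it is already defined. Dropping the horizontal sides would require decay as $R\to\infty$, which you do not have. For non-periodic $f$ the local step needs a device that is compact in the imaginary directions. One example is analytic discs whose real parts are harmonic extensions of loops lying near the two sides, combined with the continuity principle.

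(3) The gluing argument in your third paragraph is not valid. The overlap of $\Omega$ with the tube over the newly added piece can be disconnected, so the identity theorem gives agreement with $f$ only on one component. Simple connectivity of the target does not help until continuation along all paths is known, which is why your next paragraph rightly calls this the main obstacle. Translation invariance shows that $i\R^n$ acts on the envelope of holomorphy, but schlichtness is the real content of the theorem, and your sketch only asserts it. One way to obtain it: by Oka's theorem, $-\log$ of the boundary distance is plurisubharmonic on the envelope; it is also $i\R^n$-invariant, hence convex along real segments. This forces the envelope to be schlicht over a convex base in $\R^n$.
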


Essentially, in multi-dimensional complex space a singularity occurs as a union of codimension $1$ hypersurfaces in $\C^n$, i.e.~ a divisor. This is a restrictive structure when $n \ge 2$, and in particular not every domain $\Omega\subseteq \C^n$ has a boundary given by some (limit of) divisors. This means that any analytic function on $\Omega$ can be continued to a larger region whose boundary is given by a (limit of) divisors. In the case of connected tubular regions, the boundary is given by a (limit of) divisors if and only if the tubular region is convex. We can use this to produce analytic continuations essentially for ``free''.

We will apply the theorem to argue that a function $f(\underline{s})$ that is meromorphic in a region $\Omega$ with polar divisors $D_1, \dots, D_r$  has meromorphic continuation to ${\rm Hull}(\Omega)$ with the same polar divisors and the same orders of the poles along them. To see that, one multiplies the function $f(\underline{s})$ by a simpler function $g(\underline{s})$ that precisely cancels the poles and applies Bochner's tube theorem to this new function $g(\underline{s})f(\underline{s}).$

We do this by using the two (or generally, at least two) fibrations of $D_k(G;\underline{s})$ instead of the functional equations in the usual multiple Dirichlet series playbook that is explained in \cite{DGH}. In both cases, the goal is to meromorphically continue the multiple Dirichlet series of interest to a tubular region whose convex hull contains the pole that gives the asymptotic via Tauberian arguments.

There are two reasons that we opt not to consider functional equations in this paper: first of all, $D_{k}(G,\underline{s})$ does not typically have a functional equation. For the partitions of $D_k(G;\underline{s})$ corresponding to a normal subgroup $T\normal G$, it is in fact often the case that the summands have natural boundary at the hyperplane ${\rm Re}(s_\tau) = 0$ for each $\tau\in \iota_*\RTo{k}(T)$. Second, the poles we are interested in are easier to access and a functional equation is often not necessary. The ``rightmost pole'' of $D_k(G;\underline{s})$ is expected to be on the boundary of the region of absolute convergence, ${\rm Re}(s_\tau) > 1$. This means we only need to produce a slight increase to the region of meromorphicity to reveal this pole. In contrast, when studying moments of $L$-functions one is interested in poles at points like $(1/2,1)$, which is a distance of $1/2$ away from the region of absolute convergence. This indicates that a larger region of meromorphicity is needed, which may require using functional equations to reach.

In order to extend this process to prove power saving error bounds, or upper bounds like in \Cref{thm:nilpotent_fibers_uniform}, we will need a way to extrapolate bounds for $f$ on $\Omega$ to the convex hull. This is accomplished by an application of the Phragm\'en--Lindel\"of principle together with some useful facts about multivariable holomorphic functions.

\begin{proposition}\label{prop:multivariable_phragmen-lindelof}
    Let $\Omega\subset \C^n$ be a connected tubular region, that is $\Omega = \{\underline{s}\in \C^n : {\rm Re}(\underline{s})\in U\}$ for some connected open set $U\subseteq \R^n$. For any $\underline{s}\in \C^n$, write $\underline{s}=\underline{\sigma}+i\underline{t}$ for the real and imaginary parts $\underline{\sigma},\underline{t}\in \R^n$.

    Let $f$ be a holomorphic function on $\Omega$, and suppose there exist constants $A$ and $\mu$ for which
    \[
        |f(\underline{\sigma}+i\underline{t})| \ll_{\underline{\sigma}} A(1+|\underline{t}|)^{\mu}.
    \]
    Then the same bound holds for the analytic continuation to the convex hull ${\rm Hull}(\Omega)$.
\end{proposition}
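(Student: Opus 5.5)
The plan is to reduce the statement to the classical one-variable Phragm\'en--Lindel\"of principle on strips, applied along real directions in $\operatorname{Re}(\underline{s})$-space, and to propagate the bound from $U$ to $\operatorname{Hull}(U)$ by a connectedness argument. Let $V\subseteq \operatorname{Hull}(U)$ be the set of points $\underline{\sigma}$ having a neighborhood $B$ such that
\[
|f(\underline{\sigma}'+i\underline{t})| \ll_{B} A(1+|\underline{t}|)^{\mu} \quad\text{for all } \underline{\sigma}'\in B,\ \underline{t}\in\R^n,
\]
where $f$ now denotes the continuation given by \Cref{thm:hartogs}. I will read the hypothesis as holding locally uniformly in $\underline{\sigma}$; if only a pointwise statement is given, a short Cauchy-estimate argument upgrades it to this form. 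Under that reading $U\subseteq V$, and $V$ is open by construction.

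The key step is a segment lemma. Suppose $\underline{\sigma}_0,\underline{\sigma}_1\in V$ and the closed segment joining them lies in $\operatorname{Hull}(U)$. Put $\underline{v}=\underline{\sigma}_1-\underline{\sigma}_0$, fix $\underline{t}_0$, and consider
\[
h(z)=f(\underline{\sigma}_0+z\underline{v}+i\underline{t}_0),
\]
which is holomorphic on a neighborhood of the closed strip $0\le \operatorname{Re} z\le 1$. On the boundary lines $\operatorname{Re} z=0,1$, the imaginary part of the argument is $\underline{t}_0+(\operatorname{Im} z)\underline{v}$, so the hypothesis gives
\[
|h(z)|\ll A\bigl(1+|\underline{t}_0|+|\underline{v}|\,|\operatorname{Im} z|\bigr)^{\mu}.
\]
Phragm\'en--Lindel\"of in the form for polynomially bounded boundary data (apply the three-lines lemma to $h(z)(z+c)^{-\mu}$ for a suitable constant $c$) then yields the same shape of bound on the interior. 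Setting $\operatorname{Im} z=0$, this gives the bound at every point of the segment, uniformly in $\underline{t}_0$. Perturbing $\underline{\sigma}_0,\underline{\sigma}_1$ within their neighborhoods shows the bound is also locally uniform, so the whole segment lies in $V$. Iterating: any point of $\operatorname{Hull}(U)$ is a convex combination of at most $n+1$ points of $U$ (Carath\'eodory), and it is reached by finitely many such segment steps, each with both endpoints already in $V$. Hence $V=\operatorname{Hull}(U)$.

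The main obstacle is that Phragm\'en--Lindel\"of requires an a priori growth bound for $h$ in the strip, for instance $|h(z)|\ll \exp(e^{c|\operatorname{Im} z|})$ with $c<\pi$. Nothing in the statement of \Cref{thm:hartogs} provides this for the continuation. My first attempt is to extract it from the proof of Bochner's theorem in \cite{hormander1973severalvariables}, where the continuation across a segment is built by Cauchy integrals over polydiscs whose tubes lie in a region where $f$ is already known. Translation invariance in $\underline{t}$ is the crucial feature: because the construction commutes with imaginary translations, every constant in it is uniform in $\underline{t}$. This should give a bound of the form $\sup_{B}|f|$ times a factor $(1+|\underline{t}|)^{\mu}$ coming from nearby tubes, which is already of finite order and suffices for Phragm\'en--Lindel\"of. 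If that bookkeeping proves awkward, the fallback is a compactness argument: establish uniform boundedness on the tube over a thin neighborhood of a short subsegment adjacent to the known region, and then apply the strip argument one step at a time.
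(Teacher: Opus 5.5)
Your overall architecture is sound, and in one respect it is more careful than the paper. You propagate the bound through iterated two-point combinations via Carath\'eodory. The paper instead asserts that every point of ${\rm Hull}(\Omega)$ lies on a segment between two points of $\Omega$, which fails in general for $n\ge 3$ (take a thin neighbourhood of three coordinate segments in $\R^3$). Your strip step is essentially the paper's use of Phragm\'en--Lindel\"of along a complex line. One small correction: your boundary estimate $A(1+|\underline{t}_0|+|\underline{v}|\,|{\rm Im}\, z|)^{\mu}$ needs $\mu\ge 0$. In general, use $1+|\underline{t}_0|\le(1+|\underline{t}_0+y\underline{v}|)(1+|y|\,|\underline{v}|)$.

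The genuine gap is the one you flag yourself: Phragm\'en--Lindel\"of needs an a priori finite-order bound for the continuation on the strip, and your proposal does not supply one. ``Extract it from H\"ormander's proof'' is a plan, not an argument. The fallback does not work as stated: compactness controls $f$ only on tubes with bounded imaginary part. It cannot by itself give any growth bound as $|\underline{t}|\to\infty$.

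The paper fills this gap with a domination lemma, \cite[Proposition C.5]{cech2024ratios}: if $|f|\le|h|$ on a tube, then the continuations satisfy $|f|\le|h|$ on its convex hull. Take a subtube $\Omega'$ over a bounded base and $h(\underline{s})=A\prod_j(4s_j-4a_j)^{2\mu'}$ with $\underline{a}$ far from $\Omega'$. This gives polynomial growth, hence finite order, on ${\rm Hull}(\Omega')$. Only then does the paper apply Phragm\'en--Lindel\"of to sharpen the exponent to $\mu$.

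Your translation-invariance idea can be made rigorous without opening up Bochner's proof, and it then gives the whole statement with no Phragm\'en--Lindel\"of step. Write $T_V$ for the tube over $V$. Restriction $\mathcal{O}(T_{{\rm Hull}(U)})\to\mathcal{O}(T_U)$ is a continuous linear map of Fr\'echet spaces. It is surjective by Bochner's theorem and injective by the identity theorem. By the open mapping theorem, the extension operator $E$ is therefore continuous. Concretely, for each $\underline{\sigma}\in{\rm Hull}(U)$ there are a compact $L\subset T_U$ and a constant $C$ with $|Eg(\underline{\sigma})|\le C\sup_L|g|$. By uniqueness of continuation, $E$ commutes with imaginary translations. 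Applying the estimate to $g=f(\cdot+i\underline{t}_0)$ gives $|f(\underline{\sigma}+i\underline{t}_0)|\le C\sup_{L+i\underline{t}_0}|f|\ll_{\underline{\sigma}}A(1+|\underline{t}_0|)^{\mu}$, for every real $\mu$.

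Both this argument and the paper's need the hypothesis to hold locally uniformly in $\underline{\sigma}$; the paper assumes this tacitly. Your suggested Cauchy-estimate upgrade from a pointwise hypothesis is not short. It would need bounds on polydiscs whose real parts vary, which a pointwise-in-$\underline{\sigma}$ hypothesis does not give.
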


\begin{proof}
    The idea is to apply the Phragm\'en--Lindel\"of principle to a specialization of $f$ to a complex line. We will use the version given by \cite[Theorem 2]{rademacher1959phragmen}, although a number of other forms would also suffice.
    
    For any $\underline{s}\in {\rm Hull}(\Omega)$, the definition of the convex hull implies that there exist points $\underline{\omega}_1,\underline{\omega}_2\in \Omega$ such that $\underline{s}$ is on the line segment connecting $\underline{\omega}_1$ to $\underline{\omega}_2$. If $f$ has finite order on this complex line, then the result follows from the Phragm\'en--Lindel\"of principle.

    Choose a closed tubular region $\Omega'\subseteq \Omega$ which contains $\underline{\omega}_1,\underline{\omega}_2$, and lies over a bounded domain in $\R^n$. It suffices to show that $f$ has finite order on the convex hull ${\rm Hull}(\Omega')$, which necessarily contains the line segment connecting $\underline{\omega}_1$ to $\underline{\omega}_2$. That is it suffices to show that there exist constants $C$ and $c$ for which
    \[
        |f(\underline{s})| \le C e^{|\underline{t}|^c}
    \]
    for each $\underline{s}\in {\rm Hull}(\Omega')$. This is certainly true on $\Omega'$, as $A(1+|\underline{t}|)^{\mu} \le Ce^{|\underline{t}|}$ for some positive constant $C$ depending on $A$ and $\mu$.

    We can extrapolate information to the convex hull using \cite[Proposition C.5]{cech2024ratios}: if $f$ and $h$ are holomorphic on $\Omega$ and satisfy $|f(\underline{s})|\le |h(\underline{s})|$ for each $\underline{s}\in \Omega$, then the analytic continuations also satisfy $|f(\underline{s})|\le |h(\underline{s})|$ for each $\underline{s}\in {\rm Hull}(\Omega)$.

    Let $\underline{a}\in \R^n$ be a distance of at least $1$ from $\Omega'$ in each coordinate, and take
    \[
        h(\underline{s}) := A\prod_{j=1}^n (4s_j - 4a_j)^{2\mu'},
    \]
    where $\mu' = \max\{0,\lceil \mu/2 \rceil\}$, so that $\mu'$ is a positive integers and satisfies $\mu \le 2\mu'$.

    On the one hand, for each $\underline{s}\in \Omega'$
    \begin{align*}
        |f(\underline{s})| \le A(1+|\underline{t}|)^{\mu}
        \le 4^{2\mu'}A(1+|\underline{t}|^2)^{\mu'}
        \le 4^{2\mu'}A \prod_{j=1}^n (1+|t_j|^2)^{\mu'}
        \le 4^{2\mu'}A\prod_{j=1}^n|s_j-a_j|^{2\mu'}
        = |h(\underline{s})|,
    \end{align*}
    so we can conclude that $|f(\underline{s})| \le |h(\underline{s})|$ for each $\underline{s}\in {\rm Hull}(\Omega')$.

    On the other hand,
    \begin{align*}
        |h(\underline{s})| &\ll_{\mu} \prod_{j=1}^n\left(|\sigma_j-a_j|^2 + |t_j|^2\right)^{\mu'}
        \ll_{n,\mu,\Omega'} \prod_{j=1}^n\left(1 + |t_j|^2\right)^{\mu'}
        \le (1+|\underline{t}|^2)^{n\mu'}
        \ll_{n,\mu,\Omega'} e^{n\mu' |\underline{t}|^2},
    \end{align*}
    which implies $f$ has finite order on ${\rm Hull}(\Omega')$, concluding the proof.
\end{proof}

We will most often use the following consequence of \Cref{prop:multivariable_phragmen-lindelof}, which allows for a function in the exponent depending on variables that are unrelated to the convex hull.

\begin{corollary}\label{cor:independent_variable_multi_phragmen_lindelof}
    Let $\Omega'\subseteq \C^{n-d}$ be a connected tubular region, $d \le n$, and $\underline{c}\in \R^d$. The domain
    \begin{align*}
        \Omega &= \{\underline{z} \in \C^d : {\rm Re}(z_j) > c_j\text{ for }j=1,\dots d\}\times \Omega'
    \end{align*}
    is a connected tubular region in $\C^n$.

    Suppose $f$ is holomorphic on $\Omega$, and that there exists a constant $A$ and a continuous function $\mu:\R^d\to \R_{\ge 0}$ which is non-increasing in each coordinate such that for each $\epsilon > 0$
    \[
        |f(\underline{s})| \ll_{\underline{\sigma},\epsilon} A(1+|\underline{t}|)^{\mu(\sigma_1,\dots,\sigma_d)+\epsilon}
    \]
    for each $\underline{s}\in \Omega$. Then the same bound holds on the convex hull
    \[
        {\rm Hull}(\Omega) = \{\underline{z} \in \C^d : {\rm Re}(z_j) > c_j\text{ for }j=1,\dots d\}\times {\rm Hull}(\Omega').
    \]
\end{corollary}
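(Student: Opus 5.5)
The plan is to reduce this corollary to \Cref{prop:multivariable_phragmen-lindelof}, applied to a family of sub-tube-domains on which the exponent $\mu$ is essentially constant. First, the shape of the hull. Write $\Omega = \{\underline{s} : {\rm Re}(\underline{s}) \in U\}$ with $U = \prod_{j=1}^d (c_j,\infty)\times U'$, where $U'$ is the (connected, open) real base of $\Omega'$. Since $\prod_j (c_j,\infty)$ is convex, the convex hull of the product is
\[
    {\rm Hull}(U) = \prod_{j=1}^d (c_j,\infty)\times {\rm Hull}(U').
\]
This gives the stated description of ${\rm Hull}(\Omega)$. Connectedness and tubularity of $\Omega$ are immediate. \Cref{thm:hartogs} supplies the analytic continuation of $f$ to ${\rm Hull}(\Omega)$.

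Next, fix a point $\underline{s}_0 = \underline{\sigma}_0 + i\underline{t}_0 \in {\rm Hull}(\Omega)$ and some $\epsilon > 0$. Write $\underline{\sigma}_0 = (\underline{\sigma}_0', \underline{\sigma}_0'')$ with $\underline{\sigma}_0'\in\R^d$. By continuity of $\mu$, choose $\eta > 0$ such that $\mu(\underline{x}) \le \mu(\underline{\sigma}_0') + \epsilon/2$ whenever $x_j > \sigma'_{0,j} - \eta$ for all $j$. Here the coordinatewise non-increasing property is what makes this work: it suffices to take $\eta$ small enough that $\mu(\underline{\sigma}_0' - \eta\underline{1}) \le \mu(\underline{\sigma}_0') + \epsilon/2$, and monotonicity then controls $\mu$ on the entire unbounded orthant above that point.

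Now set
\[
    \Omega_\eta = \{\underline{z} \in \C^d : {\rm Re}(z_j) > \max(c_j, \sigma'_{0,j}-\eta)\}\times \Omega'.
\]
This is a connected tubular subregion of $\Omega$. On it the hypothesis gives
\[
    |f(\underline{s})| \ll_{\underline{\sigma},\epsilon} A(1+|\underline{t}|)^{\mu(\underline{\sigma}_0')+\epsilon},
\]
that is, a bound with a constant exponent. Its hull is again a product, and it contains $\underline{s}_0$, because $\underline{\sigma}_0'' \in {\rm Hull}(U')$ and the first $d$ coordinates are unaffected. Applying \Cref{prop:multivariable_phragmen-lindelof} to $f|_{\Omega_\eta}$ transfers this bound to ${\rm Hull}(\Omega_\eta)$. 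By uniqueness of analytic continuation, the continuation from $\Omega_\eta$ agrees with the one from $\Omega$, so in particular the bound holds at $\underline{s}_0$. Since $\epsilon$ was arbitrary, this is the claimed estimate.

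The main obstacle is the uniformity of the implied constant. \Cref{prop:multivariable_phragmen-lindelof} allows the implied constant to depend on $\underline{\sigma}$ only through the local data: the two hull points $\underline{\omega}_1,\underline{\omega}_2$ and a compact piece $\Omega''$ of the base. I will restate the argument locally to make this explicit. Pick $\underline{\omega}_1,\underline{\omega}_2 \in \Omega_\eta$ whose real parts have convex combination $\underline{\sigma}_0$. Apply the proposition's finite-order argument on a bounded closed subtube, where the $\underline{\sigma}$-dependent constants are uniform by the usual compactness assumption implicit in ``$\ll_{\underline{\sigma}}$''. Then run one-variable Phragmén--Lindelöf along the complex line through these two points, with exponent $\mu(\underline{\sigma}_0')+\epsilon$ at both ends. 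The resulting constant depends only on $\underline{\sigma}_0$ and $\epsilon$, which is exactly what the corollary asserts.
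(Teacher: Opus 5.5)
Your proposal is correct and follows essentially the same route as the paper. In both, you pass to the sub-orthant $\{{\rm Re}(z_j) > \sigma_j - \eta\}\times\Omega'$, where monotonicity of $\mu$ makes the exponent constant, apply Proposition~\ref{prop:multivariable_phragmen-lindelof} there, and absorb the loss $\mu(\underline{\sigma}'-\eta)-\mu(\underline{\sigma}')$ into $\epsilon$ via continuity. Your extra remarks on the uniformity of the implied constant make explicit what the paper leaves implicit, without changing the argument.
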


\begin{proof}
    For each $\underline{x}\in \R^d$ with $x_j > c_j$ for $j=1,\dots, d$, we apply \Cref{prop:multivariable_phragmen-lindelof} to the domain
    \[
        \{\underline{z}\in \C^d : z_j > x_j\text{ for }j=1,\dots,d\} \times \Omega'
    \]
    with the bound
    \[
        |f(\underline{s})|\ll_{\underline{\sigma}} A(1+|\underline{t}|)^{\mu(x_1,\dots,x_d)+\epsilon},
    \]
    where the bound holds by $\mu$ being a non-increasing function. Now that $\mu(x_1,\dots,x_d)+\epsilon$ is constant in terms of $\underline{s}$, \Cref{prop:multivariable_phragmen-lindelof} implies the same bound holds on the convex hull
    \[
        \{\underline{z}\in \C^d : z_j > x_j\text{ for }j=1,\dots,d\} \times {\rm Hull}(\Omega').
    \]
    For any particular $\underline{\sigma}$ with $\sigma_j > c_j$, choose $x_j = \sigma_j-\epsilon$. Continuity of $\mu$ then implies
    \[
        \mu(\sigma_1-\epsilon,\dots,\sigma_d-\epsilon) \le \mu(\sigma_1,\dots \sigma_d) + \epsilon',
    \]
    for $\epsilon' > 0$ tending to $0$ with $\epsilon$.
\end{proof}

\begin{remark}
    In the single variable setting, one often allows different bounds for $f(s)$ at the edges of the strip $a \le {\rm Re}(s) \le b$, and the Phragm\'en--Lindel\"of principle gives a bound on the interior of the strip that is interpolated between the bounds on the edges. A similar generalization certainly holds for multivariable complex analytic functions.

    We opt not to state \Cref{prop:multivariable_phragmen-lindelof} at this level of generality because (1) we do not need it for our application, and (2) the ``edges'' of $\Omega$ can be significantly more complicated, making a complete statement much more unwieldy than the current form of \Cref{prop:multivariable_phragmen-lindelof}.
\end{remark}

\subsection{Tauberian theorems}

We use Bochner's tube theorem to produce meromorphic continuations for multiple Dirichlet series. Our results will then follow by restricting to a single variable Dirichlet series along a complex line as in \Cref{prop:specialize}, then applying a Tauberian theorem. The Tauberian theorems we use can both be found in the expository paper \cite{pierce2025guide}, which includes a useful exposition of these results and self-contained proofs. We provide the statements of these theorems here for ease of reference.

The first Tauberian theorem we make use of is \cite[Theorem A]{pierce2025guide}, also found in earlier works such as \cite[Theorem 7.7]{bateman-diamond2004}.

\begin{theorem}\label{thm:tauberian_main_term}
    Let $a_n\ge 0$ be a sequence of nonnegative numbers, and $A(s) = \sum a_n n^{-s}$ the associated Dirichlet series.

    Suppose we know the following:
    \begin{enumerate}[(a)]
        \item $A(s)$ converges absolutely on the halfplane ${\rm Re}(s) > \sigma_a$, and
        \item $A(s)$ has a meromorphic continuation to an open neighborhood of the halfplane ${\rm Re}(s) \ge \sigma_a$ with a pole at $s=\sigma_a$ of order $b\ge 1$.
    \end{enumerate}
    Then
    \[
        \sum_{n\le X} a_n \sim \left(\frac{1}{\Gamma(b)}\lim_{s\to\sigma_a} (s-\sigma_a)^bA(s)\right)X^{\sigma_a}(\log X)^{b-1}.
    \]
\end{theorem}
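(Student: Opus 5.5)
The plan is to reduce to the classical Wiener--Ikehara argument, upgraded to higher order poles in the manner of Delange. Since the statement is only used as a black box and appears in \cite{pierce2025guide} and \cite[Theorem 7.7]{bateman-diamond2004}, in the paper one could simply cite it. A self-contained argument would run as follows. Put $S(X)=\sum_{n\le X}a_n$, which is nondecreasing, and change variables $X=e^u$. Partial summation on the halfplane of absolute convergence gives
\[
    \frac{A(s)}{s} = \int_0^\infty S(e^u)e^{-su}\,du, \qquad {\rm Re}(s)>\sigma_a .
\]
By hypothesis (b), near $s=\sigma_a$ we can write $A(s)/s = \sum_{j=1}^b c_j (s-\sigma_a)^{-j} + h(s)$, where $c_b = \sigma_a^{-1}\lim_{s\to\sigma_a}(s-\sigma_a)^bA(s)$. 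Here $h$ extends holomorphically to an open neighbourhood of the closed line ${\rm Re}(s)=\sigma_a$, because there are no other poles on the line.

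Next I would compare $S$ with the explicit model function
\[
    M(u)=\sum_{j=1}^b \frac{c_j}{\Gamma(j)}\,u^{j-1}e^{\sigma_a u},
\]
whose Laplace transform is exactly the principal part $\sum_j c_j(s-\sigma_a)^{-j}$. The difference $R(u)=S(e^u)-M(u)$ then has Laplace transform $h(s)$, which continues to the closed halfplane. Normalising by $e^{-\sigma_a u}$ turns the situation into that of Ikehara's theorem: a function whose Laplace transform extends continuously to the boundary line. The target conclusion is
\[
    R(u)=o\bigl(u^{b-1}e^{\sigma_a u}\bigr),
\]
which yields $S(X)\sim \frac{\sigma_a c_b}{\Gamma(b)}X^{\sigma_a}(\log X)^{b-1}$. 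This is the asserted constant, since the factor $1/s$ contributes $\sigma_a$ at $s=\sigma_a$.

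The key step, and the main obstacle, is that $R$ is not monotone, so the Fejér-kernel argument of Ikehara does not apply verbatim. I would first integrate $e^{-\sigma_a u}R(u)$ against the Fejér kernel $K_\lambda(v)=\frac{1}{\pi}\frac{\sin^2(\lambda v)}{\lambda v^2}$, whose Fourier transform is compactly supported. Using only the continuity of $h$ on the segment $[\sigma_a-2i\lambda,\sigma_a+2i\lambda]$ and the Riemann--Lebesgue lemma, this gives $\int e^{-\sigma_a(u-v)}R(u-v)K_\lambda(v)\,dv = o(u^{b-1})$. To unsmooth, I would exploit the fact that $S(e^u)$ is nondecreasing while $M$ is smooth with $M'(u)=O(u^{b-1}e^{\sigma_a u})$. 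Over a window of length $O(1/\sqrt{\lambda})$, $S$ can therefore only lose, and $M$ only vary, by a relative amount $e^{O(1/\sqrt\lambda)}-1$ of the main term $u^{b-1}e^{\sigma_a u}$. This gives upper and lower bounds for $R$ of size $\bigl(e^{O(1/\sqrt\lambda)}-1\bigr)u^{b-1}e^{\sigma_a u}+o(u^{b-1}e^{\sigma_a u})$, and letting $\lambda\to\infty$ finishes the argument.

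A preliminary step is needed to make the unsmoothing rigorous: an a priori bound $S(e^u)\ll u^{b-1}e^{\sigma_a u}$, so that the tails of the kernel integral are controlled. I would obtain it from the same smoothed identity applied with positivity, exactly as in the standard Ikehara proof. The lower-order terms $c_j$ with $j<b$ are harmless throughout, since they are absorbed into $M$ and contribute $o(u^{b-1}e^{\sigma_a u})$.
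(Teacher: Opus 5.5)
The paper gives no proof of this statement; it only cites \cite[Theorem A]{pierce2025guide} and \cite[Theorem 7.7]{bateman-diamond2004}. Your Wiener--Ikehara--Delange outline is the right kind of argument, but as written it fails for $b\ge 2$ because of the choice of kernel.

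The Fej\'er kernel $K_\lambda(v)=\frac{1}{\pi}\frac{\sin^2(\lambda v)}{\lambda v^2}$ decays only like $v^{-2}$. Hence $\int_0^\infty u^{j-1}K_\lambda(x-u)\,du=\infty$ for every $j\ge 2$. Likewise $\int_0^\infty e^{-\sigma_a u}S(e^u)K_\lambda(x-u)\,du=\infty$, since $e^{-\sigma_a u}S(e^u)$ is genuinely of size $u^{b-1}$. Now consider the identity
\[
\int_0^\infty e^{-\sigma_a u}R(u)\,e^{-\epsilon u}K_\lambda(x-u)\,du=\frac{1}{2\pi}\int_{-2\lambda}^{2\lambda}\widehat{K_\lambda}(t)\,h(\sigma_a+\epsilon+it)\,e^{ixt}\,dt .
\]
As $\epsilon\to0^+$, the two pieces of the left side (from $S$ and from $M$) each tend to $+\infty$. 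Only their difference has a limit, so you learn nothing about $S$ near $e^x$.

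In particular, the smoothed integral $\int e^{-\sigma_a(u-v)}R(u-v)K_\lambda(v)\,dv$ that you want to show is $o(u^{b-1})$ is not even absolutely convergent. For the same reason, the a priori bound ``from positivity'' and the control of kernel tails in your unsmoothing step are unavailable. For $b=1$ your sketch is the usual Ikehara proof and is fine.

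The standard repair is a better kernel. Take a nonnegative $k_\lambda(v)=\lambda k(\lambda v)$ with $\int k=1$, compactly supported Fourier transform, and $k(v)\ll(1+|v|)^{-N}$ for some $N>b$. For example, $k(v)=c_m(\sin v/v)^{2m}$ with $2m>b$ works: its transform is a $2m$-fold convolution of box functions, supported in $[-2m,2m]$. Then monotone convergence handles the $S$-part and dominated convergence the $M$-part. Riemann--Lebesgue, together with finiteness of the moments of $k$ up to order $b-1$, gives for fixed $\lambda$
\[
\int_0^\infty e^{-\sigma_a u}S(e^u)\,k_\lambda(x-u)\,du=\frac{c_b}{\Gamma(b)}x^{b-1}+O_\lambda\bigl(x^{b-2}\bigr)+o(1)\qquad(x\to\infty).
\]
From this you get the a priori bound $e^{-\sigma_a u}S(e^u)\ll(1+u)^{b-1}$. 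The tails are controlled by $\int_{|v|>\eta}(1+|v|)^{b-1}k_\lambda(v)\,dv\to0$ as $\lambda\to\infty$, and your monotonicity unsmoothing then goes through.

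Your bookkeeping of the constant is also wrong. From $R(u)=o(u^{b-1}e^{\sigma_a u})$ and $M(u)\sim\frac{c_b}{\Gamma(b)}u^{b-1}e^{\sigma_a u}$ you get $S(X)\sim\frac{c_b}{\Gamma(b)}X^{\sigma_a}(\log X)^{b-1}$, not $\frac{\sigma_a c_b}{\Gamma(b)}X^{\sigma_a}(\log X)^{b-1}$. The factor $1/s$ contributes $1/\sigma_a$, not $\sigma_a$. So your argument actually proves
\[
\sum_{n\le X}a_n\sim\frac{1}{\sigma_a\Gamma(b)}\Bigl(\lim_{s\to\sigma_a}(s-\sigma_a)^bA(s)\Bigr)X^{\sigma_a}(\log X)^{b-1},
\]
and this is the correct constant. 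For $A(s)=\zeta(2s)$ one has $\sum_{n\le X}a_n=\lfloor\sqrt{X}\rfloor\sim X^{1/2}$, whereas $\lim_{s\to1/2}(s-\tfrac12)\zeta(2s)=\tfrac12$. The corrected constant is also what the residue of $A(s)X^s/s$ gives in the paper's power-saving Tauberian theorem. The displayed statement is therefore off by a factor of $\sigma_a$; this is harmless for the paper, which never uses the value of this constant. You should flag the discrepancy rather than adjust your computation to match it.

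Your argument also silently uses two hypotheses: $\sigma_a>0$ (for the Laplace identity and to divide by $\sigma_a$), and that $\sigma_a$ is the only pole on ${\rm Re}(s)=\sigma_a$. Both are needed for the statement to be true. For the first, see $\zeta(s+1)$. For the second, see $2\zeta(s)+\zeta(s-i)+\zeta(s+i)$, whose coefficients $2+2\cos(\log n)$ are nonnegative. State both assumptions explicitly.
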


The key hypothesis that $a_n \ge 0$ is what allows this result to require essentially no information to the left of $s=\sigma_a$. We will only apply a Tauberian theorem to specializations of $D_k(G;\underline{s})$ to complex lines of the form described in \Cref{prop:specialize}, which by construction has nonnegative integer coefficients determined by the number of $\pi\in \Sur(G_k,G)$ with a particular fixed invariant.

One would like a larger region of meromorphicity for $A(s)$ to correspond to bounds for the error term. This turns out to be true if we additionally assume a $t$-aspect bound for $A(s)$ in vertical strips. The following result is the nonnegative case of \cite[Theorem 6.1]{alberts2024}, see \cite[Theorem B]{pierce2025guide} for a sharper result.

\begin{theorem}\label{thm:tauberian_power_saving}
    Let $a_n\ge 0$ be a sequence of nonnegative numbers, and $A(s) = \sum a_n n^{-s}$ the associated Dirichlet series.

    Suppose we know the following:
    \begin{enumerate}[(a)]
        \item $A(s)$ converges absolutely on the halfplane ${\rm Re}(s) > \sigma_a$,
        \item $A(s)$ has a meromorphic continuation to the halfplane ${\rm Re}(s) > \sigma_a - \delta$ with finitely many poles $s_1,s_2,\dots,s_r$ in this region, and
        \item $|A(\sigma+it)| \ll (1+|t|)^{\xi}$ for each $\sigma > \sigma_a - \delta$.
    \end{enumerate}
    Then
    \[
        \sum_{n\le X} a_n = \sum_{j=1}^r \underset{s=s_j}{\rm Res}\left(A(s)\frac{X^s}{s}\right) + O_{\epsilon}\left(X^{\sigma_a - \frac{\delta}{\xi+1}+\epsilon}\right).
    \]
\end{theorem}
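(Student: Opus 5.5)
We would prove this by the standard truncated Perron formula argument, shifting the contour past the finitely many poles. Since $a_n\ge 0$ and $A(s)$ converges absolutely for ${\rm Re}(s)>\sigma_a$, we take $c=\sigma_a+1/\log X$ and a parameter $T\ge 1$ to be chosen. The truncated Perron formula gives
\[
\sum_{n\le X} a_n = \frac{1}{2\pi i}\int_{c-iT}^{c+iT} A(s)\frac{X^s}{s}\,ds + O\Big(\sum_n a_n \Big(\frac{X}{n}\Big)^{c}\min\Big(1,\frac{1}{T|\log(X/n)|}\Big)\Big).
\]
This error is the delicate term. Nonnegativity lets us bound it without pointwise information on $a_n$. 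We split into $n$ with $|n-X|>X/T^{\theta}$ and the short interval $|n-X|\le X/T^{\theta}$. The first range contributes $\ll X^{\sigma_a+\epsilon}T^{\theta-1}$ using $\sum a_n n^{-c}\ll \log X$-type bounds (more precisely $\ll X^\epsilon$ after absorbing the pole order). For the short interval we need a bound on $\sum_{X-h<n\le X+h}a_n$. We would obtain this by a smoothed argument: we majorize the indicator of the short interval by a smooth weight $w$ whose Mellin transform decays rapidly beyond height $X/h$. We then shift that contour to ${\rm Re}(s)=\sigma_a-\delta$ using hypothesis (c). This gives $\ll h X^{\sigma_a-1+\epsilon}+X^{\sigma_a-\delta+\epsilon}(X/h)^{\xi+1}$-type bounds. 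Alternatively, and more simply, we would use a smoothed Perron formula throughout, with a weight of width $X/T$, and then unsmooth via positivity: the smoothed sum with weights supported on $[0,X]$ and $[0,X+X/T]$ sandwiches the sharp sum. This avoids the short-interval issue and is the route we would actually take.

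Concretely, let $\phi_\pm$ be smooth, equal to $1$ on $[0,1\mp 1/T]$, supported in $[0,1\pm1/T]$, with $\sum a_n\phi_-(n/X)\le\sum_{n\le X}a_n\le\sum a_n\phi_+(n/X)$. The Mellin transform $\tilde\phi_\pm(s)$ equals $1/s$ up to $O(T^{-1})$ near the poles, and satisfies $|\tilde\phi_\pm(s)|\ll_m |s|^{-1}(T/|s|)^{m}$. We write $\sum a_n\phi(n/X)=\frac{1}{2\pi i}\int_{(c)}A(s)\tilde\phi(s)X^s\,ds$ and shift to ${\rm Re}(s)=\sigma_a-\delta+\epsilon$. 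This is legitimate by (c) together with the rapid decay of $\tilde\phi$, with horizontal segments vanishing. Along the way we pick up the residues at $s_1,\dots,s_r$. The residues of $A(s)\tilde\phi(s)X^s$ differ from those of $A(s)X^s/s$ by $O(X^{\sigma_a+\epsilon}/T)$, since $\tilde\phi(s)-1/s$ and its derivatives are $O(1/T)$ near each pole and the pole orders are finite. On the new line, (c) gives the integral bound
\[
X^{\sigma_a-\delta}\int (1+|t|)^{\xi}|\tilde\phi(\sigma+it)|\,dt\ll X^{\sigma_a-\delta+\epsilon}T^{\xi}.
\]

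Finally we balance $X^{\sigma_a}/T$ against $X^{\sigma_a-\delta}T^{\xi}$ by choosing $T=X^{\delta/(\xi+1)}$. Both errors then become $X^{\sigma_a-\delta/(\xi+1)+\epsilon}$, which is the claimed bound. The same estimate for $\phi_+$ and $\phi_-$ squeezes the sharp sum. The main obstacle is the bookkeeping showing that the smoothing error in the residues is $O(X^{\sigma_a+\epsilon}/T)$ uniformly across the poles, including the pole on ${\rm Re}(s)=\sigma_a$ of possibly high order; this is handled by Taylor-expanding $\tilde\phi$ around each $s_j$. A second, milder point is that (c) is uniform in $\sigma$ only locally, so we fix the line ${\rm Re}(s)=\sigma_a-\delta+\epsilon$ and absorb $\epsilon$.
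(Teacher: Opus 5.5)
The paper does not prove this statement itself: it is quoted as the nonnegative case of \cite[Theorem 6.1]{alberts2024}, with \cite[Theorem B]{pierce2025guide} cited for a sharper form. So there is no in-paper argument to compare against.

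The route you commit to is a correct, self-contained proof of exactly the stated bound. That route is: smoothed Perron with weights of relative width $1/T$; positivity used once, to sandwich the sharp sum between $\sum a_n\phi_-(n/X)$ and $\sum a_n\phi_+(n/X)$; a contour shift; residue comparison; and the choice $T=X^{\delta/(\xi+1)}$. The key checks go through:
\begin{itemize}
\item $\tilde\phi_\pm(s)-1/s=\pm\int w_\pm(x)x^{s-1}\,dx$, where $0\le w_\pm\le 1$ is supported on an interval of length $1/T$ adjacent to $x=1$. This difference is therefore entire and $O(1/T)$ uniformly on compact sets.
\item By Cauchy's estimates, the residue at a pole of order $m$ then changes by $O(X^{\sigma_a}(\log X)^{m-1}/T)$. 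This holds even for a high-order pole on ${\rm Re}(s)=\sigma_a$.
\item The bound $|\tilde\phi_\pm(s)|\ll_m |s|^{-1}\min\{1,(T/|s|)^m\}$ gives $\int(1+|t|)^{\xi}|\tilde\phi_\pm|\,dt\ll T^{\xi}\log T$ on the shifted line.
\end{itemize}

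Compared with the sharp truncated Perron formula in your first paragraph (which you can simply delete), smoothing buys two things. First, the horizontal segments go off to infinity, so you never bound $A$ along ${\rm Im}(s)=\pm T$. With only the pointwise bound (c), that segment costs $X^{\sigma_a}T^{\xi-1}$ unless you first interpolate by Phragm\'en--Lindel\"of. Second, you never need short-interval bounds for $\sum a_n$.

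Small repairs are needed:
\begin{itemize}
\item The signs in your definition of $\phi_\pm$ are scrambled. You want $\phi_+=1$ on $[0,1]$ with support in $[0,1+1/T]$, and $\phi_-=1$ on $[0,1-1/T]$ with support in $[0,1]$.
\item Moving the contour requires (c) uniformly for $\sigma$ in compact subintervals, with $|t|$ bounded away from the poles. That is the intended reading, and it is how such bounds arise in the paper's applications (via Phragm\'en--Lindel\"of).
\item Choose $\epsilon$ so that no pole lies on ${\rm Re}(s)=\sigma_a-\delta+\epsilon$. Poles with real part in $(\sigma_a-\delta,\sigma_a-\delta+\epsilon]$ are not crossed, but their residues are $O(X^{\sigma_a-\delta+2\epsilon})$ and are absorbed into the error term.
\end{itemize}
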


If $s=s_j$ is a pole of order $b_j$, then a direct computation as in \cite[Remark 2.3.1]{pierce2025guide} shows that
\[
    \underset{s=s_j}{\rm Res}\left(A(s)\frac{X^s}{s}\right) = X^{s_j} P_j(\log X)
\]
for an explicit polynomial $P_j$ of degree $b_j-1$. This is how we present the main terms in our results.

\subsection{$D_4$-extensions as an example}\label{subsec:D4}

We present a sketch of a meromorphic continuation for $D_{\Q}(D_4;\underline{s})$. The goal of this section is to demonstrate our method in a small, concrete example, before jumping into the general results. The heart of the argument is using the two fibrations given by the two normal abelian subgroups described below to get meromorphic continuation to a region of $\C^n$ that is {\em not} a domain of holomorphy. We then apply Bochner's tube theorem to get meromorphic continuation to a domain that contains an open neighborhood of the right-most pole, so that we can apply a Tauberian theorem in order to get the counting asymptotic.

Fix a presentation $D_4 = \langle a,b : a^4 = b^2 = 1,\ bab^{-1}=a^{-1}\rangle$. Throughout the paper we will use the LMFDB labels for conjugacy classes. For $D_4$, see \cite[\href{https://www.lmfdb.org/GaloisGroup/4T3}{Transitive Group 4T3}]{lmfdb} (after choosing an embedding that sends $b$ to a transposition in $S_4$) and \cite[\href{https://www.lmfdb.org/GaloisGroup/8T4}{Transitive Group 8T4}]{lmfdb} for information on the quartic and octic orderings. Combined with the weights for the conductor derived from \cite[Proposition 2.4]{altug-shankar-varma-wilson2021}, the conjugacy classes are given by the following table:

\begin{table}[ht]
    \centering
    $$\begin{array}{lllllll}
     \textrm{Label} & \textrm{Representative} & \textrm{Size} & \textrm{Order} & \textrm{Quartic} & \textrm{Conductor} & \textrm{Octic} \\
     \textrm{} & \textrm{} & \textrm{} & \textrm{} & \textrm{Index} & \textrm{Weight} & \textrm{Index}\\
    \hline
    \textrm{1A} &  1 &  1 &  1 &  0 & 0 & 0\\
    \textrm{2A} &  a^{2} &  1 &  2 & 2 & 2 & 4\\
    \textrm{2B} &  ab &  2 &  2 & 2 & 1 & 4\\
    \textrm{2C} &  b &  2 &  2 &  1 & 1 & 4\\
    \textrm{4A} &  a &  2 &  4 &  3 & 2 & 6\\
    \end{array}$$

    \caption{Tame ramification types of $D_4$}
    \label{tab:D4_ram_types}
\end{table}

These conjugacy classes are all $\Q$-conjugacy class as each one is closed under the cyclotomic action, and so are in bijection with the tame ramification types. Thus, the generating multiple Dirichlet series is a function of four tame variables (and several wild variables)
\[
    D_\Q(D_4;s_{2A},s_{2B},s_{2C},s_{4A},\underline{w}).
\]
\Cref{prop:nilpotent_absolute_convergence} states that this series converges absolutely on the orthant cut out by $\sigma_\tau > 1$ for each nontrivial tame ramification type $\tau$.

In order to draw pictures, we will look at the projection of the variables to $(\sigma_{2B},\sigma_{2C})$. This will not be a significant sacrifice for visualizing the argument, as $\{2B,2C\}$ contains the minimum weight elements in each of the three orderings we want to consider (save for the central element in the octic ordering, which is easier to deal with separately). In \Cref{fig:abs_conv} we have shaded the region of absolute convergence. The two lines are the specializations that we are interested in: for the quartic discriminant, the specialization is $(s_{2B},s_{2C}) = (2s,s)$, for the conductor $(s_{2B},s_{2C}) = (s,s)$, and for the octic discriminant $(s_{2B},s_{2C}) = (8s,8s)$ (which is in fact the same line as for the conductor in these two variables).

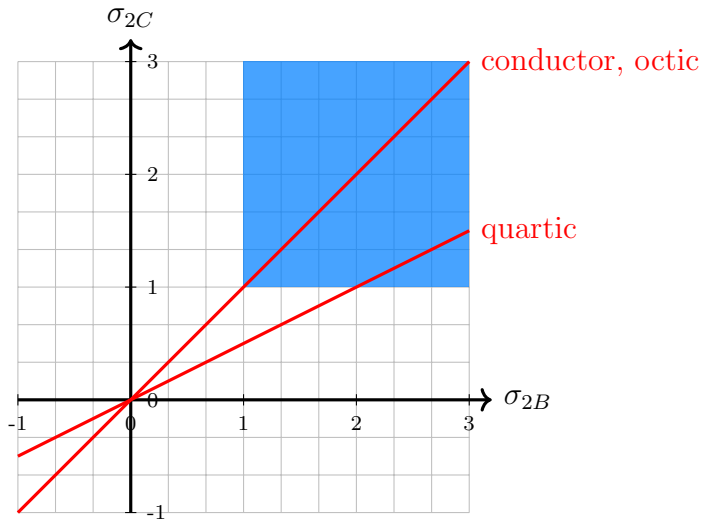
\begin{figure}[!ht]
\begin{center}
\hspace*{3cm} 
    \begin{tikzpicture}[scale=1.5]

    \draw[gray!50, thin, step=1/3] (-1,-1) grid (3,3);
    \draw[very thick,->] (-1,0) -- (3.2,0) node[right] {$\sigma_{2B}$};
    \draw[very thick,->] (0,-1) -- (0,3.2) node[above] {$\sigma_{2C}$};

    \foreach \x in {-1,...,3} \draw (\x,0.05) -- (\x,-0.05) node[below] {\tiny\x};
    \foreach \y in {-1,...,3} \draw (-0.05,\y) -- (0.05,\y) node[right] {\tiny\y};

    \fill[blue!50!cyan,opacity=0.6] (1,3) -- (1,1) -- (3,1) -- (3,3);
    \fill[blue!50!cyan,opacity=0.3] (1,3) -- (1,1) -- (3,1) -- (3,3);
    
    \draw[red,very thick,-] (-1,-1) -- (3,3) node[right] {conductor, octic};
    \draw[red,very thick,-] (-1,-1/2) -- (3,1.5) node[right] {quartic};
    \end{tikzpicture}
    \caption{\label{fig:abs_conv} Region of Absolute Convergence}
\end{center}
\end{figure}

Our goal is to construct a meromorphic continuation to a larger segment of these lines, which we will do by constructing a meromorphic continuation of the entire multiple Dirichlet series.

Each of the three orderings we are considering for $D_4$ is semiconcentrated in the abelian normal subgroups
\begin{align*}
    T_B:= \langle 2A, 2B\rangle = 1A\cup 2A\cup 2B\text{ and }T_C := \langle 2A, 2C\rangle = 1A \cup 2A \cup 2C.
\end{align*}
(In fact, the quartic ordering is concentrated in solely $T_C$). We use the two normal subgroups $T_B$ and $T_C$ to partition the series. This is analogous to the ``fibering method'' used in \cite{ALOWW}. It will be useful to know that both of these subgroups are abstractly isomorphic to $C_2\times C_2$, and their corresponding quotients are both isomorphic to $C_2$.

We first give an analytic continuation in the ``$T_C$-directions". Write $q_C:G\to G/T_C$ for the corresponding quotient map. For each $\pi\in \Sur(G_\Q,C_2)$ we define
\[
    D_\Q(T_C,\pi;\underline{s}) := \sum_{\substack{\psi\in \Sur(G_\Q,D_4)\\(q_C)_*\psi = \pi}} \inv_{\Q,D_4}(\psi,\underline{s})
\]
for each $\pi \in \Sur(G_\Q,C_2)$. This is precisely the subseries of extensions whose fixed field by $T_C$ is the quadratic field defined by $\pi$. These give a partition
\[
    D_{\Q}(D_4,\underline{s}) = \sum_{\pi\in \Sur(G_\Q,C_2)} D_\Q(T_C,\pi;\underline{s}).
\]
The most technical part of our work is determining the analytic structure of $D_\Q(T_C,\pi;\underline{s})$ with enough accuracy to say something about the summation. This is done in general in Theorem \ref{thm:meromorphic_continuation_of_fiber_series}, which works in this case because $T_C$ is abelian. We summarize the result for this specific example here.

\begin{description}
    \item[Fact 1] $D_\Q(T_C,\pi;\underline{s})$ is meromorphic on the region
    \[
        \{\underline{s} : {\rm Re}(s_{2A}) > 1/2,\ {\rm Re}(s_{2C}) > 1/2\},
    \]
    with possible poles at $s_{2A}=1$ and $s_{2C} = 1$ of order at most $1$. Note that there are no restrictions on $s_{2B}$, $s_{4A}$, or any of the wild variables.

    \item[Fact 2] As a part of proving Fact 1 above, $D_\Q(T_C,\pi;\underline{s})$ is shown to be an explicit finite sum of products of Hecke $L$-functions in this region, which is bounded above by
    \[
        |D_\Q(T_C,\pi;\underline{s})| \ll  |\disc(\pi)|^{-\min\{\sigma_{2B},\sigma_{4A}\}+\epsilon}\max_{\chi_0}L(s_{2A}, \chi_0)\max_{\chi}\left\{|L_{\pi}(s_{2C},\chi)|,1\right\},
    \]
    where $\chi$ ranges over Hecke characters over the quadratic field cut out by $\pi$ which satisfies $\chi^2 = 1$ and is unramified away from $\{2,\infty\}$, and $\chi_0$ ranges over quadratic Dirichlet characters over $\Q$ unramified away from $2$. In particular, the conductor of $\chi$ is bounded $\mathfrak{f}(\chi) \ll 1$ which makes the analytic conductor of the Hecke $L$-function $L_{\pi}(s,\chi)$ over a quadratic field bounded by
    \[
        \mathfrak{q}(\chi,s) \ll |\disc(\pi)|(|s|+4)^2
    \]
    for an absolute implied constant. See \cite[Chapter 5]{iwaniec-kowalski2004} for more information on the analytic conductor.
    
    The $L(s_{2A}, \chi_0)$ factor is a Dirichlet $L$-function of conductor dividing $8$, which helps streamline this example.
\end{description}

Theorem \ref{thm:meromorphic_continuation_of_fiber_series} is proven using the twisted counting framework of \cite{alberts2021,alberts-odorney2021,alberts2024}, as the function $D_k(T_C,\pi;\underline{s})$ is closely related to the generating series for crossed homomorphisms $Z^1(\Q,T_C(\pi))$.

Let $p(\underline{s}) = \frac{(s_{2A}-1)(s_{2B}-1)(s_{2C}-1)}{(s_{2A}+1)(s_{2B}+1)(s_{2C}+1)}$. We use the Weyl-strength hybrid subconvexity bounds for Dirichlet $L$-functions \cite[Theorem 1.1]{petrow2023fourth} and the subconvexity bounds for Hecke $L$-functions proven in \cite[Corollary 1.2]{yang2023burgess} following Burgess' work to prove that
\begin{align*}
    |L(s,\chi_0)| &\ll_{\epsilon} (1+|t|)^{\frac{1}{3}\max\{1-\sigma,0\}+\epsilon}\\
    |L_\pi(s,\chi)| &\ll_{\epsilon} \mathfrak{q}(\chi,s)^{\frac{3}{8}\max\{1-\sigma,0\}+\epsilon} \ll_{\epsilon} |\disc(\pi)|^{\frac{3}{8}\max\{1-\sigma,0\}+\epsilon} (1+|t|)^{\frac{3}{4}\max\{1-\sigma,0\}+\epsilon}
\end{align*}
for each $\sigma > 1/2$ (with $s$ bounded away from $1$ in case $\chi$ or $\chi_0$ is the trivial character). For $\underline{t}$ the imaginary part of $\underline{s}$, we conclude that
\begin{align*}
    |p(\underline{s})D_\Q(T_C,\pi;\underline{s})|
    &\ll |\disc(\pi)|^{-\min\{\sigma_{2B},\sigma_{4A}\} + \frac{3}{8}\max\{1-\sigma_{2C},0\}+\epsilon}(1+|\underline{t}|)^{\frac{1}{3}\max\{1-\sigma_{2A},0\} + \frac{3}{4}\max\{1-\sigma_{2C},0\}+\epsilon}
\end{align*}
on the region $\sigma_{2A},\sigma_{2B} > 1/2$. The factor $p(\underline{s})$ is used to cancel out poles of the zeta function and Hecke $L$-function at the trivial character.

On the region $\sigma_{2A},\sigma_{2B} > 1/2$, the above bound implies that $|p(\underline{s})D_\Q(D_4,\underline{s})|$ is bounded above by
\begin{align*}
    &\ll \sum_{\pi\in \Sur(G_\Q,C_2)}|\disc(\pi)|^{-\min\{\sigma_{2B},\sigma_{4A}\} + \frac{3}{8}\max\{1-\sigma_{2C},0\} + \epsilon} (1+|\underline{t}|)^{\frac{1}{3}\max\{1-\sigma_{2A},0\} + \frac{3}{4}\max\{1-\sigma_{2C},0\} + \epsilon}.
\end{align*}
This converges absolutely if $-\min\{\sigma_{2B},\sigma_{4A}\} + \frac{3}{8}\max\{1-\sigma_{2C},0\} < -1$. Putting these together, we have constructed an analytic continuation of $p(\underline{s})D_\Q(D_4,\underline{s})$ to the region cut out by
\begin{align*}
    \sigma_{2A} &> \frac{1}{2}, & \sigma_{2C} &> \frac{1}{2}, & \sigma_{2B} &> 1, & \sigma_{4A} &> 1, &
    \sigma_{2B} + \frac{3}{8}\sigma_{2C} &> \frac{11}{8}, & \sigma_{4A} + \frac{3}{8}\sigma_{2C} &> \frac{11}{8},
\end{align*}
with a $t$-aspect bound
\[
    |p(\underline{s})D_\Q(D_4,\underline{s})| \ll  (1+|\underline{t}|)^{\frac{1}{3}\max\{1-\sigma_{2A},0\} + \frac{3}{4}\max\{1-\sigma_{2C},0\}+\epsilon}
\]
in vertical strips. To simply the remaining argument, we consider the subdomain
\[
    \Omega_C = \left\{\underline{s} : \sigma_{2A} > 1/2,\ \sigma_{2C} > 1/2,\ \sigma_{2B} > 1,\ \sigma_{4A} > 19/16,\ \sigma_{2B} + \frac{3}{8}\sigma_{2C} > \frac{11}{8}\right\}.
\]
This region, projected to the $(\sigma_{2B},\sigma_{2C})$ plane, is depicted in \Cref{fig:TC_direction}.

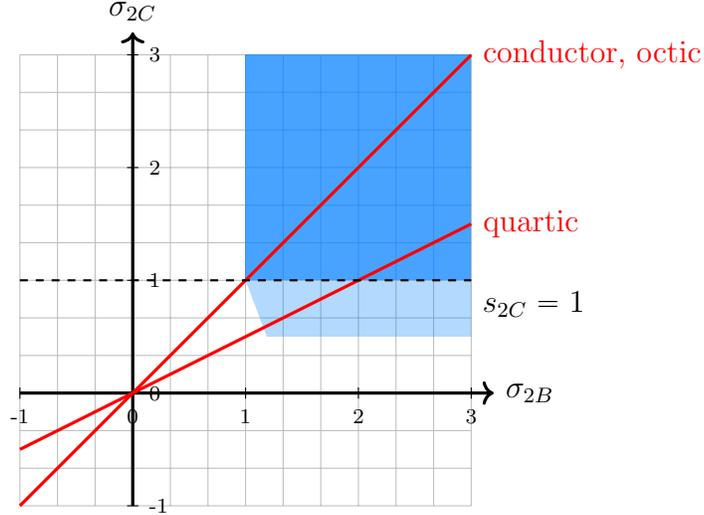
\begin{figure}[!ht]
\begin{center}
    \hspace*{3cm} 
    \begin{tikzpicture}[scale=1.5]

    \draw[gray!50, thin, step=1/3] (-1,-1) grid (3,3);
    \draw[very thick,->] (-1,0) -- (3.2,0) node[right] {$\sigma_{2B}$};
    \draw[very thick,->] (0,-1) -- (0,3.2) node[above] {$\sigma_{2C}$};

    \foreach \x in {-1,...,3} \draw (\x,0.05) -- (\x,-0.05) node[below] {\tiny\x};
    \foreach \y in {-1,...,3} \draw (-0.05,\y) -- (0.05,\y) node[right] {\tiny\y};

    \fill[blue!50!cyan,opacity=0.6] (1,3) -- (1,1) -- (3,1) -- (3,3);
    \fill[blue!50!cyan,opacity=0.3] (3,1/2) -- (19/16,1/2) -- (1,1) -- (1,3) -- (3,3);

    \draw[red,very thick,-] (-1,-1) -- (3,3) node[right] {conductor, octic};
    \draw[red,very thick,-] (-1,-1/2) -- (3,1.5) node[right] {quartic};

    \draw[black,dashed,thick,-] (-1,1) -- (3,1) node[below right] {$s_{2C}=1$};
    \end{tikzpicture}
    \caption{\label{fig:TC_direction} Continuation in $T_C$-direction}
\end{center}
\end{figure}

We can already see that some progress has been made for the quartic discriminant. Indeed, a meromorphic continuation constructed from a single abelian normal subgroup in this way is another way of thinking about the inductive methods of \cite{ALOWW} for concentrated groups. The dashed line represents a possible pole at $s_{2C}=1$ for $D_\Q(D_4;\underline{s})$ in this region, which would be canceled out in the product $p(\underline{s}) D_{\Q}(D_4;\underline{s})$ if a pole really exists here.

The group $T_B$ is completely analogous to $T_C$ (in fact, they are images of each other under an automorphism of $D_4$). By symmetry, the same process gives an analytic continuation of $p(\underline{s})D_\Q(D_4;\underline{s})$ to the region
\[
    \Omega_B = \left\{\underline{s} : \sigma_{2A} > 1/2,\ \sigma_{2B} > 1/2,\ \sigma_{2C} > 1,\ \sigma_{4A} > 19/16,\ \sigma_{2C} + \frac{3}{8}\sigma_{2B} > \frac{11}{8}\right\}.
\]
with a $t$-aspect bound
\[
    |p(\underline{s})D_\Q(D_4,\underline{s})| \ll  (1+|\underline{t}|)^{\frac{1}{3}\max\{1-\sigma_{2A},0\} + \frac{3}{4}\max\{1-\sigma_{2B},0\}+\epsilon}
\]
in vertical strips. We can combine this with $\Omega_C$ to show that $p(\underline{s})D_\Q(D_4;\underline{s})$ has an analytic continuation to the region $\Omega_B \cup \Omega_C$, depicted in \Cref{fig:TCandTB_direction}.  The dashed lines represent possible polar divisors of $D_k(D_4;\underline{s})$ that $p(\underline{s})$ could be canceling out.

\begin{figure}[!ht]
\begin{center}
    \hspace*{3cm} 
    \begin{tikzpicture}[scale=1.5]

    \draw[gray!50, thin, step=1/3] (-1,-1) grid (3,3);
    \draw[very thick,->] (-1,0) -- (3.2,0) node[right] {$\sigma_{2B}$};
    \draw[very thick,->] (0,-1) -- (0,3.2) node[above] {$\sigma_{2C}$};

    \foreach \x in {-1,...,3} \draw (\x,0.05) -- (\x,-0.05) node[below] {\tiny\x};
    \foreach \y in {-1,...,3} \draw (-0.05,\y) -- (0.05,\y) node[right] {\tiny\y};

    \fill[blue!50!cyan,opacity=0.6] (1,3) -- (1,1) -- (3,1) -- (3,3);
    \fill[blue!50!cyan,opacity=0.3] (1/2,3) -- (1/2,19/16) -- (1,1) -- (19/16,1/2) -- (3,1/2) -- (3,3);

    \draw[red,very thick,-] (-1,-1) -- (3,3) node[right] {conductor, octic};
    \draw[red,very thick,-] (-1,-1/2) -- (3,1.5) node[right] {quartic};

    \draw[black,dashed,thick,-] (1,-1) -- (1,3) node[above] {$s_{2B}=1$};
    \draw[black,dashed,thick,-] (-1,1) -- (3,1) node[below right] {$s_{2C}=1$};
    \end{tikzpicture}
    \caption{\label{fig:TCandTB_direction} Continuation in $T_B$- and $T_C$-directions}
\end{center}
\end{figure}
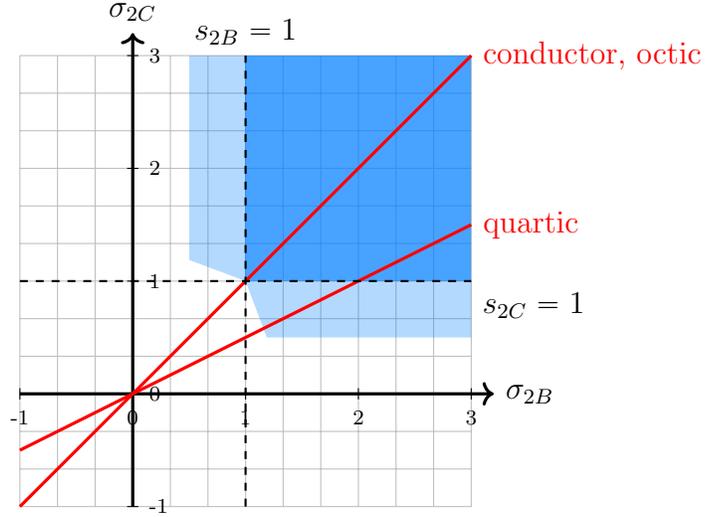

This region is not convex! Bochner's tube theorem now implies that we have an analytic continuation to the convex hull
\[
    \Omega = \{\underline{s} : \sigma_{2A},\sigma_{2B},\sigma_{2C} > 1/2,\ \sigma_{4A} > 19/16,\ \sigma_{2B} + \sigma_{2C} > 27/16\}.
\]
The projection of this region to $(\sigma_{2B},\sigma_{2C})$ is depicted in \Cref{fig:final_continuation}.

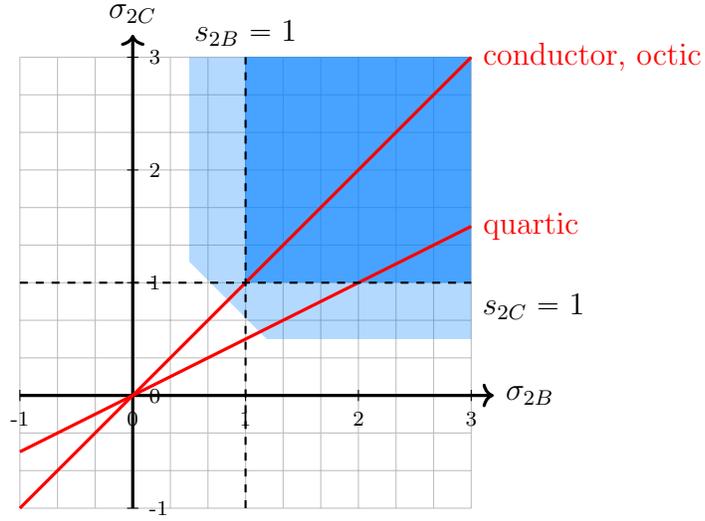
\begin{figure}[!ht]
\begin{center}
    \hspace*{3cm} 
    \begin{tikzpicture}[scale=1.5]

    \draw[gray!50, thin, step=1/3] (-1,-1) grid (3,3);
    \draw[very thick,->] (-1,0) -- (3.2,0) node[right] {$\sigma_{2B}$};
    \draw[very thick,->] (0,-1) -- (0,3.2) node[above] {$\sigma_{2C}$};

    \foreach \x in {-1,...,3} \draw (\x,0.05) -- (\x,-0.05) node[below] {\tiny\x};
    \foreach \y in {-1,...,3} \draw (-0.05,\y) -- (0.05,\y) node[right] {\tiny\y};

    \fill[blue!50!cyan,opacity=0.6] (1,3) -- (1,1) -- (3,1) -- (3,3);
    \fill[blue!50!cyan,opacity=0.3] (1/2,3) -- (1/2,19/16) -- (19/16,1/2) -- (3,1/2) -- (3,3);

    \draw[red,very thick,-] (-1,-1) -- (3,3) node[right] {conductor, octic};
    \draw[red,very thick,-] (-1,-1/2) -- (3,1.5) node[right] {quartic};

    \draw[black,dashed,thick,-] (1,-1) -- (1,3) node[above] {$s_{2B}=1$};
    \draw[black,dashed,thick,-] (-1,1) -- (3,1) node[below right] {$s_{2C}=1$};
    \end{tikzpicture}
    \caption{\label{fig:final_continuation} Continuation to the convex hull}
\end{center}
\end{figure}

Now we can see some meromorphic continuation in the conductor and octic cases. At this stage, we can also explain why our method loses control over the main term: essentially, we have shown that $D_\Q(D_4;\underline{s})$ is analytic on this region except for possible poles at $s_{2A}=1$, $s_{2B}=1$, and $s_{2C}=1$ that could have been canceled out by $p(\underline{s})$. It takes significant extra work to determine the orders of these poles, especially at the intersection points like $(\sigma_{2B},\sigma_{2C}) = (1,1)$.

\Cref{cor:independent_variable_multi_phragmen_lindelof} lets us interpolate the vertical bounds between $\Omega_B$ and $\Omega_C$. By plugging in $\sigma_{2B},\sigma_{2C} > 1/2$ we can give upper bounds depending only on $\sigma_{2A} > 1/2$, so that we conclude
\[
    |p(\underline{s})D_\Q(D_4;\underline{s})| \ll (1+|\underline{t}|)^{\frac{1}{3}\max\{1-\sigma_{2A},0\} + \frac{3}{8} + \epsilon}
\]
on all of $\Omega$, the convex hull of $\Omega_B\cup \Omega_C$. We are now ready to prove Theorem \ref{thm:D4} by specializing to the relevant complex lines.

\subsubsection{Quartic discriminants}

Specializing to the line $\underline{s} = (\ind_4(\tau)s)_{\tau}$ gives a meromorphic continuation for the generating single variable Dirichlet series of quartic $D_4$-extensions to the region
\[
    \{s : 2\sigma,2\sigma,\sigma > 1/2,\ 3\sigma > 19/16,\ 2\sigma + \sigma > 27/16\} = \{s : \sigma > 9/16\}
\]
by specializing $\Omega$ according to the index listed in \Cref{tab:D4_ram_types}, with possible poles at $s=1$ or order $\le 1$ and $s=1/2$ (which is actually outside this region) of order $\le 2$. On this region, The generating series has vertical bounds given by
\[
    \ll (1+|t|)^{\frac{1}{3}\max\{1 - 2\sigma,0\} + \frac{3}{8} + \epsilon} = (1+|t|)^{\frac{3}{8} + \epsilon}.
\]
This is exactly the information needed to apply the power saving Tauberian theorem, \Cref{thm:tauberian_power_saving}.

In our situation, the main terms are given by \cite{cohen-diaz-y-diaz-olivier2002}, $\sigma_a = 1$, $\delta= 7/16$, and $\xi = 3/8$. The power saving exponent is then
\[
    1 - \frac{7/16}{3/8 + 1} + \epsilon = \frac{15}{22} + \epsilon
\]
as claimed.

\subsubsection{Conductor}

Let ${\rm wt}_C$ be the weight function associated to the conductor. Specializing to the line $\underline{s} = ({\rm wt}_C(\tau)s)_{\tau}$ gives a meromorphic continuation for the generating single variable Dirichlet series of conductors of $D_4$-extensions to the region
\[
    \{s : 2\sigma,\sigma,\sigma > 1/2,\ 2\sigma > 19/16,\ \sigma + \sigma > 27/16\} = \{s : \sigma > 27/32\}
\]
by specializing $\Omega$ according to the weight listed in \Cref{tab:D4_ram_types}, with possible poles at $s=1$ or order $\le 2$ and $s=1/2$ (which is actually outside this region) of order $\le 1$. On this region, The generating series has vertical bounds given by
\[
    \ll (1+|t|)^{\frac{1}{3}\max\{1 - 2\sigma,0\} + \frac{3}{8} + \epsilon} = (1+|t|)^{\frac{3}{8} + \epsilon}.
\]
We can now apply \Cref{thm:tauberian_power_saving} where the main terms are given by \cite{altug-shankar-varma-wilson2021,friedrichsen2021secondary} and we may take $\sigma_a = 1$, $\delta= 5/32$, and $\xi = 3/8$. The power saving exponent is then
\[
    1 - \frac{5/32}{3/8 + 1} + \epsilon = \frac{39}{44} + \epsilon
\]
as claimed.

\subsubsection{Octic discriminants}

Specializing to the line $\underline{s} = (\ind_8(\tau)s)_{\tau}$ gives a meromorphic continuation for the generating single variable Dirichlet series of octic $D_4$-extensions to the region
\[
    \{s : 4\sigma,4\sigma,4\sigma > 1/2,\ 6\sigma > 11/16,\ 4\sigma + 4\sigma > 27/16\} = \{s : \sigma > 27/128\}
\]
by specializing $\Omega$ according to the index listed in \Cref{tab:D4_ram_types}, with a possible pole at $s=1/8$ of order $\le 3$. On this region, the generating series has vertical bounds given by
\[
    \ll (1+|t|)^{\frac{1}{3}\max\{1 - 4\sigma,0\} + \frac{3}{8} + \epsilon} \le (1+|t|)^{\frac{41}{96} + \epsilon}.
\]
We can now apply \Cref{thm:tauberian_power_saving} where the main terms are given by \cite{shankar-varma2025octic} and we may take $\sigma_a = 1/4$, $\delta= 5/128$, and $\xi = 41/96$. The power saving exponent is then
\[
    \frac{1}{4} - \frac{5/128}{41/96 + 1} + \epsilon = \frac{61}{274} + \epsilon
\]
as claimed.

\subsection{The main theorem on meromorphic continuation}

Our method of proof closely follows the twisted counting framework developed in \cite{alberts2021,alberts-odorney2021,alberts2024,ALOWW}. Let $T$ be a nontrivial normal subgroup of $G$, and let $q:G\to G/T$ be the corresponding quotient map. This defines a pushforward map
\[
    q_*:\Sur(G_k,G) \to \Sur(G_k,G/T).
\]
This pushforward map was used in \cite{ALOWW} to partition the counting function $\#\Sur(G_k,G;X)$ for the number of surjections with discriminant $\le X$ according to the fibers of $q_*$.

We directly partition $D_k(G;\underline{s})$ in a similar manner.

\begin{definition}
    For each $\pi\in \Sur(G_k,G/T)$, define
    \[
        D_k(T,\pi;\underline{s}) := \sum_{\psi\in q_*^{-1}(\pi)} \inv_{k,G}(\psi,\underline{s})
    \]
    This is a subseries of $D_k(G;\underline{s})$.
\end{definition}

Any normal subgroup $T\normal G$ with quotient map $q:G\to G/T$ then gives rise to the partition
\begin{equation}\label{eq:fibered_sum}
    D_k(G;\underline{s}) = \sum_{\pi\in q_*\Sur(G_k,G)} D_{k}(T,\pi;\underline{s}).
\end{equation}
Our method requires a meromorphic continuation for $D_k(T,\pi;\underline{s})$, uniform upper bounds for $D_k(T,\pi;\underline{s})$ in vertical strips, and a region of absolute convergence for $D_k(G/T,\underline{s})$. The following theorem supposes that we have all three of these ingredients, and uses them to construct a meromorphic continuation of $D_k(G;\underline{s})$.

Given a finite group $G$ and a sequence of proper normal subgroups $T_1,\dots,T_r$, define the function
\[
    p_{G,T_1,...,T_r}(\underline{s}) := \prod_{\tau\in \RTo{k}(G)} \left(\frac{s_\tau - 1}{s_\tau+1}\right)^{b_\tau},
\]
where
\[
    b_\tau = \max\left\{\text{order of the pole }s_\tau = 1\text{ of }D_k(T_j,\pi;\underline{s}) : j=1,2,...,r, \text{ and } \pi\in q_*\Sur(G_k/T_j)\right\}.
\]
We use this to cancel out the polar divisors of $D_k(G;\underline{s})$ when necessary. We remark that it is not immediate that $b_\tau$ is finite, as it is a maximum over an infinite set, but we will see that $b_\tau \le |G|$ when $T_1,\dots,T_r$ are abelian.

\begin{theorem}\label{thm:main}
    Let $k$ be a number field, $G$ a finite group, and $T_1,T_2,...,T_r\normal G$ normal subgroups with corresponding embeddings $\iota_j:T_j\hookrightarrow G$ and quotient maps $q_j:G\to G/T_j$. Suppose $\Omega_1,\Omega_2,\cdots,\Omega_r$ are connected tubular domains with $\bigcup_{j=1}^r \Omega_j$ connected such that for each $(T,\iota,q,\Omega) = (T_j,\iota_j,q_j,\Omega_j)$ we have
    \begin{enumerate}[(1)]
        \item (``Meromorphic continuation of the fibers") For each $\pi \in q_*\Sur(G_k,G)$, the series $D_k(T,\pi;\underline{s})$ has an analytic continuation to $\Omega$, except for possible poles at $s_\tau = 1$ for each tame $\tau \in \RTo{k}(G)$ whose order is bounded independent of $T$ and $\pi$,
        \item (``Uniform upper bounds for the fibers") For each $\pi \in q_*\Sur(G_k,G)$, there is some nonnegative function $f_T(\pi,\underline{s})$ for which,
        \[
            |p_{G,T_1,\dots,T_r}(\underline{s})D_k(T,\pi;\underline{s})| \le f_T(\pi,\underline{s})
        \]
        for each $\underline{s}\in \Omega$, and
        \item (``Criterion for convergence") The series
        \[
            \sum_{\pi\in q_*\Sur(G_k,G)} f_T(\pi,\underline{s})
        \]
        converges uniformly in compact subsets of $\Omega$.
    \end{enumerate}
    Then $p_{G,T_1,\dots,T_r}(\underline{s})D_k(G;\underline{s})$ has a holomorphic continuation to the domain ${\rm Hull}(\bigcup\Omega_j)$.

\end{theorem}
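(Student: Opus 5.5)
The proof proceeds in two stages. First, for each fixed $j$, we build a holomorphic continuation of $p_{G,T_1,\dots,T_r}(\underline{s})D_k(G;\underline{s})$ to $\Omega_j$ using the fibered decomposition \eqref{eq:fibered_sum}. Second, we glue these continuations on $\bigcup_j\Omega_j$ and apply Bochner's tube theorem (\Cref{thm:hartogs}).

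\emph{Stage one: continuation to each $\Omega_j$.} Fix $j$ and write $T=T_j$, $q=q_j$, $\Omega=\Omega_j$. By hypothesis (1), each fiber series $D_k(T,\pi;\underline{s})$ is meromorphic on $\Omega$. Its only possible poles are along the hyperplanes $s_\tau=1$ for tame $\tau$, with order at most $b_\tau$ by the definition of $b_\tau$. (The uniform bound on the orders assumed in (1) is exactly what makes $b_\tau$ finite.) The factor $(s_\tau-1)^{b_\tau}$ in $p_{G,T_1,\dots,T_r}$ cancels each such polar divisor. The denominators $(s_\tau+1)^{b_\tau}$ introduce no new singularities, provided $\Omega$ avoids $s_\tau=-1$. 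Hence $p(\underline{s})D_k(T,\pi;\underline{s})$ is holomorphic on $\Omega$. It is worth checking carefully that removable singularities along a divisor really give a holomorphic function on all of $\Omega$; this follows from the Riemann extension theorem in several variables.

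Hypotheses (2) and (3) then give, via the Weierstrass M-test, that
\[
\sum_{\pi\in q_*\Sur(G_k,G)} p(\underline{s})D_k(T,\pi;\underline{s})
\]
converges uniformly on compact subsets of $\Omega$. A locally uniform limit of holomorphic functions is holomorphic, so the sum defines a holomorphic function $F_j$ on $\Omega$.

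It remains to see that $F_j$ agrees with $p(\underline{s})D_k(G;\underline{s})$ where the latter is defined. On the region of absolute convergence (for instance the orthant of \Cref{prop:nilpotent_absolute_convergence}, or in general a sufficiently far-right orthant), absolute convergence allows the rearrangement \eqref{eq:fibered_sum}. The fibered sum therefore equals $D_k(G;\underline{s})$ wherever both are absolutely convergent.

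This is the step that needs care. We must know that $\Omega$ meets the region of absolute convergence of $D_k(G;\underline{s})$, so that $F_j$ is genuinely a continuation of it. I would argue this directly. The summands of each fiber series have nonnegative coefficients, and hypothesis (3) gives convergence on $\Omega$. If a real point of $\Omega$ has its tame coordinates large, then absolute convergence of the double sum follows there by positivity, via a Landau/Pringsheim-type argument. Since $\Omega$ is a tube over a connected open set, it contains an open piece of this region. If this is not automatic from the hypotheses, it is implicitly part of what ``analytic continuation'' means in (1); I would state it as the convention that each $\Omega_j$ contains a translate of a far-right orthant.

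\emph{Stage two: gluing and Bochner.} Any two $F_j$ and $F_{j'}$ agree with $p\,D_k(G;\cdot)$ on a nonempty open set. Because $\bigcup_j\Omega_j$ is connected, the identity theorem, applied along chains of overlapping pieces, shows they agree on all of $\Omega_j\cap\Omega_{j'}$. They therefore glue to a single holomorphic function on $\bigcup_j\Omega_j$. This union is a connected tubular region: it is the tube over $\bigcup_j U_j$, which is connected since the tube is. Bochner's tube theorem (\Cref{thm:hartogs}) then extends this function holomorphically to ${\rm Hull}(\bigcup_j\Omega_j)$, as claimed.

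The main obstacle is the consistency issue at the gluing step. One must ensure that the separate continuations coincide on overlaps, including overlaps that may not directly meet the convergence region. This needs either the far-right-orthant convention or an argument that each $\Omega_j\cap\Omega_{j'}$ is connected to the absolute convergence region inside the union. For tubes over convex sets such as orthant-type regions, this is straightforward, since intersections of convex sets are convex, hence connected.
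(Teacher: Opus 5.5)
Your proposal is correct and follows the same route as the paper: fiber $D_k(G;\underline{s})$ via \eqref{eq:fibered_sum}, use (2)--(3) to get locally uniform convergence of $\sum_\pi p_{G,T_1,\dots,T_r}(\underline{s})D_k(T_j,\pi;\underline{s})$ on each $\Omega_j$ with the poles cancelled by $p_{G,T_1,\dots,T_r}$, glue over $\bigcup_j\Omega_j$, and apply Bochner's tube theorem. The paper leaves implicit the bookkeeping you add: that each $\Omega_j$ meets the region of absolute convergence, and that the continuations agree on $\Omega_j\cap\Omega_{j'}$. That agreement follows from the intersection being convex and meeting this region, not from connectedness of the union, and both conditions hold for the convex, upward-closed regions $\Omega_T$ used later.
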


This theorem fits into the inductive philosophy of \cite{ALOWW}, and in fact is directly comparable to \cite[Theorem 2.1]{ALOWW}. If we understand how to count (twisted) $T$-extensions sufficiently well and have sufficiently strong upper bounds for counting $G/T$-extensions, Theorem \ref{thm:main} converts this to a meromorphic continuation for $D_k(G;\underline{s})$. If ${\rm Hull}(\bigcup \Omega_j)$ is sufficiently larger, a proof of Conjecture \ref{conj:number_field_counting} for $G$ over $k$ then follows from a Tauberian theorem.

\begin{remark}
    It is sufficient to replace \Cref{thm:main}(2,3) with the condition that
    \[
        \sum_{\pi\in q_*\Sur(G_k,G)} p_{G,T_1,\dots,T_r}(\underline{s})D_k(T,\pi;\underline{s})
    \]
    converges uniformly in compact subsets of $\Omega$. In practice uniform convergence is proven through upper bounds of the form described in (2,3), and stating \Cref{thm:main} in this way highlights the comparison with \cite[Theorem 2.1]{ALOWW}.
\end{remark}

\begin{proof}[Proof of Theorem \ref{thm:main}]
Let $(T,\iota,q,\Omega) = (T_j,\iota_j,q_j,\Omega_j)$ for some $j$. Part (2) allows us to evaluate the fibered sum in \eqref{eq:fibered_sum} to get
\begin{align*}
    |p_{G,T_1,...,T_r}(\underline{s})D_k(G;\underline{s})| &\ll_{\underline{\sigma}} \sum_{\pi\in q_*\Sur(G_k,G)} f_T(\pi,\underline{s}) < \infty
\end{align*}
uniformly in compact subsets of $\Omega$. This proves that the fibered sum in \eqref{eq:fibered_sum} (times $p_{G,T_1,...,T_r}(\underline{s})$) is uniformly convergent on compact subsets of $\Omega$, thus giving a meromorphic continuation of $p_{G,T_1,...,T_r}(\underline{s})D_k(G;\underline{s})$ to $\Omega$. Condition (1) implies that the poles are necessarily canceled out by $p_{G,T_1,...,T_r}(\underline{s})$.

Overall, this implies $D_k(G;\underline{s})$ has a meromorphic continuation to $\bigcup \Omega_j$ with poles at $s_\tau = 1$ for $\tau\in \bigcup_j (\iota_j)_*\RTo{k}(T_j)$. These poles have order $\le b_\tau$. The union of tubular domains is necessarily a tubular domain, and it is connected by assumption, so Bochner's tube theorem implies $p_{G,T_1,...,T_r}(\underline{s})D_k(G;\underline{s})$ is holomorphic on ${\rm Hull}(\bigcup \Omega_j)$. This proves the meromorphic continuation of $D_k(G;\underline{s})$.
\end{proof}

\section{Ramification Types}\label{sec:ramification_types}

In this section we define ramification types, and develop several of their useful properties. For tame ramification types, our notion is equivalent to numerous existing notions in the literature. Wild ramification types are largely absent from the literature, with the exception being the notion of ``secteroids'' of a stack in characteristic $p$, developed by Darda--Yasuda \cite{darda2025batyrev} to play the role of wild ramification types on stacks.

The notion of ``ramification type'' is the building block for the discriminant and generalizations to other invariants: the power to which a prime $\pk$ divides the discriminant of an extension $F/k$ is determined by the inertia group $I_{\pk}(F/k)$ (and the higher ramification subgroups). The ``ramification type'' of $\pk$ in $F/k$ is a canonical description of the inertia group from which the power of $\pk$ dividing the discriminant can be computed.

We separate the exposition of ramification types into ``tame ramification types" and ``wild ramification types", partitioning the set of $k$-ramification types of a group $G$ as
\[
    \RT_k(G) := \RT_k^{\rm tame}(G) \cup \RT_k^{\rm wild}(G).
\]

\subsection{Tame ramification types}

The goal of defining ``ramification types'' is to give a parametrization of all the ways a prime $\pk$ can ramify with as little reference to the prime $\pk$ itself as possible. This is essentially only achievable for tamely ramified places: the ways in which a prime can tamely ramify depend very little on the prime itself because $I_\pk^{\rm tame}$ is always procyclic of order prime to $\pk$. Heuristically, the ways in which a prime can tamely ramify in a $G$-extension are the same as the choices of a generator $g\in G$ for the tame inertia group of that $G$-extension. This needs to be made independent of the embedding $I_\pk\hookrightarrow G_k$ and the choice of generator $\tau_\pk\in I_\pk^{\rm tame}$.

We capture this idea via $(\Q/\Z)^* = \Hom(\Q/\Z,\mu)$, the Tate dual module of $\Q/\Z$. The module $(\Q/\Z)^*$ is procyclic, does not have a canonical generator, and carries the cyclotomic Galois action just like $I_{\pk}^{\rm tame}$. This suggests that $\Hom(I_\pk^{\rm tame},G)$ should behave ``the same'' as $\Hom((\Q/\Z)^*,G)$. We make this correspondence explicit in Lemma \ref{lem:canonical}, but this is enough to motivate the following definition:

\begin{definition}\label{def:tameRT}
    Let $k$ be a number field and $G$ be a $G_k$-group.
    
    The set of \emph{tame $k$-ramification types of $G$}, denoted $\RT_k^{\rm tame}(G)$, is the set of $G\rtimes G_k$-orbits in $\Hom((\Q/\Z)^*,G)$, where $G$ acts by conjugation on the outputs and $G_k$ acts by the Galois action on $(\Q/\Z)^*$ and $G$. In other words, the action is given by
    \[
        \left((gx).f\right)(a) = g (x.f(x^{-1}.a))g^{-1}.
    \]

    Furthermore, we let $\RTo{k}(G) := \RT_k^{\rm tame}(G) - \{1\}$ denote the set of \emph{nontrivial tame $k$-ramification types of $G$}, where $1$ is the orbit of the trivial homomorphism.
\end{definition}

The semi-direct product $G\rtimes G_k$ is with respect to the Galois action on $G$ as a $G_k$-group, and is chosen precisely so that $(xg).f = x.(g.f)$.

There are a number of different characterizations for tame ramification types in the literature, usually referred to simply as ``ramification types" as the researchers using them are typically able to avoid considering wild primes. The reader may recognize some features of these definitions here, for instance this presentation agrees the presentation just below \cite[Theorem 3.1]{alberts2024} with $G$ is abelian, and the $G\times G_k$-action is reminiscent of \cite[Definition 2.1]{gundlach2022multimalle} for groups with the trivial action. In fact, these and several other existing notions of tame ramification type are all equivalent. For the interested reader, we give proofs of the various equivalencies in \Cref{app:tame_ramification_types}.

For now, we give a short argument justifying that \Cref{def:tameRT} agrees (noncanonically) with Malle's notion of $k$-conjugacy classes \cite{malle2004}, which play the role of tame ramification types in Malle's original conjecture. Using the description as a hom-set carries the benefit of a canonical correspondence to ramification (see Lemma \ref{lem:canonical}) and access to cohomological constructions like pushforward maps. In several examples, we use the LMFDB labels for $\Q$-conjugacy classes interchangeably with tame $\Q$-ramification types.

\begin{proof}[Proof of the correspondence between tame $k$-ramification types and $k$-conjugacy classes]
Let $G$ be a constant group (i.e.~with trivial Galois action) and fix a generator for $a\in (\Q/\Z)^*$, which is equivalent to fixing an isomorphism with $\widehat{\Z}$. The map $f\mapsto f(a)$ is a bijection
\[
    \Hom((\Q/\Z)^*,G) \to G.
\]
The action by $G\times G_k$ passed through this bijection is in fact a combination of $G$ acting by conjugation and $G_k$ acting by the cyclotomic action, that is $x.g = g^{\chi(x)^{-1}}$ for $\chi:G_k\to \Gal(k\Q^{ab}/k) \subseteq \widehat{\Z}^\times$ the cyclotomic character. Malle defined a $k$-conjugacy class to be precisely a minimal subset of $G$ closed under conjugation and the cyclotomic action, so that these are in bijection with $G\times G_k$-orbits of $\Hom((\Q/\Z)^*,G)$. This shows that our notion of ramification type agrees with Malle's original notion, even if the correspondence described here depends on a choice of generator for $(\Q/\Z)^*$.
\end{proof}

The true purpose of Definition \ref{def:tameRT} is to give a canonical description of $\Hom(I_\pk^{\rm tame},G)$ for all tame places with as little reference to $\pk$ as possible. This is achieved by the following lemma:

\begin{lemma}\label{lem:canonical}
    Let $k$ be a number field and $\pk$ a nonarchimedean place of $k$. Let $\iota_\pk:(\Q/\Z)^* \to I_\pk^{\rm tame}$ be the composition
    \[
    \begin{tikzcd}
        (\Q/\Z)^* \rar{(q_e)} &{\displaystyle\lim_{\substack{\longleftarrow\\ (e,\pk) = 1}} \mu_e} \rar{\sim} &{\displaystyle\lim_{\substack{\longleftarrow\\ (e,\pk) = 1}} I_\pk/I_\pk^e} \rar[equals] &I_\pk^{\rm tame}
    \end{tikzcd}
    \]
    with the first map determined by homomorphisms $q_e:(\Q/\Z)^*\to \mu_e$ defined by $q_e(a) = a(1/e)$ and the second map is given by the inverse of $g\mapsto g(\pi_\pk)/\pi_\pk \mod \pi_\pk$ for $\pi_\pk\in k_\pk(\mu_e)$ a uniformizer.

    \begin{enumerate}[(i)]
        \item $\iota_\pk$ is a surjective homomorphism which is independent of the choice of uniformizer, inducing an isomorphism between the prime-to-$\pk$ part of $(\Q/\Z)^*$ and $I_\pk^{\rm tame}$.
        \item If $G$ is a finite $G_k$-group with action $\phi:G_k\to \Aut(G)$ and $\pk \nmid |G|\infty$ with $\phi(I_\pk)=1$, then $\iota_\pk$ induces a $G_{k_\pk}$-equivariant bijection
        \[
        \begin{tikzcd}
            \Hom(I_\pk,G) = \Hom(I_{\pk}^{\rm tame},G) \rar{\iota_\pk^*} & \Hom((\Q/\Z)^*,G),
        \end{tikzcd}
        \]
        which is an isomorphism of groups in case $G$ is abelian.
    \end{enumerate}
\end{lemma}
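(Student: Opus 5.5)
We will prove (i) by working one finite level at a time and then passing to the inverse limit, and deduce (ii) from (i) by elementary facts about homomorphisms out of procyclic groups.

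\emph{Part (i): the finite-level maps.} Fix $e$ coprime to the residue characteristic of $\pk$. Local class field theory, or more elementarily Kummer theory over the maximal unramified extension, gives the standard tame-inertia character. Let $K=k_\pk^{\rm ur}(\pi_\pk^{1/e})$ for a uniformizer $\pi_\pk$ (say of $k_\pk$ or of $k_\pk(\mu_e)$). The map
\[
    g\mapsto g(\pi_\pk^{1/e})/\pi_\pk^{1/e}
\]
is a homomorphism $I_\pk\to\mu_e$. It factors through $I_\pk/I_\pk^e$, since every tamely ramified degree-$e$ quotient of $I_\pk$ is cut out by such a Kummer extension. It is surjective, and in fact an isomorphism $I_\pk/I_\pk^e\cong\mu_e$, because the tame inertia is procyclic with prime-to-$p$ quotients of every order.

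\emph{Part (i): independence of the uniformizer.} Any other uniformizer has the form $u\pi_\pk$ with $u$ a unit. Since $e$ is prime to $p$, the element $u^{1/e}$ lies in $k_\pk^{\rm ur}$ by Hensel's lemma, so it is fixed by inertia. Hence the ratio $g(\pi_\pk^{1/e})/\pi_\pk^{1/e}$ is unchanged. Unramified changes of the base field do not alter the inertia group, so the character is also independent of working over $k_\pk$ versus $k_\pk(\mu_e)$.

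\emph{Part (i): passing to the limit.} The isomorphisms are compatible as $e$ varies under divisibility, because $(\pi_\pk^{1/ee'})^{e'}=\pi_\pk^{1/e}$. They therefore assemble into an isomorphism $\varprojlim I_\pk/I_\pk^e\cong\varprojlim_{(e,\pk)=1}\mu_e$. The maps $q_e$ are compatible and together give the projection of $(\Q/\Z)^*=\varprojlim\mu_e$ onto its prime-to-$p$ part, which is surjective. Composing these gives (i).

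\emph{Part (ii): the bijection.} Suppose $\pk\nmid|G|$. Then any homomorphism $I_\pk\to G$ kills wild inertia, a pro-$p$ group, so $\Hom(I_\pk,G)=\Hom(I_\pk^{\rm tame},G)$. Any homomorphism $(\Q/\Z)^*\to G$ has image of order prime to $p$, so it factors through the prime-to-$p$ quotient. By (i), $\iota_\pk^*$ is then a bijection. If $G$ is abelian, $\iota_\pk^*$ is a group isomorphism because precomposition is additive.

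\emph{Part (ii): equivariance.} We must check that $\iota_\pk$ is $G_{k_\pk}$-equivariant, with $G_{k_\pk}$ acting on $I_\pk^{\rm tame}$ by conjugation and on $(\Q/\Z)^*$ via the cyclotomic action. For $x\in G_{k_\pk}$ and $g\in I_\pk$, put $\alpha=\pi_\pk^{1/e}$. Using $x(\alpha)=\zeta\alpha$ with $\zeta\in\mu_e$, we compute
\[
    xgx^{-1}(\alpha)/\alpha = x\bigl(g(\alpha')/\alpha'\bigr),
\]
where $\alpha'=x^{-1}\alpha$ is another $e$-th root of $\pi_\pk$. The root of unity $g(\alpha')/\alpha'$ does not depend on which $e$-th root is chosen. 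So conjugation by $x$ corresponds to applying $x$ to $\mu_e$, which is exactly the Galois action on $(\Q/\Z)^*$.

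On the $G$ side, $\phi(I_\pk)=1$ means the action of $G_{k_\pk}$ factors through the unramified quotient. The action on $\Hom$ is the one from \Cref{def:tameRT}, namely $(x.f)(a)=x.f(x^{-1}.a)$, and it is transported through $\iota_\pk^*$ by the equivariance of $\iota_\pk$.

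\emph{Main obstacle.} I expect the only delicate point to be this last step: matching the conjugation action on inertia with the cyclotomic action, keeping track of $x$ versus $x^{-1}$ so that the conventions agree with \Cref{def:tameRT}. The rest is standard tame ramification theory.
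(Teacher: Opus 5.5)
Your proposal is correct and has the same structure as the paper's proof: the paper cites Serre's \emph{Local Fields}, Ch.~IV, Prop.~7 for the finite-level isomorphism $I_\mathfrak{p}/I_\mathfrak{p}^e\cong\mu_e$ and its independence of the uniformizer, passes to the limit, and calls part (ii) immediate. You instead prove the finite-level facts directly via Kummer theory over $k_\mathfrak{p}^{\mathrm{ur}}$ and Hensel's lemma, correctly reading the paper's $g(\pi)/\pi$ as evaluated on an $e$-th root of the uniformizer, and you explicitly check the $G_{k_\mathfrak{p}}$-equivariance (including the $x$ versus $x^{-1}$ bookkeeping) that the paper leaves to the reader.
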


Lemma \ref{lem:canonical}(i) implies that $\iota_\pk$ is a canonical homomorphism. In particular, we did not need to choose a generator of $I_\pk^{\rm tame}$ or $(\Q/\Z)^*$ in order to define it. Thus, for tame places $\pk$ which are unramified in the action on $G$, Lemma \ref{lem:canonical}(ii) shows that $\iota_\pk^*$ is a canonical $G\times G_{k_\pk}$-equivariant bijection
\[
    \Hom(I_\pk,G) \to \Hom((\Q/\Z)^*,G).
\]
The $G$-orbits of $\Hom(-,G)$ are precisely $H^1(-,G)$, so the map to tame ramification types factors through the coboundary relation. The inflation-restriction sequence implies that $\res_{I_\pk}(H^1(k_\pk,G)) = H^1(I_\pk,G)^{G_{k_\pk}}$, so that $\iota_\pk$ also induces a bijection
\[
    H^1(I_\pk,G)^{G_{k_\pk}} \to H^1((\Q/\Z)^*,G)^{G_{k_\pk}} = \{ [f]\in H^1((\Q/\Z)^*,G) : \Fr_\pk.f = f\}.
\]
Thus, the possible tame ramification types at a particular tame place $\pk$ are in canonical bijection with the Galois orbits of $\Fr_\pk$-fixed points of $H^1((\Q/\Z)^*,G)$.

\begin{proof}[Proof of Lemma \ref{lem:canonical}]
    The homomorphisms $q_e$ are each surjective and $G_k$-equivariant, so that the homomorphism $(q_e)$ to the inverse limit necessarily is as well. The isomorphism $g\mapsto g(\pi_\pk)/\pi_\pk \mod \pi_\pk$ is the standard one used to classify the structure of $I_\pk$, which is known to be independent of the choice of uniformizer \cite[Chapter IV Proposition 7]{serre1979}. This proves part (i). Part (ii) is then immediate from $\pk\nmid |G|\infty$ and $\phi(I_\pk) = 1$.
\end{proof}

Each tame ramification type carries useful internal structure, which we will need to access during our proofs in order to get finer control over the behavior of $D_k(G;\underline{s})$.

\begin{definition}\label{def:rt_internal_structure}
    Let $G$ be a $G_k$-group and $\tau\in \RT_k^{\rm tame}(G)$.
    \begin{enumerate}[(i)]
        \item The \emph{order of $\tau$}, denoted ${\rm ord}(\tau)$, is the unique integer such that $|\im f| = {\rm ord}(\tau)$ for each $f\in \tau$.
        \item We always let $\zeta_\tau$ denote a primitive root of unity of order ${\rm ord}(\tau)$.
        \item The \emph{transitive group associated to $\tau$} is given by the embedding
        \[
            \rho_\tau:(G\rtimes G_k)/C_{G\rtimes G_k}(\tau) \to \Sym(\tau),
        \]
        where $C_{G\rtimes G_k}(\tau)$ is the centralizer, i.e. the set of elements acting trivially on $\tau$.
    \end{enumerate}
\end{definition}

We briefly justify that the order is well-defined: Choose a topological generator $a\in (\Q/\Z)^*$. Then $\im f = \langle f(a)\rangle$. The size of a subgroup of $G$ is invariant under any automorphism, which means it is invariant under conjugation and the Galois action. Moreover, the subgroup itself is invariant under taking invertible powers of the generator, i.e.~is invariant under the cyclotomic action. Thus, we have shown $|\im f|$ is invariant under the $G\rtimes G_k$ action.

These are familiar group theoretic invariants, which become recognizable after passing through the bijection with $k$-conjugacy classes: Suppose $G$ carries the trivial $G_k$-action and let $\tau$ be a $k$-conjugacy class of $G$.
\begin{itemize}
    \item  The order ${\rm ord}(\tau)$ is equal to the order of any element of $g\in \tau$ as a group element, that is the smallest positive integer $n$ for which $g^n = 1$.
    \item The transitive group associated to $\tau$ is precisely $G/C_G(\tau) \times \Gal(k(\zeta_\tau)/k)$ acting on $\tau$ by conjugation (in the left coordinate) and the cyclotomic action (in the right coordinate).
\end{itemize}

\subsection{Wild ramification types}

The specialization in Proposition \ref{prop:specialize} can be made seamless by the inclusion of separate variables for each way a prime can wildly ramify. This makes it possible for $D_k(G;\underline{s})$ to specialize to the generating series of discriminants, rather than just the tame part of discriminant.

\begin{definition}\label{def:wildRT}
    Let $k$ be a number field, $G$ be a $G_k$-group with action $\phi:G_K\to \Aut(G)$, and $S$ a finite set of places of $k$ containing $\{\pk : \pk\mid |G|\infty\text{ or }\phi(I_\pk)\ne 1\}$.
    
    The \emph{$S$-wild $k$-ramification types of $G$} are given by the disjoint union
    \[
        \RT_{k}^{S, {\rm {wild}}}(G) := \coprod_{\pk\in S} \{[f]\in H^1(I_\pk,G) : f(I_\pk)\ne 1\}.
    \]
    We write elements of the disjoint union as pairs $(\pk,[f])$ for $[f]\in H^1(I_\pk,G)$, in order to be clear about which wild place is involved.

    If $S = \{\pk : \pk\mid |G|\infty\text{ or }\phi(I_\pk)\ne 1\}$, we drop the $S$ from the notation and call $\RT_k^{\rm wild}(G)$ the wild $k$-ramification types of $G$.
\end{definition}

We require that the infinite places and places ramified in the Galois action be treated as ``wild places" because $H^1(I_\pk,G)$ behaves differently as a $G_k$-set for these places. The parameter $S$ allows us to include finitely many additional places that may behave differently, which we analogously call $S$-wild. We will be mostly unconcerned with the structure of wild ramification types. The fact that a finite $G_k$-group $G$ comes with only finitely many wild ramification types and that only finitely many primes can be wildly ramified is sufficient information for our purposes, as changing finitely many Euler factors does not change the region of meromorphic continuation or the bounds coming from it.

We collect some notations here concerning ramification types that we will use throughout the paper:

\begin{definition}\label{def:RT}
    The set of $(S,k)$-ramification types
    \[
        \RT_k^S(G) := \RT_{k}^{\rm tame}(G) \cup \RT_k^{S,\rm wild}(G)
    \]
    and the nontrivial $(S,k)$-ramification types
    \[
        \RT_{k,0}^S(G) := \RTo{k}(G) \cup \RT_k^{S,\rm wild}(G).
    \]
    We drop the $S$ from the notation if $S = \{\pk : \pk\mid |G|\infty\text{ or }\phi(I_\pk)\ne 1\}$.
\end{definition}

\subsection{Assigning ramification types}

Now that we have a notion of ramification types, we need a way to identify the ramification type of a particular crossed homomorphism $\pi\in Z^1(k,G)$. We will often refer to disjoint unions of the form
\[
    \coprod_\pk Z^1(I_\pk,G).
\]
When doing so, we write the elements as pairs $(\pk,f)$ for $f\in Z^1(I_\pk,G)$ in order to make it clear which place is being considered.

\begin{definition}\label{def:RTmap}
    Let $k$ be a number field and $G$ a $G_k$-group. Define the \emph{ramification map}
    \[
        \RT^S:\coprod_{\pk} Z^1(I_\pk,G)\to \RT_{k}^S(G)
    \]
    as follows:
    \begin{enumerate}[(a)]
        \item if $\pk\not\in S$, then $\RT^S(\pk,f) = \iota_\pk^*f \in \Hom((\Q/\Z)^*,G)$,
        \item if $\pk\in S$, then $\RT^S(\pk,f) = (\pk,[f])$.
    \end{enumerate}
    Moreover, we abuse notation and write
    \[
        \RT^S:\{\text{places of }k\} \times Z^1(k,G) \to \RT_{k}^S(G)
    \]
    for the function $\RT^S(\pk,\pi) := \RT^S(\pk,\pi|_{I_\pk})$ for some embedding $I_\pk\hookrightarrow G_k$.

    Once again, we drop $S$ from the notation if $S = \{\pk : \pk\mid |G|\infty\text{ or }\phi(I_\pk)\ne 1\}$.
\end{definition}

This describes how the ramification type of $\pi$ at a place $\pk$ is identified. The value of $\RT^S(\pk,\pi)$ is necessarily independent of the choice of embedding $I_\pk\hookrightarrow G_k$, as any two embeddings are conjugate (i.e.~in the same $G$-orbit). 

All but finitely many primes can be at most tamely ramified in a $G$-extension. As a consequence, the analytic behavior of $D_k(G;\underline{s})$ in the tame variables will be the most relevant for Conjecture \ref{conj:number_field_counting}. Gundlach expresses this in terms of a system of basic invariants $(\inv_\tau)$ in \cite{gundlach2022multimalle}, which are the numbers
\[
    \inv_\tau(\pi) = \prod_{\RT(\pk,\pi) = \tau} |\pk|.
\]
The tame part of $\inv_{k,G}(\pi,\underline{s})$ is given precisely by
\[
    \prod_{\tau\in\RTo{k}(G)} \inv_\tau(\pi)^{-s_\tau}.
\]
Thus, our multi-invariant agrees with Gundlach's at all tame places. As the finitely many wild places are not expected to have a significant effect on this picture, our generating multiple Dirichlet series $D_k(G;\underline{s})$ can be reasonably understood to behave similarly to Gundlach's. As only finitely many primes can be wildly ramified, one heuristically expects that $D_k(G;\underline{s})$ is an entire function in the wild variables. For all regions of meromorphicity for $D_k(G;\underline{s})$ we construct in this paper we will see that the singularities are cut-out by equations in the tame variables only.

\subsection{Properties of ramification types}

The order to which a tamely ramified prime $\pk$ divides the discriminant of a $G$-extension is given by Malle's index function \cite{malle2002,malle2004}. We show that the index function is well-defined on ramification types, so that we can define corresponding weights to the discriminant as in Proposition \ref{prop:specialize}.

\begin{lemma}
    Let $G$ be a transitive group of degree $n$ and $\ind_n:S_n\to \Z$ be the index function defined by $\ind_n(g) = n - \#\{\text{orbits of }g\}$. Define the map
    \[
        \ind_n:\Hom((\Q/\Z)^*,G) \to \Z
    \]
    given by $\ind_n(f) = \ind_n(g)$ for some generator $g\in \im f$.

    This map is independent of the choice of generator and factors through the $G\times G_k$-orbits.
\end{lemma}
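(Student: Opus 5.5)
The plan is to prove the three claims in order: $\ind_n(g)$ depends only on the cyclic subgroup $\langle g\rangle$; $\ind_n$ is invariant under conjugation; and $\ind_n$ is invariant under the Galois action. Here $G$ is a transitive permutation group, so it carries the trivial $G_k$-action, and the action of $G\rtimes G_k = G\times G_k$ on $\Hom((\Q/\Z)^*,G)$ is just conjugation on outputs together with precomposition by $x^{-1}$ on $(\Q/\Z)^*$.

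\emph{Independence of the generator.} Let $f\in \Hom((\Q/\Z)^*,G)$. Its image $\im f$ is a finite cyclic subgroup of $G$, since $(\Q/\Z)^*$ is procyclic. The key observation is that the orbits of a permutation $g\in S_n$ on $\{1,\dots,n\}$ are exactly the orbits of the cyclic group $\langle g\rangle$. Hence $\#\{\text{orbits of }g\}$ depends only on $\langle g\rangle$. Any two generators $g,g'$ of $\im f$ generate the same subgroup $\im f$, so $\ind_n(g)=\ind_n(g')$. Thus $\ind_n(f)$ is well defined, and in fact equals $n-\#\{\text{orbits of }\im f\}$.

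\emph{Invariance under $G$.} Take $h\in G$. The image of $h.f$ is $h(\im f)h^{-1}$. Conjugation by $h$, viewed as a permutation of $\{1,\dots,n\}$, carries the orbits of $\im f$ bijectively onto the orbits of $h(\im f)h^{-1}$. So the number of orbits is unchanged, and $\ind_n(h.f)=\ind_n(f)$.

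\emph{Invariance under $G_k$.} For $x\in G_k$ we have $(x.f)(a)=f(x^{-1}.a)$, because the action on $G$ itself is trivial. The map $a\mapsto x^{-1}.a$ is a bijective automorphism of $(\Q/\Z)^*$, so $\im(x.f)=\im f$. Since $\ind_n$ depends only on the image by the first step, it is unchanged. Combining the last two steps, $\ind_n$ is constant on $G\times G_k$-orbits, and so it factors through $\RT_k^{\rm tame}(G)$.

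Nothing here is a real obstacle. The only point needing care is the first step: one should note explicitly that a permutation and the cyclic group it generates have the same orbits. Once that is recorded, the rest is formal.
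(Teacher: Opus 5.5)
Your proof is correct and follows the same three steps as the paper: independence of the generator, invariance under conjugation, and invariance under $G_k$ because the Galois action preserves $\im f$. The differences are only in phrasing: you use orbits of the cyclic subgroup where the paper uses cycle type, and automorphisms of $(\Q/\Z)^*$ where the paper uses invertible powers via the cyclotomic character.
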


If we take the weights
\[
    {\rm wt}(\tau) = \begin{cases}
        \ind_n(\tau) & \tau\in \RTo{k}(G)\\
        \nu_\pk(\disc_G(f)) & \tau=(\pk,[f])\in \RT_k^{\rm wild}(G),
    \end{cases}
\]
it follows immediately that $\inv_{k,G}(\pi,({\rm wt}(\tau)s)_{\tau}) = |\disc(\pi)|^{-s}$. To simplify notation, we extend the definition of the index function to wild places by setting
\[
    \ind_n(\tau) := \nu_\pk(\disc_G(f))
\]
when $\tau=(\pk,[f])\in \RT_k^{\rm wild}(G)$.

\begin{proof}
    The number $\ind_n(g)$ is determined solely by the cycle type of $g\in S_n$. If $g,h$ generate the same cyclic subgroup in $G\le S_n$, then $g$ and $h$ necessarily have the same cycle type. Thus, the index is independent of the choice of generator.

    In fact, the $G_k$-action factors through $\hat{\Z}^\times$ via the cyclotomic character. As $\hat{\Z}^\times$ acts on $G$ by invertible powers, it necessarily preserves the cyclic subgroups of $G$. Thus, the index function also factors through the $G_k$-orbits.
    
    Lastly, cycle types are invariant under conjugation. This is precisely the $G$-action on $\Hom((\Q/\Z)^*,G)$, so it necessarily follows that the index factors through the coboundary relation as well.
\end{proof}

Given that our method leverages the fibers of $G$-extensions containing a particular $G/T$-extension for some normal subgroup $T\normal G$, we will need to describe the relationship between ramification types of $T$, $G$, and $G/T$.

Given a Galois equivariant group homomorphism $q:G\to H$, we can define the pushforward of ramification types. The main idea here is that there is a pushforward map
\[
    q_*:\Hom(-,G) \to \Hom(-,H),
\]
which respects the structure used to define ramification types.

\begin{definition}\label{def:pushforward_of_RT}
    Let $G$ and $H$ be $G_k$-groups, and $q:G\to H$ a Galois equivariant homomorphism. Let
    \[
        q_*:\Hom((\Q/\Z)^*,G) \to \Hom((\Q/\Z)^*,H)
    \]
    be the pushforward map. Define the pushforward on ramification types
    \[
        q_*:\RT^S_{k}(G) \to \RT^S_{k}(H)
    \]
    as follows:
    \begin{itemize}
        \item If $\tau\in \RT_{k}^{\rm tame}(G)$ is tame and $f\in \tau$, then $q_*\tau$ is defined to be the Galois orbit of $q_*f\in H^1((\Q/\Z)^*,H)$.
        \item If $\tau = (\pk,[f])\in \RT_{k}^{S, {\rm wild}}(G)$ is wild, define $q_*\tau$ to be the wild ramification type $(\pk,q_*[f])\in \RT_{k}^{S, {\rm wild}}(H)$.
    \end{itemize}
\end{definition}

The pushforward is a $q:G\rtimes G_k\to H\rtimes G_k$ equivariant map, as $q$ is Galois equivariant and
\begin{align*}
    q_*((gx).f)(a) &= q(g.x.f(x^{-1}.a))\\
    &= q(g(x.f(x^{-1}.a)g^{-1})\\
    &=q(g)(x.q(f(x^{-1}.a))) q(g)^{-1}\\
    &= ((q(g)x).q_*f)(a).
\end{align*}
This is sufficient to conclude that the pushforward of tame ramification types is well-defined.

We also need to define a twisting of ramification types. The idea is that if $T\normal G$, then the ramification types of the $G_k$-group $T(\pi)$ twisted by $\pi\in \Sur(G_k,G)$ should induce ramification types of $G$ whose image is contained in $T$.

\begin{definition}\label{def:twist_of_RT}
    Let $G$ be a $G_k$-group and $T\normal G$ a normal $G_k$-equivariant subgroup. Let $\pi\in Z^1(k,G)$ be a crossed homomorphism, and define $T(\pi)$ to be the $G_k$-group with twisted action $x:t \mapsto \pi(x)(x.t)\pi(x)^{-1}$.

    Define the twist of ramification types
    \[
        \tw_{T,\pi}:\RT_{k}^S(T(\pi)) \to \RT_{k}^S(G)
    \]
    as follows:
    \begin{itemize}
        \item If $\tau\in \RT_{k}^{\rm tame}(T(\pi))$ and $f\in \tau\subseteq \Hom((\Q/\Z)^*,T(\pi))$, then $\tw_{T,\pi}(\tau)$ is defined to be the $G\times G_k$-orbit of $\iota_* f\in \Hom((\Q/\Z)^*,G)$ through the embedding $\iota:T\hookrightarrow G$.
        \item If $\tau = (\pk,[f])\in \RT_{k}^{S, {\rm wild}}(T(\pi))$ is wild, define $\tw_{T,\pi}(\tau)$ to be the wild ramification type $(\pk,[f*\pi|_{I_\pk}])$.
    \end{itemize}
    In each of the above, $f*\pi$ is the pointwise product of functions $(f*\pi)(x) = f(x)\pi(x)$.
\end{definition}

The pointwise product of functions $f\mapsto f*\pi$ is shown in \cite[Lemma 1.3]{alberts2021} to induce a bijection $Z^1(k,T(\pi))\to \{ \psi\in Z^1(k,G) : q_*\psi = \pi\}$ which respects the coboundary relation. This implies the twist of wild ramification types is well-defined.

To show that the twist of tame ramification types is well-defined, it will be sufficient to show that it is in fact an equivariant map:

\begin{lemma}
    Let $G$ be a $G_k$-group and $T\normal G$ a $G_k$-equivariant subgroup. Define a map $\phi_\pi:T(\pi)\rtimes G_k \to G\rtimes G_k$ by
    \[
        (t,x) \mapsto (t\pi(x),x).
    \]
    Then
    \begin{enumerate}[(a)]
        \item $\phi_\pi$ is an injective group homomorphism;
        \item $\tw_{T,\pi}$ is a $\phi_\pi$-equivariant map, that is
        \[
            \tw_{T,\pi}((t,x).f) = \phi_\pi(t,x).\tw_{T,\pi}(f).
        \]
    \end{enumerate}
\end{lemma}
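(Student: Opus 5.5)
The plan is to verify both parts by direct computation from the definitions. Recall that $T(\pi)$ is the $G_k$-group $T$ with action $x:t\mapsto \pi(x)(x.t)\pi(x)^{-1}$. The semidirect product $T(\pi)\rtimes G_k$ therefore has multiplication
\[
(t,x)(t',y) = (t\,\pi(x)(x.t')\pi(x)^{-1},\,xy),
\]
and $G\rtimes G_k$ has multiplication $(g,x)(g',y)=(g\,(x.g'),xy)$. Throughout I will use the cocycle relation $\pi(xy)=\pi(x)\,(x.\pi(y))$, taking $\pi\in Z^1(k,G)$ to be continuous.

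\textbf{Part (a).} To check that $\phi_\pi$ is a homomorphism, I compute both sides. First,
\[
\phi_\pi\big((t,x)(t',y)\big) = \big(t\pi(x)(x.t')\pi(x)^{-1}\pi(xy),\,xy\big) = \big(t\pi(x)(x.t')(x.\pi(y)),\,xy\big),
\]
using the cocycle relation. Second,
\[
\phi_\pi(t,x)\phi_\pi(t',y) = \big(t\pi(x)\,x.(t'\pi(y)),\,xy\big).
\]
These agree because $x$ acts on $G$ by automorphisms. For injectivity, suppose $(t\pi(x),x)=(1,1)$. Then $x=1$, so $\pi(1)=1$ and hence $t=1$. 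I will also note that $T$ is $G_k$-stable and normal, so $t\pi(x)\in G$ makes sense.

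\textbf{Part (b).} I treat tame and wild types separately.

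\emph{Tame case.} Take $f\in\Hom((\Q/\Z)^*,T(\pi))$. The action of $(t,x)$ is
\[
\big((t,x).f\big)(a) = t\,\pi(x)\,\big(x.f(x^{-1}.a)\big)\,\pi(x)^{-1}\,t^{-1},
\]
where $x.$ denotes the original action on $T$, because the twisted action is $\pi(x)(x.\cdot)\pi(x)^{-1}$. On the other side, $\phi_\pi(t,x)=(t\pi(x),x)$ acts on $\iota_*f$ by
\[
a\mapsto t\pi(x)\,\big(x.f(x^{-1}.a)\big)\,\big(t\pi(x)\big)^{-1}.
\]
These are literally the same expression, so the equivariance holds already at the level of $\Hom$ sets, before passing to orbits. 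A consequence is that $\iota_*$ sends $T(\pi)\rtimes G_k$-orbits into $G\rtimes G_k$-orbits, so $\tw_{T,\pi}$ is well defined on tame types. The remaining point is to check that $\iota_*f$ is genuinely a homomorphism $(\Q/\Z)^*\to G$, which is clear.

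\emph{Wild case.} Here $\tw_{T,\pi}$ sends $(\pk,[f])\mapsto(\pk,[f*\pi|_{I_\pk}])$. The statement to prove is the compatibility of the pointwise product with coboundaries, cited from \cite[Lemma 1.3]{alberts2021}, so this case is routine bookkeeping.

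The only delicate point in the whole argument is keeping track of which action (twisted or original) appears where, together with the direction of the cocycle identity. I expect no real obstacle beyond this.
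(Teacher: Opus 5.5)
Your proposal is correct and follows essentially the same route as the paper: part (a) is the same direct computation using the cocycle identity $\pi(xy)=\pi(x)(x.\pi(y))$, and part (b) is the same literal comparison of the two actions on $\Hom((\Q/\Z)^*,-)$. Your separate wild case is unnecessary, since the equivariance statement only concerns the tame Hom-set actions; the paper settles well-definedness of the wild twist before the lemma via \cite[Lemma 1.3]{alberts2021}.
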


\begin{proof}
    For part (a) injectivity is clear, while being a group homomorphism follows from direct computation with the semi-direct product structures:
    \begin{align*}
        \phi_\pi((t_1,x_1)(t_2,x_2)) &= \phi_\pi(t_1\pi(x_1)(x_1.t_2)\pi(x_1)^{-1}, x_1x_2)\\
        &= (t_1\pi(x_1)(x_1.t_2)\pi(x_1)^{-1} \pi(x_1x_2),x_1x_2)\\
        &= (t_1\pi(x_1)(x_1.t_2)(x_1.\pi(x_2)),x_1x_2)\\
        &= (t_1\pi(x_1),x_1)(t_2\pi(x_2),x_2).
    \end{align*}
    Part (b) also follows from direct computation from the two actions on $f$, as
    \begin{align*}
        \tw_{T,\pi}((t,x).f) &= \tw_{T,\pi}(t\pi(x)(x.f)\pi(x)^{-1}t^{-1})\\
        &=\iota_*(t\pi(x)(x.f)\pi(x)^{-1}t^{-1})\\
        &=t\pi(x).(x.\iota_*f)\\
        &= (t\pi(x),x).\iota_*f\\
        &= \phi_\pi(t,x).\tw_{T,\pi}(f).
    \end{align*}
\end{proof}

Despite how they appear in the definition, the tame ramification types $\tau \in \RT_{k}^{\rm tame}(T(\pi))$ and $\tw_{T,\pi}(\tau) \in \RT_k^{\rm tame}(G)$ are not exactly the same. Essentially, $\tw_{T,\pi}$ need not be injective so that the tame ramification types of $T(\pi)$ may be more numerous than those of $G$.

This can happen if the subgroup $\phi_\pi\left(T(\pi)\rtimes G_k\right)$ in $G\times G_k$ has smaller orbits than all of $G\times G_k$. Put another way, let $\chi:G_k\to \hat{\Z}^{\times}$ be the cyclotomic map and denote $\chi^{-1}(x):=\chi(x)^{-1}$. This issue occurs when
\[
    (\pi*\chi^{-1})(G_k) \ne \pi(G_k)\chi^{-1}(G_k).
\]
The ``overlap" between the actions $\pi$ and $\chi$ is equivalent to the field fixed by $\ker\pi$ containing certain extra roots of unity compared to $k$. This is closely related to Kl\"uners counter example to Malle's original conjecture \cite{kluners2005}, as having more ramification types than expected ultimately leads to a larger power of $\log X$ in the asymptotic. See \cite[Section 3.4]{alberts2021} for further discussion of this phenomenon.

Despite the potential for overlap depending on how $\pi$ interacts with $\chi$, the \emph{image} of $\tw_{T,\pi}$ on tame ramification types does not actually depend on $\pi$. We will require the following lemma characterizing both the image and the fibers.

\begin{corollary}\label{cor:rt_twist_structure}
    Let $G$ be a $G_k$-group, $T\normal G$ a normal $G_k$-equivariant subgroup with embedding $\iota:T\hookrightarrow G$, and $\pi\in Z^1(G_k,G)$ a crossed homomorphism.
    \begin{enumerate}[(a)]
        \item $\tw_{T,\pi}\left(\RT_{k}^{\rm tame}(T(\pi))\right) = \iota_*\RT_{k}^{\rm tame}(T)$,
        \item for each $\widetilde{\tau}\in \tw_{T,\pi}^{-1}(\tau)$, ${\rm ord}(\widetilde{\tau}) = {\rm ord}(\tau)$,
        \item Suppose further that $\pi(G_k)T=G$. Then $\#\tw_{T,\pi}^{-1}(\tau) = \frac{|\tau|}{|\widetilde{\tau}|}$ for each $\widetilde{\tau}\in \tw_{T,\pi}^{-1}(\tau)$.
    \end{enumerate}
\end{corollary}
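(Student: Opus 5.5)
The whole argument should run through the lemma just proved: $\phi_\pi:T(\pi)\rtimes G_k\to G\rtimes G_k$ is an injective homomorphism and $\tw_{T,\pi}$ is $\phi_\pi$-equivariant. Throughout, I would regard $\tw_{T,\pi}$ at the level of homomorphisms as the map $f\mapsto \iota_*f$. This map is injective, with image
\[
\Hom((\Q/\Z)^*,\iota(T)) \subseteq \Hom((\Q/\Z)^*,G).
\]
Tame ramification types of $T(\pi)$ are then the $\phi_\pi(T(\pi)\rtimes G_k)$-orbits on this subset. Tame types of $G$ lying in $\iota_*\RT_k^{\rm tame}(T)$ are the $G\rtimes G_k$-orbits that meet it. Note that $\iota(T)$ is normal and Galois stable, so the subset is stable under all of $G\rtimes G_k$.

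For (a), I would first note that every $G\rtimes G_k$-orbit meeting $\Hom((\Q/\Z)^*,T)$ is the image of the $\phi_\pi$-orbit of any of its points in that set. Hence the image of $\tw_{T,\pi}$ consists exactly of the $G\rtimes G_k$-orbits of elements of $\Hom((\Q/\Z)^*,T)$. The same description holds for $\iota_*\RT_k^{\rm tame}(T)$, where $T$ carries its untwisted action. This is because the $T\rtimes G_k$-orbits with the untwisted action are also contained in $G\rtimes G_k$-orbits, and they cover the same underlying set. So both images equal the set of $G\rtimes G_k$-orbits meeting $\Hom((\Q/\Z)^*,T)$, and this set is independent of $\pi$.

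Part (b) is immediate. The map $\iota$ is injective, so $|\im \iota_*f|=|\im f|$, and ${\rm ord}$ is well defined on orbits.

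For (c), I would use orbit counting. Let $H=\phi_\pi(T(\pi)\rtimes G_k)$. The fiber $\tw^{-1}_{T,\pi}(\tau)$ is the set of $H$-orbits into which the $G\rtimes G_k$-orbit $\tau$ splits. The key claim is that under $\pi(G_k)T=G$ we have $H\cdot(G\times 1)=G\rtimes G_k$, or better, that $H$ has finite index with $G\rtimes G_k=(T\times 1)\cdot H'$ for a suitable decomposition. I expect this step to be the main obstacle: making precise that the action of $G\rtimes G_k$ on $\tau$ factors through a finite quotient, and that $H$ together with the stabilizer of a point acts transitively in a way that lets all $H$-orbits in $\tau$ be translates of one another. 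The $H$-orbits in $\tau$ are permuted transitively by $G\rtimes G_k$. I would argue that $H$ is normal in $G\rtimes G_k$. Indeed, $H=\{(g,x): g\in T\pi(x)\}$, and conjugating by $(g',1)$ gives $g'T\pi(x)(x.g')^{-1}$. This lies in $T\pi(x)$ exactly when $g'\pi(x)(x.g')^{-1}\pi(x)^{-1}\in T$. Modulo $T$, and since $G=\pi(G_k)T$, this is the cocycle condition in $G/T$, which I would verify using that $q_*\pi$ is a surjective homomorphism onto $G/T$ and that $G/T$ acts compatibly. Once $H$ is normal, the group $G\rtimes G_k$ permutes the $H$-orbits in $\tau$ transitively. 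So all fibers $\widetilde\tau$ have equal size $|\widetilde\tau|$, and $\#\tw_{T,\pi}^{-1}(\tau)=|\tau|/|\widetilde\tau|$ follows.

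If normality fails in general, the fallback is the weaker fact that $g\cdot H\cdot g^{-1}$-orbits have equal size. Combining this with the decomposition $G=T\pi(G_k)$ would show that every $H$-orbit in $\tau$ is a $(g,1)$-translate of a fixed one, which again gives equal sizes.
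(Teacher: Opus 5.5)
Your treatment of (a) and (b) is fine and matches the paper. The gap is in (c).

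\textbf{The normality claim is false.} You claim that $H=\phi_\pi(T(\pi)\rtimes G_k)$ is normal in $G\rtimes G_k$, but this fails in general. The condition $g'\pi(x)(x.g')^{-1}\pi(x)^{-1}\in T$ that you derive is not the cocycle condition. It says that $g'T$ is fixed by the $q_*\pi$-twisted action of $G_k$ on $G/T$. Already when $G$ has trivial Galois action (so $\pi$ is a homomorphism), it says that $g'T$ commutes with every $\pi(x)T$, i.e.\ $g'T\in Z(G/T)$. This fails as soon as $G/T$ is nonabelian. So $G\rtimes G_k$ does not permute the $H$-orbits in $\tau$: an element $w$ carries $H$-orbits to $wHw^{-1}$-orbits.

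\textbf{The fallback has the same defect.} The set $(g,1)\cdot Hf$ is a $(g,1)H(g,1)^{-1}$-orbit, not the $H$-orbit $H(g,1)f$. Note also that $H\cdot(G\times 1)=G\rtimes G_k$ holds for every $\pi$, so it cannot be where the hypothesis $\pi(G_k)T=G$ enters.

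\textbf{The missing idea.} The paper translates by $1\times G_k$ rather than by $G\times 1$.
\begin{enumerate}
\item The hypothesis gives $H\cdot(1\times G_k)=G\rtimes G_k$: write $g=t\pi(x)$, so $(g,z)=(t\pi(x),x)(1,x^{-1}z)$. Hence every point of $\tau$ has the form $y.(h.f)$ with $y\in G_k$ and $h\in H$.
\item When $G$ carries the trivial Galois action, $y$ acts on $\Hom((\Q/\Z)^*,G)$ by precomposition with the power map $\chi(y)^{-1}$. This commutes with conjugation and with the whole action, which factors through $G\times\hat{\Z}^{\times}$.
\item Thus $y.(Hf)=H(y.f)$, so $G_k$ permutes the $H$-orbits in $\tau$ transitively by bijections, and $\#\tw_{T,\pi}^{-1}(\tau)=|\tau|/|\widetilde{\tau}|$ follows.
\end{enumerate}

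\textbf{This commutation is indispensable.} Without it, (c) cannot be proved in the full generality you are working in. For example, let $G_k$ act on $G=S_4$ through a quotient $S_3\times C_2$ by $x.g=c(x)gc(x)^{-1}$, where $c$ is the projection onto $S_3=\mathrm{Stab}(4)$. Let $T=V_4$ and $\pi(x)=\psi(x)c(x)^{-1}$, with $\psi$ the projection onto $\langle(12)\rangle$. Then:
\begin{itemize}
\item $\pi$ is a cocycle with $\pi(G_k)T=G$;
\item the twisted action on $T$ is conjugation by $(12)$;
\item identifying order-$2$ homomorphisms with involutions, the type $\tau$ of the three double transpositions has fiber $\{\{(12)(34)\},\{(13)(24),(14)(23)\}\}$.
\end{itemize}
This fiber has size $2$, which is neither $3$ nor $3/2$. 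So you must restrict (c) to constant $G$. That is the case in which the paper applies it, and it is where the paper's proof (``the cyclotomic action commutes with conjugation'') implicitly lives.
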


\begin{proof}[Proof of Corollary \ref{cor:rt_twist_structure}]
    Part (a) follows from the definition $\tw_{T,\pi}(f) = \iota_*f$.

    Choose some $f\in \widetilde{\tau}$. Then $|\im f| = |\im \iota_*f|$, where $\iota_*f\in \tau$. This implies ${\rm ord}(\widetilde{\tau}) = {\rm ord}(\tau)$.

    We now consider part (c). The twisting map being $\phi_\pi$-equivariant implies that the orbits in $\iota_*\tw_{T,\pi}^{-1}(\tau)$ partition $\tau$, that is
    \[
        \tau = \coprod_{\widetilde{\tau}\in \tw_{T,\pi}^{-1}(\tau)} \iota_*\widetilde{\tau}.
    \]
    Given that $\pi(G_k)T = G$, it follows that for every $g\in G$ there exists an $x\in G_k$ for which $gx \in \phi_\pi(T(\pi)\rtimes G_k)$. In particular, $\phi_\pi(T(\pi)\rtimes G_k) G_k = G\rtimes G_k$, which implies that $G_k$ acts transitively on $\{\iota_*\widetilde{\tau} : \widetilde{\tau}\in \tw^{-1}_{T,\pi}(\tau)\}$ by the cyclotomic action. Given that the cyclotomic action commutes with conjugation, it necessarily follows that the cyclotomic action further induces a bijection between $\iota_*\widetilde{\tau}$ and $x.(\iota_*\widetilde{\tau})$ as subsets of $\Hom((\Q/\Z)^*,G)$. This implies $|\iota_*\widetilde{\tau}|=|\widetilde{\tau}|$ is independent of the choice of $\widetilde{\tau}\in \tw_{T,\pi}^{-1}(\tau)$, which implies the result.
\end{proof}

\section{The Fiber Series}\label{sec:fiber_series}

The goal of this section is to determine a region $\Omega_T$ for each abelian normal subgroup $T\normal G$ for which $(T,\iota,q,\Omega_T)$ satisfies the conditions laid out in Theorem \ref{thm:main}. This is done in the most general form by Theorem \ref{thm:meromorphic_continuation_of_fiber_series}. We prove this result by following the ``twisted number field counting'' program of study in \cite{alberts2021,alberts-odorney2021,alberts2024,ALOWW}, where the terms in the fiber series $D_k(T,\pi;\underline{s})$ are parametrized by crossed homomorphisms.

The results of this section are the most technical of the paper. The criterion for convergence in Theorem \ref{thm:main} requires uniform control of the fiber series $D_k(T,\pi;\underline{s})$ to the left of its first polar divisors, a significantly stronger type of uniformity than required in \cite{ALOWW}. More than this, our method really proves something new when the convex hull taken in Theorem \ref{thm:main} is nontrivial, that is when $\bigcup \Omega_{T}$ is non-convex to start with. This is a delicate condition, and is a hurdle which many of our unconditional examples only barely clear.

For this reason, we find that it is beneficial to have the most complete information about the regions $\Omega_T$ in the most general setting. We prove several simplifications in \Cref{sec:unconditional} that make some sacrifices in the strength of the error bounds in exchange for making the scope of our results clearer, notably including shortcuts for determining when the convex hull contains the poles of interest. Readers looking for statements to use solely in applications might refer to those results in lieu of Theorem \ref{thm:meromorphic_continuation_of_fiber_series}.

\subsection{Meromorphic continuation and vertical bounds}

The primary purpose of this section is to give an explicit meromorphic continuation of the fiber series $D_k(T,\pi;\underline{s})$. We summarize the features we will use in the following result.

\begin{theorem}\label{thm:meromorphic_continuation_of_fiber_series}
Let $k$ be a number field, $G$ a finite group, $T\normal G$ abelian with quotient map $q:G\to G/T$ and inclusion map $\iota:T\to G$, and $\pi\in q_*\Sur(G_k,G)$.
\begin{enumerate}[(i)]
    \item The fiber series $D_k(T,\pi;\underline{s})$ has a meromorphic continuation to the region
    \[
        \{\underline{s} : \text{if }\tau\in \iota_*\RTo{k}(T)\text{ then }\sigma_\tau > 1/2\},
    \]
    with possible poles at $s_\tau = 1$ of order $\le |\tw_{T,\widetilde{\pi}}^{-1}(\tau)|$ for each $\tau\in \iota_*\RTo{k}(T)$ and any choice of lift $\widetilde{\pi}\in q_*^{-1}(\pi)$.
    \item Moreover, for each $\epsilon,\delta > 0$ we have the bound
    \[
        |D_k(T,\pi;\underline{s})| \ll |H^1_{ur}(k,T(\pi))| \cdot \inv_{k,G/T}(q_*\underline{\sigma}+\epsilon) \cdot \prod_{\tau\in \iota_*\RTo{k}(T)}\left(\prod_{\widetilde{\tau}\in\tw_{T,\widetilde{\pi}}^{-1}(\tau)}\max_{\chi}\left\{|L_{F(\pi,\widetilde{\tau})}(s_\tau,\chi)|,1\right\}\right)
    \]
    for each $\underline{s}$ in the region cut out by $\sigma_\tau > 1/2+\delta$ for each $\tau\in \iota_*\RTo{k}(T)$, where
    \begin{enumerate}[(a)]
        \item $q_*\underline{\sigma}$ is the tuple of variables indexed by $\overline{\tau}\in \RT_k^{\rm tame}(G/T)$ and defined by
        \[
            (q_*\underline{\sigma})_{\overline{\tau}} := \min_{\tau\in q_*^{-1}(\overline{\tau})}\sigma_\tau,
        \]
        \item $\widetilde{\pi}\in q_*^{-1}(\pi)$ is any choice of lift,
        \item $F(\pi,\widetilde{\tau})$ is the field fixed by a stabilizer of the Galois action on $\widetilde{\tau}$ (defined up to conjugation), via the $G_k$-action on $\Hom((\Q/\Z)^*,T(\pi))$,
        \item the maximum is taken over Hecke characters $\chi$ over $F(\pi,\widetilde{\tau})$ of order dividing $\exp(T)$ which are unramified away from the primes dividing $|T|\infty$, and
        \item the implied constant depends only on $[k:\Q]$, $|T|$, $\epsilon$, and $\delta$.
    \end{enumerate}
\end{enumerate}
\end{theorem}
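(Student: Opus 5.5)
The plan is to parametrize the fiber by crossed homomorphisms and then apply Poisson summation / Fourier analysis on local cohomology, as in \cite{alberts2021,alberts-odorney2021,alberts2024}. Fix a lift $\widetilde{\pi}\in q_*^{-1}(\pi)$. By \cite[Lemma 1.3]{alberts2021}, $f\mapsto f*\widetilde{\pi}$ is a bijection from $Z^1(k,T(\pi))$ onto the crossed homomorphisms lifting $\pi$. The surjective ones form a subset. We remove the non-surjective lifts by M\"obius inversion over subgroups $H\le G$ with $q(H)=G/T$, i.e.\ over $T\cap H$. This gives a finite signed sum of series over all of $Z^1(k,(T\cap H)(\pi))$, each of the same shape. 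It therefore suffices to treat the full series $\sum_{f\in Z^1(k,T(\pi))}\inv_{k,G}(f*\widetilde{\pi},\underline{s})$. Because $T(\pi)$ is abelian, the local ramification type at a tame place $\pk$ unramified in $\pi$ is $\tw_{T,\widetilde{\pi}}(\RT(\pk,f))$. At places ramified in $\pi$ the type is determined by the restriction of $f*\widetilde{\pi}$ to $I_\pk$. There are only finitely many local possibilities there, each with $\sigma$-part at least $\min_{\tau\in q_*^{-1}(\overline\tau)}\sigma_\tau$. This is the source of the factor $\inv_{k,G/T}(q_*\underline\sigma+\epsilon)$, where the $\epsilon$ absorbs the divisor-type count $O(\#\text{local choices})^{\omega(\disc\pi)}$.

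Next comes the local-global step. Write $Z^1(k,T(\pi))$ as an extension of $H^1(k,T(\pi))$ by the coboundaries (a bounded factor $|T|$). Then use Poitou--Tate exactly as in \cite[Theorem 3.1/4.1]{alberts-odorney2021}, in its multivariable form, to express the generating series as
\[
\frac{|H^0(k,T(\pi))|}{|H^0(k,T(\pi)^*)|}\sum_{g\in H^1(k,T(\pi)^*)\text{ (restricted)}}\prod_{\pk}\widehat{w}_\pk(g_\pk,\underline{s}).
\]
Here $w_\pk$ is the local weight $\inv$ on $H^1(k_\pk,T(\pi))$ and $\widehat{w}_\pk$ is its local Fourier transform. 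Only finitely many dual classes contribute. These are the dual classes unramified outside $|T|\infty$ and the places ramified in $\pi$. Their number is controlled by $|H^1_{ur}(k,T(\pi))|$ via the Greenberg--Wiles formula, which gives the first factor of the bound. For a fixed dual class $g$, the Euler factor at a good tame place $\pk$ is
\[
1+\sum_{\widetilde\tau\neq 1}\Bigl(\sum_{h\in\widetilde\tau\cap H^1(k_\pk)}\langle h,g_\pk\rangle\Bigr)|\pk|^{-s_{\tw(\widetilde\tau)}}.
\]
Grouping primes by the orbit $\widetilde\tau$, the count of Frobenius-fixed points in $\widetilde\tau$, twisted by the character $g$, is a permutation character of the transitive group $\rho_{\widetilde\tau}$ tensored with a character of order dividing $\exp(T)$. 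This identifies the product over $\pk$, up to a factor absolutely convergent for $\sigma_\tau>1/2+\delta$, with
\[
\prod_{\tau}\prod_{\widetilde\tau\in\tw^{-1}_{T,\widetilde\pi}(\tau)}L_{F(\pi,\widetilde\tau)}(s_\tau,\chi_{g,\widetilde\tau}).
\]
Here $F(\pi,\widetilde\tau)$ is the field fixed by the stabilizer of a point of $\widetilde\tau$ under the twisted Galois action, and $\chi_{g,\widetilde\tau}$ is a Hecke character of order dividing $\exp(T)$ unramified away from $|T|\infty$. The unramifiedness holds because $g$ is unramified outside those places once the $\pi$-ramified places are split off into the finitely many local factors. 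Each $L$-function is entire except for a simple pole at $s_\tau=1$, which occurs only when $\chi$ is trivial. Summing over $\widetilde\tau$ gives poles of order at most $|\tw^{-1}_{T,\widetilde\pi}(\tau)|$, which proves (i). Bounding each factor by $\max\{|L|,1\}$ and taking the maximum over characters gives (ii).

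I expect the main obstacle to be making the remainder Euler product (quadratic and higher terms, and places ramified in $\pi$) bounded uniformly in $\pi$. The second-order terms $|\pk|^{-2\sigma}$ converge uniformly for $\sigma>1/2+\delta$ with constants depending only on $[k:\Q]$ and $|T|$. The places ramified in $\pi$, however, must contribute only $\inv_{k,G/T}(q_*\underline\sigma)$ times $|\disc\pi|^{\epsilon}$. I would handle these by bounding each such local factor trivially by $\sum_{h\in H^1(k_\pk,T(\pi))}|\pk|^{-\sigma(h*\widetilde\pi)}\le |T|^{2}|\pk|^{-\min\sigma}$. I would also check carefully that the identification of the permutation-character counts with Hecke $L$-functions over $F(\pi,\widetilde\tau)$ loses only a bounded number of bad Euler factors.
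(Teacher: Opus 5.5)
Your route is the paper's route. You parametrize the fiber by $f\mapsto f*\widetilde{\pi}$, M\"obius-invert over $\{H: HT=G\}$, and apply the Poisson summation of \cite{alberts-odorney2021}. You then identify the good Euler factors with monomial representations, hence Hecke $L$-functions over $F(\pi,\widetilde\tau)$ as in \cite{alberts2024}, count dual classes by Greenberg--Wiles, and bound the places in $S(\pi)$ trivially.

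There is one step, however, where what you wrote does not deliver the statement. Part (d) asserts that $\chi_{g,\widetilde\tau}$ is unramified at primes dividing $\disc(\pi)$ but not $|T|$. You describe the contributing dual classes as unramified only outside $|T|\infty$ \emph{and the places ramified in $\pi$}. You then justify unramifiedness by saying the $\pi$-ramified places are ``split off into the finitely many local factors.'' Removing Euler factors does not change the conductor of $\chi$. If $g$ were ramified at some $\pk\mid\disc(\pi)$, then $\chi_{g,\widetilde\tau}$ would in general be ramified above $\pk$, and its conductor would grow with $\pi$. The bound (ii), a maximum over characters unramified away from $|T|\infty$, would then not follow. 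This $\pi$-independence ($\mathfrak{f}(\chi)\ll_{|T|}1$) is exactly what is used later when subconvexity is fed in.

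The missing point is that the local weight at \emph{every} finite place, including those in $S(\pi)$, depends only on the class of the restriction to $I_\pk$. So it is invariant under translation by $H^1_{ur}(k_\pk,T(\pi))$, and the local Fourier transform vanishes off its annihilator. Poisson summation therefore runs over the dual Selmer group $H^1_{ur^\perp}(k,T(\pi)^*)$. For $\pk\nmid|T|\infty$, the unramified subgroups of $H^1(k_\pk,T(\pi))$ and $H^1(k_\pk,T(\pi)^*)$ are exact annihilators under local Tate duality, even where $T(\pi)$ ramifies. Orthogonality holds because $\widehat{\Z}$ has cohomological dimension $1$, and the sizes match by the local Euler characteristic formula; cf.\ \cite[Theorem 2.17(e)]{darmon-diamond-taylor1995}. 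Hence every contributing $g$ is unramified away from $|T|\infty$, and this is how the paper obtains (d).

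There is also a smaller omission. Saying ``it therefore suffices to treat the full series'' does not by itself give (ii), or the pole order in (i), for the M\"obius terms with $T\cap H\neq T$. Those terms involve the ramification types and the dual Selmer group of $(T\cap H)(\pi)$. You need four facts:
\begin{itemize}
\item $(\iota_H)_*$ is injective on tame types and satisfies $\tw_{T\cap H,\widetilde\pi}=\tw_{T,\widetilde\pi}\circ(\iota_H)_*$;
\item the stabilizer fields agree, since the orbits lie in a $G_k$-submodule;
\item $|H^1_{ur}(k,(T\cap H)(\pi))|\le |T|\,|H^1_{ur}(k,T(\pi))|$;
\item taking $\max\{|L|,1\}$, which you do mention, lets the product over types of $(T\cap H)(\pi)$ be dominated by the full product over $\tw_{T,\widetilde\pi}^{-1}(\tau)$, uniformly in $H$.
\end{itemize}
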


We then use Theorem \ref{thm:meromorphic_continuation_of_fiber_series} to produce the required inputs for Theorem \ref{thm:main}. Using subconvexity bounds for the Hecke $L$-functions, we construct a subdomain $\Omega_T$ of the region of meromorphicity for $D_k(T,\pi;\underline{s})$ on which the criterion for convergence holds for the sum over all $\pi$.

The proof will take several steps. We recall the basics of the ``twisted number field counting framework'' in Subsection \ref{subsec:param_fibers} and show how this can be modified to express the fiber series as a sum over crossed homomorphisms. Then, we prove \Cref{thm:explicit_continuation} which gives an explicit meromorphic continuation for the generating multiple Dirichlet series of a Galois module. This result is essentially a (slightly more explicit) multivariable version of the work in \cite[Corollary 3.4]{alberts2024}. Finally, we go back from the results for Galois modules to the fiber series $D_k(T,\pi;\underline{s})$ to prove Theorem \ref{thm:meromorphic_continuation_of_fiber_series}.

\subsection{Parametrizing the fibers}\label{subsec:param_fibers}

Let $T\normal G$ be a normal subgroup with quotient map $q:G\to G/T$. As done in \cite{ALOWW}, we partition according to the fibers of the pushforward $q_*:\Sur(G_k,G) \to \Sur(G_k,G/T)$ as in \eqref{eq:fibered_sum}. In this section we give a description for the generating series of the fibers, $D_k(T,\pi;\underline{s})$, in terms of cohomology following the framework of \cite{alberts2021}.
    
It was proven in \cite[Lemma 1.3]{alberts2021} that the map $f\mapsto f*\widetilde{\pi}$ is a bijection from
\[
    \{f\in Z^1(k,T(\widetilde{\pi})) : f*\widetilde{\pi}\text{ surjective}\} \to q_*^{-1}(\pi),
\]
where $\widetilde{\pi}:G_k\to G$ is any choice of lift for $\pi$ (i.e. $\widetilde{\pi}\in q_*^{-1}(\pi)$), $T(\widetilde{\pi})$ is the group $T$ together with the Galois action $x.g = \widetilde{\pi}(x) g\widetilde{\pi}(x)^{-1}$, and $f*\widetilde{\pi}:G_k\to G$ is the pointwise product $(f*\widetilde{\pi})(x) = f(x)\widetilde{\pi}(x)$.

In the case that $T$ is abelian, the conjugation action on $T$ factors through the quotient $G/T$. For this reason, we will often abuse notation and write $T(\pi)$ for $T(\widetilde{\pi})$ as the Galois module depends only on $\pi$.

It is then immediate that
\[
    D_k(T,\pi;\underline{s}) = \sum_{\substack{f\in Z^1(k,T(\pi))\\ f*\widetilde{\pi}\text{ surjective}}} \inv_{k,G}(f*\widetilde{\pi},\underline{s}).
\]
While this expression depends on the choice of $\widetilde{\pi}$, we recall that the series $D_k(T,\pi;\underline{s})$ does not.

It will be convenient to drop the surjectivity condition and, as much as possible, remove the dependence on $\widetilde{\pi}$. We do this by defining a generating multiple Dirichlet series for $G_k$-groups in analogy with Definition \ref{def:DkGs}.

\begin{definition}
    Let $k$ be a number field, $G$ a $G_k$-group, and $S$ a finite set of places of $k$ containing $\{\pk : \pk\mid |G|\infty\text{ or }\pk\text{ ramifies in the }G_k\text{-action}\}$. For any $f\in Z^1(k,G)$, define
    \[
        \inv^S_{k,G}(f,\underline{s}) = \prod_{\tau\in \RT_{k,0}^S(G)} \prod_{\RT^S(\pk,f)=\tau} |\pk|^{-s_\tau},
    \]
    where $|\pk|$ is the norm of $\pk$ down to $\Q$ and $\underline{s} = (s_\tau)$ is a tuple of variables indexed by $(S,k)$-ramification types.

    We then define
    \[
        \Detale{S}{k}(G;\underline{s}) = \sum_{f\in Z^1(k,G)} \inv^S_{k,G}(f,\underline{s}).
    \]
\end{definition}

This series has nicer arithmetic structure than $D_k(T,\pi;\underline{s})$, and will be easier to study directly. Using an inclusion-exclusion argument, we can relate $D_k(T,\pi;\underline{s})$ to $\Detale{S}{k}(T(\widetilde{\pi});\underline{s})$.

\begin{proposition}\label{prop:mobius_inversion}
    Let $k$ be a number field, $G$ a finite group, $T\normal G$ with quotient map $q:G\to G/T$. Further, let $\pi\in q_*\Sur(G_k,G)$ and $\widetilde{\pi}\in q_*^{-1}(\pi)$. Then
    \begin{align}\label{eq:mobius_inversion}
        D_k(T,\pi;\underline{s}) = \sum_{\substack{H\le G\\HT = G}}\mu_{G,T}(H) \Detale{S(\pi)}{k}( (T\cap H)(\widetilde{\pi});\tw_{T\cap H,\widetilde{\pi}}^*(\underline{s})),
    \end{align}
    where $\mu_{G,T}$ is the M\"obius function of the lattice of subgroups $\{H\le G : HT = G\}$, $S(\pi) = \{\pk : \pk\mid |G|\infty\text{ or }\pi(I_\pk)\ne 1\}$, and $\tw_{T\cap H,\widetilde{\pi}}^*(\underline{s})$ is the tuple of variables
    \[
        \tw_{T\cap H,\widetilde{\pi}}^*(\underline{s})_\tau = s_{\tw_{T\cap H,\widetilde{\pi}}(\tau)}
    \]
    indexed by $\tau\in \RT_{k,0}^{S(\pi)}((T\cap H)(\widetilde{\pi}))$.
\end{proposition}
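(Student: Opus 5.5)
The plan is a standard Möbius inversion on the lattice $\mathcal{L}=\{H\le G : HT=G\}$, combined with the bijection of \cite[Lemma 1.3]{alberts2021} and a check that the multivariable invariant is compatible with twisting.

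\textbf{Step 1 (parametrize the fiber without surjectivity).} Fix the lift $\widetilde{\pi}$ and consider the set
\[
    \mathcal{X}=\{f\in Z^1(k,T(\widetilde{\pi}))\}.
\]
Via $f\mapsto f*\widetilde{\pi}$, each $f\in\mathcal{X}$ gives a crossed homomorphism $\psi=f*\widetilde{\pi}$ with $q_*\psi=\pi$. Since $q\circ\psi=\pi$ is surjective onto $G/T$, the image $\psi(G_k)$ always lies in $\mathcal{L}$. For $H\in\mathcal{L}$ define
\[
    g(H)=\sum_{f:\ \psi(G_k)=H}\inv_{k,G}(f*\widetilde{\pi},\underline{s})
    \qquad\text{and}\qquad
    F(H)=\sum_{K\le H,\,K\in\mathcal{L}}g(K).
\]
Then $D_k(T,\pi;\underline{s})=g(G)$, and Möbius inversion on the finite poset $\mathcal{L}$ gives
\[
    g(G)=\sum_{H\in\mathcal{L}}\mu_{G,T}(H)F(H).
\]
These are formal identities of absolutely convergent series in the orthant of convergence, and they extend by meromorphic continuation.

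\textbf{Step 2 (identify $F(H)$).} The sum $F(H)$ runs over those $f$ with $f*\widetilde{\pi}$ landing in $H$. Here one must be careful, because $\widetilde{\pi}$ itself need not land in $H$. I would first choose $\widetilde{\pi}$ adapted to $H$: any $H\in\mathcal{L}$ with $\pi$ lifting into $H$ admits a lift $\widetilde{\pi}_H$ with values in $H$. If no such lift exists, then $F(H)=0$, and in that case I would argue that the corresponding term is empty/zero as well. (Alternatively, I would read the statement as implicitly allowing the lift to depend on $H$.) With $\widetilde{\pi}_H$ in hand, $f*\widetilde{\pi}_H\in H$ if and only if $f$ takes values in $T\cap H$. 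Moreover, $(T\cap H)(\widetilde{\pi}_H)$ is a $G_k$-submodule, since $H$ normalizes $T\cap H$. Hence
\[
    F(H)=\sum_{f\in Z^1(k,(T\cap H)(\widetilde{\pi}))}\inv_{k,G}(f*\widetilde{\pi},\underline{s}).
\]
Changing the lift by an element of $Z^1(k,T(\pi))$ is a bijection of the index set, so $F(H)$ does not depend on the choice of lift.

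\textbf{Step 3 (compare invariants: the main obstacle).} I expect the main obstacle to be showing the pointwise identity
\[
    \inv_{k,G}(f*\widetilde{\pi},\underline{s})=\inv^{S(\pi)}_{k,(T\cap H)(\widetilde{\pi})}\bigl(f,\tw^*_{T\cap H,\widetilde{\pi}}(\underline{s})\bigr).
\]
I would verify it place by place.
\begin{itemize}
    \item For $\pk\notin S(\pi)$, the restriction of $\widetilde{\pi}$ to $I_\pk$ is trivial, since $\pi(I_\pk)=1$ and $\widetilde{\pi}$ can be chosen so; more carefully, $\widetilde{\pi}(I_\pk)\subseteq T$, and one must handle this. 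Then $(f*\widetilde{\pi})|_{I_\pk}=\iota_*(f|_{I_\pk})$. By \Cref{def:twist_of_RT}, $\RT(\pk,f*\widetilde{\pi})=\tw(\RT^{S(\pi)}(\pk,f))$, and the same holds for the trivial type.
    \item For $\pk\in S(\pi)$, the wild twist in \Cref{def:twist_of_RT} is exactly $(\pk,[f*\pi|_{I_\pk}])$.
\end{itemize}
Places where $\pk\in S(\pi)$ but $\pk$ is tame for $G$ also require matching the conventions in \Cref{def:RTmap}. Handling this mismatch of $S$, along with the $T$-valued restriction $\widetilde{\pi}|_{I_\pk}$, is the delicate bookkeeping. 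I would resolve it by choosing $\widetilde{\pi}$ unramified outside $S(\pi)$, which is possible after adjusting by a coboundary locally, or by absorbing the discrepancy into the choice of $S$. Combining Steps 1–3 yields \eqref{eq:mobius_inversion}.
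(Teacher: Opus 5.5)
Your route (partition $\{f*\widetilde{\pi}\}$ by image in $\{H : HT=G\}$, M\"obius invert, match invariants through $\tw$) is the paper's. Both subtleties you flag are real. The paper's proof uses the fixed surjective $\widetilde{\pi}$ for every $H$, so its inner sums over $f\in Z^1(k,(T\cap H)(\widetilde{\pi}))$ with $\operatorname{im}(f*\widetilde{\pi})=H$ are empty unless $H=G$: if $f$ and $f*\widetilde{\pi}$ are both $H$-valued, then $\widetilde{\pi}(G_k)\subseteq H$. It also treats the pointwise identity of invariants as definitional. The gap is that at each of these points one of your two fixes is false, and the other proves a modified statement that you need to commit to.

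In Step 2, the $H$-term cannot be argued to vanish when $\pi$ has no $H$-valued lift. In that case $F(H)=0$, but $\Detale{S(\pi)}{k}((T\cap H)(\widetilde{\pi});\tw^*_{T\cap H,\widetilde{\pi}}(\underline{s}))$ has nonnegative coefficients and a nonzero $f=1$ summand. Even when a lift into $H$ exists, the surjective $\widetilde{\pi}$ parametrizes the wrong torsor. Take $G=\langle a\rangle\times\langle b\rangle\cong C_2^2$, $T=\langle a\rangle$, $H=\langle b\rangle$. Then $F(H)=\inv_{k,G}(\widetilde{\pi}_b,\underline{s})$ for the unique $\langle b\rangle$-valued lift $\widetilde{\pi}_b$, which is nonconstant as soon as $\pi$ ramifies at a finite place. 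The literal term, by contrast, is $\Detale{S(\pi)}{k}(1;\cdot)=1$. So the identity holds only when the sum is restricted to those $H$ into which $\pi$ lifts, using an $H$-valued (generally non-surjective) lift $\widetilde{\pi}_H$ in the $H$-th term.

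In Step 3, ``adjusting by a coboundary locally'' cannot make $\widetilde{\pi}_H$ unramified at places $\pk\notin S(\pi)$, where one only knows $\widetilde{\pi}_H(I_\pk)\subseteq T\cap H$. Changing $\widetilde{\pi}_H$ by a coboundary only conjugates it. The admissible changes are by global $g\in Z^1(k,(T\cap H)(\pi))$ with prescribed restrictions to $I_\pk$ at every $\pk\notin S(\pi)$, and nothing in your argument shows this global problem is solvable.

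Your alternative is the one that works. Set $S_H=S(\pi)\cup\{\pk : \widetilde{\pi}_H(I_\pk)\neq 1\}$, a finite set.
\begin{itemize}
\item Outside $S_H$ one has $(f*\widetilde{\pi}_H)|_{I_\pk}=\iota_*(f|_{I_\pk})$, and the action on $(T\cap H)(\widetilde{\pi}_H)$ is unramified. This is needed for \Cref{lem:canonical}(ii) and is not automatic for nonabelian $T$.
\item At $\pk\in S_H$, the factor attached to $f$ must be $|\pk|^{-s_{\RT(\pk,f*\widetilde{\pi}_H)}}$. This is a tame variable of $G$, not an $S_H$-wild one, when $\pk\nmid|G|\infty$.
\item The previous rule must be imposed even when $[f|_{I_\pk}]$ is trivial. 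Such classes lie outside $\RT^{S}_{k,0}$ and contribute nothing to $\inv^{S}$, yet $f*\widetilde{\pi}_H$ is ramified at every place where $\pi$ is.
\end{itemize}
With these conventions the pointwise identity is immediate. You obtain \eqref{eq:mobius_inversion} with $\widetilde{\pi}$ and $S(\pi)$ replaced by $\widetilde{\pi}_H$ and $S_H$, summed over liftable $H$. This suffices for the continuation in \Cref{thm:meromorphic_continuation_of_fiber_series}(i), since only finitely many Euler factors change.
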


A meromorphic continuation of the righthand side gives a meromorphic continuation of the left-hand side, so it suffices to study $\Detale{S}{k}(G;\underline{s})$ for $G_k$-groups $G$. We remark that an $(S(\pi),k)$-ramification type need not be the same as a $k$-ramification type. While $\widetilde{\pi}$ induces the action on $T(\widetilde{\pi})$, it is possible for the action to factor through some further quotient. There may be primes ramified in $\pi$ (and so in $\widetilde{\pi}$) which are not ramified in the action of $T(\widetilde{\pi})$.

\begin{proof}
    Given $\widetilde{\pi}\in \Sur(G_k,G)$ and $f\in Z^1(k,G)$, we note that $\im(f*\widetilde{\pi})$ necessarily belongs to the lattice $\{H\le G : HT = G\}$. The fact that $HT=G$ is a group implies $T\cap H$ is a normal subgroup in $H$. Thus, we can write
    \[
        \sum_{f\in Z^1(k,T(\pi))} \inv_{k,G}(f*\widetilde{\pi},\underline{s}) = \sum_{\substack{H\le G\\HT=G}} \sum_{\substack{f\in Z^1(k,(T\cap H)(\widetilde{\pi}))\\\im(f*\widetilde{\pi}) = H}} \inv_{k,G}(f*\widetilde{\pi},\underline{s}).
    \]
    It follows from M\"obius inversion that
    \[
        D_k(T,\pi;\underline{s}) = \sum_{\substack{H\le G\\HT = G}}\mu_{G,T}(H)\sum_{f\in Z^1(k,(T\cap H)(\widetilde{\pi}))} \inv_{k,G}(f*\widetilde{\pi},\underline{s}).
    \]
    Finally, we note that the definition of $\inv$ as a product over ramification types implies
    \[
        \inv_{k,G}(f*\widetilde{\pi},\underline{s}) = \inv_{k,(T\cap H)(\widetilde{\pi})}(f, \tw_{T\cap H,\widetilde{\pi}}^*(\underline{s})).
    \]
    This concludes the proof.
\end{proof}

\subsection{Explicit meromorphic continuation}\label{sec:explicit_merom}

We are now ready to construct an explicit meromorphic continuation of $\Detale{S}{k}(T(\pi);\underline{s})$ in analogy with the single variable case in \cite[Corollary 3.4]{alberts2024}. Despite the heavy notation, the structure of the argument follows \cite{alberts-odorney2021} and \cite{alberts2024} quite closely: using Poisson summation as in \cite{alberts-odorney2021} we express $\Detale{S}{k}(T(\pi);\underline{s})$ as a finite linear combination of Euler products. These Euler products are described explicitly by appealing to cohomological results from \cite{alberts2024}. We carefully track the dependence on more parameters, but otherwise the proof is analogous to that of \cite[Corollary 3.4]{alberts2024}.

\begin{theorem}\label{thm:explicit_continuation}
    Let $T\normal G$ be an abelian normal subgroup with quotient map $q:G\to G/T$, $\pi\in q_*\Sur(G_k,G/T)$, and $S$ a finite set of places of $k$ containing $\{\pk : \pk \mid |G|\infty \text{ or }\pi(I_\pk) \ne 1\}$. Let $\underline{s} = (s_\tau)$ be a tuple of complex variables with real part $\underline{\sigma}=(\sigma_\tau)$ and imaginary part $\underline{t}=(t_\tau)$ each indexed by $\tau\in\RT_{k,0}(T(\pi))$ the nontrivial ramification types of $T(\pi)$. Then
    \[
        \Detale{S}{k}(T(\pi);\underline{s}) = \frac{|T|}{|H^0(k,T(\pi))|}\sum_{h\in H^1_{ur^\perp}(k,T(\pi)^*)} Q(\underline{s},h) B(\underline{s},h) \prod_{\tau\in \RTo{k}(T(\pi))} L(s_\tau,\rho_h|_{\C\tau}),
    \]
    where
    \begin{enumerate}[(a)]
        \item $H^1_{ur^{\perp}}(k,T(\pi)^*)$ is the dual Selmer group to $H^1_{ur}(k,T(\pi))$ and it satisfies
        \[
            |H^1_{ur^{\perp}}(k,T(\pi)^*)| \ll_{[k:\Q],|T|} |H^1_{ur}(k,T(\pi))|.
        \]
        \item $Q(\underline{s},h)$ is an entire function given by
        \begin{align*}
            Q(\underline{s},h) = &\prod_{\pk\in S}\left(\frac{1}{|H^0(k_\pk,T(\pi))|}\sum_{\tau\in \RT_k^{S,{\rm wild}}(T(\pi))}\sum_{\substack{f\in H^1(k_\pk,T(\pi))\\ \RT(\pk,f|_{I_\pk})=\tau}} \langle f,h\rangle |\pk|^{-s_\tau}\right)\\
            &\cdot \prod_{\pk\in S} \prod_{\substack{\tau\in \RT_k^{S,{\rm wild}}(T(\pi))\\\tau=(\pk,[f])}}\det\left(I - {\rm tr}(\rho_h|_{\C \tau}(\Fr_\pk|\C\tau^{I_\pk}))|\pk|^{-s_\tau}\right),
        \end{align*}
        which satisfies
        \[
            |Q(\underline{s},h)| \le O_{[k:\Q],|T|}(1)^{|S|} \inv^S_{k,G/T}\left(\pi,q_*\underline{\sigma}\right)
        \]
        where $q_*\underline{\sigma}$ is the tuple of real variables defined as in Theorem \ref{thm:meromorphic_continuation_of_fiber_series}.
        \item $B(\underline{s},h)$ is an absolutely convergent multiple Dirichlet series on the region
        \[
            \{\underline{s} : \text{if }\tau\in\RTo{k}(T(\pi)),\text{ then }\sigma_\tau > 1/2\},
        \]
        which is bounded uniformly in this region by
        \[
            |B(\underline{s},h)| \ll_{[k:\Q]} \prod_{\tau\in \RTo{k}(T(\pi))} |\sigma_\tau - 1/2|^{O_{[k:\Q],|T|}(1)}.
        \]
        \item $\rho_h:G_k\to {\rm GL}(V)$ is the Galois representation defined in \cite[Definition 5.1]{alberts2024} acting on $V:=\C[\Hom((\Q/\Z)^*,T(\pi))]$. In particular, $\rho_h|_{\C\tau}$ is induced from a Hecke character $\chi$ over the field fixed by a stabilizer of the Galois action on $\tau$, denoted $F(\pi,\tau)$ in Theorem \ref{thm:meromorphic_continuation_of_fiber_series}, which is unramified away from the primes dividing $|T|\infty$ and satisfies $\chi^{\exp(T)} = 1$ for $\exp(T)$ the exponent of $T$ as a group.
        
        If $h=0$, then $\rho_h|_{\C\tau}$ is induced from the trivial character over $F(\pi,\tau)$.
    \end{enumerate}
\end{theorem}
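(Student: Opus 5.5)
The argument runs through Poisson summation on the adelic cohomology, following \cite{alberts-odorney2021} and \cite[Corollary 3.4]{alberts2024}, while tracking a separate complex variable for each ramification type. Since $T(\pi)$ is abelian, $Z^1(k,T(\pi))\to H^1(k,T(\pi))$ has fibers of size $|T|/|H^0(k,T(\pi))|$. The summand $\inv^S_{k,T(\pi)}(f,\underline{s})$ depends only on the local restrictions $f|_{I_\pk}$, hence on the image of $[f]$ in the restricted product $\prod'_\pk H^1(k_\pk,T(\pi))$ (restricted with respect to $H^1_{ur}$). We therefore write
\[
\Detale{S}{k}(T(\pi);\underline{s}) = \frac{|T|}{|H^0(k,T(\pi))|}\sum_{f\in H^1(k,T(\pi))}\prod_\pk w_\pk(f_\pk,\underline{s}),
\]
where $w_\pk$ is the local weight $|\pk|^{-s_{\RT^S(\pk,f)}}$ (equal to $1$ if $f_\pk$ is unramified). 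This weight is a function on $H^1(k_\pk,T(\pi))$ that is constant on cosets of $H^1_{ur}$ for $\pk\notin S$.

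Next we apply Poitou--Tate Poisson summation, exactly as in \cite{alberts-odorney2021}. The global image is its own annihilator under local Tate duality, and the factorizable weight is supported on classes that are unramified at all but finitely many places. The sum therefore becomes a sum over the dual Selmer group $H^1_{ur^\perp}(k,T(\pi)^*)$ of products of local Fourier transforms $\widehat{w}_\pk(h_\pk,\underline{s})$. The normalization constants combine, via the global Euler characteristic formula, into the stated factor. Part (a), the bound $|H^1_{ur^\perp}(k,T(\pi)^*)|\ll|H^1_{ur}(k,T(\pi))|$, follows from the Greenberg--Wiles formula: the ratio is a product of local terms at places in $S$ and archimedean places, and each such term is bounded in terms of $[k:\Q]$ and $|T|$. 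One subtlety is that the local factors at ramified places of $\pi$ outside the wild set are still only $O(1)$ each, but that count is absorbed into $q_*\underline\sigma$ below.

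For $\pk\notin S$, Lemma \ref{lem:canonical} identifies $H^1(I_\pk,T(\pi))^{\Fr_\pk}$ with the $\Fr_\pk$-fixed part of $\Hom((\Q/\Z)^*,T(\pi))$. The Fourier transform $\widehat w_\pk(h_\pk,\underline{s})$ then equals
\[
1+\sum_{\tau}\operatorname{tr}\bigl(\rho_h|_{\C\tau}(\Fr_\pk)\bigr)|\pk|^{-s_\tau}+(\text{terms involving }|\pk|^{-1-s_\tau}\text{ and higher}),
\]
where $\rho_h$ is the permutation representation twisted by the pairing with $h$, as in \cite[Definition 5.1]{alberts2024}. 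Dividing the Euler product by $\prod_\tau L(s_\tau,\rho_h|_{\C\tau})$ gives $B(\underline{s},h)$, whose $\pk$-factors are $1+O(|\pk|^{-2\sigma_\tau}+|\pk|^{-\sigma_\tau-\sigma_{\tau'}})$. This converges absolutely for all tame $\sigma_\tau>1/2$. The stated $|\sigma_\tau-1/2|^{-O(1)}$ bound comes from comparing $B$ with a product of $\zeta_k(2\sigma_\tau)^{O(1)}$ factors, since the number of terms is bounded in terms of $|T|$ and $[k:\Q]$. Part (d) is read off from \cite{alberts2024}. Because $\C\tau$ is a transitive permutation module twisted by a character, $\rho_h|_{\C\tau}$ is induced from the stabilizer field $F(\pi,\tau)$. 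The inducing character takes values in $\mu_{\exp(T)}$ and is unramified outside $|T|\infty$, since $h$ is unramified outside $S$ and the relevant ramification is cyclotomic; it is trivial when $h=0$.

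For $\pk\in S$, we collect $\widehat w_\pk$ together with the inverse Euler factors of the $L$-functions at $\pk$ (the determinant terms) into $Q(\underline{s},h)$, which is a finite product of finite sums and hence entire. The estimate on $|Q|$ is the main obstacle. Every $f\in H^1(k_\pk,T(\pi))$ at a place where $\pi$ is ramified produces a twisted class $f*\widetilde\pi$ whose ramification type pushes forward under $q_*$ to that of $\pi$. This gives $|\pk|^{-\sigma_\tau}\le|\pk|^{-(q_*\underline\sigma)_{q_*\tau}}$, and so the product over ramified places is at most $O(1)^{|S|}\inv^S_{k,G/T}(\pi,q_*\underline\sigma)$. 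The difficulty is making the twist compatibility of ramification types (Definition \ref{def:twist_of_RT} and Corollary \ref{cor:rt_twist_structure}) precise at places where the action on $T(\pi)$ ramifies. I would first attempt this locally on inertia using the bijection of \cite[Lemma 1.3]{alberts2021}.
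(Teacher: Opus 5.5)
Your route is the paper's: Poisson summation as in \cite{alberts-odorney2021}, identifying the tame local Fourier transforms with $\operatorname{tr}(\rho_h|_{\C\tau}(\mathrm{Fr}_{\mathfrak p}))$ as in \cite[Theorem 3.1]{alberts2024}, extracting $B$, Greenberg--Wiles for (a), and the comparison $\sigma_\tau\ge(q_*\underline\sigma)_{q_*\tau}$ for $Q$. However, the reasoning you give does not establish (a) and (d) as stated.

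\textbf{Part (a).} You describe the Greenberg--Wiles ratio as a product of local terms at the places of $S$, each only $O(1)$, and propose absorbing the resulting $O(1)^{|S|}$ into $q_*\underline\sigma$. That cannot work. Part (a) does not involve $\underline{s}$ and asserts a constant depending only on $[k:\Q]$ and $|T|$, while $|S|$ grows with the number of primes ramified in $\pi$.

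The missing fact is that the Selmer condition is the unramified subgroup at \emph{every} finite place, including those in $S$. For every finite $v$ one has $|H^1_{ur}(k_v,M)|=|M^{I_v}/(\mathrm{Fr}_v-1)M^{I_v}|=|H^0(k_v,M)|$. Hence every finite local factor in Greenberg--Wiles is exactly $1$. Only the ratio of global $H^0$'s and the at most $[k:\Q]$ archimedean factors survive, which is how the paper gets a uniform constant. The same identity shows the tame local factor is exactly $\sum_\tau \operatorname{tr}(\rho_h|_{\C\tau}(\mathrm{Fr}_{\mathfrak p}))|\mathfrak p|^{-s_\tau}$, with no $|\mathfrak p|^{-1-s_\tau}$ terms; such terms would have been harmless anyway.

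\textbf{Part (d).} The argument ``$h$ is unramified outside $S$'' only gives a character unramified outside $|T|\disc(\pi)\infty$. That is the weaker statement already in \cite{alberts2024}. ``The relevant ramification is cyclotomic'' says nothing about primes $v\in S$ with $v\nmid|T|$, where $T(\pi)$ and $F(\pi,\tau)$ can ramify.

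What is needed is that $h$ is unramified at \emph{every} finite $v\nmid|T|$. For such $v$, the exact annihilator of $H^1_{ur}(k_v,M)$ under local Tate duality is $H^1_{ur}(k_v,M^*)$, even when $M$ is ramified at $v$:
\begin{itemize}
\item the two subgroups are orthogonal, since $H^2$ of $\widehat{\Z}$ with torsion coefficients vanishes;
\item their orders multiply to $|H^1(k_v,M)|$, by the identity above and the local Euler characteristic formula.
\end{itemize}
This is \cite[Theorem 2.17(e)]{darmon-diamond-taylor1995}, which the paper cites. Since $h\in H^1_{ur^\perp}(k,T(\pi)^*)$, this is what makes the inducing character unramified away from $|T|\infty$. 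It is also exactly what the later bound $\mathfrak f(\chi)\ll_{|T|}1$ in the proof of \Cref{thm:general_unconditional_nilpotent} depends on.

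\textbf{The $Q$ estimate.} By contrast, the $Q$ bound you flag as the main obstacle is immediate. At $\mathfrak p\in S$ all types are wild, and the twist is by definition $(\mathfrak p,[f*\widetilde\pi|_{I_{\mathfrak p}}])$. Since $q\circ(f*\widetilde\pi)=\pi$ pointwise, every local class (including trivial $f$) carries a variable whose type pushes forward to that of $\pi$. The paper combines this with the triangle inequality and the fiber count $|H^1_{ur}(k_{\mathfrak p},T(\pi))|=|H^0(k_{\mathfrak p},T(\pi))|$; no further use of \cite[Lemma 1.3]{alberts2021} is needed.
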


It was sufficient information for the purposes of  \cite{alberts2024} to know that the Hecke characters in (d) are unramified at all primes $\pk\nmid |T|\disc(\pi)\infty$, although we will see that it is immediate from the same arguments that such Hecke characters are unramified at all $\pk\nmid |T|\infty$.

The information given above can immediately be distilled to a meromorphic continuation for $\Detale{S}{k}(T(\pi);\underline{s})$.

\begin{corollary}\label{cor:explicit_continuation}
    Let $T\normal G$ be an abelian normal subgroup with quotient map $q:G\to G/T$, and $\pi\in q_*\Sur(G_k,G)$. Then $\Detale{S}{k}(T(\pi);\underline{s})$ has a meromorphic continuation to the set
    \[
        \{\underline{s} : \text{if }\tau\in\RTo{k}(T(\pi)),\text{ then }{\rm Re}(s_\tau) > 1/2\},
    \]
    with at most simple poles at $s_\tau=1$ for each $\tau\in \RTo{k}(T(\pi))$.

    Moreover, for each $\underline{s}$ in this region
    \begin{align*}
        |\Detale{S}{k}(T(\pi);\underline{s})| \ll_{[k:\Q],|T|}& |H^1_{ur}(k,T(\pi))|\cdot O_{[k:\Q],|T|}(1)^{|S|}\cdot \inv^S_{k,G/T}\left(\pi,q_*\underline{\sigma}\right)\\
        &\cdot \max_{h\in H^1_{ur^{\perp}}(k,T(\pi))}\prod_{\tau\in\RTo{k}(T(\pi))} |L(s_\tau,\rho_h|_{\C\tau})|\cdot |\sigma_\tau-1/2|^{O_{[k:\Q],|T|}(1)}.
    \end{align*}
\end{corollary}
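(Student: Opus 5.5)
The corollary follows by reading off the analytic properties of each factor in the finite sum of \Cref{thm:explicit_continuation}. The sum over $h\in H^1_{ur^\perp}(k,T(\pi)^*)$ is finite, so a meromorphic continuation and a bound for each summand give the same for $\Detale{S}{k}(T(\pi);\underline{s})$. The number of summands is bounded by part (a). The prefactor $|T|/|H^0(k,T(\pi))|$ is at most $|T|$, so it is absorbed into the implied constant.

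\emph{Continuation and poles.} We treat each summand $Q(\underline{s},h)B(\underline{s},h)\prod_\tau L(s_\tau,\rho_h|_{\C\tau})$ factor by factor.
\begin{enumerate}
\item By part (b), $Q$ is entire.
\item By part (c), $B$ is holomorphic on the region $\sigma_\tau>1/2$ for tame $\tau$, since it is an absolutely convergent multiple Dirichlet series there.
\item By part (d), each $L(s_\tau,\rho_h|_{\C\tau})$ is the $L$-function of a representation induced from a Hecke character $\chi$ over $F(\pi,\tau)$. By Artin formalism it equals the Hecke $L$-function $L_{F(\pi,\tau)}(s_\tau,\chi)$. This is entire unless $\chi$ is trivial, in which case it is the Dedekind zeta function of $F(\pi,\tau)$, with a single simple pole at $s_\tau=1$.
\end{enumerate}
Each variable $s_\tau$ appears in exactly one $L$-factor, so the product has at most a simple pole along each hyperplane $s_\tau=1$. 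A finite sum of such functions has the same property.

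\emph{Bound.} We combine the estimates term by term. For $Q$ we use part (b) directly: $|Q|\le O(1)^{|S|}\inv^S_{k,G/T}(\pi,q_*\underline{\sigma})$. For $B$ we use part (c), which gives the factor $\prod_\tau|\sigma_\tau-1/2|^{O(1)}$. We then bound each summand by the maximum over $h$ of $\prod_\tau |L(s_\tau,\rho_h|_{\C\tau})|$, and multiply by the number of terms, which is $\ll |H^1_{ur}(k,T(\pi))|$ by part (a).

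\emph{Main obstacle.} The only point needing care is the polar divisor. Read literally, the bound is infinite at $s_\tau=1$, so it should be understood away from the poles, or with the pole factor included as it appears. One must also check that the $B$-bound in part (c) is uniform in $h$ and $\pi$, with constants depending only on $[k:\Q]$ and $|T|$. This is exactly as stated in \Cref{thm:explicit_continuation}, so we expect no genuine obstacle: the corollary is bookkeeping once that theorem is granted.
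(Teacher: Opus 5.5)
Your proposal is correct and matches the paper's argument. As in the paper, the poles at $s_\tau=1$ come only from the $L$-factors in the finite decomposition of Theorem~\ref{thm:explicit_continuation}, and the bound is obtained by combining parts (a), (b) and (c) of that theorem.
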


\begin{proof}[Proof of Corollary \ref{cor:explicit_continuation}]
    It is clear from Theorem \ref{thm:explicit_continuation} that $\Detale{S}{k}(T(\pi);\underline{s})$ is meromorphic on this region with possible singularities at $s_\tau = 1$ for each $\tau\in \RTo{k}(T(\pi))$ coming from the $L$-functions. The bound is immediate from combining the bounds in Theorem \ref{thm:explicit_continuation}(a,b,c).
\end{proof}

This statement and \Cref{thm:meromorphic_continuation_of_fiber_series} are extremely similar. When we go to prove \Cref{thm:meromorphic_continuation_of_fiber_series}, we do so by stitching together the results for $\Detale{S}{k}(T(\pi);\underline{s})$ according to \Cref{prop:mobius_inversion} resulting in similar statements for $D_k(T,\pi;\underline{s})$.

\begin{remark}
    It is possible to confirm that $s_\tau = 1$ is a simple pole of $\Detale{S}{k}(T(\pi);\underline{s})$ for each $\tau\in \RTo{k}(T(\pi))$ using the the ideas in joint work of the first author with O'Dorney \cite{alberts-odorney2021,alberts-odorney2023} to show that the poles of the Hecke $L$-functions at the trivial character do not cancel out. We have opted not to include this step, as we are losing control over the order of the poles by the end of our argument regardless.
\end{remark}

\subsection{Proof of Theorem \ref{thm:explicit_continuation}}

We will follow along the program of study in \cite{alberts-odorney2021,alberts2024} to prove Theorem \ref{thm:explicit_continuation}. These papers do essentially exactly what we need in a single variable setting, so it will suffice to confirm that the arguments still work in the multivariable setting.

We first use Poisson summation to express $\Detale{S}{k}(T(\pi);\underline{s})$ as a finite sum of Euler products as in \cite{alberts-odorney2021}. Throughout this section let $s_1 = 0$, where $1$ is the trivial tame ramification type. This allows us to write more concise formulas.

\begin{lemma}
    The series $\Detale{S}{k}(T(\pi);\underline{s})$ is equal to
    \[
        \frac{|T|}{|H^0(k,T(\pi)^*)|} \sum_{h\in H^1_{ur^{\perp}}(k,T(\pi)^*)} \prod_\pk\left(\frac{1}{|H^0(k_\pk,T(\pi))|}\sum_{\tau\in\RT_{k}^S(T(\pi))}\sum_{\substack{f\in H^1(k_\pk,T(\pi))\\\RT^S(\pk,[f])=\tau}} \langle f,h\rangle |\pk|^{-s_\tau}\right),
    \]
    where $T(\pi)^* = \Hom(T(\pi),\mu)$ is the Tate dual group, $H^1_{ur^\perp}(k,T(\pi)^*)$ is the dual Selmer group to $H^1_{ur}(k,T(\pi))$, and $\langle,\rangle$ is the (local) Tate pairing.
\end{lemma}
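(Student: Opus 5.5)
The plan is to run the Poisson summation argument of \cite{alberts-odorney2021} with the single variable $s$ replaced by the tuple $\underline{s}$. The key observation is that the summand $\inv^S_{k,T(\pi)}(f,\underline{s})$ factors as a product of local terms. Each local term depends only on the restriction $f|_{I_\pk}$, through the ramification type $\RT^S(\pk,f)$, and not on $f$ itself.

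\textbf{Step 1 (local weight functions).} For each place $\pk$, define $w_\pk$ on $H^1(k_\pk,T(\pi))$ by
\[
w_\pk(f)=|\pk|^{-s_{\RT^S(\pk,f)}},
\]
with the convention $s_1=0$. Then $w_\pk$ is constant on cosets of the unramified subgroup $H^1_{ur}(k_\pk,T(\pi))$. In particular $w_\pk = 1$ on the unramified classes, which is exactly the value taken at almost every place. Since $T(\pi)$ is abelian, $Z^1(k,T(\pi))\to H^1(k,T(\pi))$ has fibers of size $|T|/|H^0(k,T(\pi))|$. Hence
\[
\Detale{S}{k}(T(\pi);\underline{s}) = \frac{|T|}{|H^0(k,T(\pi))|}\sum_{f\in H^1(k,T(\pi))}\prod_\pk w_\pk(f|_{k_\pk}).
\]
For $\underline{\sigma}$ in the region of absolute convergence, $w=\prod_\pk w_\pk$ is a factorizable function on the restricted product of the local $H^1(k_\pk,T(\pi))$ relative to the unramified subgroups.

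\textbf{Step 2 (Poisson summation).} Apply the Poisson summation formula for the image of global $H^1$ in the adelic $H^1$, as in \cite[Theorem 2.1]{alberts-odorney2021} and \cite{lmfdb}-independent Poitou--Tate duality. Use the local Tate pairing $\langle\,,\rangle$ and Haar measures normalized so that the unramified subgroup has volume $|H^0(k_\pk,T(\pi))|^{-1}$ times its size. The orthogonal complement of $H^1(k,T(\pi))$ is the image of $H^1(k,T(\pi)^*)$. The Fourier transform of $w_\pk$ is supported on the annihilator of $H^1_{ur}(k_\pk,T(\pi))$ at almost every place, so the dual sum collapses to $h\in H^1_{ur^\perp}(k,T(\pi)^*)$. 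The local Fourier transform is then
\[
\hat w_\pk(h)=\frac{1}{|H^0(k_\pk,T(\pi))|}\sum_{f}\langle f,h\rangle w_\pk(f),
\]
which, grouping $f$ by ramification type, is exactly the displayed Euler factor. The remaining global constants are the ratio of the covolume of the global $H^1$ and its dual. By the Tate global Euler characteristic / Poitou--Tate exact sequence, this ratio simplifies to $|T|/|H^0(k,T(\pi)^*)|$ times the prefactor; this ratio is the constant appearing in the statement.

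\textbf{Main obstacle.} The delicate point is the bookkeeping of the global normalization constants, specifically the $H^0$ terms and the product of local normalizations. It must match the finite-support version of Poisson summation exactly. I would handle this by first proving the identity for a truncated weight: fix a finite set of places and use $w_\pk=\mathbf 1_{ur}$ outside it, so that both sides are finite sums. There the constant can be read off directly from \cite{alberts-odorney2021}. I would then pass to the limit using absolute convergence for $\sigma_\tau$ large, which holds by Proposition \ref{prop:nilpotent_absolute_convergence}-type counting bounds. Equality on this region then extends by analytic continuation.
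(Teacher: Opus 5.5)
Your proposal is correct and is essentially the paper's argument. The paper likewise rewrites the sum over $H^1(k,T(\pi))$ with the factor $|T|/|H^0(k,T(\pi))|$, notes that the weight is multiplicative and periodic for the unramified subgroups at every place (not merely almost every), and applies the Poisson summation of \cite{alberts-odorney2021}; your truncation-and-limit step just makes explicit the convergence that the paper leaves inside its appeal to that theorem. One correction to your bookkeeping sentence: the Poisson constant itself is $|H^0(k,T(\pi))|/|H^0(k,T(\pi)^*)|$, and only its product with your prefactor $|T|/|H^0(k,T(\pi))|$ gives the stated $|T|/|H^0(k,T(\pi)^*)|$.
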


\begin{proof}
    As the ramification types $\RT^S(\pk,f)$ depend only on the class $[f]\in H^1(k,G)$, we can alternatively write
    \[
        \Detale{S}{k}(T(\pi);\underline{s}) = \frac{|T|}{|H^0(k,T(\pi))|}\sum_{[f]\in H^1(k,T(\pi))} \inv^S_{k,T(\pi)}(f,\underline{s}).
    \]
    The locally compact abelian groups $H^1(k,T(\pi))$ and $H^1(k,T(\pi)^*)$ are dual to each other via the Tate pairing $\langle, \rangle$, where $T(\pi)^* = \Hom(T(\pi),\mu)$ is the Tate dual module. The function $\Detale{S}{k}(T(\pi);\underline{s})$ then appears to be the left-hand side of a Poisson summation, which is precisely how these types of sums are addressed in \cite{alberts-odorney2021}.

    The function $\inv^S_{k,T(\pi)}(-,\underline{s})$ can naturally be extended to a function from $H^1(\A_k,T(\pi)) \to \C$. This function is multiplicative in the sense of \cite[Definition 3.1]{alberts-odorney2021} and it is periodic with respect to the compact open subgroup
    \[
        Y_{ur}:=\prod_{\pk\mid \infty} H^1(k_\pk,T(\pi)) \times \prod_{\pk\nmid \infty} H^1_{ur}(k,T(\pi))
    \]
    because it depends only on ramification, not splitting type. \cite[Proposition 4.1]{alberts-odorney2021} applies to a single variable analog of this function with $L_\pk = H^1(k_\pk,T(\pi))$, and it is straight forward to see that the same argument shows that $\inv^S_{k,T(\pi)}(-,\underline{s})$ is multiplicative and satisfies the hypotheses of \cite[Theorem 2.3]{alberts-odorney2021}. Thus, \cite[Theorem 2.3]{alberts-odorney2021} implies
    \[
        \Detale{S}{k}(T,\pi;\underline{s}) = \frac{|T|}{|H^0(k,T(\pi)^*)|}\sum_{h\in H^1(k,T(\pi)^*) \cap Y_{ur}^{\perp}} \widehat{\inv}_{k,T(\pi)}^S(h,\underline{s}),
    \]
    where $\widehat{\inv}_{S,k,T(\pi)}(-,\underline{s})$ is the Fourier transform given by
    \[
        \widehat{\inv}_{k,T(\pi)}^S(h,\underline{s}) = \frac{1}{|H^0(k_\pk,T(\pi))|}\prod_\pk\left(\sum_{\tau\in\RT_{k}^S(T(\pi))}\sum_{\substack{f\in H^1(k_\pk,T(\pi))\\\RT^S(\pk,[f])=\tau}} \langle f,h\rangle |\pk|^{-s_\tau}\right).
    \]
    This concludes the proof, as $H^1(k,T(\pi)^*) \cap Y_{ur}^{\perp}$ is the definition of the dual Selmer group $H^1_{ur^{\perp}}(k,T(\pi)^*)$.
\end{proof}

It is remarked in \cite{alberts-odorney2021} that these Euler products are Frobenian, which is used to construct a meromorphic continuation. The work in \cite{alberts2024} goes much further in describing these Euler products in \cite[Theorem 3.1]{alberts2024}. The following is a multivariable version of this result:

\begin{lemma}\label{lem:tame_euler_factors}
    Suppose $\pk\not\in S$ and $h\in H^1_{ur^{\perp}}(k,T(\pi)^*)$. Then
    \[
        \frac{1}{|H^0(k_\pk,T(\pi))|}\sum_{\tau\in\RT_{k}^S(T(\pi))}\sum_{\substack{f\in H^1(k_\pk,T(\pi))\\\RT^S(\pk,[f])=\tau}}\langle f,h\rangle |\pk|^{-s_\tau} = \sum_{\tau\in\RT_{k}^{\rm tame}(T(\pi))}{\rm tr}(\rho_h|_{\C\tau}(\Fr_\pk)) |\pk|^{-s_\tau}
    \]
    for $\rho_h$ the Galois representation defined in \cite[Definition 5.1]{alberts2024}.
\end{lemma}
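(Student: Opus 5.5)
The plan is to reduce the identity to the single-variable computation in \cite[Theorem 3.1]{alberts2024}, after first sorting the local summands by tame ramification type. Fix $\pk\notin S$. Then $\pk\nmid |T|\infty$ and $\pi(I_\pk)=1$, so $I_\pk$ acts trivially on $T(\pi)$ and $\pk$ is tamely ramified in every class.

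\textbf{Step 1: sort local classes by ramification type.} Because $\pk\notin S$, the ramification map sends $[f]\in H^1(k_\pk,T(\pi))$ to a tame type, namely the orbit of $\iota_\pk^*(f|_{I_\pk})$. So no wild types occur in the sum. Using Lemma \ref{lem:canonical}(ii) and inflation--restriction, the restriction map
\[
  H^1(k_\pk,T(\pi)) \to H^1(I_\pk,T(\pi))^{\Fr_\pk}\cong \Hom((\Q/\Z)^*,T(\pi))^{\Fr_\pk}
\]
is surjective. Its kernel is $H^1_{ur}(k_\pk,T(\pi))$, which has size $|H^0(k_\pk,T(\pi))|$ since $T(\pi)$ is finite and unramified at $\pk$. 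The left-hand side therefore becomes
\[
  \sum_{\tau}|\pk|^{-s_\tau}\sum_{\substack{g\in \tau\\ \Fr_\pk g = g}} \frac{1}{|H^1_{ur}(k_\pk)|}\sum_{f\mapsto g}\langle f,h\rangle .
\]
Here $\tau$ runs over $\RT^{\rm tame}_k(T(\pi))$, with $s_1=0$ for the trivial type.

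\textbf{Step 2: evaluate the inner character sums.} Since $h$ lies in the dual Selmer group, $h_\pk$ is orthogonal to $H^1_{ur}(k_\pk,T(\pi))$ under the local Tate pairing. Hence $f\mapsto\langle f,h\rangle$ is constant on each coset of $H^1_{ur}$, i.e. on each fiber over $g$. Write $\langle g,h\rangle_\pk$ for this common value. The left-hand side is then
\[
  \sum_\tau |\pk|^{-s_\tau}\sum_{g\in\tau^{\Fr_\pk}}\langle g,h\rangle_\pk .
\]

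\textbf{Step 3: match with the trace.} By \cite[Definition 5.1]{alberts2024}, $\rho_h$ acts on $V=\C[\Hom((\Q/\Z)^*,T(\pi))]$ by permuting the basis through the Galois action, twisted by a scalar attached to $h$. That scalar is exactly the local pairing value above at $\Fr_\pk$. The sub-representation $\C\tau$ is stable for each orbit $\tau$, so ${\rm tr}(\rho_h|_{\C\tau}(\Fr_\pk))$ sums the scalars over the basis vectors $g\in\tau$ fixed by $\Fr_\pk$. This is the inner sum from Step 2, and summing over $\tau$ gives the right-hand side. The multivariable version adds nothing new, because the single-variable identity of \cite[Theorem 3.1]{alberts2024} already holds type by type; we simply keep a separate variable $s_\tau$ for each type instead of specializing.

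\textbf{Main obstacle.} The delicate step is Step 3. It requires verifying that the twisting scalar in $\rho_h$ agrees with $\langle f,h\rangle$ for every lift $f$ of $g$, canonically and independently of any choice of Frobenius or generator. I would handle this by unwinding the local Tate pairing through the cup product $H^1(k_\pk,T)\times H^1(k_\pk,T^*)\to H^2(k_\pk,\mu)$ on tame classes. When $h_\pk$ is unramified, this pairing evaluates as $h(\Fr_\pk)$ applied to $g(\text{tame generator})$, which is precisely the formula defining $\rho_h$. The identification of $I_\pk^{\rm tame}$ with $(\Q/\Z)^*$ used here should be the canonical $\iota_\pk$ of Lemma \ref{lem:canonical}.
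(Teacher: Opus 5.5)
Your proposal is correct and follows the paper's argument. The paper likewise notes that at $\pk\notin S$ only tame types contribute, and that $\Hom((\Q/\Z)^*,T(\pi))=H^1((\Q/\Z)^*,T(\pi))$ because $T$ is abelian; it then says the proof of \cite[Theorem 3.1]{alberts2024} goes through unchanged with the single specialized variable replaced by an independent $s_\tau$ for each Galois orbit. Your Steps 1--3 unpack what that cited proof does, and the matching of the twisting scalar in $\rho_h$ with $\langle f,h\rangle$, which you flag as the main obstacle, is exactly what \cite{alberts2024} supplies through its parametrizing local Tate pairing.
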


\begin{remark}
    We note the role that ramification types play in this setting, which is particularly evident in Lemma \ref{lem:tame_euler_factors}. The series $\Detale{S}{k}(T(\pi);\underline{s})$ has a meromorphic continuation past its rightmost poles because each whole ramification type is assigned a single complex variable. If one separates a single ramification type with different variables, the analog to Lemma \ref{lem:tame_euler_factors} will necessarily have rational, not integer, coefficients. This would result in requiring the removal of a branch cut before being able to give a continuation of $\Detale{S}{k}(T(\pi);\underline{s})$, which is incompatible with our method.
\end{remark}

\begin{proof}[Proof of Lemma \ref{lem:tame_euler_factors}]
    As $\pk$ is tame, the only ramification types on the left-hand side with a nonzero summand are necessarily tame. We also note that $T$ being abelian implies $\Hom((\Q/\Z)^*,T(\pi)) = H^1((\Q/\Z)^*,T(\pi))$. The proof of \cite[Theorem 3.1]{alberts2024} applies without change after replacing $\ind(o)s$ for a Galois orbit $o\subset H^1((\Q/\Z)^*,T(\pi))$ with the complex variable $s_{o}$ (and relabeling Galois orbits $o$ with the variable $\tau$ for consistency).
\end{proof}

We have now built up the necessary tools in the multivariate setting to prove Theorem \ref{thm:explicit_continuation}

\begin{proof}[Proof of Theorem \ref{thm:explicit_continuation}]
Putting these lemmas together and factoring out the places in $S$, we have proven that
\[
    \Detale{S}{k}(T(\pi);\underline{s}) = \frac{|T|}{|H^0(k,T(\pi)^*)|}\sum_{h\in H^1_{ur^{\perp}}(K,T(\pi)^*)} Q(\underline{s},h)\prod_{\pk} \left(\sum_{\tau\in\RT_k^{\rm tame}(T(\pi))}{\rm tr}(\rho_h|_{\C\tau}(\Fr_\pk)) |\pk|^{-s_\tau}\right)
\]
by the definition of $Q(\underline{s},h)$ accounting for the places in $S$. Decomposing the Euler factors as in \cite[Corollary 3.3]{alberts2024}, we find that
\[
    \Detale{S}{k}(T(\pi);\underline{s}) = \frac{|T|}{|H^0(k,T(\pi)^*)|}\sum_{h\in H^1_{ur^{\perp}}(K,T(\pi)^*)} Q(\underline{s},h)B(\underline{s},h)\prod_{\tau\in\RTo{k}(T(\pi))}L(s_\tau,\rho_h|_{\C \tau}),
\]
where $B(\underline{s},h)$ is an absolutely convergent Euler product on $\Omega_T$, and in fact
\[
    B(\underline{s},h) = \prod_{\pk} \left( 1 + \sum_{\tau\in\RTo{k}(T(\pi))}O_{|T|}(|\pk|^{-2\sigma_\tau})\right)
\]
so that the bound follows. This confirms the decomposition as well as part (c).

The dual Selmer group to $H^1_{ur}(k,T(\pi))$ can be bounded using the Greenberg--Wiles identity \cite[Theorem (8.7.9)]{neukirch-schmidt-wingberg2013cohomology} to prove part (a):
\begin{align*}
    |H^1(k,T(\pi)^*) \cap Y_{ur}^\perp| &= |H^1_{ur}(k,T(\pi))| \frac{|H^0(k,T(\pi)^*)|}{|H^0(k,T(\pi))|}\prod_{\pk\mid \infty}\frac{|H^0(k_\pk,T(\pi))|}{|H^1(k_\pk,T(\pi))|}\\
    &\ll_{[k:\Q],|T|} |H^1_{ur}(k,T(\pi))|.
\end{align*}

Meanwhile, the wild parts can be bounded by noting that $|\det(I - M|\pk|^{-s})| \ll_n 1$ for ${\rm Re}(s) > 0$ and $M$ an $n\times n$ matrix with $|\det(M)| = 1$. Thus,
\begin{align*}
    |Q(\underline{s},h)| &\ll_{|T|} \prod_{\pk\in S}\left(\frac{1}{|H^0(k_\pk,T(\pi))|}\sum_{\tau\in \RT_{k}^{S,{\rm wild}}(T(\pi))}\sum_{\substack{f\in H^1(k_\pk,T(\pi))\\ \RT^S(\pk,f|_{I_\pk})=\tau}} |\langle f,h\rangle| |\pk|^{-s_\tau}|\right)\\
    &\ll_{|T|} \prod_{\pk\in S}\left(\frac{1}{|H^0(k_\pk,T(\pi))|}\sum_{\tau\in \RT_{k}^{S,{\rm wild}}(T(\pi))}\sum_{\substack{f\in H^1(k_\pk,T(\pi))\\ \RT^S(\pk,f|_{I_\pk})=\tau}} |\pk|^{-\sigma_\tau}\right)\\
    &= \prod_{\pk\in S}\left(\sum_{\tau\in \RT_{k}^{S,{\rm wild}}(T(\pi))}\sum_{\substack{f\in \res_{I_\pk}H^1(k_\pk,T(\pi))\\ \RT^S(\pk,f)=\tau}} |\pk|^{-\sigma_\tau}\right).
\end{align*}
We then bound $\sigma_\tau \ge (q_*\underline{\sigma})_{q_*\tau}$ by the definition of $q_*\underline{\sigma}$. This gives the bound
\begin{align*}
    |Q(\underline{s},h)| &\le O_{[k:\Q],|T|}(1)^{|S|} \prod_{\overline{\tau}\in\RT_{k,0}^S(G/T)} \prod_{\substack{\pk\in S\\\RT^S(\pk,\pi) = \overline{\tau}}} p^{-(q_*\sigma)_{\overline{\tau}}}.
\end{align*}
We note that the contribution of any individual place is bounded above by $1$, and so can be removed. Thus, removing the contributions of all places in $S$ which are unramified in $\pi$ gives exactly
\begin{align*}
    |Q(\underline{s},h)| &\le O_{[k:\Q],|T|}(1)^{|S|} \inv^S_{k,G/T}\left(\pi,q_*\underline{\sigma}\right),
\end{align*}
proving part (b).

Finally, part (d) follows from \cite[Definition 1.5]{alberts2024}. The representation is defined by
\[
    \rho_h(g)a = [g.a, (h*\phi)(g)](g.a)
\]
for each $a\in \Hom((\Q/\Z)^*,T(\pi))$ and $g\in G_k$. Here $[,]$ is the ``parametrizing local Tate pairing'' \cite[Definition 4.1]{alberts2024} valued in the group of roots of unity, and
\[
h*\phi:G_k \to T(\pi)^*\rtimes \Gal(F/k)
\]
is the pointwise product $(h*\phi)(g) = (h(g),\phi(g))$, where $F/k$ contains the fields of definition for both $T(\pi)$ and $T(\pi)^*$ and $\phi:G_k\to \Gal(F/k)$ is the canonical quotient map. This is a monomial representation in the sense of \cite[Definition 2.1]{alberts2024}, which by \cite[Lemma 2.2]{alberts2024} is a direct sum of representations induced from Hecke characters on each Galois orbit $o\subseteq \Hom((\Q/\Z)^*,T(\pi))$ over the field $k_a$ fixed by the Galois stabilizer of some $a\in o$. The Galois orbits are precisely the tame ramification types, so relabeling $o$ with $\tau$ and $k_a$ with $F(\pi,\tau)$ gives us the desired description.

Moreover, the fact that $h\in H^1_{{ur}^{\perp}}(k,T(\pi)^*)$ implies that $h$ is unramified at all primes not dividing $|T|\infty$ (see, for example, \cite[Theorem 2.17(e)]{darmon-diamond-taylor1995}). This implies that the Hecke character $\chi$ factors through $\Gal(L/F(\pi,\tau))$ for some cyclic extension $L/F(\pi,\tau)$ of degree equal to the order of $\chi$ which is unramified away from the primes dividing $|T|\infty$.
\end{proof}

\subsection{Proof of Theorem \ref{thm:meromorphic_continuation_of_fiber_series}}

We first construct a region of meromorphicity for each individual fibers series.

\begin{proof}[Proof of Theorem \ref{thm:meromorphic_continuation_of_fiber_series}(i)]
    This follows directly from Proposition \ref{prop:mobius_inversion} decomposing $D_k(T,\pi;\underline{s})$ into a sum of $\Detale{S}{k}((T\cap H)(\pi);\tw_{T\cap H,\widetilde{\pi}}^*\underline{s})$ and Corollary \ref{cor:explicit_continuation} giving the meromorphic continuation.
    
    Generically, we expect the poles to be simple poles. However, in the presence of extra roots of unity ramification types of $G$ can split into the union of several ramification types of $T(\pi)$. This is captured by the quantity $|\tw_{T,\widetilde{\pi}}^{-1}(\tau)|$. More explicitly, the upper bound on the order of the pole comes from the overlap between the poles $s_{\tw_{T\cap H,\widetilde{\pi}}(\tau)}=1$ for several $\tau\in \RT_{k,0}((T\cap H)(\pi))$.
\end{proof}

Next, we distill the explicit description of $\Detale{S}{k}(T(\pi);\underline{s})$ in \Cref{thm:explicit_continuation} into vertical bounds for $D_k(T,\pi;\underline{s})$.

\begin{proof}[Proof of Theorem \ref{thm:meromorphic_continuation_of_fiber_series}(ii)]
    Proposition \ref{prop:mobius_inversion} and the vertical bounds given in Corollary \ref{cor:explicit_continuation} together give an upper bound of the form
    \begin{align*}
        &\ll |H^1_{ur}(k,T(\pi))|\cdot \inv_{k,G/T}(\pi,q_*\underline{\sigma}+\epsilon) \max_{\substack{H\le G\\ HT=G}}\left\{\prod_{\widetilde{\tau}\in \RTo{k}((T\cap H)(\pi))} \max_{\chi} |L_{F(\pi,\widetilde{\tau})}(s_{\tw_{T\cap H,\widetilde{\pi}}(\widetilde{\tau})} , \chi)|\right\}\\
        &\ll|H^1_{ur}(k,T(\pi))|\cdot \inv_{k,G/T}(\pi,q_*\underline{\sigma}+\epsilon) \max_{\substack{H\le G\\ HT=G}}\left\{\prod_{\widetilde{\tau}\in \RTo{k}((T\cap H)(\pi))} \max_{\chi} \left\{|L_{F(\pi,\widetilde{\tau})}(s_{\tw_{T\cap H,\widetilde{\pi}}(\widetilde{\tau})} , \chi)|,1\right\}\right\},
    \end{align*}
    where we note that the $\chi$ range over Hecke characters that induce to $\rho_h|_{\C\widetilde{\tau}}$. These are Hecke characters on $F(\pi,\widetilde{\tau})$ of order $\le \exp(T)$ which are unramified away from the primes dividing $|T|\infty$ by Theorem \ref{thm:explicit_continuation}(d).

    Let $\iota_H:T\cap H \to T$ be the inclusion map. The inclusion map and twists of ramification types commute via the following diagram:
    \[
    \begin{tikzcd}
        \RTo{k}((T\cap H)(\pi)) \drar[swap]{\tw_{T\cap H,\widetilde{\pi}}} \arrow{rr}{(\iota_H)_*} &&\RTo{k}(T(\pi))\dlar{\tw_{T,\widetilde{\pi}}}\\
        &\RTo{k}(G)
    \end{tikzcd}
    \]
    Thus, for any ramification type $\widetilde{\tau}\in \RTo{k}((T\cap H)(\pi))$ it follows that
    \[
        L_{F_{\pi,\widetilde{\tau}}}(s_{\tw_{T\cap H,\widetilde{\pi}}(\widetilde{\tau})} , \chi) = L_{F_{\pi,\widetilde{\tau}}}(s_{\tw_{T,\widetilde{\pi}}((\iota_H)_*\widetilde{\tau})} , \chi).
    \]
    Finally, we claim that the pushforward of the inclusion map on tame ramification types is injective in this case. Indeed, $(\iota_H)_*$ is injective on the underlying hom sets. As both $T$ and $T\cap H$ are abelian, the action by conjugation is trivial, so it suffices to show that $(\iota_H)_*$ is injective on $G_k$-orbits, which is clear as $\Hom((\Q/\Z)^*,(T\cap H)(\pi))$ is in fact a submodule of $\Hom((\Q/\Z)^*,T(\pi))$.

    In particular, the product of $L$-functions can be written as
    \begin{align*}
        &\prod_{\widetilde{\tau}\in \RTo{k}((T\cap H)(\pi))} \max_{\chi}\left\{|L_{F(\pi,\widetilde{\tau})}(s_{\tw_{T\cap H,\widetilde{\pi}}(\widetilde{\tau})} , \chi)|,1\right\}\\
        &= \prod_{\widetilde{\tau}\in (\iota_H)_*\RTo{k}((T\cap H)(\pi))} \max_{\chi}\left\{|L_{F(\pi,\widetilde{\tau})}(s_{\tw_{T,\widetilde{\pi}}(\widetilde{\tau})} , \chi)|,1\right\}\\
        &\le \prod_{\widetilde{\tau}\in \RTo{k}(T(\pi))} \max_{\chi}\left\{|L_{F(\pi,\widetilde{\tau})}(s_{\tw_{T,\widetilde{\pi}}(\widetilde{\tau})} , \chi)|,1\right\},
    \end{align*}
    where the last inequality comes from multiplying in each extra terms for $\tau \in \RTo{k}(T(\pi)) - \iota_*\RTo{k}((T\cap H)(\pi))$. For this step to work as intended, it is necessary to take the maximum of each $L$-function with $1$.
    
    This product is now independent of $H$, so the maximum over $H$ can be dropped. Factoring this product according to the possible images $\tau = \tw_{T,\widetilde{\pi}}(\widetilde{\tau}) \in \iota_*\RTo{k}(T)$ and plugging this into the upper bound concludes the proof.
\end{proof}

\section{Bounds for $h^1_{ur}(k,N,T,\pi)$ Using Class Group Torsion}\label{sec:bounds}

In order to meromorphically continue the series $D_k(G;\underline{s})$ and extract information about its polar behavior we will need to account for the contributions coming from two main sources: the appropriate class group torsion and the various (Hecke) $L$-functions that correspond to each of our complex variables in $\underline{s}.$  For the class group torsion, the bounds will have to be in terms of the conductor of the map $\pi$ along which we fiber the series, while for $L$-functions we will need to make explicit the dependence both on the discriminant of the fields involved, and on the imaginary part of the complex variables involved.

Assume the group $G$ is concentrated in a nilpotent normal subgroup $N$. Denote by $q_N: G \to G/N$ the projection on the quotient by $N.$ By \eqref{eq:fibered_sum} we can write the generating multiple Dirichlet series for $G$ as a sum over fibers

\[
    D_k(G;\underline{s}) = \sum_{\pi \in (q_N)_* \Sur(G_k, G)} D_k (N, \pi; \underline{s}).
\]
For each normal subgroup $T\normal G$ contained in $N$ we can further decompose the fiber multiple Dirichlet series corresponding to $\pi$ as
\[
    D_k(N, \pi;\underline{s}) = \sum_{\substack{\psi\in (q_T)_*\Sur(G_k,G)\\(q_{T,N})_*\psi = \pi}} D_k (T, \psi; \underline{s}),
\]
where $q_{T,N}:G/T\to G/N$ is the quotient map.

Following \cite[Theorem 6.1]{ALOWW}, we know that when $T$ is abelian the $\psi$-dependence on $D_k(T,\psi;\underline{s})$ is approximately $|H^1_{ur}(k,T(\psi))|$, which can be bounded in terms of class group torsion.

In order to prove similar results for groups concentrated in a nilpotent normal subgroup $N$, we need to understand enough of the $\pi$-dependence on the series $D_k(N,\pi;\underline{s})$. If $G$ is in fact semiconcentrated in the abelian normal subgroups $T_1,T_2,...,T_r$ which are all contained in $N$, we expect this dependence to come from $|H_{ur}^1(k,T(\psi))|$ for various $\psi$ lying over $\pi$. The goal of this section is to show that references to $\psi$ can essentially be removed and give a bound in terms of $\pi$ alone.

\begin{definition}\label{def:h1ur}
    Let $G$ be a finite group with normal subgroups $T,N\normal G$ for which $T$ is abelian, $T$ contained in the hypercenter of $N$, and $q:G\to G/N$ is the quotient map.

    Write
    \[
        1=: Z_0(N) \subseteq Z(N) =: Z_1(N) \subseteq Z_2(N) \subseteq Z_3(N) \subseteq \cdots \subseteq Z_{\infty}(N) = N
    \]
    for the upper central series for $N$, concluding with the hypercenter.

    For any $\pi \in q_*\Sur(G_k,G)$ define
    \[
        h_{ur}^1(k,N,T,\pi) := \prod_{i=1}^{\infty} |H^1_{ur}(k, (T\cap Z_{i}(N)/ T\cap Z_{i-1}(N))(\pi))|.
    \]
\end{definition}

This is well-defined, as $N$ acts trivially by conjugation on $Z_i(N)/Z_{i-1}(N)$ by construction of the upper central series. Thus, for any $\psi\in q_*^{-1}(\pi)$ for $q:G\to G/N$ the quotient map, the action on $(T\cap Z_{i}(N)/T\cap Z_{i-1}(N))(\psi)$ necessarily factors through $\pi$.

We note that nilpotent groups are equal to their own hypercenter, so if we take $N$ to be nilpotent it suffices to have $T\subseteq N$.

For a single Galois module $M$, the quantity $|H^1_{ur}(k,M(\pi))|$ is closely related to the size of certain class group torsion. This is made more explicit in \cite[Corollary 1.14]{ALOWW}, and is the key step in converting bounds on class group torsion to new results towards new cases of \Cref{conj:number_field_counting} for concentrated groups. As $h^1_{ur}(k,N,T,\pi)$ is a product of over several Galois modules, we can cite the bounds for individual Galois modules to bound $h^1_{ur}(k,N,T,\pi)$ in terms of class group torsion as well.

\begin{proposition}\label{prop:h1ur_bound}
    Let $G$ be a finite group with normal subgroups $T,N\normal G.$ Assume $T$ is abelian and contained in the hypercenter of $N$. Denote by $q:G\to G/N$ the quotient map.

    Then for any $\pi \in q_*\Sur(G_k,G)$ and any $\psi\in q_*^{-1}(\pi)$,
    \[
        |H^1_{ur}(k,T(\psi))| \ll_{[k:\Q],|G|,\epsilon} h^1_{ur}(k,N,T,\pi) \cdot |\disc(\psi)|^{\epsilon}
    \]
\end{proposition}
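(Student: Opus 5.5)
The plan is to filter $T$ by the subgroups $T_i := T\cap Z_i(N)$, each normal in $G$ because the $Z_i(N)$ are characteristic in $N \normal G$. Since $T$ is contained in the hypercenter, $T_m = T$ for some finite $m$. For each $i$ we have a short exact sequence of $G_k$-modules
\[
    0 \to T_{i-1}(\psi) \to T_i(\psi) \to (T_i/T_{i-1})(\psi) \to 0 .
\]
As noted after \Cref{def:h1ur}, the quotient $(T_i/T_{i-1})(\psi)$ depends only on $\pi$, since $N$ acts trivially on it by conjugation. The goal is the inequality
\[
    |H^1_{ur}(k,T_i(\psi))| \le C_i(\psi)\, |H^1_{ur}(k,T_{i-1}(\psi))|\cdot |H^1_{ur}(k,(T_i/T_{i-1})(\pi))| ,
\]
where the error factor satisfies $C_i(\psi) \ll_{[k:\Q],|G|,\epsilon} |\disc(\psi)|^{\epsilon}$. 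Inducting over the at most $\log_2|G|$ steps then multiplies these bounds together and gives exactly $h^1_{ur}(k,N,T,\pi)\cdot|\disc(\psi)|^{\epsilon}$.

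For a single step, I take the long exact sequence in Galois cohomology,
\[
    H^0(k,(T_i/T_{i-1})(\pi)) \to H^1(k,T_{i-1}(\psi)) \to H^1(k,T_i(\psi)) \to H^1(k,(T_i/T_{i-1})(\pi)),
\]
and restrict to unramified classes. Let $f\in H^1_{ur}(k,T_i(\psi))$. Its image in $H^1(k,(T_i/T_{i-1})(\pi))$ is unramified at every place, so it lies in $H^1_{ur}$ of the quotient. Now fix such an image. The elements of $H^1_{ur}(k,T_i(\psi))$ with that image form a coset of the image of those classes in $H^1(k,T_{i-1}(\psi))$ that become unramified in $T_i(\psi)$. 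Counting those classes is the main obstacle: they need not be unramified in $T_{i-1}(\psi)$ itself.

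To handle this, I bound the local failure at each place. At a place $\pk$, a class $g$ of $H^1(k,T_{i-1}(\psi))$ whose image is unramified has restriction $g|_{I_\pk}$ lying in the kernel of
\[
    H^1(I_\pk,T_{i-1}) \to H^1(I_\pk,T_i),
\]
which is the image of $H^0(I_\pk,T_i/T_{i-1})$. That image is trivial unless $\pk$ is ramified in $\psi$ or $\pk$ divides $|G|\infty$, and it always has size at most $|T|$. Consequently $g$ lies in a Selmer group with relaxed local conditions, and a standard comparison (the Greenberg--Wiles formula, as already used in the proof of \Cref{thm:explicit_continuation}(a), or simply counting local index) gives
\[
    |\{g\}| \le |H^1_{ur}(k,T_{i-1}(\psi))|\cdot |T|^{\#\{\pk \text{ ram in }\psi\} + O_{[k:\Q],|G|}(1)} .
\]
Finally, $|T|^{\omega(\disc(\psi))} \ll_\epsilon |\disc(\psi)|^{\epsilon}$ by the divisor-type bound $A^{\omega(n)}\ll_{A,\epsilon} n^\epsilon$, which settles the error factor $C_i(\psi)$. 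The $H^0$ term on the left of the long exact sequence only shrinks the image and can be ignored.

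Assembling the steps by induction on $i$ (with the base case $T_0 = 1$), and absorbing the $O(1)^{\log_2|G|}$ constants and the $\epsilon$'s, gives the claim. The one point I would check carefully is that the relaxed-condition count is genuinely uniform in $\psi$, that is, that the number of places where the local relaxation is nontrivial is bounded by the number of primes ramified in $\psi$ plus $O(1)$. I expect this to hold because $T_{i-1}(\psi)$ and $T_i(\psi)$ are unramified Galois modules away from the primes ramified in $\psi$ and those dividing $|G|$.
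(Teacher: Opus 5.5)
Your proposal is correct and takes essentially the paper's route: filter $T$ by $T\cap Z_i(N)$, bound $H^1_{ur}$ of each extension by $H^1_{ur}$ of the sub times $H^1_{ur}$ of the quotient times a factor $|T|^{O(1)+\#\{\text{primes ramified in }\psi\}}$, and absorb that factor with the divisor-type bound. The only difference is that you prove the one-step inequality directly, via the local kernels $\delta\bigl(H^0(I_\mathfrak{p},T_i/T_{i-1})\bigr)$ and a relaxed-Selmer count, whereas the paper cites it as \cite[Lemma 4.3]{ALOWW}.
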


\begin{proof}
    This essentially follows directly from \cite[Lemma 4.3]{ALOWW}. By applying this result to the submodule $T\cap Z_1(N) \le T$ (where we let $H$ be the entire Galois group defining the action on $T(\psi)$), we get the bound
    \[
        |H^1_{ur}(k,T(\psi))| \le |H^1_{ur}(k,(T\cap Z_1(N))(\psi))| \cdot |H^1_{ur}(k,(T/T\cap Z_1(N))(\psi))| \cdot [T:T\cap Z_1(N)]^{|G| + \omega(\disc(\psi)) - 1}.
    \]
    By iterating this process along the upper central series, this produces the bound
    \[
        |H^1_{ur}(k,T(\psi))| \le \prod_{i=1}^{\infty}|H^1_{ur}(k,(T\cap Z_i(N)/T\cap Z_{i-1}(N))(\psi))| \cdot \left(\prod_{j=1}^{\infty}[T:T\cap Z_j(N)]\right)^{|G| + \omega(\disc(\psi)) - 1},
    \]
    where we note that the products are secretly finite, as the groups are all finite so the upper central series terminates after finitely many steps.

    The result then follows from the fact that the action on $(T\cap Z_i(N)/T\cap Z_{i-1}(N))(\psi)$ factors through $\pi$ and that
    \[
        \left(\prod_{i=1}^{\infty}[T:T\cap Z_j(N)]\right)^{|G| + \omega(\disc(\psi)) - 1} \ll_{[k:\Q],|G|,\epsilon} |\disc(\psi)|^{\epsilon}.
    \]
\end{proof}

\section{Unconditional Results}\label{sec:unconditional}

In general there is no guarantee that the convex hull will be large enough to conclude new counting asymptotics. Producing the largest possible region requires keeping track of the most information possible in terms of subconvexity bounds for Hecke $L$-functions.

We recall that a number field $F$ and group $G$ satisfy the hybrid subconvexity bound $H(\gamma,\alpha,\beta;F, G)$ if
\[
    |L_F(s,\chi)| \ll_{\epsilon} \left(\disc(F/\Q)\mathfrak{f}(\chi)\right)^{\alpha \max\{1-\sigma,0\}+\epsilon} \left((1+|t|)^{[F:\Q]}\right)^{\beta\max\{1-\sigma,0\}+\epsilon}
\]
for each $\sigma > \gamma$ and for each Hecke $L$-function in $\mathcal L(F,G)$. All hybrid bounds of this shape that we are currently aware of are valid for $\sigma > 1/2$, so for simplicity we will work with $H(1/2,\alpha,\beta;F,G)$ in this section. We remark that it would suffice to have $\gamma < 1$ for our application.

\subsection{From hybrid subconvexity bounds to a region of meromorphicity}

We require hybrid subconvexity bounds for each Hecke $L$-function occurring in the bounds from \Cref{thm:meromorphic_continuation_of_fiber_series}. More specifically, given a collection of abelian normal subgroups $T_1,T_2,\dots,T_r$ of $G$ we assume there is a sequence of constants $\alpha_\tau,\beta_\tau \ge 0$ indexed by $\tau\in \RTo{k}(G)$ such that, for each $T \in \{ T_1, \dots, T_r\}$ and each corresponding subfield $F(\pi,\widetilde{\tau})$ appearing in \Cref{thm:meromorphic_continuation_of_fiber_series} the hypothesis  $H(1/2,\alpha_\tau,\beta_\tau;F(\pi,\widetilde{\tau}), T)$ holds. 

We recall that the components of these $L$-functions are defined as follows.
\begin{itemize}
    \item $F(\pi,\widetilde{\tau})$ is the field fixed by a stabilizer of the Galois action on $\widetilde{\tau}\in \tw_{T,\widetilde{\pi}}^{-1}(\tau)$, where $\pi\in (q_j)_*\Sur(G_k,G)$ with $q_j:G\to G/T_j$ the quotient map and $\widetilde{\pi}\in (q_j)_*^{-1}(\pi)$ is some lift of $\pi$.

    \item $\chi$ is a Hecke character defined over $F(\pi,\widetilde{\tau})$ of order dividing $\exp(T_j)$ which is unramified away from primes dividing $|T|\infty$.
\end{itemize}

It is important that the constants $\alpha_\tau,\beta_\tau$ do not depend on $\pi$. It is possible to allow hybrid subconvexity bounds that depend on $\widetilde{\tau}$ instead of $\tau$, but this significantly complicates the region of meromorphicity and tends to be unnecessary for practical examples. We ensure the uniformity of the constants $\alpha_\tau,\beta_\tau$ by taking the maximum of all the such constants for all the intermediate subfields that appear. 

We can greatly simplify the expression of the region of meromorphicity by defining a matrix $M = (M_{\tau,\kappa})$, indexed by pairs of nontrivial tame ramification types $\tau,\kappa\in \RTo{k}(G)$, by
\[
    M_{\tau,\kappa} := \alpha_{\kappa} [k(\zeta_\kappa):k]\ind_{|\kappa|/[k(\zeta_\kappa):k]}((q_{\kappa})_* \tau),
\]
where $q_\kappa:G\to G/C_{G}(\kappa)$ is the quotient map and $\zeta_\kappa$ is a primitive root of unity with the same order as $\kappa$. We recall that $G/C_G(\kappa)$ carries a natural action on $\kappa$ with orbits of size $|\kappa|/[k(\zeta_\kappa):k]$, and we use this permutation action to define the index function in the above expression. Note that $M$ depends on $k$, $G$, and the tuple of constants $\underline{\alpha}$, all of which have been suppressed in the notation.

For an abelian normal subgroup $T\normal G$, define the tubular region $\Omega_T$ cut out by
\begin{align*}
    \sigma_\tau &> 1/2 && \text{if }\tau\in \iota_*\RTo{k}(T)\\
    \sigma_\tau &> 1 && \text{if }\tau \in \RTo{k}(G) - \iota_*\RTo{k}(T)\\
    \sigma_\tau + \sum_{\kappa\in \iota_*\RTo{k}(T)} M_{\tau,\kappa}\sigma_\kappa &> 1 + \sum_{\kappa\in \iota_*\RTo{k}(T)} M_{\tau,\kappa} && \text{if }\tau \in \RTo{k}(G) - \iota_*\RTo{k}(T).
\end{align*}
We are now ready to state our main unconditional result.

\begin{theorem}\label{thm:general_unconditional_nilpotent}
    Let $k$ be a number field, $G$ a finite nilpotent group, and $T_1,T_2,\dots T_r$ some abelian normal subgroups of $G$. Assume we have (sub)convexity bounds corresponding to $\alpha_\tau,\beta_\tau$ as stated above. Then, for the tubular regions $\Omega_{T_1},\Omega_{T_2},\dots,\Omega_{T_r}$ defined above,
    \begin{enumerate}[(i)]
        \item $D_k(G;\underline{s})$ is meromorphic on the convex hull ${\rm Hull}\left(\bigcup_{j=1}^r \Omega_{T_j}\right)$, with poles at $s_\tau = 1$ for each $\tau \in \bigcup_{j} \iota_*\RTo{k}(T_j)$ of order at most
        \[
            b_\tau = \max_{j=1,\dots,r}\left(\max_{\pi\in (q_j)_*\Sur(G_k,G)} |\tw_{T_j,\widetilde{\pi}}^{-1}(\tau)|\right) \le [k(\zeta_\tau):k].
        \]
        \item For an arbitrary inertial invariant of the group $G$ and its corresponding weight function, assume that the point
        \[
        \left(\frac{\wt(\tau)}{a_\inv(G)}\right)_{\tau}
        \]
        is contained in the convex hull ${\rm Hull}\left(\bigcup_{j} \Omega_{T_j}\right)$. Then there exists a nonzero ineffective polynomial $P=P_{k,G, \inv}$ and an effective constant $\delta =\delta_{k,G, \inv} > 0$ such that
        \[
            \#\mathcal{F}_{\inv, k}(G;X) = X^{1/a_\inv(G)}P(\log X) + O\left(X^{1/a_\inv(G) - \delta}\right),
        \]
        where
        \[
            \max_{\substack{j=1,\dots,r\\\pi\in (q_j)_*\Sur(G_k,G)}}\left\{\sum_{\substack{\tau\in \iota_*\RTo{k}(T_j)\\\wt(\tau) = a_\inv(G)}} |\tw_{T_j,\widetilde{\pi}}^{-1}(\tau)|\right\} - 1\le \deg P \le  \left(\sum_{\substack{\tau\in \bigcup_j\iota_*\RTo{k}(T_j)\\\wt(\tau) = a_\inv(G)}} b_\tau\right) - 1.
        \]
    \end{enumerate}
\end{theorem}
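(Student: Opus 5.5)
The plan is to verify the three hypotheses of \Cref{thm:main} for each $(T_j,\iota_j,q_j,\Omega_{T_j})$, conclude holomorphy of $p_{G,T_1,\dots,T_r}(\underline{s})D_k(G;\underline{s})$ on the hull, and then specialize and apply a Tauberian theorem.

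\textbf{Fiber hypotheses.} Fix an abelian $T=T_j$. Hypothesis (1) is exactly \Cref{thm:meromorphic_continuation_of_fiber_series}(i), since $\Omega_T$ imposes $\sigma_\tau>1/2$ on $\iota_*\RTo{k}(T)$. It gives pole orders at most $|\tw^{-1}_{T,\widetilde{\pi}}(\tau)|$. By \Cref{cor:rt_twist_structure}, the fibers of $\tw$ are permuted transitively by the cyclotomic action through $\Gal(k(\zeta_\tau)/k)$, which yields $b_\tau\le[k(\zeta_\tau):k]$.

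For hypothesis (2), I start from \Cref{thm:meromorphic_continuation_of_fiber_series}(ii). Multiplying by $p_{G,T_1,\dots,T_r}$ kills the poles at trivial characters, and I insert the hybrid bounds $H(1/2,\alpha_\tau,\beta_\tau)$ for each $L_{F(\pi,\widetilde\tau)}(s_\tau,\chi)$. The conductor is bounded by $\disc(F(\pi,\widetilde\tau))$ times an $O(1)$ contribution from primes dividing $|T|$. The key local computation is the following. The field $F(\pi,\widetilde\tau)$ is the fixed field of a stabilizer of the action of $\pi(G_k)\times\Gal(k(\zeta_\kappa)/k)$ on $\widetilde\tau$. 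Hence, away from the finitely many primes dividing $|G|$, a prime $\pk$ of $k$ with ramification type $\tau$ in the lift contributes $\pk^{\ind((q_\kappa)_*\tau)\cdot[k(\zeta_\kappa):k]}$ to $\prod_{\widetilde\tau}\disc F(\pi,\widetilde\tau)$. Here the index is taken in the orbit action of size $|\kappa|/[k(\zeta_\kappa):k]$, which is precisely what $M_{\tau,\kappa}$ encodes. The ramification of $\pi$ is determined by the image types $q_*\tau$. So, after the bound $\inv_{k,G/T}(q_*\underline\sigma+\epsilon)$ from \Cref{thm:meromorphic_continuation_of_fiber_series}(ii), I can take
\[
f_T(\pi,\underline s)\ll |H^1_{ur}(k,T(\pi))|\prod_{\tau\notin\iota_*\RTo{k}(T)}\prod_{\RT(\pk,\pi)=q_*\tau}|\pk|^{-\sigma_\tau+\sum_{\kappa}M_{\tau,\kappa}(1-\sigma_\kappa)+\epsilon}\,(1+|\underline t|)^{O(1)}.
\]
The minimum over lifts of the type is handled by taking the worst $\tau$ in each $q_*$-fiber, as in \Cref{thm:explicit_continuation}(b).

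\textbf{Convergence.} Hypothesis (3) needs $\sum_\pi f_T$ to converge locally uniformly on $\Omega_T$. Since $G$ is nilpotent, the bound $|H^1_{ur}(k,T(\pi))|\ll|\disc\pi|^\epsilon$ holds by Kl\"uners--Wang. The same reference gives that the number of $G/T$-extensions with a prescribed product of ramified primes is $\ll D^\epsilon$, as in \Cref{prop:nilpotent_absolute_convergence}. The sum is therefore dominated by a product over tame types of $\zeta$-like sums, and these converge exactly when every exponent exceeds $1$. That is the defining inequality of $\Omega_T$, with the strict inequality absorbing the $\epsilon$'s. Connectivity of $\bigcup\Omega_{T_j}$ is clear, since all the regions contain the orthant $\sigma_\tau>1$.

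\textbf{Conclusion.} \Cref{thm:main} then gives (i). For the vertical bounds, \Cref{cor:independent_variable_multi_phragmen_lindelof} propagates the polynomial $t$-aspect bounds to the hull. For (ii), I specialize along $s_\tau=\wt(\tau)s$ using \Cref{prop:specialize}. At $s=1/a_\inv(G)$ the point $(\wt(\tau)/a_\inv(G))_\tau$ lies in the open hull, so a small ball around it does too. Only the divisors $s_\tau=1$ with $\wt(\tau)=a_\inv$ pass through it; the others meet the line at $s=1/\wt(\tau)<1/a_\inv$, outside a suitable half-plane. So the specialized series is meromorphic on $\Re s>1/a_\inv-\delta$ with a single pole at $1/a_\inv$, of order at most $\sum b_\tau$ over the minimal-weight $\tau$, and has polynomial growth. \Cref{thm:tauberian_power_saving} then gives the power-saving asymptotic and the upper bound on $\deg P$.

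The lower bound on $\deg P$ comes from positivity. All coefficients are nonnegative, so $D_k(G;\cdot)$ dominates any single fiber series $D_k(T_j,\pi;\cdot)$ restricted to the line. That fiber has a pole of order $\sum|\tw^{-1}(\tau)|$, since simple poles of the $\Detale{S}{k}$ factors at trivial characters do not cancel, by the Alberts--O'Dorney nonvanishing mentioned in the remark after \Cref{cor:explicit_continuation}. This forces the pole of the full series to have at least that order.

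I expect the main obstacle to be the second paragraph: matching the discriminants of the fields $F(\pi,\widetilde\tau)$ prime by prime with the matrix $M$, uniformly in $\pi$ and across the different $\widetilde\tau$ in a $\tw$-fiber. The first thing I would try is a local computation at a tame prime via the permutation representation $\rho_\kappa$.
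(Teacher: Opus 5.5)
Your proposal is correct and follows the paper's proof essentially step for step. The steps coincide: the fiber hypotheses come from \Cref{thm:meromorphic_continuation_of_fiber_series}; the hybrid bounds are inserted with the discriminants of $F(\pi,\widetilde\tau)$ matched to $M$; the nilpotent $|\disc(\pi)|^{\epsilon}$ bounds control the sum over $\pi$; \Cref{thm:main} gives (i); and specialization with \Cref{thm:tauberian_power_saving} gives (ii). The paper matches the discriminants via $F(\pi,\widetilde\tau)=F(\pi)(\zeta_\tau)$ and $[F(\pi,\widetilde\tau):F(\pi)]=[k(\zeta_\tau):k]/|\tw_{T,\widetilde{\pi}}^{-1}(\tau)|$, whereas your direct inertia-index computation summed over the $\tw$-fiber yields the same exponent.

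For the lower bound on $\deg P$, the paper simply cites the fiber counting lower bounds of \cite{alberts2021} and \cite{alberts-odorney2021}, and your positivity/domination argument is equivalent. However, the exact pole order of the specialized fiber needs that full result, not just non-cancellation of each simple pole separately. Specifically, the top coefficient must be nonzero at the intersection point, and it must survive the M\"obius inversion over $H$ with $HT=G$ down to surjections.
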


Part (i) of this theorem is proven by verifying the conditions of \Cref{thm:main}. The ``meromorphic continuation of the fibers'' and ``uniform upper bounds for the fibers'' conditions both follow directly from \Cref{thm:meromorphic_continuation_of_fiber_series}, so the focus of our proof is on verifying the ``criterion for convergence''. Part (ii) is then achieved by specializing to the relevant complex line as in \Cref{prop:specialize}.

\begin{proof}[Proof of \Cref{thm:general_unconditional_nilpotent}]
    We first prove (i). The main content of this proof is bounding the Hecke $L$-functions appearing in Theorem \ref{thm:meromorphic_continuation_of_fiber_series}(ii) in order to verify the ``criterion for convergence'' in \Cref{thm:main}.

    Fix a choice of $(T,q,\Omega)=(T_j,q_j,\Omega_j)$. We assumed a shape of hybrid subconvexity bounds for such $L$-functions in terms of $\alpha_\tau$ and $\beta_\tau$, namely that for each $\sigma_\tau > 1/2$ 
    \[
        |L_{F(\pi,\widetilde{\tau})}(s_{\tau},\chi)| \ll_{\epsilon} \left(\disc(F(\pi,\widetilde{\tau})/\Q)\mathfrak{f}(\chi)\right)^{\alpha_\tau\max\{1-\sigma_{\tau},0\}+\epsilon} \left(1+|t_\tau|\right)^{\beta_\tau[F(\pi,\widetilde{\tau}):\Q]\max\{1-\sigma_{\tau},0\}+\epsilon}.
    \]
    It is now clear that, in the product over $\tau$ in Theorem \ref{thm:meromorphic_continuation_of_fiber_series}, we have produced some $t$-aspect bound for $D_k(T,\pi;\underline{s})$. The criterion for convergence will imply that the same $t$-aspect bound carries through to $D_k(G,\underline{s})$ on $\Omega_T$, and by \Cref{cor:independent_variable_multi_phragmen_lindelof} gives a $t$-aspect bound on the entire convex hull. For the sake of brevity, we now omit the $t$-aspect in our proof of the criterion for convergence.

    Choose a lift $\widetilde{\pi}\in q_*^{-1}(\pi)$, and let $F(\pi)$ be the field fixed by $\widetilde{\pi}^{-1}(\Stab_G(f))$ for some $f\in \tau$. We claim that
    \begin{equation}\label{eq:subcon_intermediate}
     |L_{F(\pi,\widetilde{\tau})}(s_{\tau},\chi)| \ll_{|G|,t_{\tau},\epsilon} \disc(F(\pi)/\Q)^{\alpha_\tau\frac{[k(\zeta_\tau):k]}{|\tw_{T,\widetilde{\pi}}^{-1}(\tau)|}\max\{1-\sigma_{\tau},0\}+\epsilon}.    
    \end{equation}
    The product over $\widetilde{\tau}\in \tw_{T,\widetilde{\pi}}^{-1}(\tau)$ then cancels with the $1/|\tw_{T,\widetilde{\pi}}^{-1}(\tau)|$ in the exponent, eliminating the twisting map from the subconvexity bound for $D_k(T,\pi;\underline{s})$.

    The conductor $\mathfrak{f}(\chi) \ll_{|T|} 1$ is bounded by $\chi$ being unramified away from primes dividing $|T|\infty$, and so can be absorbed by the implied constant. Thus, it suffices to compute the discriminant of $F(\pi,\widetilde{\tau})$. By definition, $F(\pi,\widetilde{\tau})$ is the field fixed by a stabilizer of the Galois action on $\widetilde{\tau}\subseteq \Hom((\Q/\Z)^*,T(\pi))$. The $\phi_{\widetilde{\pi}}$-equivariance of the inclusion map implies that $F(\pi,\widetilde{\tau})$ is equivalently the field fixed by the preimage of the stabilizer under the composite map
    \[
        \begin{tikzcd}
            G_k \rar{\widetilde{\pi} \times 1} &G\times G_k \rar &G/C_G(\iota_*\widetilde{\tau})\times \Gal(k(\zeta_\tau)/k),
        \end{tikzcd}
    \]
    where we note that ${\rm ord}(\tau)={\rm ord}(\widetilde{\tau})$ in \Cref{cor:rt_twist_structure}(b) implies we can take $\zeta_\tau = \zeta_{\iota_*\widetilde{\tau}}$. Going further, the proof of \Cref{cor:rt_twist_structure}(c) includes the statement that $G_k$ acts transitively on $\{\iota_*\widetilde{\tau}: \widetilde{\tau}\in \tw^{-1}_{T,\widetilde{\pi}}(\tau)\}$ via the cyclotomic action. As the cyclotomic action commutes with conjugation and $\tau$ is the disjoint union of $\iota_*\widetilde{\tau}$, it follows that $C_G(\iota_*\widetilde{\tau}) = C_G(\tau)$. Thus, $F(\pi,\widetilde{\tau})$ is precisely the field fixed by the preimage of a stabilizer under the composite homomorphism
    \[
        \begin{tikzcd}
            G_k \rar{\widetilde{\pi} \times 1} &G\times G_k \rar &G/C_G(\tau)\times \Gal(k(\zeta_\tau)/k).
        \end{tikzcd}
    \]
    In particular, $F(\pi,\widetilde{\tau})$ is independent of the choice of $\widetilde{\tau}\in \tw^{-1}_{T,\widetilde{\pi}}(\tau)$. The fact that $T$ is abelian implies the kernel is independent of the choice of lift $\widetilde{\pi}$.
    
    The field $F(\pi)$ is equivalent to the subfield of $F(\pi,\widetilde{\tau})$ fixed by $(\widetilde{\pi}\times 1)^{-1}(\Stab_G(f)\times G_k)$. In particular, $F(\pi,\widetilde{\tau}) = F(\pi)(\zeta_\tau)$, which implies $F(\pi,\widetilde{\tau})/F(\pi)$ is ramified only at primes dividing ${\rm ord}(\tau)$ and has discriminant bounded solely in terms of $|T|$. We conclude that
    \[
        \disc(F(\pi,\widetilde{\tau})/\Q) \ll_{|T|} \disc(F(\pi)/\Q)^{[F(\pi,\widetilde{\tau}):F(\pi)]}.
    \]
    It now suffices to understand the degree. Let $N\le G/C_G(\tau)\times \Gal(k(\zeta_\tau)/k)$ be the image of $(\widetilde{\pi}\times 1)(G_k)$, so that $N$ acts faithfully and transitively on $\iota_*\widetilde{\tau}$. This implies
    \[
        [F(\pi,\widetilde{\tau}):k] = |\iota_*\widetilde{\tau}|=\frac{|N|}{|(\Stab_G(f)/C_G(\tau)) \times 1|}
    \]
    for some $f\in \tau$. Meanwhile, by construction $[F(\pi):k]=[G:\Stab_G(f)]$ for some $f\in \tau$. Given that $G/C_G(\tau)\times \Gal(k(\zeta_\tau)/k)$ acts faithfully and transitively on $\tau$, we can write
    \[
        |\tau| = [G:\Stab_G(f)][k(\zeta_\tau):k].
    \]
    \Cref{cor:rt_twist_structure}(c) then implies
    \[
        [F(\pi,\widetilde{\tau}):F(\pi)] = \frac{|\iota_*\widetilde{\tau}|}{[G:\Stab_G(f)]} = \frac{|\tau|}{|\tw_{T,\widetilde{\pi}}^{-1}(\tau)|[G:\Stab_G(f)]} = \frac{[k(\zeta_\tau):k]}{|\tw_{T,\widetilde{\pi}}^{-1}(\tau)|}.
    \]
    This proves \eqref{eq:subcon_intermediate}.
    
    The ramification type of any tame prime in $F(\pi)$ is given by
    \[
        (q_\tau)_*\RT(\pk,\pi).
    \]
    The power to which this prime divides the discriminant is precisely
    \[
        \ind_{|\tau|/[k(\zeta_\tau):k]}((q_\tau)_*\RT(\pk,\pi)).
    \]
    Partitioning the discriminant in terms of the ramification types of $G$, we conclude that
    \[
        \prod_{\widetilde{\tau}\in \tw_{T,\widetilde{\pi}}^{-1}(\tau)} \max_{\chi}\{|L_{F(\pi,\widetilde{\tau})}(s_{\tau},\chi)|,1\} \ll_{t_{\tau},k,|G|,\epsilon} \prod_{\kappa\in\RTo{k}(G)} \prod_{\substack{\pk\\\RT(\pk,\pi)\in q_*\kappa}}|\pk|^{M_{\kappa,\tau}\max\{1-\sigma_{\tau},0\}+\epsilon}.
    \]
    The other components of the bound in \Cref{thm:meromorphic_continuation_of_fiber_series} can be dealt with more directly. Given that $G$ is nilpotent, it follows that $T(\pi)$ is necessarily a nilpotent $G_k$-module in the sense of \cite[Corollary 1.14]{ALOWW}(i), which implies 
    \begin{equation}\label{eq:aloww_bound_H1}
          |H^1_{ur}(k,T(\pi))| \ll_{\epsilon} |\disc(\pi)|^{\epsilon}
    \end{equation}
    is negligible, i.e.~it can be absorbed by the $\inv_{k,G/T}(q_*\underline{\sigma}+\epsilon)$ term. This term is directly bounded by
    \[
        |\inv_{k,G/T}(q_*\underline{\sigma}+\epsilon)| \le \prod_{\overline{\kappa}\in \RTo{k}(G/T)} \prod_{\substack{\pk\\ \RT(\pk,\pi)=\overline{\kappa}}} |\pk|^{-\min\{\sigma_{\kappa} : q_*\kappa = \overline{\kappa}\}}.
    \]
    All together, this gives the bound
    \begin{align*}
        |D_k(T,\pi;\underline{s})| &\ll_{t,k,|G|,\epsilon} \prod_{\tau\in \iota_* \RTo{k}(T)}\prod_{\overline{\kappa}\in \RTo{k}(G/T)} \prod_{\substack{\pk\\ \RT(\pk,\pi)=\overline{\kappa}}} |\pk|^{-\min\{\sigma_{\kappa} : q_*\kappa = \overline{\kappa}\} + M_{\kappa,\tau}\max\{1-\sigma_\tau,0\}+\epsilon}\\
        &=\prod_{\overline{\kappa}\in \RTo{k}(G/T)} \prod_{\substack{\pk\\ \RT(\pk,\pi)=\overline{\kappa}}} |\pk|^{-\min\{\sigma_{\kappa} : q_*\kappa = \overline{\kappa}\} + \sum_{\tau} M_{\kappa,\tau}\max\{1-\sigma_\tau,0\}+\epsilon}.
    \end{align*}
    One directly confirms that the inequalities cutting out $\Omega_T$ are precisely those for which $D_k(T,\pi;\underline{s})$ is meromorphic by \Cref{thm:meromorphic_continuation_of_fiber_series} and which cause each exponent to be $<-1$ (for $\epsilon$ sufficiently small). Thus, on $\Omega_T$ we find that
    \[
        \sum_{\pi\in q_*\Sur(G_k,G)}|D_k(T,\pi;\underline{s})| \le D_k(G/T,\underline{x})
    \]
    for some tuple $\underline{x}$ satisfying $x_{\overline{\tau}} > 1$ for each $\overline{\tau} \in \RTo{k}(G/T)$. As $G/T$ is nilpotent \Cref{prop:nilpotent_absolute_convergence} implies $D_k(G/T;\underline{x})$ convergences absolutely, concluding the proof of the criterion for convergence. The meromorphic continuation of $D_k(G;\underline{s})$ then follows from \Cref{thm:main}. The bounds on the orders of the poles follow precisely from the bounds in \Cref{thm:meromorphic_continuation_of_fiber_series} as follows: the fibers $D_k(T_j,\pi;\underline{s})$ have a pole at $s_\tau = 1$ of order at most $|\tw_{T_j,\widetilde{\pi}}^{-1}(\tau)|$ for each $\tau\in \iota_*\RTo{k}(T)$, so the infinite sum $D_k(G;\underline{s})$ has a pole at $s_\tau=1$ of order at most $\max_{\pi} |\tw_{T_j,\widetilde{\pi}}^{-1}(\tau)|$. Note that when $\tau \not\in \iota_*\RTo{k}(T_j)$, we have $\max_{\pi} |\tw_{T_j,\widetilde{\pi}}^{-1}(\tau)| = 0$. On the other hand, if $\tau \in \iota_*\RTo{k}(T_j)$, then $b_\tau \leq \max_{\pi} |\tw_{T_j,\widetilde{\pi}}^{-1}(\tau)|.$ The required upper bound follows taking the maximum over normal subgroups.

    For part (ii), we specialize to the line $s_\tau = \wt(\tau)s$ for the specific weight function corresponding to our choice of invariant, as in \Cref{prop:specialize}. The poles at $s_\tau = 1$ become poles at $s = 1/\wt(\tau)$, and thus the rightmost pole is at $s=1/a_{\inv}(G)$. This corresponds to the point $s_\tau = \wt(\tau)/a_\inv(G)$, which we assumed is in the region of meromorphicity for $D_k(G,\underline{s})$. Thus we conclude that $s=1/a_\inv(G)$ is contained in the region of meromorphicity of the specialization. Applying \Cref{thm:tauberian_power_saving} proves the asymptotic.

    The upper bound on the degree of the polynomial is precisely one less than the sum of the orders of the poles at $s_\tau = 1$ for which $\wt(\tau) = a_\inv(G)$, which is an upper bound for the order of the pole of the specialization. The lower bound follows from determining a lower bound for the size of a single fiber corresponding to an individual abelian normal subgroup and taking the maximum over the choice of normal subgroup and fiber, as every member of this fiber gives us a $G$-extension. For the discriminant ordering, this is exactly what is done in \cite[Corollary 1.7]{alberts2021}. For a more general invariant, the same argument applies following from \cite[Theorem 5.3]{alberts2021} (or \cite[Theorem 1.2]{alberts-odorney2021}), which is stated for a general invariant and from which the proof of \cite[Corollary 1.7]{alberts2021} immediately generalizes.
\end{proof}

\subsection{A two-dimensional simplification}\label{subsec:2dim} Computing the relevant convex hull is challenging when a large collection of variables is involved. Unfortunately, we can see in \Cref{thm:general_unconditional_nilpotent} that the equations cutting out the region of meromorphicity become quite complicated in terms of as many variables as $G$ has nontrivial tame ramification types.

The problem becomes vastly more manageable when the group $G$ happens to be semiconcentrated in just two abelian subgroups. In that case, one has to take the convex hull of just two regions corresponding to the two subgroups. Especially favorable is the situation when each of the two abelian subgroups contains all but one of the tame ramification types of minimal index. In such a situation, the problem of computing the convex hull essentially reduces to two dimensions.

\begin{proposition}\label{prop:shortcut_convex_hull}
    Let $k$ be a number field, $G$ a nilpotent group, and $T_1,T_2$ two abelian normal subgroups of $G$. Assume the same hybrid subconvexity bounds as in \Cref{thm:general_unconditional_nilpotent} and let $\Omega_{T_j}$ denote the same tubular regions.
    
    Let $\inv$ be an inertial invariant with corresponding weight function $\wt$, let $\mathcal{M} = \{\tau\in \RTo{k}(G) : \wt(\tau) = a_\inv(G)\}$ be the tame ramification types of minimum weight, and suppose there exist two distinct elements $\tau,\kappa\in \mathcal{M}$ such that
    \begin{align*}
        \mathcal{M} - \{\kappa\} &\subseteq \iota_*\RTo{k}(T_1)&\text{and}&&
        \mathcal{M} - \{\tau\} &\subseteq \iota_*\RTo{k}(T_2).
    \end{align*}
    If
    \begin{equation}\label{eq:friendly}
            \alpha_{\tau}\alpha_{\kappa}[k(\zeta_{\tau}):k][k(\zeta_{\kappa}):k]\ind_{|\tau|/[k(\zeta_{\tau}):k]}((q_\tau)_*\kappa)\ind_{|\kappa|/[k(\zeta_{\kappa}):k]}((q_\kappa)_*\tau) < 1,
    \end{equation}
    then the point $(\wt(\tau)/a_{\inv}(G))_\tau$ is contained in the convex hull ${\rm Hull}(\Omega_{T_1}\cup\Omega_{T_2})$. In particular, there exists a nonzero ineffective polynomial $P=P_{k,G,\inv}$ and a constant $\delta = \delta_{k,G, \inv} > 0$ such that
    \[
        \#\mathcal{F}_{\inv, k}(nTd;X) \sim X^{1/a_\inv(G)}P(\log X) + O\left(X^{1/a_\inv(G) - \delta}\right)
    \]
    as in \Cref{thm:general_unconditional_nilpotent}(ii).
\end{proposition}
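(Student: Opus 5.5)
The plan is to exhibit the point $P := (\wt(\rho)/a_\inv(G))_{\rho}$ explicitly as a convex combination of a point $Q_1\in\Omega_{T_1}$ and a point $Q_2\in\Omega_{T_2}$. The asymptotic then follows immediately from \Cref{thm:general_unconditional_nilpotent}(ii). Throughout we only move the two coordinates indexed by $\tau$ and $\kappa$ and leave every other coordinate equal to that of $P$. Note that $P_\rho = 1$ for $\rho\in\mathcal{M}$ and $P_\rho > 1$ for $\rho\notin\mathcal{M}$. We may assume $\kappa\notin\iota_*\RTo{k}(T_1)$ and $\tau\notin\iota_*\RTo{k}(T_2)$. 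Otherwise $\mathcal{M}$ lies entirely in the image for one $T_j$, and one checks directly, exactly as in the slack argument below, that $P\in\Omega_{T_j}$, since every inequality then holds at $P$ with equality replaced by strict slack.

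\textbf{Construction.} For small parameters $x,y,x',y'>0$, define $Q_1$ to agree with $P$ except $(Q_1)_\tau = 1-x$ and $(Q_1)_\kappa = 1+y$. Define $Q_2$ to agree with $P$ except $(Q_2)_\kappa = 1-x'$ and $(Q_2)_\tau = 1+y'$. We then check membership of $Q_1$ in $\Omega_{T_1}$ inequality by inequality.
\begin{itemize}
\item Coordinates in $\iota_*\RTo{k}(T_1)$ must exceed $1/2$. They equal $1$, $P_\rho>1$, or $1-x$, so this holds for $x<1/2$.
\item Coordinates outside $\iota_*\RTo{k}(T_1)$ must exceed $1$. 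These are either $\kappa$, with value $1+y$, or types $\rho\notin\mathcal{M}$, with value $P_\rho>1$.
\item For the linear inequality at $\rho\notin\mathcal{M}$ outside the image, note that at $P$ every $T_1$-coordinate is $\ge 1$. So $\sum M_{\rho,\mu}(\sigma_\mu-1)\ge 0$ there, and the inequality holds with slack $P_\rho-1>0$. A perturbation of size $x$ in the $\tau$-coordinate preserves this for $x$ small.
\item The only genuinely new condition is the linear inequality at $\kappa$. Since all $T_1$-coordinates of $Q_1$ other than $\tau$ are $\ge 1$, and $M$ has nonnegative entries, it suffices that $y > M_{\kappa,\tau}\,x$.
\end{itemize}
Symmetrically, $Q_2\in\Omega_{T_2}$ as soon as $y' > M_{\tau,\kappa}\,x'$ and $x'$ is small.

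\textbf{Convex combination.} Requiring $P=\lambda Q_1+(1-\lambda)Q_2$ with $\lambda\in(0,1)$ amounts to two equations:
\begin{equation*}
\lambda x = (1-\lambda) y', \qquad (1-\lambda)x' = \lambda y .
\end{equation*}
Take $\lambda=1/2$, $y = (M_{\kappa,\tau}+\eta)x$, $x' = y$, and $y' = x$. The inequality for $Q_2$ then reads $x > M_{\tau,\kappa}(M_{\kappa,\tau}+\eta)x$. This is possible for some small $\eta>0$ exactly when $M_{\tau,\kappa}M_{\kappa,\tau}<1$. Unwinding the definition of $M$, we have $M_{\tau,\kappa} = \alpha_\kappa[k(\zeta_\kappa):k]\ind_{|\kappa|/[k(\zeta_\kappa):k]}((q_\kappa)_*\tau)$, and symmetrically for $M_{\kappa,\tau}$. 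So this product is precisely the left side of \eqref{eq:friendly}. Finally, choose $x$ small enough for all the slack conditions above.

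\textbf{Conclusion and main obstacle.} This places $P$ in the convex hull ${\rm Hull}(\Omega_{T_1}\cup\Omega_{T_2})$. Invoking \Cref{thm:general_unconditional_nilpotent}(ii) then gives the stated asymptotic with a power saving. The step I expect to need the most care is the bookkeeping in the slack argument: making sure that no type $\rho$ outside $\iota_*\RTo{k}(T_j)$ other than $\kappa$ (respectively $\tau$) has $P_\rho = 1$. This is exactly the hypothesis $\mathcal{M}-\{\kappa\}\subseteq\iota_*\RTo{k}(T_1)$. It also requires that lowering the $\tau$-coordinate never pushes a linear inequality at some $\rho\notin\mathcal{M}$ below its threshold, which is handled by taking $x$ small.
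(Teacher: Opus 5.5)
Your proof is correct and rests on the same idea as the paper's: only the $(\sigma_\tau,\sigma_\kappa)$-coordinates matter, and the two constraints $\sigma_\kappa+M_{\kappa,\tau}\sigma_\tau>1+M_{\kappa,\tau}$ (from $\Omega_{T_1}$) and $\sigma_\tau+M_{\tau,\kappa}\sigma_\kappa>1+M_{\tau,\kappa}$ (from $\Omega_{T_2}$) put $(1,1)$ in the hull exactly when $M_{\tau,\kappa}M_{\kappa,\tau}<1$. Your explicit midpoint certificate is somewhat leaner than the paper's argument. The paper lowers every minimum-weight coordinate to $1-\delta$ in order to get a whole orthant neighbourhood, so it needs the commutativity fact $M_{\kappa,\theta}=0$ for $\theta\in\mathcal{M}\setminus\{\tau,\kappa\}$; by pinning those coordinates at exactly $1$, you only need $M\ge 0$, and the statement asks only for the single point anyway.
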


The condition \eqref{eq:friendly} is straightforward, if occasionally tedious, to check, and is sufficient to prove \Cref{conj:number_field_counting}. As discussed above, the proof of \Cref{prop:shortcut_convex_hull} is done by reducing to two dimensions; namely, to the $(s_\tau,s_\kappa)$-space.

\begin{proof}[Proof of \Cref{prop:shortcut_convex_hull}]
    We summarize the results of \Cref{thm:general_unconditional_nilpotent} in this case. If $\theta \in \mathcal{M}$ and $\theta \ne \tau,\kappa$, that means $\theta$ is a tame ramification type coming from both $T_1$ and $T_2$. As $T_1$ and $T_2$ are abelian, any element in their intersection commutes will all elements of their union. In particular, this means $(q_\gamma)_*\theta$ equals the identity for any $\gamma\in \iota_*\RTo{k}(T_1)\cup \RTo{k}(T_2)$. Thus,
    \[
        M_{\gamma,\theta} = 0
    \]
    whenever $\theta \in \iota_*\RTo{k}(T_1)\cap \iota_*\RTo{k}(T_1)$ and $\gamma \in \iota_*\RTo{k}(T_1)\cup \iota_*\RTo{k}(T_1)$.

    Fix some $\epsilon$. We intersect the regions $\Omega_{T_1}$ and $\Omega_{T_2}$ with the orthant $\Omega_{\mathcal{M},\epsilon,\delta}$ cut out by
    \begin{align*}
        \sigma_\theta &> 1-\delta &&\theta \in \mathcal{M}\\
        \sigma_\theta &> 1 + \epsilon && \theta\in \RTo{k}(G)-\mathcal{M},
    \end{align*}
    where $\delta$ is to be specified later. 
    
    The intersection $\Omega_{T_1}\cap \Omega_{\mathcal{M},\epsilon,\delta}$ is cut out by
    \begin{align*}
        \sigma_{\theta} &> 1-\delta && \text{if }\theta\in \mathcal{M} - \{\tau,\kappa\}\\
        \sigma_{\theta} & > 1 + \epsilon && \text{if }\theta \in\RTo{k}(G) - \mathcal{M}\text{ or } \theta=\kappa\\
        \sigma_{\theta} + \sum_{\gamma\in \RTo{k}(T_1)}M_{\theta,\gamma}\sigma_\gamma &> 1 + \sum_{\gamma\in \RTo{k}(T_1)}M_{\theta, \gamma} && \text{if }\theta \in\RTo{k}(G) - \iota_*\RTo{k}(T_1).
    \end{align*}
    The last line can be simplified a fair amount, in fact most choices of $\theta$ can be made redundant. If $\gamma\not\in\mathcal{M}$, then $M_{\theta,\gamma} \sigma_\gamma > M_{\theta,\gamma}$. So, the sum can be restricted to $\gamma\in \mathcal{M} - \{\kappa\}$. If $\theta\ne\kappa$ in the last line, then $\theta \not\in \mathcal{M}$ so that
    \begin{align*}
        \sigma_{\theta} + \sum_{\substack{\gamma\in \mathcal{M}\\\gamma\ne \kappa}}M_{\theta,\gamma}\sigma_\gamma &> 1 + \epsilon + \sum_{\substack{\gamma\in \mathcal{M}\\\gamma\ne \kappa}}M_{\theta,\gamma}(1-\delta)\\
        &=1 + \sum_{\substack{\gamma\in \mathcal{M}\\\gamma\ne \kappa}}M_{\theta,\gamma} + \epsilon - \delta\left(\sum_{\substack{\gamma\in \mathcal{M}\\\gamma\ne \kappa}}M_{\theta,\gamma}\right).
    \end{align*}
    We can choose $\delta > 0$ sufficiently small to ensure this is larger than $1+\sum M_{\theta,\gamma}$. Finally, when $\theta=\kappa$ we know $M_{\kappa,\gamma}$ vanishes for all $\gamma\in \mathcal{M}$ except $\gamma = \tau$.

    To conclude, we have found that for every $\epsilon > 0$ there exists a $\delta > 0$ such that $\Omega_{T_1}$ contains the region $U_{T_1,\epsilon,\delta}$ cut out by
    \begin{align*}
        \sigma_{\theta} &> 1-\delta && \text{if }\theta\in \mathcal{M} - \{\tau,\kappa\}\\
        \sigma_{\theta} & > 1 + \epsilon && \text{if }\theta \in\RTo{k}(G) - \mathcal{M}\\
        \sigma_{\tau} & > 1-\delta\\
        \sigma_{\kappa} & > 1 + \epsilon\\
        \sigma_{\kappa} + M_{\kappa,\tau}\sigma_\tau &> 1 + M_{\kappa, \tau}.
    \end{align*}
    Flipping the roles of $\tau$ and $\kappa$, the same argument implies that for every $\epsilon > 0$ there exists a $\delta > 0$ such that $\Omega_{T_2}$ contains the region $U_{T_2,\epsilon,\delta}$ cut out by
    \begin{align*}
        \sigma_{\theta} &> 1-\delta && \text{if }\theta\in \mathcal{M} - \{\tau,\kappa\}\\
        \sigma_{\theta} & > 1 + \epsilon && \text{if }\theta \in\RTo{k}(G) - \mathcal{M}\\
        \sigma_{\tau} & > 1+\epsilon\\
        \sigma_{\kappa} & > 1-\delta\\
        \sigma_{\tau} + M_{\tau,\kappa}\sigma_\kappa &> 1 + M_{\tau,\kappa}.
    \end{align*}
    The first two equations cutting out $U_{T_1,\epsilon,\delta}$ and $U_{T_2,\epsilon,\delta}$ are the same, while the last three are given solely in terms of $\sigma_{\tau}$ and $\sigma_{\kappa}$. Thus, it suffices to determine the convex hull in the projection to the $(\sigma_{\tau},\sigma_{\kappa})$-plane.
    
    This two dimensional convex hull is then contained the intersection of the orthant $\sigma_\tau,\sigma_\kappa>1-\delta$ with the region to the right of the segment given by the two corner points
    \[
        \left( 1-\delta, 1 + \delta M_{\kappa, \tau} \right) \textrm{ and } \left(1 + \delta M_{\tau, \kappa}, 1-\delta\right),
    \]
    of the boundary of $U_{T_1,\epsilon,\delta}$ and $U_{T_2,\epsilon,\delta}$ respectively

    The line between these two points cuts out the half-plane
    \begin{align*}
        \left(1+M_{\kappa, \tau}\right)\sigma_\kappa + \left( 1 + M_{\tau, \kappa} \right)\sigma_\tau &> \left(1+M_{\kappa, \tau}\right) + \left( 1 + M_{\tau, \kappa}\right) + \delta\left(M_{\tau,\kappa}M_{\kappa,\tau} - 1\right).
    \end{align*}
    It is clear from this expression that the convex hull of $U_{T_1,\epsilon,\delta}\cup U_{T_2,\epsilon,\delta}$ projected onto the $(\sigma_\tau,\sigma_\kappa)$-plane contains the point $(1,1)$ if and only if
    \[
        M_{\tau,\kappa}M_{\kappa,\tau} < 1,
    \]
    which is exactly \eqref{eq:friendly}. Containing $(1,1)$ is equivalent to containing an open neighborhood of the orthant $\sigma_\tau,\sigma_\kappa\ge 1$ for this region, so by \Cref{thm:general_unconditional_nilpotent}(i) we have shown that $D_k(G;\underline{s})$ is meromorphic on the orthant $\Omega_{\mathcal{M},\epsilon,\delta'}$ for some sufficiently small $\delta' > 0$.

    Consider the point $(\wt(\tau)/a_{\inv}(G))_\tau$. If $\tau\in \mathcal{M}$, then
    \[
        \frac{\wt(\tau)}{a_\inv(G)} \ge 1 > 1 - \delta'.
    \]
    If $\tau\not\in \mathcal{M}$ is tame, then
    \[
        \frac{\wt(\tau)}{a_{\inv}(G)} > 1.
    \]
    Choosing $\epsilon$ sufficiently small implies this is $ > 1+\epsilon$. There are no conditions on the wild ramification types. This implies the point is in $\Omega_{\mathcal{M},\epsilon,\delta'}$ for $\epsilon > 0$ sufficiently small, and therefore lies in the convex hull ${\rm Hull}(\Omega_{T_1}\cup \Omega_{T_2})$. Together with \Cref{thm:general_unconditional_nilpotent}, this concludes the proof.
\end{proof}

\subsection{Analogous results for twisted counting}

We can prove analogous unconditional results for nilpotent normal subgroups $N\normal G$. The proofs are the same except we need to keep track of the $\pi$ dependence given by the factors $h_{ur}^1(k,N,T;\pi)$. We state our results only for the discriminant ordering, thus making them easier to compare with the existing literature. However, our methods extend to all inertial invariants.

\begin{theorem}\label{thm:unconditional_concentrated}
    Let $k$ be a number field and $G= nTd$ a transitive group which is semiconcentrated in abelian normal subgroups $T_1,T_2,...,T_r$.

    Suppose there exists at least on $G$-extension of $k$ and that there exists a nilpotent normal subgroup $N\normal G$ which contains each of $T_1,T_2,\dots T_r$, with quotient map $q:G\to G/N$, and a $\theta \ge 0$ such that
    \begin{align}\label{eq:sum_h1ur_uncond}
        \sum_{\pi\in q_*\Sur(G_k,G;X)} \max_{j} h_{ur}^1(k,N,T_j,\pi) \ll_{n,k} X^{\theta},
    \end{align}
    where $h^1_{ur}(k,N,T_j,\pi)$ is given by Definition \ref{def:h1ur}.
    
    Assume the point $(\ind_n(\tau)/a(nTd))_\tau$ lies in the convex hull ${\rm Hull}(\bigcup_j \Omega_{T_j})$ where $\Omega_{T_1},\dots,\Omega_{T_r}$ are defined as in \Cref{thm:general_unconditional_nilpotent}.
    \begin{enumerate}[(i)]
        \item If $\theta < 1/a(N)$ then there exist ineffective constants $c\in \R_{>0}$ and $b\in \Z_{>0}$ such that
        \[
            \#\mathcal{F}_{\disc,k}(nTd;X) \sim cX^{1/a(N)}(\log X)^{b-1},
        \]
        with $b = \max_{\pi\in q_*\Sur(G_k,G)} b(\pi)$ with $b(\pi)$ defined as in \Cref{thm:unconditional_nilpotent_fibers}. In particular,
        \begin{equation}\label{eq:bbounds_nilpotent}
        \max_{\substack{j=1,\dots,r\\\psi\in (q_j)_*\Sur(G_k,G)}}\left\{\sum_{\substack{\tau\in \iota_*\RTo{k}(T_j)\\\ind_n(\tau) = a(G)}} |\tw_{T_j,\widetilde{\psi}}^{-1}(\tau)|\right\} \le b \le  \sum_{\substack{\tau\in \bigcup_j\iota_*\RTo{k}(T_j)\\\ind_n(\tau) = a(G)}} \left(\max_{\substack{j=1,\dots,r\\\psi\in (q_j)_*\Sur(G_k,G)}}|\tw_{T_j,\widetilde{\psi}}^{-1}(\tau)|\right).
        \end{equation}
        \item If $\theta \ge 1/a(N)$ then
        \[
            \#\mathcal{F}_{\disc,k}(nTd;X) \ll_{n,k} X^{\theta + \epsilon}.
        \]
    \end{enumerate}
\end{theorem}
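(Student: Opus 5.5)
The plan is to verify the three hypotheses of \cite[Theorem 2.1]{ALOWW} for the normal subgroup $N\normal G$, using the fibration
\[
    \#\mathcal{F}_{\disc,k}(G;X) \asymp \sum_{\pi\in q_*\Sur(G_k,G)} \#\{\psi\in q_*^{-1}(\pi) : |\disc(\psi)|\le X\},
\]
and then read off (i) and (ii) exactly as in the proof of \cite[Theorem 1.11]{ALOWW}. Since $G$ is semiconcentrated in $T_1,\dots,T_r\subseteq N$, we have $a(N)=a(G)$, and the minimum index elements of $N$ lie in $\bigcup_j T_j$.

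The first two inputs are the unconditional fiber results, \Cref{thm:unconditional_nilpotent_fibers} and \Cref{thm:unconditional_nilpotent_fibers_uniform}, which are stated after this theorem. I would prove them first, by running the proof of \Cref{thm:general_unconditional_nilpotent} with $D_k(N,\pi;\underline{s})$ in place of $D_k(G;\underline{s})$.

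For \Cref{thm:unconditional_nilpotent_fibers} (precise counting of the fibers), fix $\pi$ and decompose
\[
    D_k(N,\pi;\underline{s}) = \sum_{\psi} D_k(T_j,\psi;\underline{s}),
\]
where $\psi$ runs over $(q_{T_j})_*\Sur(G_k,G)$ with $(q_{T_j,N})_*\psi=\pi$. Then apply \Cref{thm:main}, fibre by fibre in $\pi$, with the regions $\Omega_{T_j}$. The uniform bound on each piece comes from \Cref{thm:meromorphic_continuation_of_fiber_series}(ii). The only change from the nilpotent-$G$ proof is that the factor $|H^1_{ur}(k,T_j(\psi))|$ is no longer $\ll|\disc|^\epsilon$ by \eqref{eq:aloww_bound_H1}. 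Instead, \Cref{prop:h1ur_bound} bounds it by $h^1_{ur}(k,N,T_j,\pi)|\disc(\psi)|^\epsilon$, which is constant in $\psi$ once $\pi$ is fixed. The subconvexity exponents are controlled by the matrix $M$ exactly as before.

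The sum over $\psi$ in a fixed fibre is then dominated by a shifted copy of $D_k(N/T_j,\cdot)$-type fibre series, which is a twisted nilpotent count. I would bound it by \cite[Theorem 1.6]{kluners-wang2022elltorsion} in the same way as \Cref{prop:nilpotent_absolute_convergence}. This is the step I expect to be the main obstacle: one must check that the twisted fibre count over $\pi$ for $N/T_j$ still has the absolute convergence orthant $\sigma>1$ in the relevant variables, uniformly in $\pi$ up to a factor $\disc(\pi)^{-\sigma+\epsilon}$. I would try to get this by combining Klüners--Wang's $D^\epsilon$ bound with the observation that the primes ramified in $\pi$ contribute at most $|\pk|^{-\min\sigma}$.

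Once this holds, Bochner's theorem gives meromorphy on ${\rm Hull}(\bigcup_j\Omega_{T_j})$. By hypothesis the point $(\ind_n(\tau)/a(G))_\tau$ lies in this hull, so specialising to the discriminant line and applying \Cref{thm:tauberian_main_term} gives
\[
    \#\{\psi\in q_*^{-1}(\pi) : |\disc(\psi)|\le X\} \sim c(\pi)X^{1/a(N)}(\log X)^{b(\pi)-1}.
\]
The pole-order bounds on $b(\pi)$ are those of \Cref{thm:general_unconditional_nilpotent}, which gives \eqref{eq:bbounds_nilpotent}.

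For \Cref{thm:unconditional_nilpotent_fibers_uniform} (uniform upper bounds on the fibers), I would track constants in the same argument. The bound on the hull is inherited from $\Omega_{T_j}$ by \cite[Proposition C.5]{cech2024ratios}, exactly as in the proof of \Cref{prop:multivariable_phragmen-lindelof}. This yields
\[
    |p\,D_k(N,\pi;\underline{s})| \ll \max_j h^1_{ur}(k,N,T_j,\pi)\,\disc(\pi)^{-1/a(N)+\epsilon}
\]
near the point of interest. A standard Tauberian upper bound, for example via Perron's formula with smoothing, then gives the uniform fibre bound
\[
    \#\{\psi\in q_*^{-1}(\pi) : |\disc(\psi)|\le X\} \ll \frac{\max_j h^1_{ur}(k,N,T_j,\pi)}{(q_*\disc_G(\pi))^{1/a(N)-\epsilon}}X^{1/a(N)}(\log X)^{b-1}.
\]

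The third input is the criterion for convergence. This is the statement that
\[
    \sum_\pi \max_j h^1_{ur}(k,N,T_j,\pi)\,\disc(\pi)^{-1/a(N)+\epsilon}
\]
converges when $\theta<1/a(N)$, which follows from \eqref{eq:sum_h1ur_uncond} by partial summation. Dominated convergence in \cite[Theorem 2.1]{ALOWW} then gives (i) with $b=\max_\pi b(\pi)$.

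In case (ii), summing the uniform fibre bounds over $\pi$ with $q_*\disc_G(\pi)\le X$ and using partial summation against $X^\theta$ gives
\[
    \#\mathcal{F}_{\disc,k}(G;X) \ll X^{\theta+\epsilon}.
\]
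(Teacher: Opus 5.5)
Your outline follows the paper's proof. You verify the three hypotheses of \cite[Theorem 2.1]{ALOWW} for $N$, obtain the fibre asymptotic and the uniform fibre bound by rerunning the nilpotent argument on $D_k(N,\pi;\underline{s})=\sum_\psi D_k(T_j,\psi;\underline{s})$ with \Cref{prop:h1ur_bound} replacing \eqref{eq:aloww_bound_H1}, and get (i) and (ii) from \eqref{eq:sum_h1ur_uncond} by Abel summation.

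The gap is the step you yourself flag as the main obstacle: your proposed fix does not supply the uniformity in $\pi$. The paper's proof of \Cref{thm:unconditional_nilpotent_fibers} handles this step with the same one-line appeal to a $D^{\epsilon}$ bound and reuses it in \Cref{thm:unconditional_nilpotent_fibers_uniform}, so the gap appears to be shared. The bound of \cite[Theorem 1.6]{kluners-wang2022elltorsion} counts extensions of a \emph{fixed} base field, with an implied constant depending on that field. The lifts $\psi$ of $\pi$, however, are $N/T_j$-twisted extensions of the field cut out by $\pi$, and that field varies with $\pi$.

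Concretely, let $M_i=Z_i(N/T_j)/Z_{i-1}(N/T_j)$ run over the upper central series of $N/T_j$. The conjugation action of $G$ on $M_i$ factors through $G/N$, so lifts through $M_i$ form a torsor under $Z^1(k,M_i(\pi))$. Twisting by a cocycle whose class lies in $H^1_{ur}(k,M_i(\pi))$ only conjugates each inertia restriction, so it leaves $\inv_{k,G/T_j}(\psi,\underline{s})$ unchanged. Hence the number of $\psi$ over $\pi$ with a given ramification profile is governed by $\prod_i|H^1_{ur}(k,M_i(\pi))|$, up to factors of size $|\disc(\psi)|^{\epsilon}$. It is not $\ll D^{\epsilon}$ uniformly in $\pi$. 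This product is also not controlled by $\max_j h^1_{ur}(k,N,T_j,\pi)$, which only sees the subquotients of $T_j$. For example, for $G=N\wr B$ with $N=8T4$ one has $(N^m/T_j^m)(\pi)\cong{\rm Ind}_{k(\pi)}^k\Z/2\Z$, and these twists range over a group of size $\asymp|\Cl_{k(\pi)}[2]|$.

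For a single fixed $\pi$ this is harmless, so your argument does give \Cref{thm:unconditional_nilpotent_fibers}. It does not give the uniform bound of \Cref{thm:unconditional_nilpotent_fibers_uniform} with constant $\max_j h^1_{ur}(k,N,T_j,\pi)$, and so it does not give the criterion for convergence. To close the argument you would need two changes:
\begin{enumerate}
\item Carry the extra factor $\prod_i|H^1_{ur}(k,M_i(\pi))|$ into the uniform fibre bound. This is straightforward by counting lifts stage by stage along the upper central series of $N/T_j$.
\item Strengthen \eqref{eq:sum_h1ur_uncond} to include that factor before invoking \cite[Theorem 2.1]{ALOWW}.
\end{enumerate}
The extra factor is $O_k(1)$ when the $M_i$ carry trivial action, as for direct products $N\times B$. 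In the wreath-product setting of \Cref{lem:wreath_h1ur}, however, the natural bound becomes $|\Cl_{k(\pi)}[2]|^{\log_2|N|}$ rather than $|\Cl_{k(\pi)}[2]|^{\log_2|T_j|}$.
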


This result is a combination of the ideas in \Cref{thm:general_unconditional_nilpotent} and the inductive method in \cite{ALOWW}. Specifically, we can directly verify the hypotheses of \cite[Theorem 2.1]{ALOWW} for the subgroup $N\normal G$.

The first hypothesis is ``precise counting of fibers'':

\begin{theorem}\label{thm:unconditional_nilpotent_fibers}
    Let $k$ be a number field, $G = nTd$ a transitive group, and $N\normal G$ a nilpotent normal subgroup with quotient map $q:G\to G/N$. Suppose there exists at least one $G$-extension of $k$ and there exist abelian normal subgroups $T_1,T_2,\dots T_r\normal G$ for which
    \[
        \{g\in N : \ind_n(g) = a(N)\} \subseteq \bigcup_{j=1}^r T_j \subseteq N.
    \]

    If the point $(\ind_n(\tau)/a(nTd))_\tau$ lies in the convex hull ${\rm Hull}(\bigcup_j \Omega_{T_j})$ where $\Omega_{T_1},\dots,\Omega_{T_r}$ are defined as in \Cref{thm:general_unconditional_nilpotent}, then for each $\pi \in q_*\Sur(G_k,G)$ there exist ineffective constants $c(\pi) \in \R_{>0}$ and $b(\pi) \in \Z_{>0}$ for which
    \[
        \#\{\psi\in q_*^{-1}(\pi) : |\disc(\psi)|\le X\} \sim c(\pi) X^{1/a(N)}(\log X)^{b(\pi)-1},
    \]
    in particular satisfying \Cref{conj:twisted_number_field_counting}. Moreover, the exponent $b$ satisfies the bounds
    \[
        \max_{\substack{j=1,\dots,r\\\psi\in (q_j)_*\Sur(G_k,G)\\q_*(\psi) = \pi}}\left\{\sum_{\substack{\tau\in \iota_*\RTo{k}(T_j)\\\ind_n(\tau) = a(G)}} |\tw_{T_j,\widetilde{\psi}}^{-1}(\tau)|\right\} \le b(\pi) \le  \left(\sum_{\substack{\tau\in \bigcup_j\iota_*\RTo{k}(T_j)\\\ind_n(\tau) = a(G)}} b_\tau(\pi)\right),
    \]
    where
    \[
        b_\tau(\pi) = \max_{j=1,\dots,r}\left(\max_{\substack{\psi\in (q_j)_*\Sur(G_k,G)\\q_*(\psi) = \pi}}|\tw^{-1}_{T_j,\widetilde{\psi}}(\tau)|\right).
    \]
\end{theorem}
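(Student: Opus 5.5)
The plan is to run the argument of \Cref{thm:general_unconditional_nilpotent} fiberwise: apply \Cref{thm:main} not to the full series $D_k(G;\underline{s})$ but to the subseries
\[
    D_k(N,\pi;\underline{s}) = \sum_{\substack{\psi\in (q_{T_j})_*\Sur(G_k,G)\\(q_{T_j,N})_*\psi = \pi}} D_k(T_j,\psi;\underline{s}),
\]
for each fixed $\pi\in q_*\Sur(G_k,G)$, using the same partition by each $T_j\subseteq N$. \Cref{thm:main} is stated for $D_k(G;\underline{s})$, but its proof only uses a partition into fiber series together with Bochner's tube theorem, so it applies verbatim to this subseries. The fiber series $D_k(T_j,\psi;\underline{s})$ are the same objects as before. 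Hence conditions (1) and (2) of \Cref{thm:main} follow from \Cref{thm:meromorphic_continuation_of_fiber_series}, with the same majorants $f_{T_j}(\psi,\underline{s})$ built from the hybrid bounds with constants $\alpha_\tau,\beta_\tau$ exactly as in the proof of \Cref{thm:general_unconditional_nilpotent}. The one place where nilpotency of $G$ was used was \eqref{eq:aloww_bound_H1}, which bounds $|H^1_{ur}(k,T_j(\psi))|$. Here I would use \Cref{prop:h1ur_bound} instead: since $N$ is nilpotent, $T_j$ lies in its hypercenter, and for $\psi$ over $\pi$ we get $|H^1_{ur}(k,T_j(\psi))|\ll h^1_{ur}(k,N,T_j,\pi)|\disc(\psi)|^\epsilon$. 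For fixed $\pi$ the factor $h^1_{ur}(k,N,T_j,\pi)$ is a constant, so it is harmless.

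The criterion for convergence (3) then reduces to the convergence of
\[
    \sum_{\psi\mapsto\pi}\prod_{\pk}|\pk|^{-\min\sigma_\kappa+\sum M_{\kappa,\tau}\max\{1-\sigma_\tau,0\}+\epsilon}
\]
on $\Omega_{T_j}$, where the sum runs over $\psi\in(q_{T_j})_*\Sur(G_k,G)$ lying over $\pi$. This is a subseries of the $G/T_j$-series with all effective exponents $>1$, but $G/T_j$ need not be nilpotent, so \Cref{prop:nilpotent_absolute_convergence} does not apply directly. I would handle this by working in the relative setting. The $G/T_j$-extensions over the fixed $G/N$-extension $\pi$ correspond to crossed homomorphisms valued in $(N/T_j)(\pi)$, a nilpotent group with a twisted action. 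The plan is to bound their number with prescribed product of ramified primes by $D^{\epsilon}$ times a constant depending on $\pi$. One option is to invoke the relative version of \cite[Theorem 1.6]{kluners-wang2022elltorsion}. Another is to induct up a central series of $N/T_j$, where each abelian layer contributes $|H^1_{ur}|\cdot O(1)^{\omega(D)}$ by the Poisson summation/Selmer bound from \Cref{thm:explicit_continuation}(a). With this in hand, $p\,D_k(N,\pi;\underline{s})$ is holomorphic on ${\rm Hull}(\bigcup_j\Omega_{T_j})$, and the order of the pole along $s_\tau=1$ is at most $b_\tau(\pi)$, where the maximum is now restricted to $\psi$ over $\pi$.

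To obtain the asymptotic, I would specialize along $s_\tau=\ind_n(\tau)s$ as in \Cref{prop:specialize}. The assumed point $(\ind_n(\tau)/a(G))_\tau$ lies in the hull. The relevant pole is at $s=1/a(N)$, and one should check that this agrees with $1/a(G)$ in the situation at hand: the minimal index elements of $N$ lie in $\bigcup T_j$, and the lift's ramification types outside $N$ contribute only through the shifted $\inv_{k,G/N}(\pi,\cdot)$ factor, which is finite for fixed $\pi$. Then \Cref{thm:tauberian_main_term} gives $c(\pi)X^{1/a(N)}(\log X)^{b(\pi)-1}$. The upper bound on $b(\pi)$ is the sum of the pole orders $b_\tau(\pi)$ over the minimal types. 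For the lower bound, pick $j$ and $\psi$ over $\pi$ and restrict to the single $T_j$-fiber $q_{T_j*}^{-1}(\psi)$, which is a subset of $q_*^{-1}(\pi)$. By \cite[Corollary 1.7]{alberts2021} this fiber already has $\gg X^{1/a(N)}(\log X)^{\sum|\tw^{-1}(\tau)|-1}$ elements, and positivity of the coefficients gives the bound. The existence of a $G$-extension guarantees that the fibers are nonempty, so $c(\pi)>0$.

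I expect the main obstacle to be the convergence step for the non-nilpotent quotient $G/T_j$ restricted to the fiber over $\pi$. Specifically, one needs the count of crossed homomorphisms in $(N/T_j)(\pi)$ to be $D^{\epsilon}$ uniformly enough to sum against exponents only slightly larger than $1$. I would first try the central-series induction using \cite[Lemma 4.3]{ALOWW}, as in \Cref{prop:h1ur_bound}, since it produces exactly $h^1_{ur}$-type constants times $O(1)^{\omega(D)}$.
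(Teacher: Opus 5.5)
Your proposal is correct and takes essentially the same route as the paper: you apply \Cref{thm:main} to $D_k(N,\pi;\underline{s})$ through the $T_j$-fibers, replace \eqref{eq:aloww_bound_H1} by \Cref{prop:h1ur_bound} (whose $h^1_{ur}$ factor is a constant once $\pi$ is fixed), and then argue exactly as in \Cref{thm:general_unconditional_nilpotent}(ii), including the single-fiber lower bound for $b(\pi)$. The convergence step you single out is where the paper simply asserts that nilpotency of $N$ gives a $D^{\epsilon}$ bound on the number of relevant extensions of discriminant $D$, and either of your justifications fills that in; likewise, $a(G)=a(N)$ is forced by the hull hypothesis itself, since every $\Omega_{T_j}$ imposes $\sigma_\tau>1$ at tame types outside $N$.
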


\begin{proof}
    The proof is completely analogous to part (ii) of \Cref{thm:general_unconditional_nilpotent}, except we use the bound
    \[
        |H^1_{ur}(k,T(\psi))| \ll_{[k:\Q],|G|,\epsilon} h^1_{ur}(k,N,T,\pi)|\disc(\psi)|^{\epsilon}
    \]
    proven in \Cref{prop:h1ur_bound}, which is independent of $\psi\in (q_{T_j,N})_*^{-1}(\pi)$ (recall $q_{T_j,N}:G/T_j\to G/N$ is the quotient map). We bound each $h^1_{ur}(k,N,T_j,\pi)$ above by
    \[
        \max_j h^1_{ur}(k,N,T_j,\pi),
    \]
    which is now independent of $j\in \{1,2,\dots,r\}$. This gives the bound
    \begin{align*}
        \sum_{\psi\in (q_{T_j,N})_*^{-1}(\pi)} |D_k(T,\psi;\underline{s})| \ll_{\underline{t}}&\max_j h^1_{ur}(k,N,T_j,\pi) \sum_{\psi\in (q_{T_j,N})_*^{-1}(\pi)}\inv_{k,G/T_j}(\psi,(q_{T_j})_*\underline{\sigma}+\epsilon),
    \end{align*}
    where we note that $\abs{\disc(\psi)}^\epsilon$ is negligible and can be absorbed by $\inv_{k,G/T_j}(\psi,(q_{T_j})_*\underline{\sigma}+\epsilon).$
    
    As $N$ is nilpotent, the number of $N$ extensions of a given discriminant $D$ is bounded by $D^{\epsilon}$. Moreover, by definition $\inv_{k,G/T_j}(\psi,(q_{T_j})_*\underline{\sigma}+\epsilon)$ is divisible by $\inv_{k,G/N}((q_N)_*\pi+\epsilon)$, so we can bound
    \begin{align*}
        \sum_{\psi\in (q_{T_j,N})_*^{-1}(\pi)} |D_k(T,\psi;\underline{s})| \ll_{\underline{t}}& \max_j h^1_{ur}(k,N,T_j,\pi) \inv_{k,G/N}((q_N)_*\pi, q_*\underline{\sigma}+\epsilon)\\
        &\cdot \sum_{\underline{n}} \prod_{\overline{\tau}\in \iota_*\RTo{k}(N/T_j)} n_{\overline{\tau}}^{-\epsilon - (q_{T_j})_*(\underline{\sigma})_{\overline{\tau}}},
    \end{align*}
    which converges absolutely if $(q_{T_j})_*(\underline{\sigma})_{\overline{\tau}} > 1$ for each $\overline{\tau}\in \iota_*\RTo{k}(N/T_j)$, or equivalently $\sigma_\tau > 1$ for each $\tau \in \iota_*(\RTo{k}(N) - \RTo{k}(T_j))$. This is sufficient to apply \Cref{thm:main}, from which the proof (including the bounds for $b$) is identical to that of \Cref{thm:general_unconditional_nilpotent}(ii).
\end{proof}

For the second hypothesis, we require the notion of pushforward discriminant from \cite[Equation (5.2)]{ALOWW}, which we recall here.

\begin{definition}\label{def:push_disc}
    Let $q : G \to H$ be a group homomorphism. The {\em pushforward discriminant} of a map $\pi \in \Hom(G_k, H)$ is
    \[
        q_*\disc(\pi) = \prod_{\pk} {\pk}^{q_*f_{\pk}(\pi_{\pk}) }
    \]
    where
    \[
        q_*f_{\pk}(\pi_{\pk}) = \begin{cases}
        \min \left\{ f_{\pk} (\psi_{\pk}); \psi_{\pk} \in \Hom(G_{k_\pk}, G) \textrm{ with } q \circ \psi_\pk = \pi_\pk\right\} & \textrm{ if a lift $\psi_\pk$ of $\pi_\pk$ exists,}\\
        &\\
        \infty & \textrm{ otherwise,}
        \end{cases}
    \]
    and $f_{\pk}(\psi_\pk)$ is the local Artin conductor of $\psi_{\pk}$, satisfying $\disc(\psi_\pk) = p^{f_\pk(\psi_\pk)}$.
\end{definition}

The second hypothesis is an ``upper bounds on fibers'':

\begin{theorem}\label{thm:unconditional_nilpotent_fibers_uniform}
    Let $k$, $G = nTd$, $N$, $q$, and $T_1,T_2,\dots,T_r$ be as in \Cref{thm:unconditional_nilpotent_fibers}. 

    If the point $(\ind_n(\tau)/a_{\disc}(nTd))_\tau$  lies in the convex hull ${\rm Hull}(\bigcup_j \Omega_{T_j})$ where $\Omega_{T_1},\dots,\Omega_{T_r}$ are defined as in \Cref{thm:general_unconditional_nilpotent}, then for each $\pi \in q_*\Sur(G_k,G)$
    \[
        \#\{\psi\in q_*^{-1}(\pi) : |\disc(\psi)|\le X\}  = O_{n,[k:\Q],\epsilon}\left(\frac{\max_j h^1_{ur}(k,N,T_j,\pi)}{(q_*\disc_G(\pi))^{1/a_{\disc}(N) - \epsilon}}X^{1/a_{\disc}(N)}(\log X)^{b(\pi)-1}\right),
    \]
    where $b(\pi) \in \Z_{>0}$ is the same ineffective constant from \Cref{thm:unconditional_nilpotent_fibers}, $q_*\disc_G$ is the pushforward discriminant and $h^1_{ur}(k,N,T_j,\pi)$ is the quantity defined in \Cref{def:h1ur}.
\end{theorem}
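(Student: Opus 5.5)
We would prove \Cref{thm:unconditional_nilpotent_fibers_uniform} by running the argument of \Cref{thm:unconditional_nilpotent_fibers} while tracking how every implied constant depends on $\pi$. Fix $\pi\in q_*\Sur(G_k,G)$ and consider the fiber series
\[
    D_k(N,\pi;\underline{s}) = \sum_{j}\ \cdots = \sum_{\substack{\psi\in (q_{T_j})_*\Sur(G_k,G)\\ (q_{T_j,N})_*\psi=\pi}} D_k(T_j,\psi;\underline{s}),
\]
one decomposition for each $j$.

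For each $j$, \Cref{thm:meromorphic_continuation_of_fiber_series}(ii) and \Cref{prop:h1ur_bound} together bound each summand. The bound is $\max_j h^1_{ur}(k,N,T_j,\pi)$ times $\inv_{k,G/T_j}(\psi,(q_{T_j})_*\underline\sigma+\epsilon)$ times a product of Hecke $L$-values. We treat the $L$-values exactly as in the proof of \Cref{thm:general_unconditional_nilpotent}: their conductor dependence is absorbed into the $M_{\tau,\kappa}$ exponents. Summing over $\psi$ in the $N/T_j$-direction then uses the $D^\epsilon$ bound for nilpotent $N$. The upshot is that $p\cdot D_k(N,\pi;\underline{s})$ is holomorphic on $\bigcup_j\Omega_{T_j}$, with a bound of the shape
\[
    |p(\underline{s})D_k(N,\pi;\underline{s})| \ll_{\underline{\sigma},\underline t,\epsilon} \max_j h^1_{ur}(k,N,T_j,\pi)\cdot \inv_{k,G/N}(\pi,q_*\underline{\sigma}+\epsilon),
\]
where the implied constant is independent of $\pi$. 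The key point is that the whole $\pi$-dependence sits in these two explicit factors.

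Next we transport this uniform bound to ${\rm Hull}(\bigcup_j\Omega_{T_j})$. Here we use \cite[Proposition C.5]{cech2024ratios} (as in the proof of \Cref{prop:multivariable_phragmen-lindelof}) or \Cref{cor:independent_variable_multi_phragmen_lindelof}, dividing out by the $\pi$-dependent factor. This works because $\inv_{k,G/N}(\pi,q_*\underline{s})$ is an exponential in $\underline{s}$, holomorphic and nonvanishing. The quotient $pD_k(N,\pi;\underline{s})/\big(h\cdot \inv_{k,G/N}(\pi,\cdot)\big)$ is then bounded independently of $\pi$ on the union, and hence on the hull. The constant $\max_j h^1_{ur}$ factors straight through.

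Finally we specialize to the discriminant line $s_\tau=\ind_n(\tau)s$ near $s=1/a(N)$ and apply a Tauberian argument. For an upper bound with nonnegative coefficients it suffices to know the order of the pole, which is at most $b(\pi)$, and to have a bound for the leading Laurent coefficient. We get that bound by Cauchy's estimate on a small circle around $1/a(N)$ lying inside the hull, applied to the bound above. On the line, $\inv_{k,G/N}(\pi,q_*\underline\sigma)$ evaluated at $\sigma\approx 1/a(N)-\epsilon$ becomes $\prod_\pk |\pk|^{-\min(\ind)\,\sigma}$, taken over minimal lifts. This product is at most $(q_*\disc_G(\pi))^{-1/a(N)+\epsilon}$, since $q_*\disc$ records the minimal local discriminant exponent among lifts, and at tame places the index exponents are exactly those from \Cref{def:push_disc}. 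The wild places contribute $O(1)$. A uniform Tauberian bound, for instance \Cref{thm:tauberian_main_term} with explicit constants or a smoothed Perron argument with the majorant $X^{\sigma}$, then yields
\[
    \#\{\psi\in q_*^{-1}(\pi):|\disc(\psi)|\le X\}\ll \frac{\max_j h^1_{ur}(k,N,T_j,\pi)}{(q_*\disc_G(\pi))^{1/a(N)-\epsilon}}\,X^{1/a(N)}(\log X)^{b(\pi)-1}.
\]

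We expect the main obstacle to be uniformity in this last step. The Tauberian theorems as stated give asymptotics with constants depending on $\pi$, so we need a version whose error is controlled by the size of the function on a fixed neighborhood. We would try the elementary route first. Bound the count by $X^{\sigma_0}\,(pD)(\sigma_0)$ in the style of Rankin, taking $\sigma_0 = 1/a(N)+1/\log X$. Near the pole of order at most $b(\pi)$, the value there is $\ll (\log X)^{b(\pi)}$ times the uniform bound. The remaining factor of $\log X$ would be removed using the Cauchy estimate together with the standard Tauberian at pole order $b(\pi)$. If that loss cannot be recovered cleanly, we would accept the slightly weaker exponent of $\log X$ and cite the shape required by \cite[Theorem 2.1]{ALOWW}.
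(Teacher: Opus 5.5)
Your outline follows the paper's proof. You isolate the $\pi$-dependence of $D_k(N,\pi;\underline s)$ as $\max_j h^1_{ur}(k,N,T_j,\pi)$ times an $\inv_{k,G/N}$-factor, using \Cref{prop:h1ur_bound} and the $D^\epsilon$ bound for nilpotent $N$. You note that on the line $s_\tau=\ind_n(\tau)s$ this factor is $\ll (q_*\disc_G(\pi))^{-1/a(N)+\epsilon}$ up to the wild places. You then control the polar part at $s=1/a(N)$ by Cauchy's estimate on a small circle.

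The one genuine gap is the final Tauberian step. Your preferred route is Rankin's trick at $\sigma_0=1/a(N)+1/\log X$. It gives $X^{1/a(N)}(\log X)^{b(\pi)}$ times the $\pi$-factor, which is one logarithm too many. This loss is intrinsic to Rankin's bound. Your proposed repair via \Cref{thm:tauberian_main_term} does not work, because that theorem carries no uniformity in $\pi$. Accepting the weaker exponent would not prove the statement. Moreover, the exact power $(\log X)^{b(\pi)-1}$ is what the dominated-convergence argument behind \cite[Theorem 2.1]{ALOWW} needs.

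The paper closes this with the smoothed Perron argument that you list only as an alternative, and you should commit to it. First majorize $\mathbf{1}_{|\disc(\psi)|\le X}$ by $e^{1-|\disc(\psi)|/X}$, so that the count is at most
\[
    \frac{e}{2\pi i}\int_{{\rm Re}(s)=2} D_k\bigl(N,\pi;(\ind_n(\tau)s)_\tau\bigr)\,\Gamma(s)X^s\,ds .
\]
Then shift the contour to ${\rm Re}(s)=1/a(N)-\epsilon$.
\begin{itemize}
    \item The superpolynomial decay of $\Gamma$ makes the shifted integral $O\bigl(\max_j h^1_{ur}(k,N,T_j,\pi)\,(q_*\disc_G(\pi))^{-1/a(N)+\epsilon}X^{1/a(N)-\epsilon}\bigr)$. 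This requires your bound to grow at most polynomially in $t$, with the same $\pi$-factor.
    \item So the dependence hidden in your ``$\ll_{\underline t}$'' has to be made explicitly polynomial. The $\beta_\tau$-exponents together with \Cref{cor:independent_variable_multi_phragmen_lindelof} supply this.
    \item The residue at $s=1/a(N)$ is $X^{1/a(N)}P(\log X)$ with $\deg P\le b(\pi)-1$. Every coefficient of $P$ is a combination of the polar Laurent coefficients of $D_k(N,\pi;(\ind_n(\tau)s)_\tau)\Gamma(s)$.
    \item All of these coefficients, not just the leading one, are bounded by the $\pi$-factor via Cauchy's formula on a circle of radius $\epsilon$.
\end{itemize}
This yields exactly $(\log X)^{b(\pi)-1}$ with no loss.

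Separately, your explicit passage of the $\pi$-uniform bound to ${\rm Hull}(\bigcup_j\Omega_{T_j})$ is a step the paper does not spell out; it asserts the bound directly for the specialized series. The device you propose for it, dividing by $\inv_{k,G/N}(\pi,q_*\underline{s})$, is not literally available. The reason is that $q_*\underline{\sigma}$ is a coordinatewise minimum of real parts, so it does not come from a holomorphic function of $\underline{s}$.
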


\begin{proof}
    The proof of this result is now almost identical to that of \cite[Theorem 6.1]{ALOWW}. When we specialize to the single variable series $D_k(N,\pi;\ind(\tau)s)$, the vertical bound is now of the form
    \[
        |D_k(N,\pi;\ind(\tau)s)| \ll_{[k:\Q],|G|,t,\epsilon} \max_j h^1_{ur}(k,N,T_j,\pi)\inv_{k,G/N}(q_*\pi, q_*(\ind(\tau)\sigma)+\epsilon)\prod_{\tau} \left(\frac{s_\tau+1}{s_\tau-1}\right)^{b_\tau(\pi)}.
    \]
    The term $\inv_{k,G/N}(q_*\pi, q_*(\ind(\tau)\sigma)+\epsilon)$ is (up to a constant given by the wild places) bounded above by $|q_*\disc(\pi)|^{-s+\epsilon}$, which is immediate from the definition of the pushforward discriminant (\Cref{def:push_disc}). Bounding above by a smoothed count gives 
    \begin{align*}
        \#\{\psi\in q_*^{-1}(\pi) : |\disc(\psi)|\le X\} &\ll \sum_{\psi\in q_*^{-1}(\pi)} \exp\left(1 - \frac{|\disc(\psi)|}{X}\right)\\
        &= \frac{e}{2\pi i}\int_{{\rm Re}(s)=2} D_k(N,\pi;\ind(\tau)s) \Gamma(s) X^s ds.
    \end{align*}
    Shifting the contour integral to ${\rm Re}(s) = 1/a(N) - \epsilon$ for $\epsilon > 0$ sufficiently small, the superpolynomial decay of $\Gamma(s)$ implies the upper bound
    \[
        \ll \underset{s=1/a(N)}{\rm Res}\left(D_k(G,\ind(\tau)s) \Gamma(s) X^s\right) + O_{[k:\Q],|G|,\epsilon}\left(\frac{\max_j h^1_{ur}(k,N,T_j,\pi)}{|q_*\disc(\pi)|^{1/a(N)-\epsilon}} X^{1/a(N) - \epsilon}\right).
    \]
    It now suffices to bound the residue using the Cauchy integral formula. The residue is of the form $X^{1/a(N)}$ times a polynomial in $\log X$ whose coefficients are linear combinations of Laurent coefficients of $D_k(N,\pi;\ind(\tau)s) \Gamma(s)$. Let $C$ be a circle of radius $\epsilon$ centered at $1/a(N)$. Using the Cauchy integral formula, we can bound each such Laurent coefficient above by
    \begin{align*}
        \frac{1}{2\pi i}\int_{C_{\epsilon}} \frac{|D_k(N,\pi;\ind(\tau)s) \Gamma(s)|}{(s-1/a(N))^{m+1}} ds &\ll_{[k:\Q],|G|,\epsilon} \frac{\max_j h^1_{ur}(k,N,T_j,\pi)}{|q_*\disc(\pi)|^{1/a(N)-\epsilon}} \int_{C_\epsilon} \epsilon^{O_{m}(1)}(1+\epsilon)^{O_\beta(1)}ds\\
        &\ll_{[k:\Q],|G|,\epsilon} \frac{\max_j h^1_{ur}(k,N,T_j,\pi)}{|q_*\disc(\pi)|^{1/a(N)-\epsilon}},
    \end{align*}
    where $\beta = (\beta_\tau)_{\tau}$ is the $t$-aspect of the subconvexity bounds for the Hecke $L$-functions appearing in $D_k(T,\psi;\underline{s})$. Utilizing this to bound the coefficients of the residue gives the desired upper bound.
\end{proof}

The conclusions of \Cref{thm:unconditional_concentrated} follow immediately from \cite[Theorem 2.1]{ALOWW}, \Cref{thm:unconditional_nilpotent_fibers}, and \Cref{thm:unconditional_nilpotent_fibers_uniform}.

\begin{proof}[Proof of \Cref{thm:unconditional_concentrated}]
    We check the hypotheses of \cite[Theorem 2.1]{ALOWW} for the normal subgroup $N\normal G$. ``Precise counting of fibers'' is given by \Cref{thm:unconditional_nilpotent_fibers}. ``Upper bounds on fibers'' is given by \Cref{thm:unconditional_nilpotent_fibers_uniform}.

    Lastly, the ''criterion for convergence'' requires us to prove that
    \[
        \sum_{\pi\in q_*\Sur(G_k,G;X)} \frac{\max_j h_{ur}^1(k,N,T_j,\pi)}{(q_*\disc_G(\pi))^{1/a_{\disc}(N)-\epsilon}}
    \]
    converges. Equation \eqref{eq:sum_h1ur_uncond} and Abel summation implies that this sum converges precisely when $\theta < 1/a_{\disc}(N)$. This proves part (i).

    Part (ii) follows from taking a the sum of the upper bounds in \Cref{thm:unconditional_nilpotent_fibers_uniform} over $q_*\disc_G(\pi)\le X$, again using Abel summation.
\end{proof}

In case the group $G$ happens to be semiconcentrated in just two abelian subgroups we can use our two dimensional simplification for computing the convex hull (\Cref{prop:shortcut_convex_hull}) and get the following simplified version of \Cref{thm:unconditional_concentrated}.

\begin{proposition}\label{prop:shortcut_convex_hull_twisted}
    Let $k$ be a number field, $G$ a transitive group with label $nTd$ for which there exists at least one $G$-extension of $k$, $N\normal G$ a nilpotent normal subgroup, and $T_1,T_2$ two abelian normal subgroups of $G$ contained in $N$. Assume the same hybrid subconvexity bounds as in \Cref{thm:general_unconditional_nilpotent} and let $\Omega_{T_j}$ denote the same tubular regions.
    
    Let $\mathcal{M} = \{\tau\in \RTo{k}(G) : \ind_n(\tau) = a(G)\}$ be the tame ramification types of minimum index, and suppose there exist two distinct elements $\tau,\kappa\in \mathcal{M}$ such that
    \begin{align*}
        \mathcal{M} - \{\kappa\} &\subseteq \iota_*\RTo{k}(T_1)&\text{and}&&
        \mathcal{M} - \{\tau\} &\subseteq \iota_*\RTo{k}(T_2).
    \end{align*}
    If
    \begin{equation}\label{eq:friendly_twisted}
            \alpha_{\tau}\alpha_{\kappa}[k(\zeta_{\tau}):k][k(\zeta_{\kappa}):k]\ind_{|\tau|/[k(\zeta_{\tau}):k]}((q_\tau)_*\kappa)\ind_{|\kappa|/[k(\zeta_{\kappa}):k]}((q_\kappa)_*\tau) < 1,
    \end{equation}
    then the point $(\ind_n(\tau)/a(G))_\tau$ is contained in the convex hull ${\rm Hull}(\Omega_{T_1}\cup\Omega_{T_2})$. In particular, there exists a nonzero ineffective polynomial $P$ and an effective constant $\delta = \delta_{k,G} > 0$ such that
    \[
        \#\mathcal{F}_{\disc,k}(G;X) \sim cX^{1/a(G)}P(\log X) + O\left(X^{1/a(G) - \delta}\right)
    \]
    as in \Cref{thm:unconditional_concentrated}(i).
\end{proposition}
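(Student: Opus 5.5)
The plan is to reduce \Cref{prop:shortcut_convex_hull_twisted} to the geometric argument already carried out in the proof of \Cref{prop:shortcut_convex_hull}, and then feed the resulting containment into \Cref{thm:unconditional_concentrated}. The key observation is that the convex-hull part of the proof of \Cref{prop:shortcut_convex_hull} never used nilpotency of $G$ or anything about the $\pi$-dependence of the fibers. It only used three inputs: the shape of the inequalities defining $\Omega_{T_1}$ and $\Omega_{T_2}$, the vanishing $M_{\gamma,\theta}=0$ for $\theta$ in both images $\iota_*\RTo{k}(T_1)\cap\iota_*\RTo{k}(T_2)$, and the covering hypotheses on $\mathcal{M}$. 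I would therefore rerun that argument verbatim with $\wt=\ind_n$ and $a_\inv(G)=a(G)$.

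The steps are as follows.

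\textbf{Step 1.} Intersect $\Omega_{T_j}$ with the orthant $\Omega_{\mathcal{M},\epsilon,\delta}$. This shows that $\Omega_{T_1}\supseteq U_{T_1,\epsilon,\delta}$ and $\Omega_{T_2}\supseteq U_{T_2,\epsilon,\delta}$. In each of these regions every coordinate is constrained by a simple orthant condition, except for a single linear inequality in $(\sigma_\tau,\sigma_\kappa)$.
\begin{itemize}
\item The intersection vanishing follows because $T_1,T_2$ are abelian. An element of $T_1\cap T_2$ therefore commutes with everything in $T_1\cup T_2$, so $(q_\gamma)_*\theta$ is trivial and its index is $0$.
\item For $\theta\notin\mathcal{M}$, the extra inequality is absorbed by choosing $\delta$ small relative to $\epsilon$.
\end{itemize}

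\textbf{Step 2.} Project to the $(\sigma_\tau,\sigma_\kappa)$-plane. The hull contains the segment joining the corners $(1-\delta,1+\delta M_{\kappa,\tau})$ and $(1+\delta M_{\tau,\kappa},1-\delta)$. This segment passes strictly to the lower left of $(1,1)$ exactly when $M_{\tau,\kappa}M_{\kappa,\tau}<1$. Unwinding the definition of $M$, that product condition is precisely \eqref{eq:friendly_twisted}.

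\textbf{Step 3.} Conclude that ${\rm Hull}(\Omega_{T_1}\cup\Omega_{T_2})$ contains $\Omega_{\mathcal{M},\epsilon,\delta'}$ for small $\delta'$. This orthant contains $(\ind_n(\tau)/a(G))_\tau$, since minimal-index types sit at $1$ and all other tame types exceed $1$, while the wild variables are unconstrained.

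With the hull containment in hand, I would invoke \Cref{thm:unconditional_concentrated}(i) with $r=2$, applied to the subgroups $T_1,T_2\subseteq N$. The hypotheses of that theorem are that there is a $G$-extension, that $N$ is nilpotent and contains $T_1,T_2$, that $G$ is semiconcentrated in $T_1,T_2$, and that the point lies in the hull.
\begin{itemize}
\item Semiconcentration follows from $\mathcal{M}\subseteq\iota_*\RTo{k}(T_1)\cup\iota_*\RTo{k}(T_2)$, which is immediate from the two covering hypotheses since $\tau\neq\kappa$.
\item For the power-saving form of the conclusion, I would use that the hull contains an open orthant around the critical point. I would then combine the $t$-aspect bounds, which propagate to the hull by \Cref{cor:independent_variable_multi_phragmen_lindelof}, with \Cref{thm:tauberian_power_saving}, exactly as in \Cref{thm:general_unconditional_nilpotent}(ii).
\end{itemize}

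\textbf{Main obstacle.} The delicate point is the transition from the theorem to the stated asymptotic. \Cref{thm:unconditional_concentrated} additionally requires the class-group-torsion hypothesis \eqref{eq:sum_h1ur_uncond} with $\theta<1/a(N)$, which is not listed explicitly in the proposition. I would therefore read the proposition as inheriting the remaining hypotheses of \Cref{thm:unconditional_concentrated} (``as in \Cref{thm:unconditional_concentrated}(i)''). The content to be proved is then exactly the hull containment, which is the two-dimensional computation above.
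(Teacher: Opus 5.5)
Your Steps 1--3 are exactly the paper's proof, which consists of the single remark that the argument is identical to that of \Cref{prop:shortcut_convex_hull}. The three ingredients are the ones you list:
\begin{itemize}
\item the vanishing of $M_{\gamma,\theta}$ for $\theta$ in both images, which uses only that $T_1,T_2$ are abelian;
\item absorbing the non-minimal types by taking $\delta$ small relative to $\epsilon$;
\item the two-dimensional segment computation, which reduces to $M_{\tau,\kappa}M_{\kappa,\tau}<1$.
\end{itemize}
None of these uses nilpotency of $G$. You are also right that the ``in particular'' clause needs \eqref{eq:sum_h1ur_uncond} with $\theta<1/a(N)$ to be inherited from \Cref{thm:unconditional_concentrated}. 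One should also note $a(N)=a(G)$, since the minimum index elements lie in $T_1\cup T_2\subseteq N$.

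The step that fails is your route to the power-saving error term. \Cref{thm:unconditional_concentrated}(i) only gives $\#\mathcal{F}_{\disc,k}(G;X)\sim cX^{1/a(N)}(\log X)^{b-1}$. In the concentrated setting the paper never continues $D_k(G;\underline{s})$ itself. It continues only the fiber series $D_k(N,\pi;\underline{s})$ for each fixed $\pi\in q_*\Sur(G_k,G)$. The sum over $\pi$ is then carried out on counting functions, via the dominated-convergence argument of \cite[Theorem 2.1]{ALOWW}, which yields no error term; the remark after \Cref{thm:main_concentrated} says precisely this.

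Your appeal to ``exactly as in \Cref{thm:general_unconditional_nilpotent}(ii)'' does not transfer. There, the criterion for convergence on $\Omega_{T_j}$ came from \Cref{prop:nilpotent_absolute_convergence} for the nilpotent quotient $G/T_j$, together with $|H^1_{ur}(k,T_j(\pi))|\ll|\disc(\pi)|^{\epsilon}$. Here $G/N$ is arbitrary, and the only control on the $\pi$-sum is \eqref{eq:sum_h1ur_uncond}. That is a one-parameter bound tied to the discriminant ordering (through $q_*\disc_G$). It does not give convergence on the orthant $\sigma_\tau>1$ for the ramification types outside $N$, which is what enters the definition of $\Omega_{T_j}$.

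To obtain a power saving you would have to do the following before invoking \Cref{thm:tauberian_power_saving}:
\begin{itemize}
\item prove uniform convergence of the pole-cancelled sum $\sum_\pi D_k(N,\pi;\underline{s})$ on a suitably modified region containing a neighborhood of the critical point;
\item prove $t$-aspect bounds uniform in $\pi$ on the hull of that region.
\end{itemize}
That is new work the paper explicitly leaves open. What your argument, and the paper's, actually proves is the hull containment, and hence the asymptotic $\#\mathcal{F}_{\disc,k}(G;X)\sim cX^{1/a(G)}(\log X)^{b-1}$ without an error term. This is all that \Cref{thm:wreath_products} and \Cref{thm:direct_products} later use. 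The displayed error term in the statement is not supported by \Cref{thm:unconditional_concentrated}(i).
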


The proof is identical to that of \Cref{prop:shortcut_convex_hull}.

\section{Proofs of the Unconditional Examples}\label{sec:unconditional_corollaries}

\subsection{$Q_8\rtimes C_2$ as 8T11 and 16T11}\label{sec:16T11}

We now prove \Cref{thm:Q8sxC2} and \Cref{thm:16T11_over_k} for $G = Q_8 \rtimes C_2$. We adopt the conjugacy class notation used in the LMFDB (\cite[\href{https://www.lmfdb.org/GaloisGroup/16T11}{Transitive Group 16T11}]{lmfdb} in degree $16$ and \cite[\href{https://www.lmfdb.org/GaloisGroup/8T11}{Transitive Group 8T11}]{lmfdb} in degree $8$).

\begin{table}[ht]
\begin{tabular}{llllll}
     Label & Size& Order& Degree $8$ & Degree $16$ & Degree $8$ \\
     &&&Index&Index&Representative \\
     \hline
    1A    &   $1$    &   $1$    &   $0$   &   $0$    &   $()$\\
    2A    &   $1$    &   $2$    &   $4$   &   $8$    &   $(1,5)(2,6)(3,7)(4,8)$ \\
    2B    &   $2$    &   $2$    &   $4$   &   $8$    &   $(1,6)(2,5)(3,8)(4,7)$\\ 
    2C    &   $2$    &   $2$    &   $2$   &   $8$    &   $(1,5)(3,7)$\\
    2D    &   $2$    &   $2$    &   $4$   &   $8$    &   $(1,4)(2,7)(3,6)(5,8)$\\
    4A1   &   $1$    &   $4$    &   $6$   &   $12$    &  $(1,3,5,7)(2,4,6,8)$\\
    4A-1  &   $1$    &   $4$    &   $6$   &   $12$    &   $(1,7,5,3)(2,8,6,4)$\\
    4B    &   $2$    &   $4$    &   $6$   &   $12$    &   $(1,8,5,4)(2,7,6,3)$\\
    4C    &   $2$    &   $4$    &   $6$   &   $12$    & $(1,3,5,7)(2,8,6,4)$   \\
    4D    &   $2$    &   $4$    &   $6$   &   $12$    & $(1,6,5,2)(3,8,7,4)$ \\
\end{tabular}
\caption{Conjugacy Classes of $Q_8\rtimes C_2$}
\label{tab:16T11}
\end{table}

We note that over $\Q$ the conjugacy classes $4A1$ and $4A\text{-}1$ are in the same cyclotomic orbit (as $4A\text{-}1$ is the inverse of $4A1$) and so their union is a single tame $\Q$-ramification type. We write $4A$ for the union $4A1\cup 4A\text{-}1$. The other conjugacy classes are distinct ramification types, so that $Q_8\rtimes C_2$ has eight nontrivial tame $\Q$-ramification types.

We will consider three normal subgroups generated by the conjugacy classes $2B$, $2C$, and $2D$. 

\begin{align*}
    T_B &= \langle 2B \rangle = 1A\cup 2A \cup 2B \cong C_2\times C_2\\
    T_C &= \langle 2C \rangle = 1A\cup 2A \cup 2C \cong C_2\times C_2\\
    T_D &= \langle 2D \rangle = 1A\cup 2A \cup 2D \cong C_2\times C_2.
\end{align*}

For each $X \in \{B, C, D\}$ the quotient $G/T_X$ is isomorphic to $C_2\times C_2$, which has three nontrivial tame ramification types. We will focus on the picture for $T_B$, as the other two subgroups work similarly by symmetry.

By \Cref{thm:meromorphic_continuation_of_fiber_series}, the fiber series $D_\Q(T_B,\pi;\underline{s})$ has a meromorphic continuation to the region cut out by $\sigma_{2A} > 1/2$ and $\sigma_{2B} > 1/2$ for each $\pi\in (q_B)_*\Sur(G_\Q,Q_8\rtimes C_2)$. Noting that $\tw_{T_B,\widetilde{\pi}}$ is injective in this case and $2A$ is central, \Cref{thm:meromorphic_continuation_of_fiber_series} gives the vertical bound
\begin{align*}
    D_{\Q}\left(T_B,\pi,\underline{s} \right) \ll_{\epsilon} &|H^1_{ur}(\Q,T_B(\pi))| \cdot \inv_{\Q,G/T_B}(\pi,q_*\underline{\sigma}+\epsilon)\\
    &\cdot \max_{\chi_0}\{|L(s_{2A}, \chi_0)|,1\} \cdot \max_{\chi}\{|L_{F(\pi,\tau)}(s_{2B},\chi)|,1\},
\end{align*}
where the first maximum is over rational Dirichlet characters of conductor dividing $8$ and the second maximum is over quadratic Hecke character $\chi$ over $F(\pi,\tau)$ unramified away from $\{2,\infty\}$, where $F(\pi,\tau)$ is the field fixed by $\Stab_{G_\Q}(g)$ for some $g\in 2B$, which in this case is exactly $\pi^{-1}(C_G(T_B))$ the preimage of the centralizer of $T_B$ under $\pi$. The centralizer is given by the group
\[
    C_G(T_B) = 1A \cup 2A \cup 4A \cup 2B \cup 4B
\]
of order $8$. Thus, $F(\pi,\tau)$ is a quadratic field.

The $H^1_{ur}$ piece can be bounded by $\disc(\pi)^{\epsilon}$ using \cite[Corollary 1.14(i)]{ALOWW}. Using the Weyl strength subconvexity bound for the Dirichlet $L$-functions in the $s_{2A}$ variable \cite[Theorem 1.1]{petrow2023fourth}, and otherwise using Yang's subconvexity bounds \cite[Corollary 1.2]{yang2023burgess} for the Hecke $L$-functions, we conclude that
\begin{align*}
    D_{\Q}\left(T_B,\pi,\underline{s} \right) \ll_{\epsilon} &\inv_{\Q,G/T_B}(\pi,q_*\underline{\sigma}+\epsilon)\abs{\disc(F(\pi,\tau)/\Q)}^{\frac{3}{8}\max\{1-\sigma_{2B},0\} + \epsilon}\\
    &\cdot(1+|\underline{t}|)^{\frac{1}{3}\max\{1-\sigma_{2A},0\} + \frac{3}{4}\max\{1-\sigma_{2B},0\}+\epsilon}.
\end{align*}
on compact subsets of the region of meromorphicity.

We separate the tame primes that divide the discriminant $\disc(F(\pi,\tau)/\Q)$, which are precisely those for which $\RT(p,\pi) \in \{2C, 2D, 4C, 4D\}$. By expanding out $\inv_{\Q,G/T_B}$ and $\disc(F(\pi,\tau)/\Q)$ in terms of the two tame ramification types of $G/C_G(T_B)$, the conductor aspect of the upper bound is
\begin{align*}
   D_{\Q}\left( T_B,\pi,\underline{s} \right)& \ll_{\underline{t},\epsilon}  \prod_{\RT(p, \pi) \in q\{4A, 4B\}} p^{-\min\{\sigma_{4A},\sigma_{4B}\} + \epsilon} \\
   &\hspace*{0.1cm}
   \cdot \prod_{\RT(p, \pi) \in q\{2C,2D,4C,4D\}} p^{-\min\{\sigma_{2C},\sigma_{2D},\sigma_{4C},\sigma_{4D}\} + \frac{3}{8}\max\{1 - \sigma_{2B},0\} + \epsilon} 
\end{align*}
We find that the sum over $\pi$ converges absolutely on compact subsets of the region $\Omega_B$ cut out by the following equations:
\begin{align*}
    &\sigma_{2A},\sigma_{2B} > 1/2; &&\sigma_{2C},\sigma_{2D},\sigma_{4A},\sigma_{4B},\sigma_{4C},\sigma_{4D} > 1;\\
    &\sigma_{2C} + \frac{3}{8}\sigma_{2B} > \frac{11}{8};&
    &\sigma_{2D} + \frac{3}{8}\sigma_{2B} > \frac{11}{8};\\
    &\sigma_{4C} + \frac{3}{8}\sigma_{2B} > \frac{11}{8};&
    &\sigma_{4D} + \frac{3}{8}\sigma_{2B} > \frac{11}{8}.
\end{align*}
This contains the region
\[
    \Omega_B \supseteq U_B:=\left\{\underline{s} :\substack{\displaystyle \sigma_{2A},\sigma_{2B}> 1/2;\ \sigma_{2C},\sigma_{2D},\sigma_{4A}> 1;\ \sigma_{4B},\sigma_{4C},\sigma_{4D} > 19/16\\ \displaystyle \sigma_{2C} + \frac{3}{8}\sigma_{2B} > \frac{11}{8};\ \sigma_{2D} + \frac{3}{8}\sigma_{2B} > \frac{11}{8}}\right\}.
\]
All together, we have shown that $D_\Q(Q_8\rtimes C_2;\underline{s})$ is meromorphic on $U_B$ with possible simple polar divisors at $s_{2A}=1$ and $s_{2B}=1$, and on this region
\[
    |D_\Q(Q_8\rtimes C_2;\underline{s})|\ll (1+|\underline{t}|)^{\frac{1}{3}\max\{1-\sigma_{2A},0\} + \frac{3}{4}\max\{1-\sigma_{2B},0\} + \epsilon} \ll (1+|\underline{t}|)^{\frac{1}{3}\max\{1-\sigma_{2A},0\} + \frac{3}{8} + \epsilon}.
\]
By symmetry, $D_\Q(Q_8\rtimes C_2;\underline{s})$ has a meromorphic continuation to $U_B\cup U_C\cup U_D$ with possible simple polar divisors at $s_{2A} = 1$, $s_{2B} = 1$, $s_{2C} = 1$, and $s_{2D} = 1$, where these regions are given by
\[
    U_C:=\left\{\underline{s} :\substack{\displaystyle \sigma_{2A},\sigma_{2C}> 1/2;\ \sigma_{2B},\sigma_{2D},\sigma_{4A} > 1;\ \sigma_{4B},,\sigma_{4C},\sigma_{4D} > 19/16\\ \displaystyle \sigma_{2B} + \frac{3}{8}\sigma_{2C} > \frac{11}{8};\ \sigma_{2D} + \frac{3}{8}\sigma_{2C} > \frac{11}{8}}\right\}.
\]
and
\[
    U_D:=\left\{\underline{s} :\substack{\displaystyle \sigma_{2A},\sigma_{2D}> 1/2;\ \sigma_{2B},\sigma_{2C},\sigma_{4A} > 1;\ \sigma_{4B},\sigma_{4C},\sigma_{4D} > 19/16\\ \displaystyle \sigma_{2C} + \frac{3}{8}\sigma_{2D} > \frac{11}{8};\ \sigma_{2B} + \frac{3}{8}\sigma_{2D} > \frac{11}{8}}\right\}.
\]
Moreover, the same bound in vertical strips $\ll(1+|t|)^{\frac{1}{3}\max\{1-\sigma_{2A},0\}+\frac{3}{8}+\epsilon}$ holds on the entire union $U_B \cup U_C \cup U_D$. By Bochner's tube theorem, the same holds in the convex hull. We claim that the convex hull contains the set
\[
    \Omega = \left\{\underline{s} :\substack{\displaystyle \sigma_{2A},\sigma_{2B},\sigma_{2C},\sigma_{2D} > 1/2;\ \sigma_{4A} > 1;\\\displaystyle \sigma_{4B}, \sigma_{4C} , \sigma_{4D} > 19/16;\\ \displaystyle \sigma_{2B} + \sigma_{2C} + \sigma_{2D} > 23/8;\\
    \displaystyle \sigma_{2B} + \sigma_{2C} > 27/16;\\ \displaystyle  \sigma_{2B} + \sigma_{2D} > 27/16; \\ \displaystyle  \sigma_{2C} + \sigma_{2D} > 27/16\\
    }\right\}.
\]
The equations $\sigma_{2A} > 1/2$, $\sigma_{4A} > 1$, and $\sigma_{4B}, \sigma_{4C} , \sigma_{4D} > 19/16$ are part of the defining data of each of $U_B$, $U_C$, and $U_D$, and so must be for the convex hull of their union. Thus, it suffices to construct the convex hull of the three dimensional shape cut out by the equations in $(\sigma_{2B},\sigma_{2C},\sigma_{2D})$. The same vertical bound applies to every point on the convex by \Cref{cor:independent_variable_multi_phragmen_lindelof}.

To assist with visualizing the convex hull, we plotted a three dimensional picture of this region in the variables $(\sigma_{2B},\sigma_{2C},\sigma_{2D}) = (x,y,z)$ using Desmos \cite{desmos}. An image of this graph is included in \Cref{fig:16T11}, and the Desmos graph itself can be found at \url{https://www.desmos.com/3d/ndwrqldvhr}.

\begin{figure}[!ht]
    \includegraphics[scale=0.5]{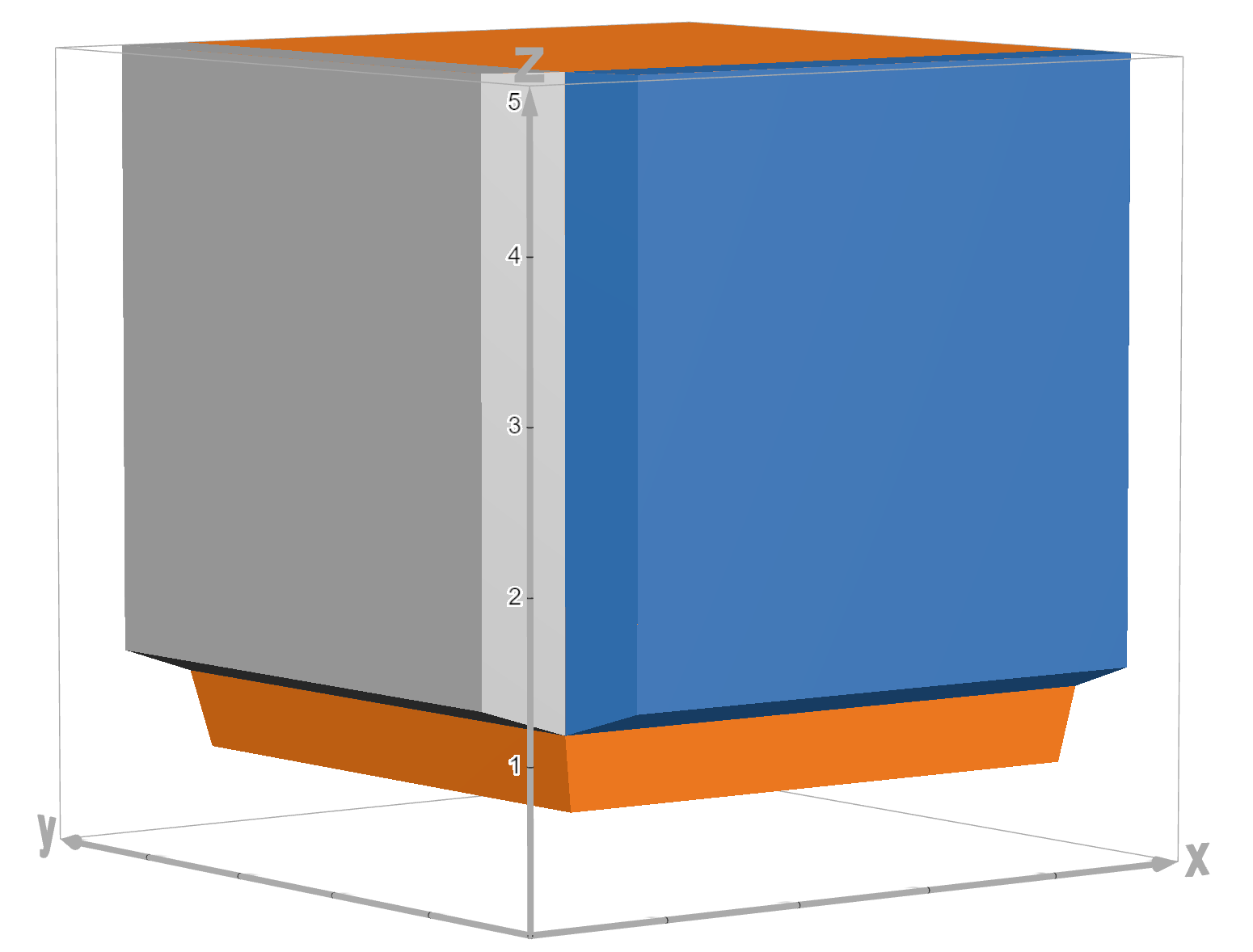}
    \caption{\label{fig:16T11} Projection of $U_B\cup U_C \cup U_D$ onto\\the $(\sigma_{2B},\sigma_{2C},\sigma_{2D}) = (x,y,z)$ space}
\end{figure}

From the image, it is clear to see that the convex hull of this region is the orthant $\sigma_{2B},\sigma_{2C},\sigma_{2D} > 1/2$ with the edges and corner sliced off. We can compute these by computing the convex hull between the various corners of $U_{B}$, $U_{C}$, and $U_{D}$ (these are the outer corners in \Cref{fig:16T11}).

Those three corners are $(1/2,19/16,19/16)$, $(19/16,1/2,19/16)$, and $(19/16,19/16,1/2)$. Slicing off the $\sigma_{2D} = 1/2$ edge comes from the convex hull of $(1/2,19/16,19/16)$ and $(19/16,1/2,19/16)$, which is precisely the equation
\[
    \sigma_{2B} + \sigma_{2C} > 27/16.
\]
A similar procedure applies to cutting off the $\sigma_{2B}=1/2$ and $\sigma_{2C}=1/2$ edges.

Finally, cutting of the corner $\sigma_{2B}=\sigma_{2C}=\sigma_{2D}=1/2$ from the orthant we need the convex hull of all three points, which is precisely
\[
    \sigma_{2B} + \sigma_{2C} + \sigma_{2D} > 23/8.
\]
This concludes the proof that $\Omega$ is contained in the convex hull.

We now check the individual transitive representations by specializing to the relevant complex lines $\underline{s} = (\ind_n(\tau)s)$.

\subsubsection{Degree eight representation}

In this case, $D_\Q(Q_8\rtimes C_2;(\ind_8(\tau)s)_\tau)$ is meromorphic on
\[
    \left\{s :\substack{\displaystyle 4\sigma,2\sigma > 1/2;\ 6\sigma > 1;\ 6 \sigma > 19/16;\\
    \displaystyle 4\sigma + 2\sigma + 4\sigma > 23/8;\\
    \displaystyle 4\sigma + 2\sigma > 27/16;\\
    \displaystyle 4\sigma + 4\sigma > 27/16
    }
    \right\} = \left\{ s : \sigma > 23/80\right\},
\]
with a possible pole at $s=1/2$ of order $\le 1$ in this region. Moreover, this series satisfies the vertical bounds
\[
    |D_\Q(Q_8\rtimes C_2;(\ind_8(\tau)s)_\tau)| \ll (1+|t|)^{\frac{1}{3}\max\{1-4\sigma,0\} + \frac{3}{8} + \epsilon} \ll (1+|t|)^{\frac{3}{8} + \epsilon}.
\]
By applying \Cref{thm:tauberian_power_saving} with $\sigma_a = 1/2$, $\delta = 17/80$, and $\xi = 3/8$ we conclude that the power saving exponent is
\[
    \frac{1}{2} - \frac{17/80}{3/8+1}+\epsilon = \frac{19}{55}+\epsilon.
\]

\subsubsection{Degree sixteen representation}

In this case, $D_\Q(Q_8\rtimes C_2;(\ind_{16}(\tau)s)_\tau)$ is meromorphic on
\[
    \left\{s :\substack{\displaystyle 8\sigma > 1/2;\ 12\sigma > 1;\ 12 \sigma > 19/16;\\
    \displaystyle 8\sigma + 8\sigma + 8\sigma > 23/8;\\
    \displaystyle 8 \sigma + 8 \sigma > 27/16
    }\right\} = \left\{ s : \sigma > 23/192\right\},
\]
with a possible pole at $s=1/8$ of order $\le 4$ in this region. Moreover, this series satisfies the vertical bounds
\[
    |D_\Q(Q_8\rtimes C_2;(\ind_8(\tau)s)_\tau)| \ll (1+|t|)^{\frac{1}{3}\max\{1-8\sigma,0\} + \frac{3}{8} + \epsilon} \ll (1+|t|)^{\frac{7}{18} + \epsilon}.
\]
By applying \Cref{thm:tauberian_power_saving} with $\sigma_a = 1/8$, $\delta = 1/192$, and $\xi = 7/18$ we conclude that the power saving exponent is
\[
    \frac{1}{8} - \frac{1/192}{7/18+1}+\epsilon = \frac{97}{800}+\epsilon.
\]

\subsubsection{Degree sixteen representation over an arbitrary number field}

The exact same arguments as above apply when $\Q$ is replaced by another number field $k$ to prove \Cref{thm:16T11_over_k}. There are essentially only two changes:
\begin{itemize}
    \item If $\Q(i) \subseteq k$, then $4A$ and $4A\text{-}1$ are separate $k$-conjugacy classes, and must be assigned separate complex variables. This does not have a significant impact on the region of meromorphicity, as neither $4A$ or $4A\text{-}1$ are minimum index conjugacy classes.
    \item The $t$-aspect bounds get worse as $[k:\Q]$ gets larger. This will reduce the power savings that can be achieved by \Cref{thm:tauberian_power_saving}, but will otherwise have no effect on the existence of an asymptotic with at least some nonzero power savings. With some effort, it is certainly possible to trace through the argument for arbitrary $k$ to give an expression for the error bound in terms of $[k:\Q]$.
\end{itemize}

\subsection{The nilpotent groups in \Cref{thm:intro_nilpotent_groups_with_shortcut}}

Each of the transitive groups listed in \Cref{thm:intro_nilpotent_groups_with_shortcut} share several common properties. For $G$ being each of these transitive groups,
\begin{enumerate}
    \item there exist abelian normal subgroups $T_1,T_2\normal G$ for which $G$ is semi-concentrated in $T_1,T_2$,
    \item $G$ has precisely three minimum index ramification types: one in $\iota_*\RTo{k}(T_1\cap T_2)$, one in $\iota_*\RTo{k}(T_1) - \iota_*\RTo{k}(T_2)$, and one in $\iota_*\RTo{k}(T_2) - \iota_*\RTo{k}(T_1)$.
\end{enumerate}
The first two properties are exactly what we need to apply the $2$-dimensional shortcut \Cref{prop:shortcut_convex_hull}. We check \eqref{eq:friendly} using \verb|Magma| \cite{MAGMA,codeurl} to confirm the asymptotic with power saving error term for each group, where we take $\alpha_\tau = 3/8$ for each $\tau$ corresponding to the unconditional subconvexity bounds for Hecke $L$-functions proven by Yang \cite[Corollary 1.2]{yang2023burgess}.

The bounds for the degree of the polynomial follow from \Cref{thm:general_unconditional_nilpotent}, where we need to determine $|\tw_{T,\widetilde{\pi}}^{-1}(\tau)|$ for each minimum index tame ramification type.

For the discriminant ordering, minimum index elements always have prime order. This can be checked directly, as $\ind_n(g^d) \le \ind(g)$ with equality if and only if $\langle g^d \rangle = \langle g\rangle$ due to how the cycle type of $g^d$ changes relative to $g$. Focusing solely on ramification types of prime order, we can prove the following:

\begin{lemma}\label{lem:prime_ord_tw_inj}
    Let $G$ be a nilpotent transitive group, $T\normal G$, and $\widetilde{\pi}:G_k\to G$ a homomorphism. If $\tau\in \RTo{k}(G)$ has prime order, then $|\tw_{T,\widetilde{\pi}}^{-1}(\tau)| = 1$.
\end{lemma}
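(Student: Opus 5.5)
The plan is to show directly that the twisted action $\phi_{\widetilde{\pi}}(T(\widetilde{\pi})\rtimes G_k)$ acts transitively on $\tau$, so that $\tau=\iota_*\widetilde{\tau}$ for a single $\widetilde{\tau}$. I will assume, as in every application in the paper, that $\widetilde{\pi}$ lifts some $\pi\in q_*\Sur(G_k,G)$, so that $\widetilde{\pi}(G_k)T=G$; the general case should reduce to this by replacing $G$ with $\widetilde{\pi}(G_k)T$, since that subgroup is again nilpotent.

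\emph{Step 1: the conjugacy classes inside $\tau$.} Fix a generator $a\in(\Q/\Z)^*$ and write elements of $\tau$ as group elements via $f\mapsto f(a)$. Let $\ell={\rm ord}(\tau)$ and choose $g\in\tau$. Then $\tau$ is the union of the $G$-conjugacy classes of $g^c$, where $c$ ranges over the image $C\subseteq(\Z/\ell)^\times$ of the cyclotomic character of $G_k$, so $|C|=[k(\zeta_\ell):k]$. I first claim these classes are pairwise distinct. The group $N_G(\langle g\rangle)/C_G(g)$ embeds in $\Aut(\langle g\rangle)\cong(\Z/\ell)^\times$, whose order $\ell-1$ is prime to $\ell$. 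Write $G=P\times P'$ with $P$ the Sylow $\ell$-subgroup, which is possible because $G$ is nilpotent. The factor $P'$ commutes with $g\in P$, and the image of $P$ is an $\ell$-group inside a group of order prime to $\ell$, hence trivial. So $g^c$ is conjugate to $g$ only when $c\equiv1$, and the $G$-orbit of $g$ equals its $P$-orbit.

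\emph{Step 2: the twisted orbit.} The twisted orbit of $g$ consists of the elements $t\widetilde{\pi}(x)\,g^{\chi(x)^{-1}}\,\widetilde{\pi}(x)^{-1}t^{-1}$ with $t\in T$ and $x\in G_k$. As $x$ varies, $\chi(x)^{-1}$ covers all of $C$, so the orbit meets every $G$-class in $\tau$. It therefore suffices to show that the orbit contains the full class of each $g^c$. Applying the cyclotomic twist, it is enough to treat the class of $g$ itself.

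Let $H_0=\ker(G_k\to\Gal(k(\zeta_\ell)/k))$. Taking $x\in H_0$ shows that the orbit contains $g$ conjugated by every element of $K:=T\widetilde{\pi}(H_0)$. Because $\widetilde{\pi}(G_k)T=G$ and $T$ is normal, $K$ is normal in $G$, and $G/K$ is a quotient of $\Gal(k(\zeta_\ell)/k)$. Hence $[G:K]$ divides $|C|$, which divides $\ell-1$, and is therefore prime to $\ell$. It follows that $K$ contains every $\ell$-element of $G$, so $K\supseteq P$. By Step 1, the $K$-orbit of $g$ is the whole $G$-class of $g$. Thus $\phi_{\widetilde{\pi}}(T(\widetilde{\pi})\rtimes G_k)$ acts transitively on $\tau$. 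By the partition $\tau=\coprod_{\widetilde{\tau}}\iota_*\widetilde{\tau}$ from the proof of \Cref{cor:rt_twist_structure}(c), there is exactly one preimage, so $|\tw_{T,\widetilde{\pi}}^{-1}(\tau)|=1$.

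The main obstacle is Step 1: the argument needs the fact that in a nilpotent group a prime-order element is never conjugate to a nontrivial power of itself, and that its conjugacy class is already reached by the Sylow $\ell$-subgroup alone. This is exactly where nilpotence is used. The index-divisibility argument in Step 2 is then routine.
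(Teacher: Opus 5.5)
Your proof is correct and is essentially the paper's argument. In both, nilpotence reduces the conjugation action on $\tau$ to the Sylow $\ell$-subgroup $P$, and coprimality of $|P|$ with $[k(\zeta_\ell):k]$ shows the twisted action realizes all of it; your containment $P\subseteq T\widetilde{\pi}(H_0)$ plays the role of the paper's surjectivity of $G_k\to S_p(G)\times\Gal(k(\zeta_p)/k)$, and because you use the $T$-part of the action you need only $\widetilde{\pi}(G_k)T=G$ rather than the surjectivity of $\widetilde{\pi}$ that the paper tacitly uses. Drop your suggested reduction of the general case, however: for an arbitrary homomorphism $\widetilde{\pi}$ the statement is false (for $G=D_4$, $T=T_C$ and $\widetilde{\pi}$ trivial, the type $2C=\{b,a^2b\}$ has the two preimages $\{b\}$ and $\{a^2b\}$, because passing to $\widetilde{\pi}(G_k)T$ splits ramification types), so a hypothesis such as $\widetilde{\pi}(G_k)T=G$ is genuinely needed.
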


In practice, this means that we do not expect any transitive nilpotent groups to be counter examples to Malle's conjecture for the value of $b$ in \Cref{conj:number_field_counting}. The possibility that $|\tw_{T,\widetilde{\pi}}^{-1}(\tau)|>1$ corresponds to $\tau$ splitting in to several ramification types of $T(\widetilde{\pi})$. This corresponds to the counter examples described by Kl\"uners in \cite{kluners2005} when this is due to ``shrinking the cyclotomic action''. When $G$ is nilpotent and we order by the product of ramified primes instead of the discriminant, it is still possible to have $|\tw_{T,\widetilde{\pi}}^{-1}(\tau)|>1$ corresponding to an ``entanglement of Frobenius'' as described by Koymans--Pagano in \cite{koymans-pagano2025}. The observation that this does not occur for nilpotent extensions in the discriminant ordering was previously communicated to the authors by Koymans in the context of their work, and \Cref{lem:prime_ord_tw_inj} is a statement of this fact in the language of ramification types.

In the context of \Cref{thm:intro_nilpotent_groups_with_shortcut}, this implies $b_\tau=1$ for each minimum index ramification type $\tau$. The upper bound for the degree of the polynomial is then given by
\begin{align*}
    \left(\sum_{\substack{\tau\in \iota_*\RTo{k}(T_1)\cup \iota_*\RTo{k}(T_2)\\ \ind_n(\tau)=a(G)}} b_\tau\right) - 1 &= \#\{\tau \in \iota_*\RTo{k}(T_1)\cup \iota_*\RTo{k}(T_2) : \ind_n(\tau) = a(G)\} -1\\
    &= 2,
\end{align*}
and the lower bound is given by
\begin{align*}
    \left(\sum_{\substack{\tau\in \iota_*\RTo{k}(T_j)\\ \ind_n(\tau)=a(G)}} b_\tau\right) - 1 &= \#\{\tau \in \iota_*\RTo{k}(T_j) : \ind_n(\tau) = a(G)\} -1\\
    &= 1.
\end{align*}

\begin{proof}[Proof of \Cref{lem:prime_ord_tw_inj}]
    Say that $\tau$ has prime order $p$. Write $S_p(G)$ for the Sylow $p$-subgroup of $G$. We remark that nilpotent groups are direct products of their Sylow subgroups, so the $G$-action on $\tau$ by conjugation necessarily factors through the quotient map $G\to S_p(G)$. The transitive $G\times G_k$-action on $\tau$ then factors through $S_p(G)\times \Gal(k(\zeta_p)/k)$.
    
    For any $\widetilde{\pi}:G_k\to G$, the action on $x:\tau\mapsto \widetilde{\pi}(x)\tau^{\chi^{-1}(x)}\widetilde{\pi}(x)^{-1}$ therefore factors through $S_p(G)\times \Gal(k(\zeta_p)/k)$. It suffices to prove that the image of this action in $S_p(G)\times \Gal(k(\zeta_p)/k)$ is the whole group, as each $\widetilde{\tau}\in \tw_{T,\widetilde{\pi}}^{-1}(\tau)$ is an orbit under this action, while $\tau$ is a full $S_p(G)\times \Gal(k(\zeta_p)/k)$ orbit.
    
    Given that $[k(\zeta_p):k]$ is coprime to $p$, the kernels of
    \[
    \begin{tikzcd}
        G_k \rar{\widetilde{\pi}} &G \rar[two heads] &S_p(G)
    \end{tikzcd}
    \]
    and
    \[
    \begin{tikzcd}
        G_k \rar{\chi^{-1}} &\widehat{\Z}^{\times} \rar[two heads] &\Gal(k(\zeta_p)/k)
    \end{tikzcd}
    \]
    have coprime index, which directly implies the composition
    \[
    \begin{tikzcd}
        G_k \rar{\widetilde{\pi}\times \chi^{-1}} &G\times \widehat{\Z}^{\times} \rar &S_p(G)\times \Gal(k(\zeta_p)/k)
    \end{tikzcd}
    \]
    is surjective. Thus $\tau$ contains a single orbit under this action, or equivalently $|\tw_{T,\widetilde{\pi}}^{-1}(\tau)|=1$.
\end{proof}

\subsection{Wreath products}

\Cref{thm:wreath_products} applies to transitive groups of the form $G = N\wr B$ where $B$ is transitive of degree $m$ and $N$ is any one of the nilpotent transitive groups in \Cref{thm:intro_nilpotent_groups_with_shortcut}. This is possible because checking the conditions of the two dimensional simplification \Cref{prop:shortcut_convex_hull_twisted} for the normal subgroup $N^m\normal G$ is essentially the same as checking the untwisted version \Cref{prop:shortcut_convex_hull} for $N$.

This is made precise by the following lemma:
\begin{lemma}\label{lem:wreath_shortcut}
    Let $G= N\wr B$ for transitive groups $N,B$ of degrees $n,m$ respectively.
    \begin{enumerate}[(a)]
        \item $a(N) = a(N^m)$
        \item Let $f:N\to N^m$ be the embedding in to the first coordinate, $g\mapsto (g,1,\dots,1)$. Then $f_*$ induces a bijection from
        \[
            \{\tau\in \RTo{k}(N) : \ind_n(\tau) = a(N)\} \rightarrow \{\tau\in \iota_*\RTo{k}(N^m) : \ind_n(\tau) = a(N^m)\}.
        \]
        \item $f_*$ preserves the order of ramification types
        \item For each $\tau,\kappa\in \RTo{k}(N)$,
        \[
            \ind_{|f_*\tau|/[k(\zeta_{f_*\tau}):k]}((q_\tau)_*f_*\kappa) = \ind_{|\tau|/[k(\zeta_{\tau}):k]}((q_\tau)_*\kappa)
        \]
    \end{enumerate}
\end{lemma}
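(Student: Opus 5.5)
The plan is to reduce every part to how cycle types behave in the imprimitive action of $N\wr B$ on $\{1,\dots,n\}\times\{1,\dots,m\}$ (degree $nm$), where $N^m$ acts blockwise. An element $h=(h_1,\dots,h_m)\in N^m$ preserves every block, and its cycle type is the union of the cycle types of the $h_i$ acting in degree $n$. Hence
\[
\ind_{nm}(h)=\sum_{i=1}^m \ind_n(h_i).
\]

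For part (a), this formula shows that the minimum over nontrivial $h$ is attained exactly when exactly one coordinate $h_i$ is nontrivial, and that coordinate has $\ind_n(h_i)=a(N)$. This gives $a(N^m)=a(N)$, with both indices computed in the degree $nm$ action on $N^m$ and the degree $n$ action on $N$.

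For part (b), the minimum index elements of $N^m$ are therefore exactly the conjugates under coordinate permutation (by $B$) of $f(g)$ with $\ind_n(g)=a(N)$. I then pass to ramification types, viewed as $G$-conjugacy and cyclotomic orbits via the $k$-conjugacy class description.
\begin{itemize}
\item \emph{Well defined:} $f$ commutes with taking powers, so $f_*$ is compatible with the cyclotomic action. Conjugation by $N^m$ only conjugates the first coordinate by $N$, so $f_*$ sends $N$-classes into $G$-classes.
\item \emph{Surjective:} every minimum index element of $N^m$ is $G$-conjugate, via a block permutation from $B$, to an element supported in the first coordinate.
\item \emph{Injective:} if $f(g)$ and $f(g')^{u}$ lie in the same $G$-orbit for some cyclotomic exponent $u$, then the conjugating element $(n_1,\dots,n_m)\beta$ must have $\beta$ fixing block $1$ (compare supports). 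Looking at the first coordinate then shows $g$ and $g'^{u}$ are $N$-conjugate.
\end{itemize}

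Part (c) follows at once: $f$ is injective, so $f(g)$ has the same order as $g$.

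Part (d) is the main obstacle, because $\tau$ on the right-hand side is a $G$-class, so $C_G(f_*\tau)$ is computed in $G$ rather than in $N$. I would identify the permutation action of $G/C_G(f_*\tau)$ on $f_*\tau$. By part (b), $f_*\tau$ is a disjoint union of $m$ pieces indexed by blocks, the piece in block $j$ being a copy of $\tau$.
\begin{itemize}
\item An element $f(\kappa)$, supported in block $1$, acts on the block-$1$ piece exactly as $\kappa$ acts on $\tau$ by conjugation.
\item It fixes the other pieces pointwise, since elements supported in disjoint blocks commute.
\end{itemize}
Quotienting by cyclotomic orbits (orbit size $|f_*\tau|/[k(\zeta):k]$), each point not in block $1$ is a fixed point. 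So the number of orbits changes by exactly the number of added points, and the index, defined as degree minus number of orbits, is unchanged:
\[
\ind_{|f_*\tau|/[k(\zeta_{f_*\tau}):k]}\bigl((q_{f_*\tau})_*f_*\kappa\bigr)=\ind_{|\tau|/[k(\zeta_\tau):k]}\bigl((q_\tau)_*\kappa\bigr).
\]
The delicate bookkeeping is checking that the cyclotomic-orbit quotient decomposes blockwise, which holds because the cyclotomic action commutes with conjugation and preserves supports.
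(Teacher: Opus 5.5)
Your proof is correct and follows the same route as the paper: additivity of the index over blocks for (a), transitivity of $B$ on blocks for surjectivity in (b), equality of orders for (c), and compatibility of the first-coordinate embedding with the conjugation action for (d). Your support-comparison argument for injectivity in (b) and your explicit fixed-point count in (d) supply details that the paper compresses into ``injectivity of $f$'' and a commutative diagram of quotient maps; one small point is that the block decomposition of $f_*\tau$ for arbitrary, not only minimum-index, $\tau$ comes from the same argument as in (b), not from the statement of (b).
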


\begin{proof}
    Let $n$ be the degree of $N$. The index of an element $(g_1,g_2,\dots,g_m)\in N^m$ is given by
    \[
        \ind_{nm}(g_1,g_2,\dots,g_m) = \ind_n(g_1) + \ind_n(g_2) + \cdots + \ind_n(g_m).
    \]
    Thus, the minimum index elements of $N^m$ are precisely those $(g_1,g_2,\dots,g_m)$ for which there exists an index $j$ such that $g_i=1$ for $i\ne j$ and $g_j$ is a minimum index element of $N$. As they both have index $a(N)$, this proves part (a).
    
    Given that $B$ acts transitively on the coordinates of $N^m$, every minimum index conjugacy class is conjugate to one of the form $(g,1,1\dots,1)$. This implies $f_*$ is surjective. Injectivity follows from injectivity of $f$, proving part (b).

    Part (c) is immediate, as $g$ and $(g,1,\dots,1)$ have the same order.

    Finally, part (d) follows by noting that $\zeta_\tau = \zeta_{f_*\tau}$ from (c) and that the diagram
    \[
        \begin{tikzcd}
            N \dar{f} \rar{q_\tau} &N/C_N(\tau)\dar{f}\\
            N^m \rar{q_\tau} &N^m/(C_G(f_*\tau)\cap N^m)
        \end{tikzcd}
    \]
    commutes. 
\end{proof}

As Yang's subconvexity results apply to any Hecke $L$-function, we can always take $\alpha_{f_*\tau} = \alpha_\tau = 3/8$. Combined with this lemma, equations \eqref{eq:friendly} and \eqref{eq:friendly_twisted} are equivalent. Given that we checked \eqref{eq:friendly} to prove \Cref{thm:general_unconditional_nilpotent} for $N$, it follows that \eqref{eq:friendly_twisted} holds for $N^m \normal G$. Thus, according to \Cref{thm:unconditional_concentrated} it now suffices to check \eqref{eq:sum_h1ur_uncond}.

\begin{lemma}\label{lem:wreath_h1ur}
    Let $k$ be a number field, $G = N\wr B$ for $N$ a transitive $2$-group, $B$ a transitive group of degree $m$, and $T\normal N$ with $T$ abelian. Then for each $\pi \in q_*\Sur(G_k,G)$,
    \[
        h_{ur}^1(k,N^m,T^m,\pi) \le |\Cl_{k(\pi)}[2]|^{\log_2|T|},
    \]
    where $k(\pi)$ is the field fixed by $\pi^{-1}(\Stab_B(1))$.
\end{lemma}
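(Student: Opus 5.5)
By Definition \ref{def:h1ur}, $h^1_{ur}(k,N^m,T^m,\pi)$ is a product over the layers of the upper central series of $N^m$ of the quantities $|H^1_{ur}(k,M_i(\pi))|$, where $M_i = T^m\cap Z_i(N^m)/T^m\cap Z_{i-1}(N^m)$. The plan is to bound each factor by a power of $|\Cl_{k(\pi)}[2]|$ and then check that the exponents add up to $\log_2|T|$.

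First we identify the Galois modules. The upper central series of a direct product is the product of the upper central series, so $Z_i(N^m)=Z_i(N)^m$. Hence $M_i = A_i^m$ with $A_i := T\cap Z_i(N)/T\cap Z_{i-1}(N)$, and $N$ acts trivially on $A_i$ by conjugation. The $G_k$-action on $A_i^m(\pi)$ therefore factors through $\pi:G_k\to B$, which permutes the $m$ coordinates transitively. This identifies $A_i^m(\pi)$ with the induced module $\operatorname{Ind}_{G_{k(\pi)}}^{G_k} A_i$, where $A_i$ carries the trivial $G_{k(\pi)}$-action. Here one should double-check that conjugation by elements of $N\wr B$ acts on each coordinate only through $N$, which acts trivially on $A_i$, so that the action really is the pure permutation action. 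Given this, Shapiro's lemma identifies $H^1_{ur}(k,A_i^m(\pi))$ with $H^1_{ur}(k(\pi),A_i)$ for the trivial module $A_i$. To justify the Shapiro step at the unramified conditions, I would compare local conditions place by place: an induced class is unramified at $\pk$ exactly when its restriction is unramified at every place of $k(\pi)$ above $\pk$. This is the standard compatibility, and it is the step where I would be most careful.

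Next, for a trivial module $A$ we have $H^1_{ur}(k(\pi),A)=\Hom(\Cl_{k(\pi)},A)$, up to the convention at infinite places; using the narrow class group at worst would still need care. Because $N$ is a $2$-group, $T$ and hence each $A_i$ is an abelian $2$-group. Filter $A_i$ by a chain with successive quotients $\Z/2$; left exactness of $\Hom(\Cl,-)$ gives $|\Hom(\Cl_{k(\pi)},A_i)|\le |\Cl_{k(\pi)}[2]|^{\log_2|A_i|}$. Taking the product over $i$ gives the exponent $\sum_i\log_2|A_i|=\log_2|T|$, since the $A_i$ are the successive quotients of a filtration of $T$, and $T$ lies in the hypercenter of the nilpotent group $N$.

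The main obstacle is the treatment of infinite places and the precise unramified convention. If $H^1_{ur}$ imposes no condition at archimedean places, the relevant object is the narrow class group, whose $2$-torsion can exceed $\Cl[2]$. I would first check the definition of $H^1_{ur}$ used in \cite{ALOWW}, which I expect makes everywhere-unramified homomorphisms correspond to the ordinary class group, or else adjust via the appropriate ray class group and absorb the discrepancy.
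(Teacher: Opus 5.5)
Your argument is the paper's proof. You use $Z_i(N^m)=Z_i(N)^m$, identify each layer with ${\rm Ind}_{k(\pi)}^k A_i$ for a trivial module $A_i$, reduce to homomorphisms out of the class group of $k(\pi)$, and bound each layer by $|\Cl_{k(\pi)}[2]|^{\log_2|A_i|}$, with exponents summing to $\log_2|T|$. The only difference is that you do the Shapiro step, including the place-by-place matching of unramified conditions, by hand, where the paper cites \cite[Corollary 1.14(iii)]{ALOWW}.

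The point you flag resolves against your first guess. In this paper $H^1_{ur}$ imposes no condition at archimedean places (see the subgroup $Y_{ur}$ in the Poisson summation step of the proof of \Cref{thm:explicit_continuation}). So your Shapiro argument gives exactly $\Hom(\Cl^+_{k(\pi)},A_i)$, where $\Cl^+$ is the narrow class group.

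What you obtain is therefore the inequality with $\Cl^+_{k(\pi)}$ in place of $\Cl_{k(\pi)}$. With $\Cl_{k(\pi)}$ it holds only up to a factor $|T|^{r_1}$, where $r_1\le m[k:\Q]$ is the number of real places of $k(\pi)$; this follows from $|\Cl^+[2]|\le 2^{r_1}|\Cl[2]|$. This $\ll$ version is all the paper's proof actually establishes, since its displayed chain runs through $\ll_{|T|,m}$ after invoking the cited corollary. It is also all that the proof of \Cref{thm:wreath_products} uses.

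The inequality with constant $1$ cannot be proved, because it is false. Take $k=\Q$, $B=C_2$, $N=8T4$, $T$ a Klein four subgroup, and $\pi$ with $k(\pi)=\Q(\sqrt3)$, which has class number $1$ and narrow class number $2$. Both layers then contribute a factor $2$, so the left side is $4$ while the right side is $1$.
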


\begin{proof}
    This follows essentially from \cite[Corollary 1.14(iii)]{ALOWW}, which implies that for a field extension $F/k$ and an abelian group $M$ with trivial $G_F$ action,
    \[
        |H_{ur}^1(k,{\rm Ind}_F^k M)| \ll_{|M|,[F:k]} |\Hom(\Cl_F, M)|.
    \]
    For any abelian group $A$, the action of $B$ on ${\rm Ind}_{\Stab_B(1)}^B(A) \cong A^m$ is precisely the action permuting the coordinates.

    The upper central series of $N^m$ is precisely $Z_i(N)^m$. In particular, the action of $B$ on
    \[
        T^m\cap Z_i(N)^m / T^m\cap Z_{i-1}(N)^m = \left(T\cap Z_i(N)/ T\cap Z_{i-1}(N)\right)^m
    \]
    induced by conjugation is exactly the action permuting the coordinates, and so is module isomorphic to
    \[
        {\rm Ind}_{\Stab_B(1)}^B\left(T\cap Z_i(N)/ T\cap Z_{i-1}(N)\right).
    \]
    For each $\pi\in q_*\Sur(G_k,G)$, this implies
    \[
        \left(T\cap Z_i(N)/ T\cap Z_{i-1}(N)\right)^m(\pi) = {\rm Ind}_{k(\pi)}^k\left(T\cap Z_i(N)/ T\cap Z_{i-1}(N)\right).
    \]
    By combining this bound with \cite[Corollary 1.14(iii)]{ALOWW} and \Cref{def:h1ur}, we can bound
    \begin{align*}
        h_{ur}^1(k,N^m,T^m,\pi) &= \prod_{i=1}^\infty \left\lvert H_{ur}^1\left(k,{\rm Ind}_{k(\pi)}^k\left(T\cap Z_i(N)/ T\cap Z_{i-1}(N)\right)\right) \right\rvert\\
        &\ll_{|T|,m} \prod_{i=1}^\infty |\Hom\left(\Cl_{k(\pi)},T\cap Z_i(N)/ T\cap Z_{i-1}(N)\right)|.
    \end{align*}
    An abelian $2$-group $A$ certainly satisfies $|\Hom(\Cl_F,A)| \le |\Cl_F[2]|^{\log_2|A|}$ since every such homomorphism will factor through the $2$-part of $\Cl_F$. From this we conclude that
    \begin{align*}
        h_{ur}^1(k,N^m,T^m,\pi) &\ll_{|T|,m} \prod_{i=1}^\infty |\Cl_{k(\pi)}[2]|^{\log_2[T\cap Z_i(N):T\cap Z_{i-1}(N)]}\\
        &=|\Cl_{k(\pi)}[2]|^{\sum_i\log_2[T\cap Z_i(N):T\cap Z_{i-1}(N)]}\\
        &=|\Cl_{k(\pi)}[2]|^{\log_2|T|}.
    \end{align*}
\end{proof}

We are now ready to prove the result.

\begin{proof}[Proof of Theorem \Cref{thm:wreath_products}]
    We prove this via \Cref{thm:unconditional_concentrated}.
    
    As discussed above, \Cref{lem:wreath_shortcut} together with the same \verb|Magma| code used to check \eqref{eq:friendly} in the proof of \Cref{thm:general_unconditional_nilpotent} can be used with \Cref{prop:shortcut_convex_hull_twisted} to prove that $(\ind_n(\tau)/a(nTd))_\tau$ is contained in the convex hull ${\rm Hull}(\bigcup_j \Omega_{T_j})$.

    Thus, it suffices to show \eqref{eq:sum_h1ur_uncond} with a value $\theta < 1/a(N)$. \Cref{lem:wreath_h1ur} implies that
    \[
        \sum_{\pi\in q_*\Sur(G_k,G;X)} \max_j h_{ur}^1(k,N^m,T_j^m,\pi) \ll_{|N|,m} \sum_{\pi\in q_*\Sur(G_k,G;X)} \max_j |\Cl_{k(\pi)}[2]|^{\log_2|T_j|}.
    \]
    It is proven in \cite[(5.1)]{ALOWW} that
    \[
        q_*\Sur(G_k,G;X) \subseteq \{\psi\in \Sur(G_k,G) : |q_*\disc_G(\psi)|\le X\}.
    \]
    As $G$ is an imprimitive extension with tower type $(N,B)$, it then follows from \cite[Proposition 5.2]{ALOWW} that
    \[
        q_*\Sur(G_k,G;X) \subseteq \Sur(G_k,G/N,X^{1/n}),
    \]
    where $N$ is a transitive group in degree $n$. This set is in a many-to-one correspondence with $\mathcal{F}_k(B;X^{1/n})$ via the map $\pi \mapsto k(\pi)$. This implies
    \[
        \sum_{\pi\in q_*\Sur(G_k,G;X)} \max_j h_{ur}^1(k,N^m,T_j^m,\pi) \ll_{|N|,m} \sum_{F\in \mathcal{F}_{k}(B;X^{1/n})} \max_j |\Cl_{F}[2]|^{\log_2|T_j|}.
    \]
    The best known general bounds for $2$-torsion in the class group is given by \cite{bhargava-shankar-taniguchi+2017}, implying that
    \[
        |\Cl_{F}[2]| \ll_{[F:\Q],\epsilon} |\disc(F/\Q)|^{\frac{1}{2} - \frac{1}{2[F:\Q]}+\epsilon}.
    \]
    Inputting this bound, we find that
    \[
        \sum_{\pi\in q_*\Sur(G_k,G;X)} \max_j h_{ur}^1(k,N^m,T_j^m,\pi) \ll_{|N|,m,[k:\Q]} \max_j X^{\frac{\log_2|T_j|}{n}\left(\frac{1}{2} - \frac{1}{2m[k:\Q]}\right)}\#\mathcal{F}_{k}(B;X^{1/n}).
    \]
    By \Cref{thm:unconditional_concentrated}(i), it suffices to show that the right-hand side is $\ll X^{1/a(N) - \delta}$ for some positive $\delta$. This is equivalent to
    \[
        \mathcal{F}_{k}(B;X) \ll \min_j X^{\frac{n}{a(N)} - \frac{\log_2|T_j|}{2} + \frac{\log_2|T_j|}{2m[k:\Q]} - \delta}
    \]
    for some positive $\delta$.

    We compute this expression directly using \verb|Magma| to conclude the proof \cite{MAGMA,codeurl}. By way of example, we demonstrate the computation for $N=8T4$. In this case $n=8$, $a(8T4) = 4$, and following the work in \Cref{subsec:D4}, $T_1$ and $T_2$ are both groups of order $4$. Thus
    \begin{align*}
        \frac{n}{a(N)} - \frac{\log_2|T_j|}{2} + \frac{\log_2|T_j|}{2m[k:\Q]} - \delta &= \frac{8}{4} - \frac{2}{2} + \frac{2}{2m[k:\Q]} - \delta\\
        &= 1 + \frac{1}{m[k:\Q]}-\delta.
    \end{align*}
\end{proof}

\subsection{Direct products}

\Cref{thm:direct_products} applies to transitive groups of the form $G = N\times B$ where $B$ is transitive of degree $m$ and $N$ is any one of the nilpotent transitive groups in \Cref{thm:intro_nilpotent_groups_with_shortcut}. The proof is similar to that of \Cref{thm:wreath_products}, and in some sense can be considered easier because conjugation by $1\times B$ acts trivially on the normal subgroup $N\times 1 \normal G$.

\begin{proof}[Proof of \Cref{thm:direct_products}]
    We prove this via \Cref{thm:unconditional_concentrated}.

    Each $\pi \in q_*\Sur(G_k,G)$ has a lift $\widetilde{\pi}:G_k\to N\times B$ given by composing with the section $B \mapsto 1\times B$. As $1\times B$ commutes with $N\times 1$, it follows that $(N\times 1)(\widetilde{\pi})\cong N$ as $G_k$-groups, where we view $N$ as having the trivial $G_k$-action. This implies that the twisting map $\tw_{N,\widetilde{\pi}}:\RTo{k}(N) \to \RTo{k}(G)$ is injective. In particular, this induces a bijection between
    \[
        \{\tau \in \RTo{k}(N) : \ind_n(\tau) = a(N)\} \to \{\tau\in \iota_*\RTo{k}(N\times 1) : \ind_{nm}(\tau) = ma(N)\}
    \]
    which preserves the order of ramification types. (Note that $a(N\times 1) = ma(N)$). This implies $G$ is semiconcentrated in $T_1\times 1$, $T_2\times 1$, where $T_1,T_2\normal N$ are the two abelian groups in which $N$ is semiconcentrated.
    
    We also find that the diagram
    \[
        \begin{tikzcd}
            N \dar \rar{q_\tau} &N/C_N(\tau)\dar\\
            N\times 1 \rar{q_{(\tau,1)}} &N\times 1/(C_G(\tau,1)\cap (N\times 1))
        \end{tikzcd}
    \]
    commutes, as $1\times B$ commutes with $N\times 1$. This implies that for each $\tau,\kappa\in \RTo{k}(N)$,
    \[
        \ind_{|(\tau,1)|/[k(\zeta_{(\tau,1)}):k]}((q_{(\tau,1)})_*(\kappa,1)) = \ind_{|\tau|/[k(\zeta_{\tau}):k]}((q_\tau)_*\kappa).
    \]

    All together, these imply \eqref{eq:friendly} for $N$ and \eqref{eq:friendly_twisted} for $N\times 1$ are equivalent. As \eqref{eq:friendly} was already checked for the proof of \Cref{thm:intro_nilpotent_groups_with_shortcut}, it now follows from \Cref{prop:shortcut_convex_hull_twisted} that $(\ind_{nm}(\tau)/a(G))_\tau$ is in the interior of ${\rm Hull}(\Omega_{T_1}\cup\Omega_{T_2})$.

    Thus, it suffices to show \eqref{eq:sum_h1ur_uncond} with a value $\theta < \frac{1}{ma(N)}$. However, consider the group
    \[
        (T_j\times 1)\cap Z_i(N\times 1) / (T_j\times 1) \cap Z_{i-1}(N\times 1).
    \]
    The group $1\times B$ acts trivially on this group by conjugation, as it is the quotient of a subgroup of $N\times 1$. Moreover, $N\times 1$ acts trivially on this group by conjugation, as it is a subgroup of a factor of the upper central series. This implies $G$ acts trivially on this group by conjugation, so that the Galois module
    \[
        \left((T_j\times 1)\cap Z_i(N\times 1) / (T_j\times 1) \cap Z_{i-1}(N\times 1)\right)(\widetilde{\pi})
    \]
    carries the trivial action. If $M$ is any Galois module with the trivial action, we necessarily have
    \[
        H_{ur}^1(k,M) = \Hom(\Cl_k,M),
    \]
    the size of which depends only on $k$ and $|M|$. Therefore
    \[
        \sum_{\pi\in q_*\Sur(G_k,G;X)} \max_j h_{ur}^1(k,N\times 1,T_j\times 1,\pi) \ll_{k,|N|} \#q_*\Sur(G_k,G;X).
    \]
    It is proven in \cite[(5.1)]{ALOWW} that
    \[
        q_*\Sur(G_k,G;X) \subseteq \{\psi\in \Sur(G_k,G) : |q_*\disc_G(\psi)|\le X\}.
    \]
    As $G$ is an imprimitive extension with tower type $(N,B)$, it then follows from \cite[Proposition 5.2]{ALOWW} that
    \[
        q_*\Sur(G_k,G;X) \subseteq \Sur(G_k,G/N,X^{1/n}),
    \]
    where $N$ is a transitive group in degree $n$. This set is in a many-to-one correspondence with $\mathcal{F}_k(B;X^{1/n})$ via the map $\pi \mapsto k(\pi)$. This implies
    \[
        \sum_{\pi\in q_*\Sur(G_k,G;X)} \max_j h_{ur}^1(k,N^m,T_j^m,\pi) \ll_{k,|N|}\#\mathcal{F}_{k}(B;X^{1/n}).
    \]
    By \Cref{thm:unconditional_concentrated}(i), it suffices to show that the right-hand side is $\ll X^{\frac{1}{ma(N)} - \delta}$ for some positive $\delta$. This is equivalent to
    \[
        \mathcal{F}_{k}(B;X) \ll X^{\frac{n}{ma(N)} - \delta}
    \]
    for some positive $\delta$, concluding the proof.
\end{proof}

\section{Conditional Results}\label{sec:conditional}

It is natural to ask is how far this method can take us in ideal conditions. The most challenging part of applying our main unconditional results \Cref{thm:general_unconditional_nilpotent} and \Cref{thm:unconditional_concentrated} is confirming that the relevant convex hull ${\rm Hull}(\bigcup \Omega_j)$ contains the singularity of interest. Luckily, we do not need the full convex hull in order to prove our main results as the singularity of interest is always on the boundary of the orthant ${\rm Re}(s_\tau) \ge 1$. Thus, it would suffice to show that the convex hull contains an open neighborhood of this closed orthant.

This is still a significant challenge for proving unconditional results, and is currently not possible for many groups. Fortunately, if we assume a sufficient form of the Lindel\"of hypothesis all of these convex hull computations can be boiled down a single result, \Cref{lem:cond_convex_hull}. Throughout this section we will assume some version of the Lindel\"of hypothesis in a half-plane of the form ${\rm Re}(s) > \gamma$ for some $\gamma < 1$; namely we will assume either $H(\gamma,0,*; k, G)$ or $H(\gamma,0,\beta; k, G)$ for some $\beta \ge 0$.

\subsection{A convex hull computation}
Conditional on $H(\gamma,0,*;k,G)$, the convex hull computation can now be boiled down to the following lemma.

\begin{lemma}\label{lem:cond_convex_hull}
    Let $0 < \gamma < 1$ and for each $j = 1,2,...,n$ define
    \[
        U_j = \{ \underline{x}\in \R^n : x_j > \gamma \text{ and } x_k > 1 \text{ for each }k\ne j\}.
    \]
    Then the convex hull of $\bigcup_j U_j$ contains the set
    \[
        U=\left\{\underline{x}\in \R^n : x_j > 1 - \frac{1}{n}(1-\gamma)\text{ for each }j\right\}.
    \]
\end{lemma}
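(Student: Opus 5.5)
The plan is to write each point $\underline{x}\in U$ explicitly as a convex combination of $n$ points $\underline{y}^{(1)},\dots,\underline{y}^{(n)}$ with $\underline{y}^{(j)}\in U_j$. The natural weights are uniform, $\lambda_j = 1/n$. Set $c = 1-\frac{1}{n}(1-\gamma)$, so that $\underline{x}\in U$ means $x_j > c$ for all $j$. Write $x_j = c + \eta_j$ with $\eta_j > 0$, and let $\eta = \min_j \eta_j > 0$.

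First I would construct the points for the corner case. Pick a small $\epsilon>0$ (to be fixed in terms of $\eta$) and define $\underline{y}^{(j)}$ to have $j$-th coordinate $\gamma+\epsilon$ and all other coordinates equal to $1+\epsilon$. Clearly $\underline{y}^{(j)}\in U_j$. The average $\frac1n\sum_j \underline{y}^{(j)}$ has every coordinate equal to
\[
\frac{1}{n}\bigl(\gamma + (n-1)\bigr) + \epsilon = 1 - \frac{1}{n}(1-\gamma) + \epsilon = c+\epsilon.
\]
So every point $(c+\epsilon,\dots,c+\epsilon)$ with $\epsilon>0$ lies in the convex hull.

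Second, I would handle general $\underline{x}\in U$ by exploiting the fact that each $U_j$ is upward closed: if $\underline{y}\in U_j$ and $\underline{z}\ge 0$ coordinatewise, then $\underline{y}+\underline{z}\in U_j$. Choose $\epsilon=\eta$, and replace $\underline{y}^{(j)}$ by $\underline{y}^{(j)} + (\underline{x} - (c+\eta)\underline{1})$. The added vector has nonnegative coordinates $\eta_k-\eta\ge 0$, so each shifted point stays in $U_j$. The uniform average of the shifted points is then exactly $(c+\eta)\underline{1} + \underline{x} - (c+\eta)\underline{1} = \underline{x}$. This shows $\underline{x}\in{\rm Hull}(\bigcup_j U_j)$.

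There is essentially no obstacle here. The only point needing care is the strict inequalities, which is why I use the positive slack $\eta$ rather than working at the boundary value $c$ itself. If desired, one can add a remark that the bound is sharp, because the hyperplane $\sum_j x_j = nc$ supports every $U_j$ from below. This sharpness is not needed for the lemma.
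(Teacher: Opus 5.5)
Your proof is correct and is essentially the paper's argument: write $\underline{x}$ as the uniform average of one point from each $U_j$. The slack $\eta$ actually cancels out, and your shifted points are exactly $y_{j,j}=x_j-(1-\frac{1}{n})(1-\gamma)$ and $y_{j,k}=x_k+\frac{1}{n}(1-\gamma)$ for $k\neq j$, which are the points the paper uses (the coefficient $1+\frac{1}{n}$ displayed in the paper's proof is a typo for $1-\frac{1}{n}$).
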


\begin{proof}
    Let $\underline{x}\in U$. For each $j=1,2,...,n$, define $\underline{y_j} = (y_{j,k})_{k=1}^n\in \R^n$ by
    \[
        y_{j,k} = \begin{cases}
            x_j - \left(1 + \frac{1}{n}\right)(1-\gamma) & j = k\\
            x_k + \frac{1}{n}(1-\gamma) & j \ne k.
        \end{cases}
    \]
    One confirms that
    \begin{align*}
        y_{j,k} &> \begin{cases}
            1-\frac{1}{n}(1-\gamma) - \left(1 + \frac{1}{n}\right)(1-\gamma) & j = k\\
            1-\frac{1}{n}(1-\gamma) + \frac{1}{n}(1-\gamma) & j \ne k.
        \end{cases}\\
        &=\begin{cases}
            \gamma & j = k\\
            1 & j \ne k,
        \end{cases}
    \end{align*}
    so that $\underline{y_j}\in U_j$

    We claim that $\underline{x} = \frac{1}{n}\sum \underline{y_j}$, which would prove that $\underline{x}$ is contained in the convex hull and conclude the proof. Indeed, for each $k=1,2,...,n$ we find that
    \begin{align*}
        \frac{1}{n}\sum_{j=1}^n y_{j,k} &= \frac{1}{n}\left( x_k - \left(1 + \frac{1}{n}\right)(1-\gamma) + \sum_{j\ne k} \left(x_k + \frac{1}{n}(1-\gamma)\right)\right)\\
        &= x_k + \frac{1}{n}\left( -\left(1-\frac{1}{n}\right)(1-\gamma) + (n-1)\frac{1}{n}(1-\gamma)\right)\\
        &= x_k.
    \end{align*}
\end{proof}

\subsection{Proofs of our main conditional results}

In the context of \Cref{thm:general_unconditional_nilpotent} and \Cref{thm:unconditional_concentrated}, this is equivalent to assuming that we can take $\alpha_\tau = 0$ for each $\tau$, or equivalently $M_{\tau,\kappa} = 0$. This assumption greatly simplifies the equations cutting out $\Omega_T$, reducing them to just
\begin{align*}
    \sigma_\tau &> \gamma && \text{if }\tau\in \iota_*\RTo{k}(T)\\
    \sigma_\tau &> 1 && \text{if } \tau \in \RTo{k}(G) - \iota_*\RTo{k}(T).
\end{align*}

\begin{proof}[Proofs of \Cref{thm:intro_nilpotent},\Cref{thm:main_concentrated},\Cref{thm:nilpotent_fibers_main},\Cref{thm:nilpotent_fibers_uniform}]
Each of these results follows from the unconditional versions \Cref{thm:general_unconditional_nilpotent},\Cref{thm:unconditional_concentrated},\Cref{thm:unconditional_nilpotent_fibers}, and \Cref{thm:unconditional_nilpotent_fibers_uniform}, as long as we can prove that the point $(\ind_n(\tau)/a(nTd))_\tau$ lies in the convex hull of ${\rm Hull}(\bigcup\Omega_{T_j})$.

Define $U_\tau$ to be the region cut out by
\begin{align*}
    \sigma_\tau &> \gamma\\
    \sigma_\kappa &> 1 && \text{if }\kappa\in \RTo{k}(G)-\{\tau\}.
\end{align*}
For each $\tau \in \bigcup_j\iota_*\RTo{k}(T_j)$, we certainly have $U_\tau\subseteq \bigcup \Omega_{T_j}$ (assuming $H(\gamma,0,*;k,G)$).

Let $U$ be the region cut out by
\begin{align*}
    \sigma_\tau &> 1 - \frac{1}{|\bigcup_j\iota_*\RTo{k}(T_j)|}(1-\gamma) &&\text{if }\tau \in \bigcup_j\iota_*\RTo{k}(T_j)\\
    \sigma_{\tau} &> 1 && \text{if }\kappa\in \RTo{k}(G)-\bigcup_j\iota_*\RTo{k}(T_j).
\end{align*}
\Cref{lem:cond_convex_hull} implies that $U$ is contained in the convex hull ${\rm Hull}(\bigcup_{\tau} U_\tau)$, where the union is over $\tau\in \bigcup_j\iota_*\RTo{k}(T_j)$, and therefore $U\subseteq {\rm Hull}(\bigcup_j\Omega_{T_j})$.

For each tame $\tau$, we find that
\[
    \frac{\ind_n(\tau)}{a(nTd)} \ge 1
\]
by the definition of $a(nTd)$ as the minimum tame index. Moreover, the assumption that $G$ is semi-concentrated in $T_1,T_2,\dots,T_r$ means that all the minimum index ramification types belong to $\bigcup_j\iota_*\RTo{k}(T_j)$. In particular, for any $\tau \in \RTo{k}(G) - \bigcup_j\iota_*\RTo{k}(T_j)$ we necessarily have
\[
    \frac{\ind_n(\tau)}{a(nTd)} \ge \frac{a(nTd)+1}{a(nTd)} > 1.
\]
This implies $(\ind_n(\tau)/a(nTd))_\tau \in U\subseteq {\rm Hull}(\bigcup_j \Omega_{T_j})$, concluding the proof.
\end{proof}

\subsection{Nilpotency class $2$ groups ordered by an arbitrary invariant}

The proof of \Cref{thm:nilpotent_general_invariant} is completely analogous to that of \Cref{thm:intro_nilpotent}. We will show how this follows from the region of meromorphicity we proved for \Cref{thm:general_unconditional_nilpotent}.

\begin{proof}[Proof of \Cref{thm:nilpotent_general_invariant}]
As before, this result follows from \Cref{thm:general_unconditional_nilpotent} as long as we can prove that the point $({\rm wt}(\tau)/a_{\inv}(G))_\tau$ is contained in the convex hull ${\rm Hull}(\bigcup_j\Omega_{T_j})$.

Let $T_1,T_2,\dots,T_r$ be the abelian normal subgroups of $G$. By the same argument as in the proof of \Cref{thm:intro_nilpotent}, we conclude that the tubular region $U$ cut out by
\[
    \begin{cases}
        \sigma_{\tau} > 1 - \frac{1}{|\bigcup \iota_*\RTo{k}(T_j)|}(1-\gamma) & \tau \in \bigcup \iota_*\RTo{k}(T_j)\\
        \sigma_\tau > 1 & \tau \in \RTo{k}(G) - \bigcup \iota_*\RTo{k}(T_j)
    \end{cases}
\]
is contained in the convex hull ${\rm Hull}(\bigcup_j \Omega_{T_j})$.

When $G$ has nilpotency class $2$ and $T_1,T_2,\dots, T_r$ are all the abelian normal subgroups of $G$, it necessarily holds that $G = \bigcup_j T_j$. Thus, we can in fact say that $U$ is cut out solely by the equations in the first row and is in particular an open neighborhood of the orthant cut out by $\sigma_\tau \ge 1$ for each $\tau\in \RTo{k}(G)$. By definition of $a_{\inv}(G)$ as the minimum tame weight, it follows that
\[
    \frac{{\rm wt}(\tau)}{a_{\inv}(G)} \ge 1,
\]
so that no matter what weight is chosen we have shown that $({\rm wt}(\tau)/a_{\inv}(G))_{\tau}\in U$. This concludes the proof.
\end{proof}

\subsection{Further applications to semiconcentrated groups}

Our main results are applicable to groups which are semiconcentrated in abelian normal subgroups. If we assume a number of existing conjectures, then \Cref{conj:number_field_counting} will follow for all of these groups.

\begin{corollary}
    Let $k$ be a number field and $G$ be a transitive group with label $nTd$ which is semiconcentrated in abelian normal subgroups and for which there exists at least one $G$-extension of $k$.

    We assume the following.
    \begin{enumerate}[(a)]
        \item There exists $\gamma < 1$ for which $H(\gamma,0,*; k, G)$ holds.
        \item The $\ell$-torsion conjecture for the size of torsion in class groups $|\Cl_K[\ell]| \ll_{[K:\Q],\ell,\epsilon} |\disc(K/\Q)|^{\epsilon}$, and
        \item Malle's predicted upper bound for $G/N$-extensions, where $N$ is the Fitting subgroup of $G$, with pushforward discriminant bounded above by $X$, which is the bound $\ll X^{1/a(G-N) + \epsilon}$.
    \end{enumerate}
    Then there exist ineffective constants $c\ge 0$ and $b\in \Z_{>0}$ for which 
    \[
        \#\mathcal{F}_{\disc,k}(G;X) \sim c X^{1/a(G)}(\log X)^{b-1}.
    \]
\end{corollary}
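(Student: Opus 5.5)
The plan is to apply \Cref{thm:main_concentrated} with $N$ the Fitting subgroup of $G$, and to split into the two cases of \Cref{lem:semiconcentrated_in_abelian}.

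\emph{Case (a): $G$ is nilpotent.} Then hypothesis (a) is exactly what \Cref{thm:intro_nilpotent}(1) requires, so the asymptotic follows immediately, with $c>0$. Hypotheses (b) and (c) are not needed in this case.

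\emph{Case (b): $G$ is not nilpotent.} By \Cref{lem:semiconcentrated_in_abelian}, $G$ is concentrated in its Fitting subgroup $N$, which is a proper nilpotent normal subgroup. Every abelian normal subgroup is nilpotent, so by Fitting's theorem $N$ contains each $T_j$. Hence the structural hypotheses of \Cref{thm:main_concentrated} hold with this $N$, and hypothesis (a) supplies $H(\gamma,0,*;k,G)$. Since the minimum index elements lie in $N$, we have $a(N)=a(G)$, and every element of $G-N$ has index strictly larger, so $a(G-N)>a(N)$. It therefore remains to verify \eqref{eq:sum_h1ur} for some $\theta<1/a(N)$, and then \Cref{thm:main_concentrated}(i) gives the result.

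To bound \eqref{eq:sum_h1ur}, I would first bound each factor $|H^1_{ur}(k,M(\pi))|$ appearing in $h^1_{ur}(k,N,T_j,\pi)$ by a product of $\Cl_F[\ell]$-torsion over boundedly many fields $F$ cut out by $\pi$. This uses \cite[Corollary 1.14]{ALOWW} together with restriction to the field trivializing the action, losing only factors of $|\disc|^\epsilon$ controlled by $\omega(\disc(\pi))$. The discriminants of these fields are polynomially bounded in $q_*\disc_G(\pi)$. The $\ell$-torsion conjecture (b) then gives $\max_j h^1_{ur}(k,N,T_j,\pi)\ll_\epsilon q_*\disc_G(\pi)^{\epsilon}$. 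Next, using the inclusion $q_*\Sur(G_k,G;X)\subseteq\{\pi : q_*\disc_G(\pi)\le X\}$ from \cite[(5.1)]{ALOWW} together with (c), the sum is $\ll X^{1/a(G-N)+\epsilon}$. Choosing $\epsilon$ small, this gives $\theta<1/a(N)$.

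The main obstacle I expect is the step converting $h^1_{ur}$ into class group torsion when the Galois module is nontrivial. Passing to a splitting field and invoking inflation--restriction has to be done with uniform constants, and I would follow the argument in the proof of \cite[Corollary 1.14]{ALOWW} for this. A secondary check is that $c$ may a priori be allowed to equal $0$ here, which is consistent with the stated $c\ge 0$; in fact \Cref{thm:main_concentrated}(i) gives $c>0$.
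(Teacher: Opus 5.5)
Your proposal is correct and follows the paper's argument: apply \Cref{thm:main_concentrated} with $N$ the Fitting subgroup, use the $\ell$-torsion conjecture (the paper goes through \cite[Lemma 4.1]{ALOWW}) to make $\max_j h^1_{ur}(k,N,T_j,\pi) \ll X^{\epsilon}$, and use Malle's upper bound to take $\theta = 1/a(G-N)+\epsilon < 1/a(N)=1/a(G)$. Your separate treatment of the nilpotent case via \Cref{thm:intro_nilpotent} is a harmless refinement; the paper's proof covers that case without comment, since then $N=G$, the set $G-N$ is empty, and the sum in \eqref{eq:sum_h1ur} is bounded.
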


The proof is immediate, as the $\ell$-torsion conjecture together with \cite[Lemma 4.1]{ALOWW} implies $h^1_{ur}(k,N,T,\pi)\ll |\disc(\pi)|^{\epsilon}$. Then Malle's predicted upper bound shows that we can take $\theta = 1/a(G-N)$ in \Cref{thm:main_concentrated}, where $N$ is the Fitting subgroup of $G$. The fact that $G$ is semiconcentrated in $T_1,T_2,\dots T_r\subseteq N$ implies that $1/a(G-N) < 1/a(N)$, so the conclusion of \Cref{thm:main_concentrated} gives the desired asymptotic.

This is similar to the content of \cite[Section 8]{ALOWW}, where they note that their results are applicable to groups which are either of the form $S_3\wr B$ or are concentrated in an abelian normal subgroup. The $\ell$-torsion conjecture and Malle's predicted upper bound together are enough to conclude \Cref{conj:number_field_counting} for each of these groups using the main results of \cite{ALOWW}.

Also similar to \cite{ALOWW}, we argue that our framework is, in principle, applicable to any semiconcentrated group. By constructing meromorphic continuations for $D_k(T_j,\pi;\underline{s})$ for new cases of normal subgroups $T_j\normal G$, essentially a complex analytic version of proving new cases of \Cref{conj:twisted_number_field_counting}, with sufficiently small bounds in vertical strips in terms of $\pi$, the method of this paper gives a roadmap for proving \Cref{conj:number_field_counting} for $G$ which is semiconcentrated in such normal subgroups $T_1,T_2,\dots, T_r$.

\section{Application to the Existence of Potential Secondary Terms}

Finally, we want to point out another application of this point of view. We will (again) use the group $G=D_4$ to illustrate how one can use the multiple Dirichlet series point of view to uncover secondary terms in the field counting asymptotics. We keep the discussion in this section {\em unconditional}. 

As is the case with the multiple Dirichlet series prototype \cite{DGH}, our method could produce exact formulas, secondary terms, and power saving error terms if one is able to compute the residues along the various polar divisors. Then one could not only add the contributions at the points where they intersect and find the order of the poles, but get the actual polynomial in $\log X$ coming from the rightmost pole (after specializing to the desired multi-invariant slice), as well as the secondary terms coming from the other poles in the region of meromorphic continuation. If on top of that one has good enough control on the $t$-aspect and class group torsion bounds in a given context, one could get a power saving error term by moving the line of integration to within $\epsilon$ of the boundary.

To illustrate this phenomenon, let us consider the following family of invariants for $D_4$-fields: Let $\gamma > 0$ be a real number. If $F$ is a quartic $D_4$-field with quadratic subfield $k\subseteq F$, define
\[
    \inv_\gamma(F) := \frac{\disc(F/\Q)}{\disc(k/\Q)^{1-\gamma}} = {\rm Nm}(\disc(F/k)) \cdot \disc(k/\Q)^{1+\gamma}.
\]
This is part way between the conductor and quartic discriminant orderings. The weights of each tame ramification type are as follows:
\begin{align*}
    {\rm wt}(2A) &= 2; & {\rm wt}(2B) &= 1+\gamma; & {\rm wt}(2C) &= 1; & {\rm wt}(4A) &= 2+\gamma.
\end{align*}
We then specialize to the complex line $\underline{s} = ({\rm wt}(\tau)s)_{\tau}$, whose projection onto the $(\sigma_{2B},\sigma_{2C})$-plane is the line $(\sigma_{2B}, \sigma_{2C}) = ((1+\gamma)\sigma, \sigma).$ The special case $\gamma = 1/4$ is depicted in \Cref{fig:secondary_term_example}.

\begin{figure}[!ht]
\begin{center}
    \hspace*{1cm} 
    \begin{tikzpicture}[scale=1.5]

    \draw[gray!50, thin, step=1/3] (-1,-1) grid (3,3);
    \draw[very thick,->] (-1,0) -- (3.2,0) node[right] {$\sigma_{2B}$};
    \draw[very thick,->] (0,-1) -- (0,3.2) node[above] {$\sigma_{2C}$};

    \foreach \x in {-1,...,3} \draw (\x,0.05) -- (\x,-0.05) node[below] {\tiny\x};
    \foreach \y in {-1,...,3} \draw (-0.05,\y) -- (0.05,\y) node[right] {\tiny\y};

    \fill[blue!50!cyan,opacity=0.6] (1,3) -- (1,1) -- (3,1) -- (3,3);
    \fill[blue!50!cyan,opacity=0.3] (1/2,3) -- (1/2,7/6) -- (7/6,1/2) -- (3,1/2) -- (3,3);

    \draw[red,very thick,-] (-1,-4/5) -- (3,12/5) node[right] {\inv};

    \draw[black,dashed,thick,-] (1,-1) -- (1,3) node[above] {$s_{2B}=1$};
    \draw[black,dashed,thick,-] (-1,1) -- (3,1) node[below right] {$s_{2C}=1$};
    \end{tikzpicture}
    \caption{\label{fig:secondary_term_example} Continuation in $T_C$-direction}
\end{center}
\end{figure}
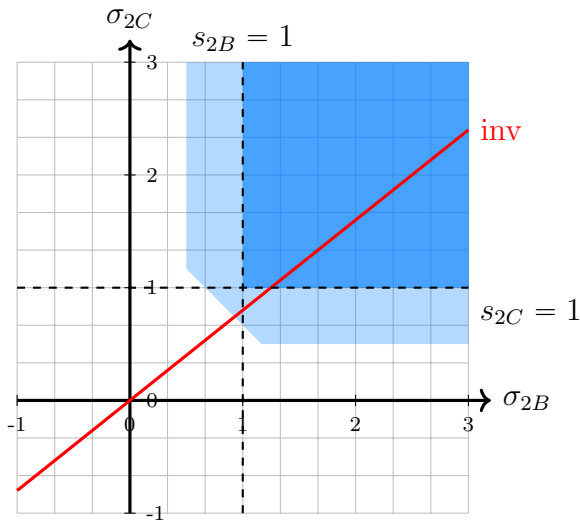

One can see that this complex line appears to cross both the polar divisors $s_{2B}=1$ and $s_{2C}=1$ within the region of meromorphicity, suggesting that we might reveal a secondary term.

We confirm this algebraically for $\gamma$ sufficiently small by specializing $\Omega$ along the complex line, although it turns out the $\gamma = 1/4$ is not small enough. The single variable generating series for this invariant has a meromorphic continuation to
\[
    \{s : 2\sigma > 1,\ (2+\gamma)\sigma > 19/16,\ (1+\gamma)\sigma + \sigma > 27/16\} = \left\{s : \sigma > \max\left\{\frac{27}{32+16\gamma},\frac{1}{2}\right\}\right\},
\]
with possible polar divisors at $s_{2B} = 1/(1+\gamma)$ and $s_{2C} = 1$ of order $\le 1$. On this region, the generating series has vertical bounds given by
\[
    \ll (1+|t|)^{\frac{1}{3}\max\{1 - 2\sigma,0\} + \frac{3}{8} + \epsilon} = (1+|t|)^{\frac{3}{8} + \epsilon}.
\]
We can now apply \Cref{thm:tauberian_power_saving}, where restricting to $\gamma \le 11/8$ we may take $\sigma_a = 1$, $\delta= \frac{5+16\gamma}{32+16\gamma}$, and $\xi = 3/8$. The power saving exponent is then
\[
    1 - \frac{(5+16\gamma)/(32+16\gamma)}{3/8 + 1} + \epsilon = \frac{39+6\gamma}{44+22\gamma} + \epsilon.
\]
Whenever the power savings is smaller than $1/(1+\gamma)$, this \emph{might} reveal a secondary term of order $X^{\frac{1}{1+\gamma}}$, as long as the polar divisor does not get canceled out at the particular point. In the limit as $\gamma\to 0$, we know the polar divisors $s_{2B}=1$ and $s_{2C}=1$ cannot vanish, as the results of \cite{altug-shankar-varma-wilson2021} imply that they must multiply together to give a double pole at $(1,1)$.

We conclude the following result:
\begin{theorem}
    Let $0 < \gamma < \frac{1}{12}(\sqrt{649} - 23) \approx 0.2062$. Then there exist constants $c(\gamma),c'(\gamma)\ge 0$ for which
    \[
        \#\mathcal{F}_{\inv_\gamma,\Q}(D_4;X) = c(\gamma)X + c'(\gamma)X^{\frac{1}{1+\gamma}} + O_{\gamma,\epsilon}\left(X^{\frac{39+6\gamma}{44+22\gamma} + \epsilon}\right),
    \]
    where the error term is smaller than the (potential) secondary term.
    
    Moreover, there exists a positive constant $\gamma_{st} > 0$ such that if we further assume $0 < \gamma < \gamma_{st}$, then $c(\gamma), c'(\gamma) \ne 0$.
\end{theorem}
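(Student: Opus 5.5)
The plan is to specialize the region of meromorphicity $\Omega$ constructed for $p(\underline{s})D_\Q(D_4;\underline{s})$ in \Cref{subsec:D4} to the complex line $\underline{s} = ({\rm wt}(\tau)s)_\tau$ attached to $\inv_\gamma$, using \Cref{prop:specialize}, and then to apply \Cref{thm:tauberian_power_saving}.

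First I would specialize the defining inequalities of $\Omega$. The conditions $\sigma_{2A},\sigma_{2B},\sigma_{2C}>1/2$, $\sigma_{4A}>19/16$ and $\sigma_{2B}+\sigma_{2C}>27/16$ become $\sigma > \max\{27/(32+16\gamma),1/2\}$, and for $\gamma\le 11/8$ this is $\sigma > 27/(32+16\gamma)$. The only polar divisors of $D_\Q(D_4;\underline{s})$ in $\Omega$ that can meet this half-plane are $s_{2B}=1$ and $s_{2C}=1$ (the divisor $s_{2A}=1$ specializes to $s=1/2$, which lies outside it). Along the line these become $s=1/(1+\gamma)$ and $s=1$. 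Because $\gamma>0$, these are distinct points, each a pole of order at most $1$.

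Next I would check that both candidate poles lie inside the region. This needs $1/(1+\gamma) > 27/(32+16\gamma)$, which holds for $\gamma<5/11$. The vertical bound $(1+|t|)^{3/8+\epsilon}$ carries over from $\Omega$ by \Cref{cor:independent_variable_multi_phragmen_lindelof}. Applying \Cref{thm:tauberian_power_saving} with $\sigma_a=1$, $\delta = 1-27/(32+16\gamma)$ and $\xi=3/8$ gives the main term $c(\gamma)X$ and the possible secondary term $c'(\gamma)X^{1/(1+\gamma)}$ as the two residues of $A(s)X^s/s$ at simple poles. It also gives the error exponent $(39+6\gamma)/(44+22\gamma)+\epsilon$. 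The condition that this error is smaller than $X^{1/(1+\gamma)}$ is
\[
(39+6\gamma)(1+\gamma) < 44+22\gamma, \quad\text{that is,}\quad 6\gamma^2+23\gamma-5<0,
\]
which is exactly $\gamma<\frac{1}{12}(\sqrt{649}-23)$. The leading constant $c(\gamma)\ge 0$ follows from \Cref{thm:tauberian_main_term}, since the coefficients are nonnegative.

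The main obstacle is the nonvanishing statement for small $\gamma$. Here I would work locally near the point $(\sigma_{2B},\sigma_{2C})=(1,1)$, with the other variables frozen at the values they take on the line. Near that point write
\[
D_\Q(D_4;\underline{s}) = \frac{A(\underline{s})}{(s_{2B}-1)(s_{2C}-1)},
\]
with $A$ holomorphic; this is possible because each divisor has order at most $1$ and $p(\underline{s})D_\Q(D_4;\underline{s})$ is holomorphic on $\Omega$. At $\gamma=0$ the line is the conductor line. There \cite{altug-shankar-varma-wilson2021} gives a genuine double pole at $s=1$, which forces $A$ to be nonzero at the corresponding point. Computing the residues explicitly, both $c(\gamma)$ and $c'(\gamma)$ are equal to $A$ evaluated at a point of the line, divided by a factor of size $\asymp\gamma$, times an explicit rational factor. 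By continuity of $A$ in $\gamma$, both residues are nonzero for all sufficiently small $\gamma$, which produces $\gamma_{st}$. Two points will need care: tracking the sign of $c'(\gamma)$ so that the claim $c'(\gamma)\ge0$ is consistent, which I would settle by comparing with the nonnegative count as $\gamma\to0$; and checking that the frozen variables $s_{2A},s_{4A}$ stay in the holomorphic region uniformly as $\gamma$ varies.
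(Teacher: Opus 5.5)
Your argument follows the paper's proof. You specialize $\Omega$ to the line $({\rm wt}(\tau)s)_\tau$ and obtain $\sigma>27/(32+16\gamma)$, with at most simple poles at $s=1$ and $s=1/(1+\gamma)$. You then apply the power-saving Tauberian theorem with $\sigma_a=1$, $\delta=(5+16\gamma)/(32+16\gamma)$, $\xi=3/8$, and deduce nonvanishing for small $\gamma$ from the double pole that \cite{altug-shankar-varma-wilson2021} forces at $\gamma=0$. Your local factorization $D_\Q(D_4;\underline{s})=A(\underline{s})/((s_{2B}-1)(s_{2C}-1))$ near $(s_{2A},s_{2B},s_{2C},s_{4A})=(2,1,1,2)$ is legitimate, since that point is interior to $\Omega$ and has $s_{2A}\neq 1$. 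It turns the paper's one-sentence continuity remark into an actual argument.

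The step that cannot be completed is the sign of $c'(\gamma)$, and your own computation shows why. Write $\ell_\gamma(s):=({\rm wt}(\tau)s)_\tau$ and $f(s)=((1+\gamma)s-1)(s-1)$. Then $f'(1)=\gamma$ and $f'(1/(1+\gamma))=-\gamma$, so
\[
  c(\gamma)=\frac{A(\ell_\gamma(1))}{\gamma},\qquad
  c'(\gamma)=-\frac{(1+\gamma)\,A\bigl(\ell_\gamma(\tfrac{1}{1+\gamma})\bigr)}{\gamma}.
\]
Both evaluation points tend to $\ell_0(1)$. The value $A(\ell_0(1))$ is the coefficient of the double pole on the conductor line, hence the leading constant of the conductor count of surjections, which is $\sim A(\ell_0(1))X\log X$. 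It is therefore positive by \cite{altug-shankar-varma-wilson2021}. Consequently $c'(\gamma)\sim -A(\ell_0(1))/\gamma<0$ for all sufficiently small $\gamma$, which is precisely the range where nonvanishing is asserted.

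The comparison with the count as $\gamma\to0$ that you propose points the same way. Heuristically $X^{1/(1+\gamma)}=X\bigl(1-\tfrac{\gamma}{1+\gamma}\log X+\cdots\bigr)$, so recovering a positive $X\log X$ term forces $-\gamma c'(\gamma)\to A(\ell_0(1))>0$. The model $\#\{(a,b):ab^{1+\gamma}\le X\}=\zeta(1+\gamma)X+\zeta(\tfrac{1}{1+\gamma})X^{1/(1+\gamma)}+\cdots$ behaves the same way, since $\zeta(\tfrac{1}{1+\gamma})<0$.

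So the claim $c'(\gamma)\ge 0$ in the statement is false. It should read $c'(\gamma)\in\R$, nonzero and in fact negative for $\gamma<\gamma_{st}$. The paper's own proof does not justify any sign for $c'(\gamma)$ either. The rest of your plan, including $c(\gamma)\ge 0$, is correct.
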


Unfortunately, one cannot be specific about the values of $c(\gamma)$, $c'(\gamma)$, or $\gamma_{st}$ without a fuller understanding of the polar divisors of $D_{\Q}(D_4;\underline{s})$ and their residues.

A similar phenomenon occurs for every group we consider: for certain invariants the power saving error bounds may reveal potential secondary terms of the form $X^{1/{\rm wt}(\tau)}P(\log X)$ for each tame ramification type $\tau$, coming from the intersection of the complex line $({\rm wt}(\tau)s)_\tau$ with the polar divisor $s_\tau = 1$. These are similar to the potential secondary terms appearing in the main results of \cite{alberts2024} for abelian groups.

More specifically, the region of meromorphicity constructed in \Cref{thm:general_unconditional_nilpotent}(i) implies the following: for $G$ a nilpotent group and $T_1,T_2,\dots, T_r$ abelian normal subgroups, there exists a $\theta > 0$ such that
\[
    \#\mathcal{F}_{\inv,k}(G;X) = \sum_{\tau \in \bigcup_j\iota_*\RTo{k}(T_j)} X^{\frac{1}{{\rm wt}(\tau)}} P_{\inv,k,\tau}(\log X) + O(X^{\theta})
\]
for certain polynomials $P_{\inv,k,\tau}$. There are a few road blocks to being more specific towards a result of this shape:
\begin{itemize}
    \item The value of $\theta$ is determined by $\inv$, the region of meromorphicity, and the best available $t$-aspect subconvexity bounds. It is possible that the $\theta$ produced in this way is too large, so that the error term absorbs any lower order asymptotic terms (as is the case in \Cref{thm:D4} and \Cref{thm:Q8sxC2}).

    \item Our method does not produce an explicit value for the polynomials $P_{\inv,k,\tau}$, and in particular we can not currently eliminate the possibility that $P_{\inv,k,\tau} = 0$ when $\tau$ does not have minimum index, causing the ``potential secondary term'' to vanish.
\end{itemize}
Both of these issues occur for abelian groups in \cite{alberts2024}, albeit to a lesser extent. While some tools are developed in \cite{alberts2024} to show that the secondary terms do not vanish for certain abelian groups, we do not develop any such tools for nilpotent groups. It would be interesting to work towards developing such tools, and possibly produce new secondary terms for number field counting results.

It is reasonable to expect that further meromorphic continuations exist than those we produce in this paper. In the particular case of quartic $D_4$-extensions, Cohen, Diaz y Diaz, and Olivier predict a secondary term of size $X^{1/2}(\log X)^{2}$ in \cite[Section 7]{cohen-diaz-y-diaz-olivier2006}. Thus, we expect to encounter more polar divisors intersecting the point $(s_{2B},s_{2C})=(1/2,1)$ in \Cref{fig:secondary_term_example}. We expect this include a polar divisor at $s_{2A} = 1$, but diagonal polar divisors like $s_{2A}+s_{2B} + s_{2C} + 2s_{4A} = 3/2$ that come from using functional equations are also expected.

\appendix
\section{Equivalence of Definitions for Tame Ramification types}\label{app:tame_ramification_types}

Let $k$ be a number field and $G$ be a group with a $G_k$-action. Recall \Cref{def:tameRT}, where we defined a tame ramification types of $G$, $\tau \in \RT_k^{\rm tame}(G)$, to be a $G\rtimes G_k$ orbits of $\Hom((\Q/\Z)^*,G)$, where $G_k$ acts on $(\Q/\Z)^* = \Hom(\Q/Z,\mu)$ and $G$, and $G$ acts by conjugation.

The notion of (tame) ramification types has played an important role in number field counting, stretching back to Malle's original predictions \cite{malle2002,malle2004}. Essentially, tame ramification types describe the ``ways in which a prime can tamely ramify in a $G$-extension'' without choosing a specific prime.

There are a number of different notions of (tame) ramification type in the literature, tailored to the specific contexts in which they were used.
\begin{enumerate}[(1)]
    \item When $G$ carries the trivial $G_k$-action, Malle considered the \emph{$k$-conjugacy classes} of $G$ in \cite{malle2004}, i.e. minimal subsets of $G$ closed under conjugation and under the cyclotomic action $x.g = g^{\chi(x)}$ for $\chi:G_k\to \hat{\Z}^{\times}$ the cyclotomic character.

    \item When $G$ carries the trivial $G_k$-action, Gundlach \cite{gundlach2022multimalle} describes tame ramification types (which he refers to simply as ramification types) as $G\times G_k$-equivalence classes of pairs $(I,\gamma)$ for a cyclic subgroup $I\le G$ and an isomorphism $\gamma:I\overset{\sim}{\to} \mu_e$. $G$ acts by conjugation and $G_k$ acts on $\mu_e$, and so on $\gamma$, by the cyclotomic action.

    \item In the case that $G$ is a $G_k$-module (i.e.~the underlying group is abelian), Smith \cite[Equation (3.1)]{smith2022selmer1} describes a ``ramification-measuring homomorphism'' valued in $G(-1)$, where $G(-1) = \Hom(\hat{\Z}(1),G)$ and 
    \[
        \widehat{\Z}(1) = \displaystyle \lim_{\leftarrow } \mu_e.
    \]
    This is generalized by Loughran--Santens \cite{loughran2024malle,tavernier2025counting} to any group scheme $G$. We remark that Loughran--Santens define the ramification type to be the image under the map from $Z(k_\nu,G)$ to the conjugacy classes of $G(-1)$ in \cite[Section 7.1]{loughran2024malle}, which is a finer version of the ramification map in \Cref{def:RTmap}. However, it is important that we assign complex variables to each $G_k$-orbit of conjugacy classes in $G(-1)$, rather than to individual conjugacy classes, to ensure that each singularity of $D_k(G;\underline{s})$ is a pole.

    \item \Cref{def:RT} is a direct extension of the notion of tame ramification type used by the first author in \cite{alberts2024}: In the case that $G$ is a $G_k$-module (i.e.~the underlying group is abelian), the tame ramification types are given by the $G_k$-orbits of $\Hom((\Q/\Z)^*,G)$.

    \item The notion of twisted sectors for a stack $\mathcal{X}$ developed by Darda--Yasuda \cite{darda2022batyrev} specializes to a notion of ramification type on the stack $BG$. Darda--Yasuda prove this is equivalent to Malle's notion of $k$-conjugacy classes in \cite[Example 2.15]{darda2022batyrev}, where their residue map plays the role of the ramification map in \Cref{def:RTmap}.
\end{enumerate}

These five descriptions are equivalent, i.e.~there are bijections between each of them. The purpose of this appendix is to prove that each of these are equivalent to the notion given by \Cref{def:tameRT}, save for the equivalence of (1) and (5) which is proven in \cite[Example 2.15]{darda2022batyrev}.

\begin{remark}
    As will be seen in the proofs, these bijections are not always \emph{canonical}. In some cases they will rely on choosing a generator for tame inertia $I_\pk^{\rm tame}$.

    Smith gave a way to make a consistent choice of generator for each $I_p^{\rm tame}$ all at once in \cite[Section 2]{smith2022selmer1}, depending on a choice of generator $\overline{\zeta}\in \hat{\Z}(1)$. He calls these generators ${\rm Tine}_k(\pk)\in I_\pk^{\rm tame}$. Each choice of generator $\overline{\zeta}\in \hat{\Z}(1)$ corresponds to a $G_{k_\pk}$-equivariant isomorphism $\hat{\Z}(1) \to I_\pk^{\rm tame}$ sending $\overline{\zeta} \mapsto {\rm Tine}_k(\pk)$.
\end{remark}

When keeping track of whether a correspondence is canonical or not, it will be useful to know the following isomorphism:
\begin{lemma}\label{lem:Q/ZtoZ(1)}
    The map $\phi:(\Q/\Z)^*\to \hat{\Z}(1)$ sending
    \[
        a\mapsto ( a(1/e)^{\frac{n}{(n,e)}})_{e=1}^{\infty}
    \]
    is a $G_k$-equivariant isomorphism, where $n$ is the order of $a\in (\Q/\Z)^*$.
\end{lemma}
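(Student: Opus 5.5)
The plan is to reduce the statement to the canonical identification of $(\Q/\Z)^*=\Hom(\Q/\Z,\mu)$ with $\widehat{\Z}(1)=\varprojlim_e \mu_e$. I would first check that the map is well defined, then that it is bijective, and finally that it is Galois equivariant, treating the exponent $n/(n,e)$ as a twist that has to be checked separately.

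\emph{Well-definedness.} Each $a(1/e)$ lies in $\mu_e$, since $e\cdot(1/e)=0$ in $\Q/\Z$. For $d\mid e$ we have $1/d=(e/d)\cdot(1/e)$, and $a$ is a homomorphism, so
\[
a(1/e)^{e/d}=a(1/d).
\]
Hence $(a(1/e))_e$ is a compatible system for the transition maps $\mu_e\to\mu_d$, $\zeta\mapsto\zeta^{e/d}$. For the exponents $n/(n,e)$ I would verify that
\[
\bigl(a(1/e)^{n/(n,e)}\bigr)^{e/d}=a(1/d)^{n/(n,d)}
\]
for all $d\mid e$. By the identity above this is the congruence
\[
\frac{n}{(n,e)}\equiv\frac{n}{(n,d)}\pmod d .
\]
I would prove it by computing $p$-adic valuations for each prime $p$, with the case where $n$ is taken to be $0$ or infinite (so that $a$ generates) giving the plain canonical map. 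The map is a homomorphism coordinatewise, since $(ab)(1/e)=a(1/e)\,b(1/e)$ in the abelian group $\mu_e$ and raising to a fixed power is a homomorphism there.

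\emph{Bijectivity.} For the untwisted map I would build the inverse explicitly. Given a compatible system $(\zeta_e)_e$, set $a(m/e):=\zeta_e^m$. This is independent of the representative: replacing $m/e$ by $mk/(ek)$ gives $\zeta_{ek}^{mk}=(\zeta_{ek}^{k})^m=\zeta_e^m$ by compatibility. It is also a homomorphism, by passing to a common denominator. The two constructions are visibly mutually inverse. The twisted map differs from this one coordinatewise by raising to $n/(n,e)$, and I would check that this exponent acts invertibly on the relevant component, reducing bijectivity to the untwisted case.

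\emph{Equivariance.} The action of $x\in G_k$ on $(\Q/\Z)^*$ is $(x.a)(y)=x\bigl(a(x^{-1}.y)\bigr)=x(a(y))$, because $G_k$ acts trivially on $\Q/\Z$. Therefore $(x.a)(1/e)=x(a(1/e))$, and applying $x$ commutes with taking powers, which gives $\phi(x.a)=x.\phi(a)$ immediately.

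The main obstacle is the exponent bookkeeping for $n/(n,e)$, namely the congruence in the first step and the invertibility in the second. If that congruence fails for some $d\mid e$, I would fall back on proving the statement for the canonical map $a\mapsto(a(1/e))_e$, which is the isomorphism actually used in \Cref{lem:canonical}.
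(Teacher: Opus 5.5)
Your treatment of the untwisted map $a\mapsto(a(1/e))_e$ is correct and is essentially the paper's argument. The paper writes $(\Q/\Z)^*=\lim_{\leftarrow}\Hom(\tfrac1e\Z/\Z,\mu)$ and passes the levelwise equivariant isomorphisms $a\mapsto a(1/e)\in\mu_e$ to the limit. Your compatibility identity $a(1/e)^{e/d}=a(1/d)$, your explicit inverse $a(m/e):=\zeta_e^m$, and your equivariance computation (using that $G_k$ acts trivially on $\Q/\Z$) do the same thing by hand.

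The exponent step, however, would fail, and it rests on a misconception.

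\emph{The step that fails.} The congruence $\frac{n}{(n,e)}\equiv\frac{n}{(n,d)}\pmod d$ is false in general: for $n=e=4$, $d=2$ it reads $1\equiv 2\pmod 2$. So no $p$-adic valuation argument can prove it, and your claimed invertibility of the twist is likewise unsupported. Setting $n=0$ does not recover the canonical map either: the exponent becomes $0/(0,e)=0$, i.e.\ the trivial map.

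\emph{The missing idea: $(\Q/\Z)^*$ is torsion-free.} Suppose $a^n=1$. Then $a(\Q/\Z)\subseteq\mu_n$. But the image of the divisible group $\Q/\Z$ is divisible, and a finite divisible group is trivial, so $a=1$. Equivalently, $(\Q/\Z)^*\cong\widehat{\Z}$ has no torsion.

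\emph{Consequence.} The only element with a finite order $n$ is $a=1$, where the formula gives $1$ trivially. For every other $a$ the exponent $n/(n,e)$ is meaningless, so the only coherent content of the statement is the canonical isomorphism. The paper's closing remark about elements ``of order $n$'' factoring through $\mu_n\to\mu_e$ has the same defect; moreover, the transition maps of $\widehat{\Z}(1)$ go $\mu_n\to\mu_e$ only when $e\mid n$.

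So discard the twisted bookkeeping, and justify doing so by torsion-freeness rather than by ``if the congruence fails.'' With that, your fallback argument is a complete proof.
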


\begin{proof}
    Basic properties of limits imply that
    \[
        (\Q/\Z)^* = \lim_{\leftarrow} \Hom((1/e)\Z/\Z,\mu).
    \]
    The map $\phi_e:\Hom((1/e)\Z/\Z,\mu) \to \mu_e$ sending
    \[
        a \mapsto a(1/e)
    \]
    is certainly a $G_k$-equivariant isomorphism for each $e\ge 1$, so it induces a $G_k$-equivariant isomorphism $(\Q/\Z)^*\to \hat{\Z}(1)$.

    We identify the image of the isomorphism by considering each of the compositions
    \[
        (\Q/\Z)^*\to \hat{\Z}(1) \to \mu_e.
    \]
    If $a\in (\Q/\Z)^*$ has order $n$, then this composition factors through $\Hom(\Z/n\Z) \to \mu_n \to \mu_e$, where the latter map is precisely $\zeta \mapsto \zeta^{n/(n,e)}$ given by the inverse system defining $\hat{\Z}(1)$.
\end{proof}

\subsection{$k$-conjugacy classes}

\begin{proposition}
    Let $G$ be a finite group with trivial Galois action. Let $a\in (\Q/\Z)^*$ be a generator. Then the map of pointed sets
    \[
        \Hom((\Q/\Z)^*,G) \rightarrow G
    \]
    given by
    \[
        f \mapsto f(a)
    \]
    is a $G\times G_k$-equivariant bijection, where $G$ acts on itself by conjugation, $G_k$ acts on $\mu$, and so on $(\Q/\Z)^*$, by the cyclotomic action $\sigma.a = a^{\chi(\sigma)}$, and $G_k$ acts on $G$ by the inverse cyclotomic acton $\sigma.g = g^{\chi(\sigma)^{-1}}$, where $\chi:G_K\to \hat{\Z}^{\times}$ is the cyclotomic character.

    In particular,
    \begin{enumerate}[(i)]
        \item This map descends to a bijection between $\RT_k^{\rm tame}(G)$ and the set of $k$-conjugacy classes in $G$. If $k$ is linearly disjoint from maximal abelian extension $\Q(\mu)$, this bijection is independent of the choice of generator $a\in (\Q/\Z)^*$.
        \item Let $\pi\in \Sur(G_k,G)$. For prime $p$ which is at most tamely ramified in $\pi$, this map sends $[\iota_p^*\pi|_{I_p}]$ to a $k$-conjugacy class containing a generator of the tame inertia group $\pi(I_p)\subseteq G$.
    \end{enumerate}
\end{proposition}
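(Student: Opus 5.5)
The proof is a direct verification in three steps: bijectivity, equivariance, then the two consequences (i) and (ii).

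\emph{Bijectivity.} The module $(\Q/\Z)^*$ is procyclic and is topologically generated by $a$. A continuous homomorphism $f:(\Q/\Z)^*\to G$ into a finite group is therefore determined by $f(a)$. Conversely, fix $g\in G$ of order $n$. The quotient $(\Q/\Z)^*\to \Hom(\tfrac1n\Z/\Z,\mu)\cong\mu_n$ is cyclic of order $n$ with $a$ mapping to a generator, so $a\mapsto g$ extends uniquely to a homomorphism. Hence $f\mapsto f(a)$ is a bijection of pointed sets.

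\emph{Equivariance.} For $h\in G$ the conjugation action gives $(h.f)(a)=hf(a)h^{-1}$, so the map is $G$-equivariant. For $\sigma\in G_k$ the action on $\Hom$ is $(\sigma.f)(a)=\sigma.f(\sigma^{-1}.a)$, and here $G$ carries the trivial action. Since $\sigma^{-1}.a=a^{\chi(\sigma)^{-1}}$ for the cyclotomic action, we get
\[
(\sigma.f)(a)=f\bigl(a^{\chi(\sigma)^{-1}}\bigr)=f(a)^{\chi(\sigma)^{-1}}.
\]
This is exactly the stated inverse cyclotomic action on $G$. Passing to orbits, $G\times G_k$-orbits on $\Hom((\Q/\Z)^*,G)$ correspond to minimal subsets of $G$ closed under conjugation and under $g\mapsto g^{\chi(\sigma)^{-1}}$. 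These are Malle's $k$-conjugacy classes, because $\chi(G_k)$ is a group and so closure under $\chi(\sigma)^{-1}$ is the same as closure under $\chi(\sigma)$. This gives the bijection in (i).

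\emph{Independence of the generator in (i).} Any other generator has the form $a'=a^u$ with $u\in\widehat{\Z}^\times$, and then $f(a')=f(a)^u$. If $k$ is linearly disjoint from $\Q(\mu)$, then $\chi(G_k)=\widehat{\Z}^\times$, so $u^{-1}=\chi(\sigma)$ for some $\sigma\in G_k$. The class of $f(a)^u$ therefore agrees with the class of $f(a)$, and the induced bijection on orbits is unchanged.

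\emph{Part (ii).} Take $p$ at most tamely ramified in $\pi$. By \Cref{lem:canonical}(i), $\iota_p$ maps $(\Q/\Z)^*$ surjectively onto $I_p^{\rm tame}$. Since $\pi|_{I_p}$ factors through tame inertia, the image of $\iota_p^*\pi|_{I_p}$ equals $\pi(I_p)$. The element $\iota_p(a)$ topologically generates $I_p^{\rm tame}$, so its image $(\iota_p^*\pi)(a)=\pi(\iota_p(a))$ generates the cyclic group $\pi(I_p)$. Changing the embedding $I_p\hookrightarrow G_k$ conjugates this element, and this stays within the same $k$-conjugacy class.

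The main obstacle is the sign convention in the equivariance step. It has to match the action formula of \Cref{def:tameRT}, $((gx).f)(a)=g(x.f(x^{-1}.a))g^{-1}$, which is where the inverse $\chi^{-1}$ comes from. Everything else is routine.
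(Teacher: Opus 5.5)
Your proposal is correct and follows the paper's proof essentially step for step. You get bijectivity from procyclicity and equivariance from $(\sigma.f)(a)=f(a^{\chi(\sigma)^{-1}})=f(a)^{\chi(\sigma)^{-1}}$. Independence of the generator comes from $\chi(G_k)=\widehat{\Z}^\times$ under linear disjointness, and part (ii) from the surjectivity of $\iota_p$ in \Cref{lem:canonical}(i). You also supply details the paper leaves implicit: the explicit construction of $f$ with $f(a)=g$, and the remark that closure under $\chi(\sigma)$ versus $\chi(\sigma)^{-1}$ yields the same classes.
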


\begin{proof}
    The fact that $(\Q/\Z)^*$ is procyclic implies this map is a bijection. The $G$-action is given by $g.f(a) = g f(a)g^{-1}$ by definition, so this map is $G$-equivariant. $G_k$ acts on $\mu$, and so on $(\Q/\Z)^*$, by the cyclotomic character so that $\sigma.f(a) = f(\sigma^{-1}.a) = f(a^{\chi(\sigma)^{-1}}) = f(a)^{\chi(\sigma)^{-1}}$.

    For part (i), equivariance of the bijection implies that it factors through the orbits, which are precisely $\RT_k^{\rm tame}(G)$ and the set of $k$-conjugacy classes respectively. Suppose $k$ is linearly disjoint from $\Q(\mu)$ and $a'$ is a different generator for $(\Q/\Z)^*$. The automorphism group of $(\Q/\Z)^*$ is exactly $\hat{\Z}^{\times}$ because $(\Q/\Z)^*$ is (noncanonically) isomorphic to $\hat{\Z}$, so $a' = a^n$ for some $n\in \hat{\Z}^\times$. Thus, $f(a') = f(a)^n$ generates the same cyclic subgroup as $f(a)$, and so belongs to the same $\Q$-conjugacy class. As $k$ and $\Q(\mu)$ are linearly disjoint, the cyclotomic character $\chi:G_\Q\to \hat{\Z}^{\times}$ satisfies $\chi(G_\Q) = \chi(G_k)$, implying that the $k$-conjugacy classes are the same as the $\Q$-conjugacy classes.
    
    Part (ii) follows directly from the properties of $\iota_\pk^*$ proven in \Cref{lem:canonical}.
\end{proof}

\subsection{Gundlach's notion of ramification type}

We now elaborate on Gundlach's definitions: Gundlach defines (tame) ramification types as $G\times G_k$-orbits $[I,\gamma]$ of pairs $(I,\gamma)$ with $I\le G$ cyclic and $\gamma:I\overset{\sim}{\to} \mu_e$ with respect to the following $G\times G_k$-action:
\begin{itemize}
    \item If $g\in G$, then $g.(I,\gamma) = (g I g^{-1}, g.\gamma)$, where $(g.\gamma)(x) = \gamma(g^{-1}xg)$.
    \item If $\sigma\in G_k$, then $\sigma.(I,\gamma) = (I,\sigma \circ \gamma)$.
\end{itemize}

Let $(L,\psi)$ be a Galois extension $L/k$ with isomorphism $\psi:\Gal(L/k) \overset{\sim}{\to} G$. A tame prime $\pk$ is said by Gundlach to have ramification type $[I,\gamma]$ in $(L,\psi)$ if the following hold:
\begin{itemize}
    \item $\pk$ is ramified in $L/K$ with ramification index $e$ and $\psi(I_\pk(L/k)) = I$,
    \item For some $\mathfrak{q}\mid \pk$, $\beta_{\mathfrak{q}\mid \pk}:I\to (\mathcal{O}_L/\mathfrak{q})^{\times}$ is the natural injection defined by $g\mapsto g(\pi_{\mathfrak{q}})/\pi_{\mathfrak{q}} \mod \mathfrak{q}$, and
    \item $\gamma:I \to \mu_e$ is defined so that $\gamma(g)\in \mu_e$ is the unique root of unity for which $\gamma(g) \equiv \beta_{\mathfrak{q}\mid \pk}(g) \mod \mathfrak{q}$ for some prime $\mathfrak{q}\mid \pk$.
\end{itemize}
The orbit $[I,\gamma]$ is independent of both $\mathfrak{q}$ and $\beta$.

\begin{remark}
    None of the $\beta$s in this appendix have anything to do with the $\beta$s in the rest of the paper, in particular Section \ref{sec:unconditional}.
\end{remark}

\begin{proposition}
    Let $G$ be a finite group with trivial Galois action. There is a canonical $G\times G_k$-equivariant bijection of pointed sets
    \[
        \{ (I,\gamma) : I \le G\text{ is cyclic and }\gamma: I \overset{\sim}{\to} \mu_e\} \leftrightarrow \Hom((\Q/\Z)^*,G)
    \]
    given by
    \[
        (I,\gamma) \mapsto q_e^*(\gamma^{-1}),
    \]
    where $q_e:(\Q/\Z)^* \to \mu_e$ is the homomorphism defined by $a\mapsto a(1/e)$ and $e = |I|$.

    Moreover,
    \begin{enumerate}[(i)]
        \item This map descends to a bijection between $G\times G_k$-orbits $[I,\gamma]$ and $\RT_k^{\rm tame}(G)$.
        \item Let $L$ be the field fixed by the kernel of $\pi\in \Sur(G_k,G)$. If $\pk$ has ramification type $[I_\pk,\gamma]$ in $L/k$, then $[q_e^*(\gamma^{-1})]=[\iota_\pk^*\pi|_{I_\pk}]\in \RT_k^{\rm tame}(G)$.
    \end{enumerate}
\end{proposition}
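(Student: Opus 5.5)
The argument proceeds in three steps: well-definedness and bijectivity of the map, equivariance (which gives (i)), and compatibility with the local construction of $\iota_\pk$ (which gives (ii)).

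\emph{Well-definedness and bijectivity.} Given $(I,\gamma)$ with $e=|I|$, the composite $q_e^*(\gamma^{-1}) = \gamma^{-1}\circ q_e : (\Q/\Z)^*\to \mu_e \to I\le G$ is a homomorphism. Since $q_e$ is surjective, its image is exactly $I$. For the inverse, take $f\in \Hom((\Q/\Z)^*,G)$ and set $I=\im f$, which is cyclic of order $e={\rm ord}(f)$. By procyclicity, $\ker f$ is the unique open subgroup of index $e$. This is $\ker q_e$, because $q_e$ is surjective onto $\mu_e$, which is cyclic of order $e$. Hence $f$ factors uniquely as $\overline{f}\circ q_e$ with $\overline{f}:\mu_e\overset{\sim}{\to} I$, and we set $\gamma=\overline{f}^{-1}$. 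These two constructions are mutually inverse. They preserve the base point: the trivial homomorphism corresponds to $(1,\gamma:1\to\mu_1)$. No choices are made anywhere, so the bijection is canonical.

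\emph{Equivariance.} For $g\in G$, we have $g.\gamma=\gamma(g^{-1}\cdot g)$, so $(g.\gamma)^{-1}(\zeta)=g\gamma^{-1}(\zeta)g^{-1}$. It follows that $q_e^*((g.\gamma)^{-1})$ is the conjugate of $q_e^*(\gamma^{-1})$, as required. For $\sigma\in G_k$, Gundlach's action gives $(\sigma\circ\gamma)^{-1}=\gamma^{-1}\circ\sigma^{-1}$. The action on $\Hom((\Q/\Z)^*,G)$ is $(\sigma.f)(a)=f(\sigma^{-1}.a)$, since $G$ has trivial action. Because $q_e$ is $G_k$-equivariant, $q_e(\sigma^{-1}.a)=\sigma^{-1}(q_e(a))$, and the two sides agree. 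This is the step where sign and inverse conventions need care: the choice of $\gamma^{-1}$ rather than $\gamma$ is precisely what makes the cyclotomic actions match. I expect this bookkeeping, together with the reduction step in (ii), to be the main places to double-check. Part (i) then follows immediately, since an equivariant bijection descends to a bijection of orbit sets.

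\emph{Part (ii).} Recall from \Cref{lem:canonical} that $\iota_\pk$ is the composite of $(q_e)$ with the inverse of $g\mapsto g(\pi)/\pi \bmod \pi$. Restricting to $\pi|_{I_\pk}$, which factors through $I_\pk/I_\pk^e\cong\mu_e$, the map $\pi|_{I_\pk}\circ\iota_\pk$ equals $\psi\circ\theta^{-1}\circ q_e$. Here $\theta:I_\pk(L/k)\to\mu_e$ is the map $g\mapsto g(\pi_{\mathfrak q})/\pi_{\mathfrak q}$, lifted from the residue field to the roots of unity via the Teichmüller reduction. This $\theta$ is exactly Gundlach's $\gamma$ transported along $\psi$, so $\psi\circ\theta^{-1}=\gamma^{-1}$ on $I=\psi(I_\pk)$. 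To justify this, identify $I_\pk(L/k)$ with the tame inertia quotient. Compute the character over $k_\pk(\mu_e)$, where a uniformizer of $L_{\mathfrak q}$ can be taken to be an $e$-th root of a uniformizer, and note that reduction mod $\mathfrak q$ is injective on $\mu_e$ because $\pk\nmid e$. The choices of $\mathfrak q$ and of embedding change things only by conjugation or by the independence statement of \Cref{lem:canonical}(i). Therefore the class $[\iota_\pk^*\pi|_{I_\pk}]$ equals $[q_e^*(\gamma^{-1})]$.
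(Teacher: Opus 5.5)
Your proposal is correct and follows essentially the same route as the paper. The inverse comes from the unique factorization of $f$ through $q_e$, and the equivariance checks are the same computations: $(g.\gamma)^{-1}=g\gamma^{-1}g^{-1}$ for the $G$-action and $(\sigma\circ\gamma)^{-1}\circ q_e=\gamma^{-1}\circ q_e\circ\sigma^{-1}$ for the $G_k$-action. Part (ii) reduces, as in the paper's diagram chase, to the observation that the canonical isomorphism $I_\pk/I_\pk^e\to\mu_e$, $g\mapsto g(\pi_{\mathfrak q})/\pi_{\mathfrak q}$, is exactly Gundlach's $\gamma$ transported along $\pi$; your extra remarks (reduction is injective on $\mu_e$ since $\pk\nmid e$, and the result is independent of $\mathfrak q$ and of the uniformizer) just make explicit what the paper leaves to that diagram.
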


\begin{proof}    
    We first show that the map is a well-defined bijection. This will essentially follow from properties of $q_e^*$, as it is clear by construction that $q_e$ is a homomorphism and is in fact $G_k$-equivariant.

    We first show that this map is well-defined. Given that $(\Q/\Z)^*$ is a procyclic group, we can partition the homomorphisms by their image
    \[
        \Hom((\Q/\Z)^*,G) = \coprod_{\substack{I\le G\\\text{cyclic}}} \Sur((\Q/\Z)^*,I).
    \]
    Setting $e = |I|$, the first isomorphism theorem implies that the pullback along $q_e$ induces a bijection between
    \[
        q_e^*:\Sur(\mu_e,I) \to \Sur((\Q/\Z)^*,I).
    \]
    Given that $|\mu_e| = e = |I|$, any element on the left-hand side must be an isomorphism. Thus,
    \[
        q_e^*(\gamma^{-1}) \in \Sur((\Q/\Z)^*,I) \subseteq \Hom((\Q/\Z)^*,G).
    \]
    This implies the map is well-defined.

    The map is bijective because we can demonstrate an inverse:
    \[
        f\leftrightarrow ( f((\Q/\Z)^*), ((q_e^*)^{-1}f)^{-1}),
    \]
    where we note that the fact that $(\Q/\Z)^*$ is procyclic and $|I|=e$ implies that $q_e^*$ is a bijection via the First Isomorphism Theorem.

    We now check that this bijection respects the action. We can check each component of the action separately. An element $g\in G$ acts on $(I,\gamma)$ by
    \[
        g.(I,\gamma) = (g I g^{-1}, g.\gamma).
    \]
    Consider that $(g.\gamma)(x) = \gamma(g^{-1}xg)$ implies that $(g.\gamma)^{-1}(\zeta) = g\gamma^{-1}(\zeta)g^{-1}$. Thus, for any $a\in (\Q/\Z)^*$, we evaluate
    \begin{align*}
        q_e^*((g.\gamma)^{-1})(a) &= (g.\gamma)^{-1}(q_e(a))\\
        &= g\gamma(q_e(a))^{-1} g^{-1}\\
        &= g q_e^*(\gamma^{-1})(a) g^{-1}.
    \end{align*}
    This is precisely the action on $\Hom((\Q/\Z)^*,G)$, $(g.f)(a) = g f(a) g^{-1}$, showing that the bijection is $G$-equivariant.

    Likewise, consider $\sigma \in G_k$ which acts as
    \[
        \sigma.(I,\gamma) = (I, \sigma \circ \gamma).
    \]
    Consider that
    \begin{align*}
        q_e^*((\sigma\circ\gamma)^{-1}) &= q_e^*(\gamma^{-1}\circ \sigma^{-1})\\
        &=\gamma^{-1}\circ \sigma^{-1} \circ q_e.
    \end{align*}
    Given that $q_e$ is $G_k$-equivariant implies that for each $a\in (\Q/\Z)^*$
    \begin{align*}
        q_e^*((\sigma\circ\gamma)^{-1})(a) &= \gamma^{-1}\circ q_e(\sigma^{-1}.a)\\
        &=q_e^*(\gamma^{-1})(\sigma^{-1}.a).
    \end{align*}
    This is exactly the $G_k$-action on $\Hom((\Q/\Z)^*,G)$, $(\sigma.f)(a) = f(\sigma^{-1}.a)$.

    Part (i) follows from equivariance, as it is stating a bijection between $G\times G_k$ orbits.
        
    We now consider part (ii). Suppose that $\pk$ has ramification type $[I,\gamma]$ in $L/K$ and let $\pi\in \Sur(G_k,G)$ correspond to $L$ under the Galois correspondence.

    Then by definition
    \begin{align*}
        \iota_{\pk}^*\pi_{I_\pk} &= \pi|_{I_\pk} \circ \iota_{\pk}\\
        &= \pi|_{I_{\pk}} \circ (q_e)_{(e,{\rm Nm}\pk)=1}.
    \end{align*}
    Choose a prime $\mathfrak{q}\mid \pk$, which is equivalent to fixing a homomorphism $I_{\mathfrak{p}}\hookrightarrow G_K$, where $I_\mathfrak{q}$ is what we label as the image of this map. Identifying $I_\pk$ with the representative image $I_\mathfrak{q}$, it now suffices to show that the diagram
    \[
        \begin{tikzcd}
            I_{\mathfrak{q}} \arrow[rr,"{\pi|_{I_\mathfrak{p}}}"]\drar &&I\\
            &I_\mathfrak{q}/I_\mathfrak{q}^e \urar\drar& \\
            (\Q/\Z)^* \arrow[rr,"q_e"]\arrow[uu,"{\iota_\pk}"] \urar &&\mu_e\arrow[uu,swap,"{\gamma^{-1}}"]
        \end{tikzcd}
    \]
     commutes, where $I_\mathfrak{q} \to I_\mathfrak{q}/I_\mathfrak{q}^e$ is the canonical quotient map, $I_\mathfrak{q}/I_\mathfrak{q}^e\to I$ is the isomorphism induced by $\pi|_{I_\pk}$, $I_\mathfrak{q}/I_\mathfrak{q}^e\to \mu_e$ is the canonical isomorphism sending $g\mapsto g(\pi_\mathfrak{q})/\pi_\mathfrak{q}$ for some uniformizer $\pi_\mathfrak{q}\in L_\mathfrak{q}$, and $(\Q/\Z)^*\to I_\mathfrak{q}/I_\mathfrak{q}^e$ is the composition of $q_e$ with the inverse of the canonical isomorphism from $I_\mathfrak{q}/I_\mathfrak{q}^e\to \mu_e$. By definition, the top, left, and bottom triangles all commute so it suffices to consider the right triangle.

     The composition $I_\mathfrak{q}/I_\mathfrak{q}^e \to I \to \mu_e$ sends $g\mapsto \beta_{\mathfrak{q}\mid \pk}(g) \mod \mathfrak{q}$. But, $\beta_{\mathfrak{q}\mid \pk} (g) = g(\pi_\mathfrak{q})/\pi_\mathfrak{q}$ by definition, so that this map agrees with the canonical isomorphism $I_\mathfrak{q}/I_\mathfrak{q}^e\to \mu_e$.
\end{proof}

\subsection{Smith's ramification-preserving homomorphism}
Smith defines
\[
    \mathcal{R}_{\pk,k}: H^1(G_k,G) \mapsto G(-1)^{G_{k_\pk}}
\]
by sending $\phi \mapsto \left(\overline{\zeta}\mapsto \phi({\rm Tine}_k(\pk))\right)$.

\begin{proposition}
    Let $k$ be a number field and $G$ a $G_k$-module. Then there is a canonical $G\times G_k$-equivariant isomorphism $\Hom((\Q/\Z)^*,G) \cong G(-1)$.

    Moreover,
    \begin{enumerate}[(i)]
        \item This map descends to a bijection between $\RT_k^{\rm tame}(G)$ and the $G_k$-orbits of $G(-1)$.
        \item The map $\mathcal{R}_{\pk,k}$ is independent of the choice of $\overline{\zeta}$.
        \item For each tame prime $\pk$, the diagram
        \[
        \begin{tikzcd}
            \Hom(G_k,G) \rar[equals]\dar & H^1(G_k,G) \rar{\mathcal{R}_{\pk,k}} & G(-1)\dar \\
            \Hom(I_\pk,G) \rar{(\iota_{\pk}^*)^{-1}} & \Hom((\Q/\Z)^*,G) \rar & \RT_k^{\rm tame}(G)
        \end{tikzcd}
        \]
        commutes.
    \end{enumerate}
\end{proposition}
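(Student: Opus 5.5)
The plan is to build the isomorphism through \Cref{lem:Q/ZtoZ(1)} and then check the three claims in turn. Let $\phi:(\Q/\Z)^*\to \hat{\Z}(1)$ be the canonical $G_k$-equivariant isomorphism of \Cref{lem:Q/ZtoZ(1)}. Pulling back along $\phi^{-1}$ gives a bijection
\[
    (\phi^{-1})^*:\Hom((\Q/\Z)^*,G)\to \Hom(\hat{\Z}(1),G)=G(-1),\qquad f\mapsto f\circ\phi^{-1}.
\]
Since $G$ is abelian, $\Hom(-,G)$ is a group under pointwise multiplication, so this map is a group isomorphism. The conjugation action of $G$ is trivial on both sides. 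For the $G_k$-action, both sides carry $(\sigma.f)(a)=\sigma.f(\sigma^{-1}.a)$, and this is respected because $\phi$ is $G_k$-equivariant: $(\sigma.f)\circ\phi^{-1}=\sigma.(f\circ\phi^{-1})$. No generator is chosen at any point, so the isomorphism is canonical.

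Part (i) is then immediate. In the abelian case a tame ramification type is a $G\rtimes G_k$-orbit in $\Hom((\Q/\Z)^*,G)$, and since conjugation is trivial these are just $G_k$-orbits. An equivariant bijection carries these orbits onto the $G_k$-orbits of $G(-1)$.

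For (ii), I would unwind Smith's construction. A choice of generator $\overline{\zeta}\in\hat{\Z}(1)$ fixes an isomorphism $\hat{\Z}(1)\to I_\pk^{\rm tame}$ sending $\overline{\zeta}\mapsto{\rm Tine}_k(\pk)$. Changing $\overline{\zeta}$ to $\overline{\zeta}^u$ with $u\in\hat{\Z}^\times$ changes ${\rm Tine}_k(\pk)$ to ${\rm Tine}_k(\pk)^u$, by Smith's compatibility. The homomorphism $\overline{\zeta}\mapsto\phi({\rm Tine}_k(\pk))$, extended $\hat{\Z}$-linearly, then sends $\overline{\zeta}^u\mapsto\phi({\rm Tine}_k(\pk))^u$, which is the same element of $G(-1)$. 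The main point to verify here is that Smith's Tine is functorial in $\overline{\zeta}$ in exactly this way, i.e.\ that $\mathcal{R}_{\pk,k}(\phi)$ is really $\phi|_{I_\pk}$ composed with the canonical isomorphism $\hat{\Z}(1)\cong I_\pk^{\rm tame}$ (prime-to-$\pk$ part).

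For (iii), the key step, and the one I expect to be the main obstacle, is to show that this canonical isomorphism $\hat{\Z}(1)\to I_\pk^{\rm tame}$ agrees with $\iota_\pk\circ\phi^{-1}$, where $\iota_\pk$ is the map of \Cref{lem:canonical}. Both maps are built from the same Kummer-type identification $I_\pk/I_\pk^e\cong\mu_e$, $g\mapsto g(\pi_\pk)/\pi_\pk$, and $\phi$ is exactly the limit of the maps $q_e$. The check therefore reduces to each finite level $e$ coprime to $\pk$, where it amounts to unwinding definitions, as in the Gundlach proof above. Granting this, the top path sends $\psi$ to $\psi|_{I_\pk}\circ\iota_\pk\circ\phi^{-1}=(\phi^{-1})^*(\iota_\pk^*\psi|_{I_\pk})$. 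The bottom path sends $\psi$ to the orbit of $\iota_\pk^*\psi|_{I_\pk}$. These agree after passing to $G_k$-orbits via (i), so the diagram commutes.
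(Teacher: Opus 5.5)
Your proposal is correct and follows essentially the paper's argument: the isomorphism comes from the canonical $G_k$-equivariant isomorphism $(\Q/\Z)^*\cong\hat{\Z}(1)$, (i) follows from equivariance, and (iii) follows from identifying the map $\overline{\zeta}\mapsto{\rm Tine}_k(\pk)$ with $\iota_\pk\circ\phi^{-1}$ and pulling back along it. The only variation is in (ii), which you check directly from ${\rm Tine}_k(\pk)\mapsto{\rm Tine}_k(\pk)^u$ under $\overline{\zeta}\mapsto\overline{\zeta}^u$; the paper instead deduces (ii) from that same comparison with $\iota_\pk\circ\phi^{-1}$, so a single verification gives both (ii) and (iii).
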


Note that while $\mathcal{R}_{\pk,k}$ does not depend on the choice of $\overline{\zeta}$, the generator ${\rm Tine}_k(\pk)\in I_{\pk}^{\rm tame}$ does depend on this choice.

\begin{proof}
    The isomorphism is immediate from the canonical $G_k$-equivariant isomorphism $(\Q/\Z)^*\cong \hat{\Z}(1)$, as this implies
    \[
        G(-1) = \Hom(\hat{\Z}(1),G) \cong \Hom((\Q/\Z)^*,G).
    \]
    Part (i) is the bijection of orbits induced by an equivariant map.

    This follows from \Cref{lem:canonical}. Smith defined ${\rm Tine}_k(\pk)$ to be the element such that for each $\alpha\in \pk - \pk^2$,
    \[
        \frac{{\rm Tine}_k(\pk)(\alpha^{1/m})}{\alpha^{1/m}}
    \]
    equals the projection of $\overline{\zeta}$ to $\mu_m$. In particular, this implies the map $\widehat{\Z}(1) \mapsto I_{\pk}^{\rm tame}$ fits together with the composition in \Cref{lem:canonical} and the canonical isomorphism in \Cref{lem:Q/ZtoZ(1)} to create the commutative diagram
    \[
    \begin{tikzcd}
        (\Q/\Z)^* \rar{(q_e)}\drar[equals] &\displaystyle\lim_{\substack{\leftarrow\\(e,\pk)=1}}\mu_e \rar{\sim} &\displaystyle\lim_{\substack{\leftarrow\\(e,\pk)=1}}I_\pk/I_{\pk}^e \rar[equals] &I_{\pk}^{\rm tame}\\
        & \widehat{\Z}(1) \uar\arrow{urr}
    \end{tikzcd}
    \]
    where the far right diagonal map is the one sending $\overline{\zeta} \mapsto {\rm Tine}_k(\pk)$. Compressing this diagram using the composition of the top row $\iota_\pk$ implies
    \[
    \begin{tikzcd}
        (\Q/\Z)^* \rar{\iota_{\pk}} &I_{\pk}^{\rm tame}\\
        \widehat{\Z}(1)\uar[equals]\urar\\
    \end{tikzcd}
    \]
    commutes, where the = line represents the canonical isomorphism in \Cref{lem:Q/ZtoZ(1)} and the diagonal line is the map $\overline{\zeta}\mapsto {\rm Tine}_k(\pk)$. In particular, taking $\Hom(-,G)$ of the above diagram gives the commutative diagram
    \[
        \begin{tikzcd}
            & G(-1)\\
            \Hom(I_\pk,G) \rar{(\iota_{\pk}^*)^{-1}}\urar & \Hom((\Q/\Z)^*,G)\uar[equals]
        \end{tikzcd}
    \]
    where the unlabeled diagonal map is induced by $\overline{\zeta}\mapsto {\rm Tine}_k(\pk)$ and the = line on the right represents the canonical isomorphism. We get a larger commutative diagram by including the maps restriction to $I_\pk$ and $\mathcal{R}_{\pk,k}$:
    \[
        \begin{tikzcd}
            \Hom(G_k,G) \rar[equals]\drar[swap]{\res_{I_\pk}} & H^1(G_k,G) \rar{\mathcal{R}_{\pk,k}}\dar{\rm res_{I_\pk}} & G(-1)\\
            &\Hom(I_\pk,G) \rar{(\iota_{\pk}^*)^{-1}}\urar & \Hom((\Q/\Z)^*,G).\uar[equals] &
        \end{tikzcd}
    \]
    The bottom of the diagram is independent of the choice of $\overline{\zeta}$, so that the top row must be as well. This proves (ii).

    Part (iii) follows from this same diagram, after taking the $G_k$-orbits of each term in the righthand column as in part (i).
\end{proof}

\bibliographystyle{alpha}
\bibliography{BAreferencesV2020}

\end{document}